         \newtheorem{theorem}{Theorem}[section]
	 \newtheorem{proposition}[theorem]{Proposition}%[section] 
	 \newtheorem{lemma}[theorem]{Lemma}
         \newtheorem{corollary}[theorem]{Corollary}%[section]	 
\theoremstyle{definition}        
         \newtheorem{remark}[theorem]{Remark}%[section]  
\numberwithin{equation}{section}
\newcommand \Abb {\mathbb{A}}
\newcommand \Cbb {\mathbb{C}}
\newcommand \Dbb {\mathbb{D}}
\newcommand \Nbb {\mathbb{N}}
\newcommand \Ibb {\mathbb{I}}
\newcommand \Lbb {\mathbb{L}}
\newcommand \Rbb {\mathbb{R}}
\newcommand \Zbb {\mathbb{Z}}
\newcommand \Tbb {\mathbb{T}}
\newcommand \be {\begin{equation}}
\newcommand \ee {\end{equation}} 
\newcommand\dif{h}
\def \t {\tilde} 
\def \v {\vec}
\newcommand \pa {\partial}
\newcommand \al {\alpha}
\newcommand \de {\delta}
\newcommand \ep {\epsilon}
\newcommand \ga {\gamma}
\newcommand \Ga {\Gamma}
\newcommand \na {\nabla}
\newcommand \si {\sigma}
\newcommand \ka {\kappa}
\newcommand \Om {\Omega}%{\boldsymbol\omega}
\newcommand \Bcal {\mathcal{B}}
\newcommand \Ccal {\mathcal{C}}
\newcommand \Fcal {\mathcal{F}} 
\newcommand \Gcal {\mathcal{G}} 
\newcommand \Lcal {\mathcal{L}}
\newcommand \Scal {\mathcal{S}}
\newcommand \Ocal {\mathcal{O}}
\newcommand \Rcal {\mathcal{R}}
\newcommand \Pcal {\mathcal{P}}
\DeclareMathOperator\PV{PV}
\newcommand\low{{\mathrm{low}}}
\newcommand\high{{\mathrm{high}}}
\newcommand{\Ccont}{ C }
\renewcommand\tilde\widetilde
\title{Finite-time singularity formation\\ for angle-crested water waves}
\author{Diego C\'ordoba, Alberto Enciso and Nastasia Grubic}
\date{\normalsize{Instituto de Ciencias Matem\'aticas\\ Consejo Superior de
  Investigaciones Cient\'\i ficas\\ 28049 Madrid, Spain\\[1ex]
  E-mail: dcg@icmat.es, aenciso@icmat.es, nastasia.grubic@icmat.es}}
\begin{document}

\maketitle

\begin{abstract}
	We show that the water waves system is locally wellposed in weighted Sobolev spaces which allow for interfaces with corners. No symmetry assumptions are required. These singular points are not rigid: if the initial interface exhibits a corner, it remains a corner but generically its angle changes. Using a characterization of the asymptotic behavior of the fluid near a corner that follows from our a priori energy estimates, we show the existence of initial data in these spaces for which the fluid becomes singular in finite time.
\end{abstract}

%% \tableofcontents

%%Section 0: Introduction 

\section{Introduction}

We consider the motion of an inviscid incompressible irrotational fluid in
the plane with a free boundary. A time-dependent interface
$$
\Ga(t) := \{z(\al,t) \,|\, \alpha\in \Rbb\}
$$ 
separates the plane into two open sets: the water region, which we denote by
$\Omega(t)$, and the vacuum region,
$\Rbb^2\backslash\overline{\Omega(t)}$. 
The
evolution of the fluid is described by the Euler equations,
\begin{subequations}\label{equationsI}
	\begin{align}
		\pa_t v + (v \cdot \nabla )v = - \nabla P - g \, e_2\quad &\text{in} \quad \Omega(t),\\
		\nabla\cdot v=0  \quad \mathrm{and} \quad \nabla^{\perp}\cdot v=0  \quad &\text{in} \quad \Omega(t), \\
		(\pa_t z- v) \cdot (\pa_\al z)^\perp = 0 \quad &\mathrm{on} \quad \Ga(t),\\
		p=0\quad &\mathrm{on} \quad \Ga(t).
	\end{align} 
\end{subequations}
Here $v$ and $p$ are the water velocity and pressure
on~$\Omega(t)$, $ e_2$ is the second vector of a Cartesian
basis and $g>0$ is the acceleration due to gravity. This system of PDEs
on~$\Rbb^2$, which are often referred to as the water wave equations,
can be formulated solely in terms of the interface curve, $z(\al,t)$,
and velocity of the fluid on the boundary.

The local wellposedness of the Cauchy problem for water waves was first established by Nalimov \cite{Na74}, Yoshihara \cite{Yo82,Yo83} and Craig \cite{Cr85} for small data and and by Wu \cite{Wu97,Wu99} for arbitrary data in Sobolev spaces. These results have been extended in various directions, see~\cite{ChLi00, Li05, La05, CoSh07, ZhZh08, CaCoFeGaGo13, AlBuZu14, KuTu14, HuIfTa16, HaIfTa17, AlBuZu18, Po19, Ai19, AiIfTa19, Ai20, SA23}. Local
wellposedness for initial data in low regularity Sobolev spaces was proven by
Alazard, Burq and Zuily~\cite{AlBuZu14,AlBuZu18} and subsequently refined by
Hunter, Ifrim and Tataru~\cite{HuIfTa16}. The lowest interface regularity they allow  is $C^{3/2}$.

Additionally, in all these works, a crucial assumption for the wellposedness of the problem in Sobolev spaces is the Rayleigh--Taylor sign condition, that is, the hypothesis that
\begin{align}\label{E.RTintro}
-\partial_n P \geq c >0 \quad \text{ on } \Gamma(t)\,,
\end{align}
where $n$ is the outward unit normal. The motivation for this condition is that, as the fluid is irrotational, the pressure satisfies the elliptic equation
\begin{equation}\label{E.subh}
	\Delta P = -|\nabla v|^2\leq0
\end{equation}
in the water region with the Dirichlet condition $P|_{\Ga(t)}=0$, so~\eqref{E.RTintro} follows from Hopf's boundary point lemma when $\Om(t)$ is regular enough to satisfy the interior ball condition. From the point of view of the energy estimates, $\partial_nP$ appears directly in the definition of the energy, and the sign of this function is directly related to the positivity of the energy.

A class of non-$C^1$ interfaces which has attracted considerable attention is that of piecewise smooth interfaces with angled crests. On one hand, the study of this kind
of solutions is motivated by experimental observations and hearkens back the XIX~century, when Stokes formally
constructed traveling wave solutions that featured sharp crests with
a $120^\circ$ corner. The existence of these traveling water waves was not rigorously established until the work of Amick, Fraenkel and
Toland~\cite{Amick} in the 1980s. On the other hand, the precise geometric control on the interface can be used to mitigate some of the analytic difficulties that appear due to the low regularity of the data (interface and velocity) and due to the failure of the Rayleigh--Taylor condition. In the context of the free boundary Euler equations with surface tension, other  kinds of singular interfaces were studied in~\cite{CEG,CEG2}.

Regarding the Cauchy problem for non-$C^1$ interfaces, Kinsey and Wu \cite{KiWu18} proved an a priori estimate for angled crested water waves, which Wu \cite{Wu19} subsequently used to prove local wellposedness in weighted Sobolev spaces that allow for interfaces with corners and cusps. Agrawal~\cite{SA23} proved estimates for the time of existence that are uniform as the gravity parameter $g\to0$. As showed in~\cite{Ag20}, the singular solutions constructed in these references are rigid, and in particular the angle of the corner does not change with time. 

In a recent paper~\cite{CoEnGr21}, we proved a local wellposedness result in another scale of weighted Sobolev spaces that allow for interfaces that have corners and cusps, with the property that the angle of the corner changes with time. Note that at the corners and cusps in \cite{Wu19, SA23, CoEnGr21}, the Rayleigh--Taylor condition is not satisfied, as ${\partial_n P}$ vanishes at those singularities.

A drawback of the vorticity-based approach we used to bypass the rigidity properties detailed in~\cite{Ag20} is that, in order to control the time evolution of the boundary vorticity, we have to consider solutions of Laplace equation for both the exterior and the interior domain. This is tricky because, depending on the size of the opening angle of the corner, solutions of Laplace equation can be very singular even for well-behaved boundary data. To bypass some of these problems, in~\cite{CoEnGr21} we had to impose fairly strong geometric assumptions, namely that the solutions were invariant under reflections across both Cartesian axes. 
%The idea is that, as the non-smooth weights appearing in the energy become more and more singular as one integrates by parts in the various integrals that
%appear in the estimates, we needed the symmetry to obtain some highly nontrivial cancellations and thus derive upper bounds for several integrals that otherwise yield dangerous error terms. 
A consequence of this is that, in particular, our approach in~\cite{CoEnGr21} forces us to disregard gravity, as the symmetry across the horizontal axis is not preserved whenever the gravity constant~$g$ is nonzero. This is in contrast to the approach developed to deal with rigid corners in~\cite{KiWu18}, which is based on mapping the singular water region conformally to the half-space and controlling the regularity of
the interface through weighted norms of the corresponding conformal map. In particular, only the interior domain (that is, the water region)  plays a role in the estimates. 
%Note that in both \cite{KiWu18} and \cite{CoEnGr21} the size of the corner in the interior domain is strictly smaller than $\frac{\pi}{2}$ which ensures that solutions of the Laplacian.

Our first objective in this paper is to prove the existence of non-rigid interfaces with corners in the case of water waves without any symmetry assumptions. We adopt the quasilinear reformulation of the problem in terms of interface parametrization and boundary velocity in the style of \cite{KiWu18, Wu19}, while using the weighted energy estimates developed in~\cite{CoEnGr21}. Our result can be informally stated as follows; a precise statement is presented as Theorem~\ref{thm:main} in Section~\ref{s.regularization}.

\begin{theorem}\label{T.main}
	The water wave system~\eqref{equationsI} is locally wellposed on a suitable scale of weighted Sobolev spaces that allows for interfaces with corners, without any symmetry requirements. If the initial interface exhibits a corner, it still has a corner for later times but generically its angle changes. Furthermore, a suitable analog of the Rayleigh--Taylor stability condition holds, but the the gradient of the pressure vanishes at the corner. 
\end{theorem}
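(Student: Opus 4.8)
The plan is to obtain Theorem~\ref{T.main} as the informal version of the precise statement given in Section~\ref{s.regularization} (Theorem~\ref{thm:main}), which we would prove in four stages: a corner-adapted quasilinear reformulation, uniform a priori weighted energy estimates, a regularization argument, and the analysis of the asymptotic profile at the corner. For the first stage we rewrite~\eqref{equationsI}, in the style of~\cite{KiWu18,Wu19}, as a closed quasilinear system for the interface parametrization $z(\cdot,t)$ and the boundary velocity, in which the only nonlocal operator is the Hilbert transform $\Hcal$ attached to the curve $z(\cdot,t)$ (equivalently, a Riemann map of $\Om(t)$ onto a half-plane). The point of this formulation is that only the water region enters the estimates, which is what lets one dispense with the reflection symmetries imposed in~\cite{CoEnGr21}. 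Corners are built in by allowing $\pa_\al z$ to vanish or blow up like $|\al|^{\nu-1}$ at $\al=0$, with the opening angle a function of the exponent $\nu$; the natural function spaces are then the weighted Sobolev spaces of~\cite{CoEnGr21}, i.e.\ $H^s$-type spaces with a weight that is a power of $|\al|$ near the corner and of $\langle\al\rangle$ at infinity.

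The heart of the matter is the a priori estimate. For smooth solutions we build an energy $E(t)$ combining the weighted Sobolev norm of $(\pa_\al z,v)$ with the degenerate Rayleigh--Taylor quantity associated with $-\pa_n P$, which is nonnegative by the maximum principle for~\eqref{E.subh} and vanishes at the corner at a rate fixed by $\nu$, and then prove a differential inequality $\frac{d}{dt}E\le\Ccont(E)$. This requires (i) commutator and boundedness estimates for $\Hcal$ and for the Dirichlet--Neumann operator on the weighted spaces, where the singular kernel interacts simultaneously with the low regularity of the interface and with the weight at $\al=0$; and (ii) a careful bookkeeping of the powers of $|\al|$ so that $E$ stays coercive despite the vanishing of $-\pa_n P$ at the corner. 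A continuity argument then produces a time $T>0$, depending only on the weighted norm of the data, on which the solution persists with $\nu(t)$ --- and hence the angle --- bounded away from its degenerate values; this in particular yields the claimed analog of the Rayleigh--Taylor condition together with the vanishing of $\na P$ at the corner.

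Existence for corner data then follows by mollifying the initial interface and velocity to smooth data, solving by the classical theory~\cite{Wu97,Wu99}, applying the uniform a priori bound, and passing to the limit by compactness; uniqueness comes from the same type of energy estimate run on the difference of two solutions in a weaker weighted norm. Finally, the a priori control yields, for $\pm\al>0$, an expansion $z(\al,t)-z(0,t)=c_\pm(t)\,|\al|^{\nu(t)}+(\text{lower order terms})$; inserting this into the evolution equation gives an ODE for $\nu(t)$ whose right-hand side is a functional of the trace of $v$ near the corner, so that the corner persists while its angle generically changes, which one checks by exhibiting data making that functional nonzero.

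We expect the main obstacle to be steps (i)--(ii): establishing the mapping and commutator properties of the nonlocal operators on the corner-adapted weighted spaces, where the interior-ball gain is lost and the energy is degenerate, so that $E$ remains simultaneously coercive and controllable. This is precisely the difficulty that the symmetry hypothesis circumvented in~\cite{CoEnGr21} and that the conformal, interior-only formulation must now handle directly; a secondary difficulty is ensuring that the regularized problems --- which momentarily restore the Rayleigh--Taylor condition and the $C^1$ regularity of the interface --- obey bounds that are uniform as the regularization is removed.
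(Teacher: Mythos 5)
Your skeleton (quasilinear reformulation, weighted a priori estimates, regularization, corner asymptotics) matches the paper's architecture, but the two most delicate steps are set up in ways that would fail. First, you encode the corner through a conformal map onto a half-plane with $\pa_\al z\sim|\al|^{\nu-1}$ and read the angle off the exponent; that is precisely the Kinsey--Wu framework, and as shown in~\cite{Ag20} the corners obtained that way are \emph{rigid}, so this formulation cannot produce the ``angle generically changes'' part of the statement. The paper instead uses the Lagrangian parametrization with $|z_\al|$ bounded above and below (arc--chord), encodes the corner as a jump $\nu_+\neq\nu_-$ of the tangent angle $\theta$, and---crucially---works with the pair of scales $H^k_\gamma(m)\supset\Lcal^k_{2,\gamma}(m)$, whose quotient is spanned by cutoff polynomials. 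The coefficients of those polynomials ($\nu_\pm$ for the interface, $b_0,b_1$ for the velocity expansion $V^*=b_0+b_1(z-z_*)+U^*$) must be tracked as separate unknowns, and the heart of the proof is that they decouple into the explicit ODE system~\eqref{E.ODEintro}, together with the identification $\si=|b_1|^2|z-z_*|\tan2\nu+\text{l.o.t.}$ (Lemmas~\ref{lem:param} and~\ref{lem:si}). Your sketch never isolates these finitely many constants, yet without them the energy is not even well defined (the top-order unknowns $\Theta$ and $U^*$ live in $\Lcal^k_{2,\gamma}$, not $H^k_\gamma$), the degenerate Rayleigh--Taylor bound cannot be verified, and the claim that the angle moves has no mechanism.

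Second, your existence argument---mollify the data, solve by~\cite{Wu97,Wu99}, pass to the limit---has a gap the paper explicitly flags: standard mollifiers do not preserve the property that the relevant functions vanish exactly at the corner point, so the weighted norms and the degenerate condition $\si\approx|z-z_*|$ are not uniform in the mollification parameter, and the smooth-theory existence time degenerates. The paper instead regularizes the \emph{equations}: an $\ep$-viscosity term plus a $\de$-regularization built from variable-step convolutions $A_\de$ whose step is comparable to the distance to the corner (so growth rates at $z_*$ are preserved), solves the $(\ep,\de)$-system by the abstract Picard theorem on $\t\Ocal^k_\beta(m)$, proves $\de$-uniform and then $\ep$-uniform energy bounds, and closes with a Cauchy-sequence argument. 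Your final ``correctly anticipated difficulty'' paragraph gestures at this, but the mollification-of-data route as written does not resolve it.
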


A key contribution of our paper is that, as an essential part of the proof of the refined energy estimates that underlie Theorem~\ref{T.main}, we show that certain time-dependent quantities $(\nu_+,\nu_-,b_1)\in \Rbb\times\Rbb\times\mathbb{C}$ must satisfy the following system of ODEs:
\begin{gather}
\label{E.ODEintro}
	\frac{db_1}{dt} = \frac{|b_1|^2e^{-i(\nu_+ + \nu_-)}}{\cos(\pi+\nu_+-\nu_-)},\qquad
	\frac{d\nu_+}{dt} = \Re\left(i b_1e^{2i\nu_+}\right),\qquad \frac{d\nu_-}{dt} = \Re\left(ib_1 e^{2i\nu_-}\right).
\end{gather}
Geometrically, $b_1$ and $\nu_\pm$ respectively describe the asymptotic behavior of the gradient of the velocity and the jump in the tangent angle at the corner point, which in particular determine the opening angle (which we denote by $2\nu$) through the formula $2\nu= \pi +\nu_+-\nu_-$.

It is a striking feature of the water wave system that the evolution of these quantities can be computed by solving an ODE. Roughly speaking, the reason for this is that these quantities (which should be thought of as time-dependent constants) arise as the quotient between the scale of Sobolev spaces~$H^k_\gamma(m)$ that we need to use to describe fluids with angled-crested interfaces and a related scale of Sobolev spaces~$\Lcal^k_{2,\gamma}(m)$ on which certain linear operators that appear in the analysis of water waves with corners are bounded. Proving energy estimates in this setting  requires to control those constants separately,  and the fact that the evolution of these constants is effectively decoupled is due to the structure of solutions to the Laplace equation on domains with corners with acute angle.
Details are given in Lemmas~\ref{lem:param} and~\ref{lem:si}. 

As a consequence of this fact, we conclude that there are initial data which blow up in finite time. Indeed, as strongly suggested by the quadratic nonlinearity, there are initial data for which the ODE system~\eqref{E.ODEintro} blows up. The gist of the argument is that there are Cauchy data in our weighted Sobolev spaces that allow the initial values of the associated quantities $(\nu_\pm,b_1)$ to be chosen in such a way that the ODE actually blows up, while our degenerate analog of the Rayleigh--Taylor condition remains bounded away from zero. The result can be informally stated as follows; a precise version of this result is presented in Theorem~\ref{T.sing2}.

\begin{theorem}\label{T.sing}
	There are initial data in the weighted Sobolev spaces of Theorem~\ref{T.sing} for which the water wave system~\eqref{equationsI} blows up in some finite time~$T_{\max}$. More precisely, for all times in certain interval $[0,T_{\max})$ the solution is in a suitable weighted space and the interface exhibits a corner, but as $t\to T_{\max}$ either the interface self-intersects away from the corner point or the weighted Sobolev norm of the solution tends to infinity.
\end{theorem}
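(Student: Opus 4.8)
The strategy is to feed the local wellposedness theory of Theorem~\ref{thm:main} a carefully chosen Cauchy datum and to preclude global existence by showing that the ODE system~\eqref{E.ODEintro} governing its corner data blows up in finite time. I would begin with the ODE analysis. Writing $b_1=\rho\,e^{i\phi}$ with $\rho>0$ and introducing $w:=\nu_+-\nu_-$ (so that the opening angle is $2\nu=\pi+w$) and $\chi:=\phi+\nu_++\nu_-$, a direct computation recasts~\eqref{E.ODEintro} as the autonomous system
\begin{gather*}
\rho'=-\frac{\rho^2\cos\chi}{\cos w},\qquad w'=-2\rho\,\cos\chi\,\sin w,\qquad \chi'=-\frac{\rho\,\sin\chi\,\cos 2w}{\cos w},
\end{gather*}
with the remaining variable $\nu_++\nu_-$ (an overall rotation of the corner) slaved to it. One checks that $\rho^2/|\tan w|$ and $\sin^2\chi/|\sin 2w|$ are conserved, which integrates the system completely. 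I then choose $w(0)$ in the admissible range of opening angles, $\rho(0)>0$ arbitrary, and $\chi(0)$ with $\cos\chi(0)\neq 0$ and $\sin^2\chi(0)<|\sin 2w(0)|$, the sign of $\cos\chi(0)$ being picked so that $w$ is driven toward the critical value $w^*$ at which $\cos w^*=0$; this fixes the initial triple $(\nu_\pm(0),b_1(0))$.

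Along the resulting trajectory the invariant $Q_2:=\sin^2\chi(0)/|\sin 2w(0)|$ is $<1$, so $\sin^2\chi=Q_2|\sin 2w|\le Q_2<1$ and hence $\cos\chi$ stays bounded away from $0$ with a fixed sign; therefore $w$ is strictly monotone and runs to $w^*$. Since $\rho^2=Q_1|\tan w|$ with $Q_1:=\rho(0)^2/|\tan w(0)|>0$, and $|\tan w|\to\infty$ as $w\to w^*$, this forces $|b_1|=\rho\to\infty$; because $\rho$ diverges like $|w-w^*|^{-1/2}$, a comparison for $w'=-2\rho\cos\chi\sin w$ shows $w$ reaches $w^*$ at a \emph{finite} time $T^*$. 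Crucially, for all $t\in[0,T^*)$ the opening angle $2\nu$ stays strictly away from the degenerate values, so the interface is a genuine corner, and the quantity $\rho^2|\cos w|=Q_1|\sin w|$ — which, up to an orientation-dependent sign, is the rescaled Rayleigh--Taylor coefficient at the corner (the object that stays well behaved even though $\partial_n P$ vanishes there) — remains bounded away from zero. This is the precise sense in which~\eqref{E.ODEintro} blows up at $T^*$ while our degenerate analog of the Rayleigh--Taylor condition persists.

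It remains to realize $(\nu_\pm(0),b_1(0))$ as the corner data of an honest Cauchy datum in the weighted Sobolev spaces $H^k_\gamma(m)$ to which Theorem~\ref{thm:main} applies, with all its hypotheses satisfied. I would do this by gluing: near the corner prescribe the model wedge of opening angle $\pi+w(0)$ together with the boundary velocity whose gradient carries the leading singular profile encoded by $b_1(0)$; away from the corner extend to a smooth, embedded, uniformly chord-arc curve — for instance closing it into a bounded water region — with a smooth velocity field, matched through a cutoff at a small scale; then verify that the glued datum has finite $H^k_\gamma(m)$ norm and that the chord-arc condition and the degenerate Rayleigh--Taylor sign (open conditions met by the model near the corner and by the smooth piece elsewhere), together with the corner compatibility conditions (built into the model profile), continue to hold, shrinking the gluing scale if necessary. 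With such a datum, Theorem~\ref{thm:main} produces a unique maximal solution on $[0,T_{\max})$; on this interval its corner data solves~\eqref{E.ODEintro} with the prescribed initial values by Lemmas~\ref{lem:param} and~\ref{lem:si}, hence agrees with the trajectory above for as long as both exist. Were $T_{\max}>T^*$, the solution would lie in $H^k_\gamma(m)$ on the compact interval $[0,T^*]$ and $(\nu_\pm,b_1)$ would stay bounded there, contradicting $|b_1(t)|\to\infty$ as $t\uparrow T^*$; hence $T_{\max}\le T^*<\infty$. The blow-up criterion accompanying Theorem~\ref{thm:main} then yields that, as $t\uparrow T_{\max}$, either the interface self-intersects away from the corner point or the weighted Sobolev norm tends to infinity, while by construction the solution lies in the weighted space and exhibits a corner at every $t\in[0,T_{\max})$, which is the assertion.

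\textbf{Main obstacle.} The analytic core is the realization step: constructing honest initial data in the weighted class with prescribed \emph{quantitative} asymptotics at the corner while simultaneously enforcing the chord-arc condition, the degenerate Rayleigh--Taylor sign, and the corner compatibility conditions — and, in tandem, checking that the particular trajectory selected genuinely stays inside the configuration space on all of $[0,T^*)$, so that Lemmas~\ref{lem:param}--\ref{lem:si} keep applying and the ODE really does govern the solution up to $T_{\max}$. A secondary difficulty is the bookkeeping needed to match the general continuation alternative produced by the energy estimates with the clean dichotomy stated here, i.e. to rule out that ``the corner straightens out'' or ``the rescaled Rayleigh--Taylor coefficient degenerates'' are additional, independent ways the solution might fail to continue.
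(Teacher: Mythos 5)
Your overall strategy coincides with the paper's: blow up the ODE system~\eqref{E.ODEintro} for an open set of corner data, realize such data as admissible Cauchy data for Theorem~\ref{thm:main}, and then argue that the solution cannot outlive the ODE. Your treatment of the ODE itself is correct and in fact cleaner than the paper's: you find a second first integral $\sin^2\chi/|\sin 2w|$ in addition to $\rho^2/\tan w$ (the paper only uses the latter, and instead bootstraps on the monotonicity of $\cos(\beta+\psi)$ in the regime $2\nu\in(\frac\pi4,\frac\pi2)$), which completely integrates the system and gives a transparent open set of blowing-up data. This part of the proposal I would accept as is.

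The genuine gap is in the passage from ODE blowup to PDE blowup, and it is exactly the two items you defer to the ``main obstacle'' paragraph; these are not bookkeeping but the substantive content of the paper's proof. First, the continuation criterion of Remark~\ref{R.continuation} includes the compatibility condition~\eqref{eq:nu} between the weight exponent $\beta$ and the opening angle, namely $2\nu<\pi/(\tfrac52-\beta)$. Since your trajectory drives $2\nu\to\frac\pi2$, this condition fails in finite time \emph{before} $T^*$ for every $\beta<\frac12$, and then the solution may leave $\Ocal^k_\beta(m)$ without any norm blowing up and without $(\nu_\pm,b_1)$ continuing to obey~\eqref{E.ODEintro}; no contradiction with $|b_1|\to\infty$ would follow. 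The paper must therefore work at the endpoint $\beta=\frac12$, where~\eqref{eq:nu} reads exactly $2\nu\in(0,\frac\pi2)$ and is preserved up to $T^*$; your proposal never makes this choice. Second, ruling out failure of the degenerate Rayleigh--Taylor condition requires a uniform lower bound $\inf_\al \si(\al,t)/|z(\al,t)-z_*(t)|\geq c>0$ on the \emph{whole} interface, not merely positivity of the limiting coefficient $|b_1|^2\tan 2\nu$ at the corner tip (your quantity $Q_1|\sin w|$ controls only the latter). The paper obtains the uniform bound from the elliptic problem $-\Delta P=|\nabla v|^2$, $P|_\Ga=0$: the monotonicity of $|b_1(t)|$ together with the uniform $C^{1,\lambda}$ bound on the remainder $u$ (Corollary~\ref{C.reg-v}) gives $|\nabla v|^2\geq\frac12|b_1^0|$ on a ball of uniform radius around the corner, and Hopf's lemma, applied after conformally opening the corner via $z\mapsto(z-z_*)^{\pi/2\nu}$ and mapping to the disk, yields $\si\geq c|z-z_*|$. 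Without these two steps the dichotomy you invoke at $T_{\max}$ is not the one the continuation criterion actually provides, so the argument as written does not close.
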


A couple of comments are in order. First, the Sobolev spaces in Theorems~\ref{T.main} and~\ref{T.sing} permit to prescribe interfaces with an arbitrary number of corners. This number does not change during the evolution (at least for short times), and there is an ODE system of the form~\eqref{E.ODEintro} associated with each corner. Here we restrict ourselves to the case of just one corner point for the sake of concreteness. Second, for data in the spaces we consider in this paper, the velocity is of class $C^{1,\lambda}$~up to the boundary inside the water region, for some $\lambda>0$. Moreover, by choosing weighted Sobolev spaces of a sufficiently high order, we can can ensure that the interface and velocity in the water region are arbitrarily smooth away from the corner. This is particularly relevant in the context of Theorem~\ref{T.sing}. 

Concerning singularity formation for water waves, it is well known that the equations can develop splash singularities in finite time~\cite{CaCoFeGaGo13}. The two essential features of this scenario of singularity formation are that the solution remains smooth up to the singular time, and that the interface intersects itself without pinching the water region~\cite{26,23}. Singularities where one does not control the regularity of the solution up to the blowup time have been constructed dynamically by Agrawal~\cite{SA23}. The essence of his argument is that one can take initial data that feature a rigid cusp and a rigid corner, which move in opposite directions. Using novel uniform estimates, he can show that either the cusp and the corner collide in finite time or some norm blows up before that. Therefore, even though the implementation and the estimates required are completely different, the strategy behind his scenario is somewhat similar to that of the work of Kiselev, Ryzhik, Yao and Zlatos on singularities for the modified SQG patch equation in the presence of boundaries~\cite{Yao} and by Coutand for the two-phase vortex sheet problem with surface tension and a solid-fluid system~\cite{Cou}. 

Our scenario is based on a different strategy, which in particular works even with just one corner point on the interface. As mentioned above, the basic idea is that there are initial conditions for which the ODE system~\eqref{E.ODEintro} blows up in finite time. Specifically, there are initial conditions and some finite~$T_0>0$ such that $|b_1(t)|\to\infty$ and the corner angle $2\nu(t)$ tends to~$\frac\pi2$ as $t\to T_0$. Since $b_1(t)$ is, in complex notation, the limit $\frac{d v^*}{dz}(t,z)$ as $z$~approaches the corner point, it is not hard to prove that the gradient of the velocity will become unbounded at time~$T_0$ unless the weighted Sobolev norm blows up or the interface self-intersects away from the corner before that time. Heuristically, this scenario of singularity formation is based on the fact that harmonic functions satisfying Neumann boundary conditions on a conical region with angle greater than~$\frac\pi2$ are typically not~$C^2$ at the corner point. This is relevant to water waves because the velocity field~$v$ is given by the gradient of a harmonic function precisely of this kind (the so-called velocity potential), so the $C^2$-norm of the velocity potential corresponds to the $L^\infty$-norm of the gradient of the velocity, $\nabla v$. Therefore, intuitively speaking, the geometric idea behind our blowup result is to exploit that our singularities are not rigid to identify initial data for which the corner angle goes to~$\frac\pi2$ in finite time, which eventually implies that $|\nabla v|$ tends to infinity in finite time.

Let us now discuss the organization of the paper. In Section \ref{s.preliminaries}, we first rewrite the water wave system as a system of equations for the parametrization of the interface and for the velocity on the boundary, in the Lagrangian parametrization. Next, we introduce the scales of weighted Sobolev spaces that we used throughout the paper and prove estimates for the Cauchy singular integral with data on these spaces. In Section \ref{s.apriori} we prove the basic a priori energy estimate that lie at the heart of the local wellposed theorem, deriving the ODE system~\eqref{E.ODEintro} along the way. Section \ref{s.regularization} is devoted to the proof of Theorem~\ref{T.main}. Because of the singular weights that appear eveywhere in this problem, the proof involves two regularizations of the water wave equations using variable-step convolutions. The proof of Theorem~\ref{T.sing} is presented in Section~\ref{S.sing}. In the closing sections of the paper we present a number of technical results that are used repeatedly throughout the paper. Specifically, in Section~\ref{s.inverse} we prove estimates for harmonic functions and for the Dirichlet-to-Neumann map on domains with corners with sharp dependence on the weighted Sobolev norms of the boundary. There are also two appendices. Appendix~\ref{A.A} presents various auxiliary results about Hardy-type inequalities, commutators and variable-step convolutions, while Appendix~\ref{A.ODE} is devoted to the analysis of the ODE system~\eqref{E.ODEintro}.

%%Section 1: Preliminiaries 

\section{Preliminaries}
\label{s.preliminaries}

In this section we shall start by introducing some notation to describe water waves with angled interfaces and to recall some well known equations. Next we will define two scales of Sobolev spaces, $H^k_\ga(m)$ and~$\Lcal^k_{2,\gamma}(m)$, which we will use throughout the paper and then we move on to prove some estimates on these spaces for the Cauchy singular integral operator for curves with corners.

\subsection{Definitions and notation}
\label{SS.defs}

We consider the free boundary Euler equations for an incompressible irrotational fluid. The fluid velocity $v$ and the pressure $P$ satisfy the equations
\begin{subequations}\label{eqs:pre}
	\begin{align}
		\pa_t v + (v \cdot \nabla) v = -\nabla P - g  e_2,  \label{eq:dP} \\
		\nabla\cdot v=0,  \quad \nabla^{\perp}\cdot v=0,  \label{eq:Inc-Irr}
	\end{align} 
\end{subequations}
in the water domain $\Om\equiv \Om(t)$, where $ e_2 := (0,1)$. As the fluid density is~0 outside the water region, we impose the boundary condition
\be\label{ass:P}
P\big|_{\Ga} = 0.
\ee

We parametrize the interface $\Ga(t) := \pa\Om(t)$ by a time-dependent $2\pi$-periodic curve $z(\al,t)$ as
$$
\Ga = \{z(\al,t) \in \Rbb^2 \  |  \ \alpha\in [-\pi, \pi]\}.
$$
The time evolution of $z(\al, t)$ is governed by the kinematic boundary condition. That is, on the interface we have
\be
(\pa_t z- v) \cdot (\pa_\al z)^\perp = 0, \label{eq:evol_zt}
\ee
where $(\pa_\al z)^\perp$ denotes the vector perpendicular to the tangent $\pa_\al z$. As the condition \eqref{eq:evol_zt} prescribes~$z_t$ only in the normal direction, there is some residual freedom to choose the tangential component. We do so by considering the equations in Lagrangian parametrization, that is, by replacing~\eqref{eq:evol_zt} by the stronger condition 
\be\label{eq:lagrangian}
z_t = v.
\ee

Throughout, we identify a point $z = (z_1, z_2)\in \Rbb^2$ with its complex representation $z  = z_1 + iz_2\in \Cbb$, and we denote its complex conjugate by $z^* := z_1 - i z_2$. Note we are using~$z$ both for points in~$\Cbb$ and for the curve $z(\al,t)$, but the meaning of will always be clear from the context. Recall that, in complex notation, we can write the real scalar product $z\cdot w$ of two vectors $z, w \in\Rbb^2$ as
\be\label{eq:scalar}
z\cdot w = \Re \left(z^* w\right) = \Re\left(z w^*\right).
\ee
Although we will mostly employ complex notation, it will sometimes be convenient to write certain quantities in real notation (e.g., as a real scalar product). We will use both simultaneously without further comment.

In the rest of the paper, we will assume that 
 the water region~$\Omega$ is a bounded domain, as one would assume in the case e.g.\ of a water drop. This will enable us to make some arguments slightly cleaner.
	It is known that when the water domain is bounded, the gravity term can be transformed away, so there is no loss of generality in taking $g=0$ in the case of bounded domains. 

\begin{remark}\label{R.periodicOmega}
	For completeness, in Section~\ref{ss.unbounded} we will describe the minor modifications that readily enable us to handle periodic water domains, where one assumes that the system is invariant under $2\pi$-periodic translations in the horizontal direction. That is, in the periodic setting that we will consider in Section~\ref{ss.unbounded}, $\Om(t)$ is the unbounded domain given by the region below a periodic open curve (i.e., $z(\al+2\pi,t)=z(\al,t) + 2\pi$) and  $v$ and~$P$ are assumed to be $2\pi$-periodic and to satisfy the usual  conditions at infinity inside the water region (i.e, $v(z)\to0$ and $P(z)\to+\infty$ as $z\to-i\infty$). In this case, it is well known that one cannot transform the gravity away, and that the  fact that $g>0$ is important for the Rayleigh--Taylor condition. To make the presentation of Section~\ref{ss.unbounded} as elementary as possible, we will keep the gravity constant~$g$ everywhere in the paper, but we stress that readers only interested in  bounded water domains can assume $g=0$ throughout.
\end{remark}

In Lagrangian parametrization, the length $|z_\al|$ of the tangent vector $z_\al$ depends on both space and time. We therefore use the parametrization-independent derivative 
$$
\pa_s := \frac{1}{|z_\al|}\pa_\al
$$
in order to keep track of various factors of $|z_\al|$. We sometimes use subscripts to denote partial derivatives. For instance, in the case of the unit tangent vector, we write $z_s(\al, t) \equiv \pa_s z(\al, t) = \frac{z_\al(\al, t)}{|z_\al(\al, t)|}$.  

Note that the $\pa_s$-derivative of the unit tangent $z_s$ is directly proportional to the unit normal,
$
z_s^\perp = i z_s,
$
and vice-versa. More precisely, defining the tangent angle $\theta$ as
$$
z_s =: e^{i\theta},
$$
we have
$$
\quad \pa_s z_{s} = \theta_s z_s^\perp, \quad \pa_s z_{s}^\perp = -\theta_s z_s,
$$
and $\theta_s = z_{ss}\cdot z_s^\perp$ is the curvature of the interface.

For concreteness, we shall assume that the interface has exactly one corner point, although the arguments remain unchanged for an arbitrary (finite) number of corners, which is preserved by the evolution. We therefore assume the interface $\Ga$ is of class $\Ccont^{1,\lambda}$ for some $\lambda\in(0,1)$ everywhere, except at the corner point $z_*\equiv z_*(t) \in \Ga(t)$. This is the only point of discontinuity of the unit tangent vector. More precisely, the interface at time~$t$ is parametrized by a Lipschitz continuous function $z(\cdot, t)$ on a one-dimensional torus $\Tbb:=\Rbb/2\pi\Zbb$, whose derivative exists and is H\"older continuous everywhere except at some fixed point $\al_*\in[-\pi, \pi]$ (mod $2\pi$) corresponding to $z_*$. In general, we will refer to $z_*(t)$ (i.e., the tip of the curvilinear corner) as the singular point or the corner point, while all other points on $\Ga$ will be considered regular.

As $\Ga(t)$ is a closed curve, the choice of $\al_*$ is arbitrary, in the sense that we can (and will, in the following) parametrize the curve so that the corner point corresponds to~$\al_*=\pi$:
\[
z_*(t) = z(-\pi, t) = z(\pi, t).
\]

We will assume that the curve $z(\al,t)$ satisfies the arc-chord condition 
\be\label{arc-chord}
\Fcal(z)(t) := \sup_{(\al, \al')} \Fcal(z)(\al, \al', t) < \infty,
\ee
where
\be\label{eq:defFcal}
\Fcal(z)(\al, \al', t) :=\begin{cases}
	\frac{|e^{i\al}- e^{i\al'}|}{|z(\al, t)-z(\al', t)|} \quad &\al\neq \al'\\
	\frac{1}{|z_\al(\al, t)|}                       \quad &\al = \al'.
\end{cases}  
\ee
It is well known that this assumption excludes the existence of self-intersections and cusps, but it is compatible with the assumption that the interface has corner points.

Next, we recall how to rewrite Equations \eqref{eqs:pre} in terms of the interface parametrization $z(\al, t)$ and the boundary velocity $V(\al, t) := v(z(\al, t), t)$, which we define to be the (non-tangential) limit of $v(z, t)$ on the boundary, that is,
$$
V(\al, t) := \lim_{z\rightarrow z(\al, t)} v(z, t).
$$ 

Let us now recall that the incompressibility and irrotationality conditions \eqref{eq:Inc-Irr} correspond to the Cauchy--Riemann equations for the complex conjugate of the velocity, so $v^*: = v_1 - i v_2$ is holomorphic in $\Om$. Therefore, assuming that $v\in \Ccont(\overline{\Om})$, it is standard that
\be\label{eq:Cauchy-V}
V^* = \Ccal(z)  V^*
\ee
on any regular point of the boundary. Here, $\Ccal(z)$ denotes the Cauchy singular integral operator associated to the curve, defined as the principle value
\begin{equation}\label{E.defCcalV*}
	\Ccal(z)  f(\al, t) := \frac{1}{\pi i} \, \PV  \int_{-\pi}^\pi \frac{f(\al', t)}{z(\al, t)-z(\al', t)} \, z_{\al}(\al', t)\,d\al'.
\end{equation}

Note that $z_\al(\al',t)\, d\al'$ is just the (parametrization-invariant) arclength measure, and that we obviously have 
\be\label{eq:Cauchy-const}
\Ccal(z)  1 = 1\,,\qquad \Ccal(z) z(\al,t)=z(\al,t)
\ee
everywhere except at the corner tip. Conversely, with $1 < p < \infty$, it is well known that any complex-valued $V^*\in L^p(\Ga)$  for which \eqref{eq:Cauchy-V} holds almost everywhere on $\Ga$ can be extended to an analytic function on $\Om$. When there is no ambiguity, we drop the reference to the curve in $\Ccal(z)$ and denote the Cauchy integral simply by $\Ccal$. %{\color{violet}(reference?)}. 

We claim that the complex derivative $\frac{dv^*}{dz}$ of the holomorphic function~$v^*$ satisfies the same equation. To see this, we start by noting that  
\be\label{eq:Val}
\pa_\al V(\al, t)^* = \pa_\al v(z(\al, t), t)^* = (z_\al \cdot \nabla) v(z(\al, t), t)^* = z_\al(\al, t) \frac{dv^*}{dz}(z(\al, t), t),
\ee
where in the last equality we use \eqref{eq:scalar} and the fact that $\frac{dv^*}{dz} = (\nabla v_1)^* = -i(\nabla v_2)^*$. Here we abuse the notation and identify the gradient $\nabla v_i$ with its complex representation $ \pa_x v_i + i \pa_y v_i$.

Now let us introduce the notation
\be\label{eq:DsV}
D_\al V^* := \frac{1}{z_\al} \pa_\al V^*,
\ee  
which we will sometimes refer to as the conformal derivative of $V^*$. It is parametrization independent, so we have $D_\al \equiv D_s$ with the obvious notation $D_s = \frac{1}{z_s}\pa_s$. Therefore, assuming~$v\in \Ccont^1(\overline{\Om})$, Equation~\eqref{eq:Cauchy-V} holds for $D_\al V^*$ as well, that is,
\begin{equation}\label{E.DalV*}
	D_\al V^*=	\Ccal(z) D_\al V^*
\end{equation}
as claimed. Observe we cannot argue similarly for higher derivatives of~$V^*$, since in the solutions we consider in this paper $v\in C^{1,\lambda}(\overline\Om)$ but it is not in~$C^2(\overline\Om)$, in general.

A similar calculation for the time derivative of $V$ yields  
\be\label{eq:Vt}
%\aligned
\pa_t V(\al, t) = (z_t \cdot \nabla) v(z(\al, t), t) + \pa_t v(z(\al, t), t).%\\
%&= z_t(\al, t) \frac{dv^*}{dz}(z(\al, t), t) + \pa_t v(z(\al, t), t)^*.
%\endaligned
\ee
Note that, in the Lagrangian parametrization, the right-hand side corresponds to the left-hand side of \eqref{eq:dP}, after letting $z$ approach $z(\al, t)$ from the inside of the fluid domain. Since the pressure is zero on the interface by~\eqref{ass:P}, the tangential component of the pressure gradient must be identically  zero and therefore
$$
\pa_t V = - (\nabla P \cdot z_s^\perp) z_s^\perp - g e_2.
$$

The evolution equations for $(z, V)$ can finally be written as 
\be\label{eqs:evolGa}
\aligned
z_t &= V\\
V_t + ig &= i\si z_s
\endaligned
\ee
under the constraint \eqref{eq:Cauchy-V} for the complex conjugate of $V$.	Here and in what follows, we are using the notation
$$
\si:=-\pa_n P=-\na P \cdot z_s^\perp
$$ 
for minus the normal derivative of the pressure, which is the term that will appear in the Rayleigh--Taylor condition later on. 

We can remove the constraint \eqref{eq:Cauchy-V} by noting that the components of $V$ are not independent. More precisely, we shall  use the Dirichlet-to-Neumann operator $\Gcal(z)$ associated to~$z$  to express the second component~$V_2$ in terms of~$V_1$. Recall that the Dirichlet-to-Neumann map of the domain~$\Om$ is defined as 
\be\label{eq:DvN}
\Gcal(z) f:= \pa_n w,
\ee 
where $w$ is the harmonic extension of $f$ to $\Om$, that is, the unique solution of 
$$
\Delta w = 0, \qquad w|_\Ga = f.
$$
Details are given Section~\ref{s.inverse}.

As $v^*$ is holomorphic and the domain~$\Om$ is simply connected, it is standard that we can obtain the conjugate function $-v_2$ of the first component~$v_1$ by complex integration. Since $\Gcal(z)V_1$ gives the normal derivative of the harmonic extension of~$V_1$, the boundary trace $-V_2$ of $-v_2$ is then, up to a constant, equal to $ \int \Gcal(z) V_1 \, ds$, where the integral denotes a primitive of this function with respect to an arclength parameter on the interface curve. Therefore, the boundary trace $V^*:=V_1-iV_2$ of $v^* := v_1 - iv_2$ can be written as
\be\label{eq:DtN-V}
V = V_1 + i \int \Gcal(z) V_1 \, ds_z,
\ee
and by the construction its complex-conjugate satisfies \eqref{eq:Cauchy-V}. When there is no ambiguity, we will drop the dependence on~$z$ notationally and denote the Dirichlet-to-Neumann operator simply by~$\Gcal$.

%\subsubsection*{Normal derivative of the pressure}

Next, let us write $\si$ in terms of of $(z,V)$  using the Dirichlet-to-Neumann operator $\Gcal(z)$. To do this, we note that, by taking a time derivative of \eqref{eq:Inc-Irr}, one readily sees that $\pa_t v^*$ is holomorphic in $\Om$. By the same argument as in \eqref{eq:Val}, assuming that $v_t\in \Ccont(\overline\Om)$, we can then write its trace  in terms of $(z,V)$  as  
$$
\pa_t v(z(\al, t), t)^* = \pa_t V(\al, t)^* - z_t(\al, t)D_\al V(\al, t)^*,
$$
where $D_\al V^*$ was defined in \eqref{eq:DsV}. Indeed, set  
$$
\Psi(\al, t) := \pa_t v(z(\al, t), t).
$$
Using \eqref{eqs:evolGa}, we  conclude 
\be\label{eq:Psi}
\aligned
\Psi \cdot z_s  &= -gz_{2s} - \frac{1}{2}\pa_s |V|^2,\\
\Psi \cdot z_s^\perp &=  \si -gz_{1s} - V_s \cdot V^\perp   .                                                                   
\endaligned
\ee
One should note that e.g., $V\cdot V_s = \Re(V \pa_s V^*) = \Re(z_s z_t D_sV^*)$ by \eqref{eq:scalar} and \eqref{eq:DsV}. Thus, knowledge of~$z$ and~$V$ yields the tangential component of $\Psi^*$ on the boundary, which in turn  determines the normal component.

Let $\psi$ be a function, harmonic in $\Om$, whose boundary trace $\psi|_{\Ga} = \int \Psi \cdot z_s\, ds$ is a primitive of $\Psi \cdot z_s$. By the previous formulas, we can take
\be\label{eq:psi}
\psi|_{\Ga}  = -gz_{2} - \frac{1}{2}|V|^2\,.
\ee
Let us now define $\Psi^* := \frac{d\psi}{dz}= \pa_x \psi - i\pa_y\psi$, and record here that
\begin{equation}\label{E.HPsi}
	\Ccal(z)\Psi^*=\Psi^*\,.
\end{equation}
In terms of the Dirichlet-to-Neumann operator, we have
$\Gcal(z)\psi = \pa_n\psi =  \Psi\cdot z_s^\perp$, so
\be\label{eq:sigma}
\si= \Gcal\psi + gz_{1s} + V_s\cdot V^\perp.
\ee

Finally, note that $-P$ is subharmonic in $\Om$ by~\eqref{E.subh}. Therefore, we can use Hopf's lemma to conclude that $\si$, the normal derivative of $-P$, must be strictly positive at any regular point $z\in\Ga$. In fact, at any point that satisfies the inner ball condition (that is, at any point of~$\Ga$ but the corner tip) one has
$$
\si(z, t) > 0, \qquad z\in \Ga \setminus\{z_*\}.
$$ 
As $z$ approaches the corner tip $z_*$, we will show (cf.~Lemma~\ref{lem:si}) that, under certain conditions on the initial boundary velocity, one has
$$ 
\si(z, t) \approx |z - z_*|.
$$
We will  refer to this bound as the Rayleigh--Taylor condition (for domains with corners). Here and in what follows, when comparing two non-negative quantities, we often write
$$
f \lesssim g \,,\qquad f \approx g
$$
to denote that $f\leq Cg$ and $f/C\leq g\leq Cf$, respectively, where $C$ is an absolute constant (or a constant depending only on controlled quantities). Even for quantities that are not necessarily positive, we will also use the big-O notation
\[
f = O(g)
\]
for the pointwise bound $|f|\lesssim |g|$.

\subsection{Weighted Sobolev spaces}
\label{ss.weightedSobolev}

Let us now define the function spaces that we will use in this paper. More information about these spaces can be found in~\cite{KMR,MazNaPla}. For the purposes of this subsection, we can notationally drop the dependence on~$t$.

For $\ga\in \Rbb$, we define the weighted Lebesgue space $\Lcal_{2, \ga}(\Ga, m)$ as
$$
\Lcal_{2, \ga}(\Ga, m) := \{ \phi: \Ga \rightarrow \Rbb \, \, | \, \, m^\ga \phi \in L^2(\Ga) \},
$$ 
endowed with the norm 
$$
\|\phi \|_{2, \ga}^2 := \int_{\Ga} m(z)^{2\ga} |\phi(z)|^2 ds_z.
$$
Unless explicitly stated otherwise, for these spaces we always consider the power weight 
$$
m(z) := |z - z_*|,
$$
which vanishes to first order at the corner point $z_*\in \Ga$.
Here we are using that~$\Ga$ is bounded; otherwise we would take a well-behaved weight that coincides with~$m$ near~$z_*$ and is constant outside a certain ball. For complex functions one uses essentially the same definition.

Similarly, we can define weighted Lebesgue spaces on the torus $\Tbb$, or more generally on some bounded interval $I\subseteq \Rbb$. As discussed in the previous subsection, we are particularly interested in taking the interval $I := (-\pi, \pi)$, with the singular point $z_*=z (\pi)=z(-\pi)$ corresponding to the endpoints of the interval, $\pa I :=\{-\pi, \pi\}$. The weight function in this setting is essentially the distance to $\pa I$ and can be chosen to be smooth, e.g.\ by setting
\be\label{eq:weight}
m(\al) := (\pi + \al)(\pi - \al).
\ee
For ease of notation, we usually drop either the space or the weight notationally and write $\Lcal_{2, \ga}(m)$ or $\Lcal_{2, \ga}(I)$. Note that for arc-chord curves (i.e. those satisfying \eqref{arc-chord}), we have $m(\al) \approx m(z(\al))$.

If $\ga \in\left(-\frac{1}{2},\frac{1}{2}\right)$,  $m^\ga$ is a Muckenhaupt weight, i.e., it satisfies the Muckenhaupt $A_2$-condition. It is therefore standard that $\Lcal_{2,\ga}(m)\subseteq L^p$ for some $p > 1$. For later use, we introduce the notation
\begin{equation}\label{E.deflambda}
	\lambda_\ga : = \frac{1}{2} - \ga
\end{equation}
and note that $\lambda_\ga\in (0, 1)$ whenever $\ga$ satisfies the Muckenhaupt condition.

Given $\ga\in\Rbb$, we introduce two families of weighted Sobolev spaces:
\begin{align}
\phi \in H^k_{\ga}(\Ga, m) \quad &\Leftrightarrow \quad \pa_s^j\phi\in \Lcal_{2, \ga}(\Ga, m) , \quad 0\leq j\leq k,\label{Hkgamma}\\ 
\phi \in \Lcal^k_{2, \ga}(\Ga, m) \quad &\Leftrightarrow \quad \pa_s^j\phi\in \Lcal_{2, \ga + j - k}(\Ga, m), \quad 0 \leq j\leq k. \label{Lkgamma}
\end{align}
The corresponding weighted Sobolev spaces on the torus $\Tbb$ or on a bounded interval $I\subseteq \Rbb$ (in which case the arc-length derivative $\pa_s$ are replaced by the usual derivative) are defined analogously.

One clearly has
\be\label{def:relsobolev}
\Lcal^k_{2, \ga}(m)  \subseteq H^k_{\ga}(m).
\ee
The question of when the converse inclusion holds, and how it can fail, is considerably more interesting, and very relevant for our purposes. When $(\ga - k) + \frac12 > 0$, Hardy's inequality (in the form given as Theorem~\ref{thm:hardy} and Lemma~\ref{lem:hardy} in Appendix~\ref{A.A}) ensures that these spaces coincide. Otherwise, the inclusion is proper. 

In the case $\gamma>-\frac12$, by integrating and using Hardy inequalities, it is easy to see that the quotient space $H^k_{\ga}(m)/ \Lcal^k_{2, \ga}(m)$ is spanned by finitely many polynomials $\Pcal_j^\pm(s)$ (where $s$ is any arclength parameter, with $s=0$ being the corner point), of order at most $k - 1$, multiplied by fixed but arbitrary cutoff functions $\chi^\pm$ that are equal to~1 in a one-sided neighborhood of the corner point (say on the right side for $\chi^+$ and on the left for $\chi^-$) and zero on the other side. The cutoffs are piecewise smooth, and smooth but at the corner. This is because, if $\phi\in\Lcal^k_{2,\ga}(m)$, necessarily $\pa_s^i\phi(z_*) = 0$ for all $i\leq k$ such that $(\ga - i) + \frac12 \leq 0$, as it can be seen by an easy contradiction argument. This simply captures the fact that, in general, a function $\phi\in H^k_{\ga}(m)$ can have a jump discontinuity at $z_*$. Details can be found e.g.\ in~\cite{KMR}. 

Let $l$ be the largest non-negative integer strictly smaller than $k$ such that $(\ga + l - k) < -\frac12$, if it exists. Then, the polynomials $\Pcal_j^\pm$ are of degree at most $l$, so a function $\phi\in H^k_{\ga}(m)$ can be written  in a unique way as
\[
\phi(s)= \tilde \phi(s)+ \sum_{i=1}^l s^i\,[a_i^+ \chi^+(s) +a_i^- \chi^-(s)]
\]
with $\tilde \phi \in L^k_{\ga}(m)$. For all $\ga > -\frac12$ we then define 
\be\label{otherHkgamma}
\|\phi\|_{H^k_\ga(m)} := \|\phi\|_{\Lcal^k_{2,\ga}(m)} + \sum_{i = 0}^l (|a^+_i| + |a^-_i|).  
\ee
Of course, if one decomposes $\phi= \tilde \phi_1+ \sum_j (\t a_j^+\Pcal_j^+\chi^++ \t a_j^-\Pcal_j^-\chi^-)$ instead, with $\tilde \phi_1 \in \Lcal^k_{2,\ga}(m)$, then the norm defined as
\begin{equation*}
\|\tilde\phi_1\|_{\Lcal^k_{2,\ga}(m)} + \sum_j (|\t a^+_j| + |\t a^-_j|)
\end{equation*}
is obviously equivalent.
If the curve has several corner points, it suffices to consider the polynomials associated with each corner.

When we consider products of functions from these spaces, we will frequently use the following elementary pointwise bound without further mention.

\begin{lemma}\label{lem:sobolev}
Assume $m'= O(1)$. If $\phi \in \Lcal^1_{2,\ga + 1}(m)$, then
$$
  |\phi| \leq  Cm^{-1 + \lambda_\ga} 
$$
with a constant that only depends on $\|\phi\|_{1,\ga+1}$.
\end{lemma}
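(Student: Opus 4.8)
The plan is to obtain the pointwise bound from a one-dimensional weighted Morrey-type inequality: near the corner I will recover $\phi$ at a given point from the fundamental theorem of calculus, averaged over an interval of length comparable to the distance to the corner, while on the rest of $\Ga$ the bound will follow from the ordinary Sobolev embedding $H^1\hookrightarrow L^\infty$. Parametrize a neighborhood of the corner point by arclength $s$ with $z_*$ at $s=0$; by the arc-chord condition one then has $m\approx|s|$ there, while $m\approx 1$ on the complementary arc. Unwinding the definition of $\Lcal^1_{2,\ga+1}(m)$, the hypothesis is exactly that $m^\ga\phi\in L^2$ and $m^{\ga+1}\pa_s\phi\in L^2$, with $\|\phi\|_{2,\ga}+\|\pa_s\phi\|_{2,\ga+1}\lesssim\|\phi\|_{1,\ga+1}$; in particular $\pa_s\phi\in L^2_{\mathrm{loc}}(\Ga\setminus\{z_*\})$, so $\phi$ has a locally absolutely continuous representative off the corner.

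For the estimate near the corner, fix a small $\de>0$ (small enough that the arclength neighborhood above contains $(-2\de,2\de)$ and that $m\approx|s|$ there; this is where $m\approx$ distance-to-corner and $m'=O(1)$ enter). For $0<s<\de$, averaging the identity $\phi(s)=\phi(s')-\int_s^{s'}\pa_s\phi$ over $s'\in[s,2s]$ gives
\[\phi(s)=\frac1s\int_s^{2s}\phi(s')\,ds'-\frac1s\int_s^{2s}\Big(\int_s^{s'}\pa_s\phi(\tau)\,d\tau\Big)ds'.\]
By Cauchy--Schwarz and $m\approx s$ on $[s,2s]$, the first term is $O\big(s^{-1/2}\big(\int_s^{2s}|\phi|^2\big)^{1/2}\big)=O\big(s^{-1/2-\ga}\|\phi\|_{2,\ga}\big)$; and, bounding the inner integral in the second term by $\int_s^{2s}|\pa_s\phi|$ and using Cauchy--Schwarz again, the second term is $O\big(\big(\int_s^{2s}m^{-2(\ga+1)}\big)^{1/2}\|\pa_s\phi\|_{2,\ga+1}\big)=O\big(s^{-1/2-\ga}\|\pa_s\phi\|_{2,\ga+1}\big)$, where I used once more $m\approx s$ together with $|[s,2s]|=s$. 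Since $-\tfrac12-\ga=-1+\lambda_\ga$ and $m(s)\approx s$, this yields $|\phi(s)|\lesssim m(s)^{-1+\lambda_\ga}\|\phi\|_{1,\ga+1}$ for $0<s<\de$, and the range $-\de<s<0$ is identical after replacing $[s,2s]$ by $[2s,s]$.

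On the remaining part of $\Ga$, where $m\approx 1$, the functions $\phi$ and $\pa_s\phi$ lie in $L^2$ with norm $\lesssim\|\phi\|_{1,\ga+1}$, so the one-dimensional embedding $H^1\hookrightarrow L^\infty$ gives $|\phi|\lesssim\|\phi\|_{1,\ga+1}\approx m^{-1+\lambda_\ga}\|\phi\|_{1,\ga+1}$ there too, which completes the proof. The computation is routine; the only point that requires attention is the bookkeeping of the exponents — one must check that both contributions scale like the same power $m^{-1+\lambda_\ga}$, which is precisely what the weight shift $\ga\mapsto\ga+1$ attached to the derivative in the definition of $\Lcal^1_{2,\ga+1}(m)$ is designed to ensure — together with keeping the averaging interval within the region where $m$ is comparable to the distance to the corner. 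One can avoid the case distinction by averaging over $[s,s+\min(s,\de)]$ for every $s>0$ at once, at the cost of marginally heavier notation.
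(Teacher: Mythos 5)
Your proof is correct, but it takes a genuinely different route from the paper's. The paper argues in one line: it differentiates $m^{2(\ga+1/2)}\phi^2$, notes that the resulting expression $(2\ga+1)m'm^{2\ga}\phi^2+2m^{2\ga+1}\phi\,\phi'$ lies in $L^1$ (the first term because $m'=O(1)$ and $m^\ga\phi\in L^2$, the second by Cauchy--Schwarz pairing $m^\ga\phi$ with $m^{\ga+1}\phi'$), and concludes by integration that $m^{2\ga+1}\phi^2$ is bounded, which is exactly the claim since $-\ga-\tfrac12=-1+\lambda_\ga$. Your dyadic Morrey-type averaging over $[s,2s]$ reaches the same conclusion and your exponent bookkeeping is right, but note two things. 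First, your argument uses the two-sided comparison $m\approx|s|$ near the corner, i.e.\ that $m$ vanishes to exactly first order; this holds for every weight used in the paper, but it is strictly more than the stated hypothesis $m'=O(1)$ (which only gives $m\lesssim|s|$), and the second term of your estimate would degrade if $m$ vanished to higher order, whereas the paper's integration argument is insensitive to this. Second, the paper's proof needs no case distinction between the corner region and the rest of $\Ga$ and no choice of $\de$. What your approach buys is localization: it shows the bound at a point $s$ depends only on the norms over $[s,2s]$, which is sometimes useful, but for the purposes of this lemma the paper's version is both shorter and marginally more general.
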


\begin{proof}
As
$$
(m^{2(\ga + 1/2)} \phi^2)' = (2\ga + 1)m' m^{2\ga} \phi^2 + m^{2\ga + 1} \phi \phi' \in L^1
$$
by H\"older's inequality, the claim follows by integration.
\end{proof}

\begin{remark}\label{rem:ga}
	Let us briefly discuss an important special case which will be frequently used in the rest of the paper. Assume $\ga \in \left(-\frac12, \frac12\right]$. Then, $\phi\in\Lcal_{2,\ga - 1}(m)$ implies we must have $\phi = o(m)$ by a contradiction argument. More precisely, there cannot exist $c > 0$ such that $\phi \geq c$ a.e.~on a neighborhood of a singular point. On the other hand, by Lemma \ref{lem:sobolev}, we know that $\phi\in \Lcal^1_{2,\ga}(m)$ implies $\phi = O(m^{\lambda_\ga})$ with $\lambda_\ga \geq 0$, and we conclude that $\phi$ must be zero at the singular point (in the trace sense). In addition, any $\phi \in H^1_\ga(m)$ is actually bounded, since by definition $\phi = a_0^\pm + o(m)$ near the singular point. Note that $\ga = \frac12$ is the biggest (`worst') value of parameter for which $\phi \in H^1_\ga(m)$ is guaranteed to remain bounded as we approach the corner tip. 
\end{remark}

\iffalse
	\begin{remark}
	Note that by an easy contradiction argument, $\Theta\in\Lcal_{2,\beta - 1}(m)$ implies $\Theta = o(m)$ for all $\beta \leq \frac12$, i.e. $\Theta$ must be zero on $\pa I$ (in the trace sense). On the other hand, by Lemma \ref{lem:sobolev}, we have $\Theta = O(m^{\lambda_\beta})$ for $\lambda_\beta = \frac12 - \beta$, and therefore necessarily $\theta \in L^\infty$. A similar conclusion is clearly also true for $\log |z_\al|$. In particular, from the point of view of our regularity assumptions $\beta = \frac12$ is the worst choice of parameter for which the tangent vector $z_\al = |z_\al|e^{i\theta}\in H^k_{\beta + k}$ is guaranteed to make sense as we approach the corner tip. 
\end{remark}
\fi

Just as in the analysis of water waves with smooth interfaces, we will also need fractional Sobolev spaces. To define them, let 
\[
\Lambda^{1/2}\phi := (-\pa_{x}^2)^{1/4}\phi
\]
denote the half-derivative (or fractional Laplacian of order $1/2$)  on the real line, which one can equivalently define by  
$$
\Lambda^{1/2}\phi\,(\al) = c_\Lambda \int \frac{\phi(\al) - \phi(\al')}{|\al - \al'|^{3/2}}\,d\al'
$$
for an explicit  constant $c_\Lambda$.

A weighted estimate for the Riesz integral (see Theorem~\ref{thm:rieszintegral}) guarantees that $\Lambda^{1/2}:H^1_{\ga + \frac{1}{2}}(m)\rightarrow  \Lcal_{2,\ga}(m)$ is continuous provided $\ga \in \left(-\frac{1}{2}, 0\right)$, that is, provided that both $\ga$ and $\ga + \frac{1}{2}$ satisfy the Muckenhaupt $A_2$-condition (cf.~Lemma~\ref{lem:halfDer}). Sobolev spaces of half-integer order can then be defined the obvious way.

If this condition on $\ga$ is violated, we proceed as follows. We fix a parameter $\lambda\in\Rbb$ such that $\ga - \lambda \in \left(-\frac{1}{2}, 0\right)$ and define
$$
\Lcal^{k+1/2}_{2, \ga}(m) := \{\phi \in \Lcal^k_{2,\ga-1/2}(m) \, \, | \, \, \Lambda^{1/2}(m^\lambda\pa_s^k\phi) \in \Lcal_{2, \ga - \lambda}(m)\},
$$
endowed with the norm 
$$
\|\phi\|^2_{\Lcal^{k+ 1/2}_{2,\ga}} := \|\phi\|^2_{\Lcal^{k}_{2,\ga-\frac{1}{2}}} + \|\Lambda^{1/2}(m^\lambda\phi)\|^2_{2, \ga - \lambda}.
$$
We define $H^{k+1/2}_{\ga}(m)$ similarly. It is not difficult to see this definition is independent of the exact value of $\lambda$ by the commutator estimates of Subsection~\ref{SS.La}. 

A closely related operator is the (periodic) Hilbert transform,
\be\label{eq:hilbert}
H\phi(\al) := \frac{1}{2\pi} \PV \int_{-\pi}^{\pi}  \cot\left(\frac{\al - \al'}{2}\right)\phi(\al')\, d\al'.
\ee
With $I:=(-\pi,\pi)$, it is well-known that $H$ is bounded as an operator $\Lcal_{2,\ga}(I)\rightarrow \Lcal_{2,\ga}(I)$ as long as $m^\ga$ is Muckenhaupt. Some useful results on derivatives and commutators with $H$ and $\Lambda^{1/2}$ are given in Section \ref{ss.commutator} in Appendix~\ref{A.A}.

\subsection{The Cauchy singular integral operator for curves with corners}
\label{ss.BR}

In this section, we give a quick overview of the properties of the Cauchy integral on weighted Sobolev spaces that we will need to prove our energy estimates. For ease of notation, in this subsection we notationally omit the time dependence. 

Throughout this section, the implicit constants appearing in the estimates only depend on $\|z\|_{H^3_{\beta + 1}} + \Fcal(z)^{-1}$, where the arc-chord parameter $\Fcal(z)$ is given by~\eqref{eq:defFcal}. Here we are omitting the factor $z_\al$ in~\eqref{E.defCcalV*}, which will not cause any trouble later on.

\begin{lemma}\label{lem:BRbasic}
	Let $\ga \in \left(-\frac{1}{2}, \frac{1}{2}\right)$, $\beta\in \left(-\frac{1}{2}, \frac{1}{2}\right]$ and let $z\in H^3_{\beta + 1}(I)$  satisfy the arc-chord condition \eqref{arc-chord}. Then, the operator 
	$$
	 \Ccal(z) f(\al) := \frac{1}{\pi i} \PV \int_{I} \frac{z_\al(\al')}{z(\al') - z(\al)} f(\al')d\al'
	$$ is bounded on the space of all (complex-valued) $f\in \Lcal_{2,\ga}(I)$:
	$$
	\|\Ccal(z) f\|_{2,\ga} \, \lesssim \, \|f\|_{2,\ga}.
	$$ 
\end{lemma}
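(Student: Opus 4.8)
The plan is to reduce the boundedness of $\Ccal(z)$ on $\Lcal_{2,\ga}(I)$ to the classical Muckenhoupt-weighted $L^2$ boundedness of the Cauchy integral on Lipschitz curves. The key point is that, although the curve $z$ has a corner, the weight $m^\ga$ with $\ga\in(-\tfrac12,\tfrac12)$ is exactly the $A_2$-weight that compensates for it. First I would split the kernel as
\[
\frac{z_\al(\al')}{z(\al')-z(\al)} = \frac{z_\al(\al')}{z(\al')-z(\al)} - \frac{z_\al(\al')}{(\al'-\al)\,z_\al(\al)} \;+\; \frac{z_\al(\al')}{(\al'-\al)\,z_\al(\al)}\,,
\]
so that the singular part is governed by the model operator $f\mapsto \frac{1}{z_\al(\al)}\,\PV\!\int \frac{z_\al(\al')f(\al')}{\al'-\al}\,d\al'$, which is (up to the bounded multiplier $1/z_\al$ and the bounded operation of multiplying $f$ by $z_\al$, using $\|z\|_{H^3_{\beta+1}}$ and $\Fcal(z)^{-1}$ to control $|z_\al|$ and $1/|z_\al|$ away from zero and infinity) just the periodic Hilbert transform $H$ from \eqref{eq:hilbert}. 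Since $m^\ga$ is Muckenhoupt for $\ga\in(-\tfrac12,\tfrac12)$, $H$ is bounded on $\Lcal_{2,\ga}(I)$, which handles the model term.

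The remaining term has kernel
\[
K(\al,\al') := z_\al(\al')\left(\frac{1}{z(\al')-z(\al)} - \frac{1}{(\al'-\al)\,z_\al(\al)}\right),
\]
and the heart of the matter is to show the associated integral operator is bounded on $\Lcal_{2,\ga}(I)$. Away from the corner this is a standard Calderón-commutator-type estimate: using the arc-chord condition and $z\in H^3_{\beta+1}$ (in particular $z\in C^{1,\lambda}$ away from the corner, with $z_\al$ of bounded variation there), one gets $|K(\al,\al')|\lesssim |\al-\al'|^{-1+\eps}$ locally, so the kernel is only weakly singular and boundedness follows from Schur's test together with the $A_2$ property of $m^\ga$ (which guarantees $\Lcal_{2,\ga}(I)\hookrightarrow L^p$ for some $p>1$, so that even a $|\al-\al'|^{-1}$-type bound can be absorbed). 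The genuinely delicate region is a neighborhood of the corner point $\al=\al'=\pi$, where $z_\al$ is merely bounded (not continuous) and $z(\al')-z(\al)$ can be comparable to $|\al'-\al|$ without the quotient $\frac{z(\al')-z(\al)}{\al'-\al}$ being close to $z_\al(\al)$. There one must exploit the arc-chord bound $|z(\al')-z(\al)|\gtrsim |e^{i\al'}-e^{i\al}|\approx |\al'-\al|$ to get
\[
|K(\al,\al')| \;\lesssim\; \frac{1}{|\al-\al'|}\cdot\frac{\big|(z(\al')-z(\al)) - z_\al(\al)(\al'-\al)\big|}{|\al-\al'|}\,,
\]
and then estimate the numerator by $\int_{\al}^{\al'}|z_\al(\tau)-z_\al(\al)|\,d\tau$. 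Since $z\in H^3_{\beta+1}(I)$ controls a weighted modulus of continuity of $z_\al$ near the corner, the numerator is $O(|\al-\al'|\cdot \omega(\al,\al'))$ with $\omega$ a suitable (possibly weight-dependent) gain; the resulting kernel is then dominated by a positive kernel that is bounded on $\Lcal_{2,\ga}(I)$ by a Schur-test argument using the $A_2$ weight, or alternatively by comparison with the Cauchy integral on a Lipschitz graph (the corner, being a single point, does not obstruct the Lipschitz bound once one works in the arclength parametrization, and the $A_2$ weight transfers correctly by $m(\al)\approx m(z(\al))$).

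I expect the main obstacle to be precisely this corner region: making rigorous the claim that the $A_2$-weighted $L^2$ bound for the Cauchy integral on Lipschitz curves survives in the presence of a corner, with constants depending only on $\|z\|_{H^3_{\beta+1}}+\Fcal(z)^{-1}$ and not on the opening angle. The cleanest route is probably to invoke directly a known weighted bound for the Cauchy integral on Lipschitz graphs (the curve is Lipschitz in arclength, since $|z_s|=1$), transport the weight $m^\ga$ to the graph via $m(\al)\approx m(z(\al))$ and the comparability $|\al-\al'|\approx|s-s'|$ from the arc-chord condition, and then absorb the difference between the true curve and its "corner model" through the weakly singular remainder estimate above. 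The second obstacle, of a bookkeeping nature, is ensuring every multiplier ($z_\al$, $1/z_\al$, cutoffs separating the corner from the rest) acts boundedly on $\Lcal_{2,\ga}(I)$, which is where one uses the uniform bounds on $|z_\al|$ and the smoothness of $z$ away from the corner encoded in $\|z\|_{H^3_{\beta+1}}$.
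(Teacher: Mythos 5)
Your strategy is essentially the paper's: the proof there also reduces the singular part of the kernel to the Hilbert transform (bounded on $\Lcal_{2,\ga}$ because $m^\ga$ is $A_2$) and controls the remainder via the arc-chord condition, the pointwise bound $z_{\al\al}=O(m^{-1+\lambda_\beta})$ from Lemma~\ref{lem:sobolev}, and the Hardy inequalities of Lemma~\ref{lem:hardy}. The main structural difference is that the paper first localizes in $\al'$, splitting the integral over the left, central and right intervals $I_l(\al)$, $I_c(\al)$, $I_r(\al)$ of \eqref{eq:Is}, and only performs the Hilbert-transform comparison on the central interval $I_c(\al)$ where $\al\approx\al'$; on $I_l$ and $I_r$ it uses the crude bound $|z(\al)-z(\al')|^{-1}\lesssim \Fcal(z)\max(\al,\al')^{-1}$ and Hardy directly, with no cancellation needed. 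Your global kernel split forces you to control the remainder $K(\al,\al')$ everywhere, and your stated estimates for it do not cover the region where $\al$ and $\al'$ lie on \emph{opposite sides} of the corner (e.g.\ $\al$ near $-\pi$, $\al'$ near $\pi$): there $|\al-\al'|\approx 2\pi$ while $|z(\al)-z(\al')|$ can be arbitrarily small, so the lower bound $|z(\al')-z(\al)|\gtrsim|\al'-\al|$ fails and the numerator $\int_\al^{\al'}|z_\al(\tau)-z_\al(\al)|\,d\tau$ gains nothing because $z_\al$ jumps across the corner. This region is not fatal — the kernel there is $O\bigl((|\al+\pi|+|\pi-\al'|)^{-1}\bigr)$ by the arc-chord condition, which is again a Hardy-type kernel bounded on $\Lcal_{2,\ga}$ for $|\ga|<\tfrac12$ — but it must be treated separately, as the paper implicitly does in its off-diagonal cases. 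Two smaller points: the remark that the embedding $\Lcal_{2,\ga}\hookrightarrow L^p$ lets you ``absorb a $|\al-\al'|^{-1}$-type bound'' by Schur's test is not correct as stated (a kernel merely bounded by $|\al-\al'|^{-1}$ is not bounded on any $L^p$ without either cancellation or off-diagonal separation); and your alternative route of invoking the $A_2$-weighted boundedness of the Cauchy integral on chord-arc curves is legitimate and would handle the corner in one stroke, at the price of a much heavier black box than the paper's elementary Hardy-inequality argument.
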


\begin{proof} 
	We start by noting that it is obviously enough to consider the estimate in a small neighborhood of the corner tip~$z_*$. Outside this neighborhood, the estimate is classical because $z$ is in the usual Sobolev space $H^3$ away from the corner and because the arc-chord condition holds. 
	
	For notational simplicity, we consider the shifted interval $I := (0, 2\pi)$, with $z_*=z(0)=z(2\pi)$. We aim to show that Cauchy integral belongs to $\Lcal_{2,\ga}$ in the vicinity of the left endpoint, i.e., that as a function of~$\al$, 
	$$
	 \frac{1}{\pi i} \PV \int_{I} \frac{z_\al(\al')}{z(\al') - z(\al)} f(\al')d\al'\in \Lcal_{2,\ga}(I_\de),
	$$
	with $I_\de := (0,\de)$. 
	
	So let us fix $\al\in I_\de$. We decompose 
	\be\label{eq:partition}
	I = I_{2\de} \cup I\setminus I_{2\de} \qquad I_{2\de} := I_l(\al) \cup I_c(\al) \cup I_r(\al),
	\ee
	with the left, central and right intervals
	\be\label{eq:Is}
	I_l(\al):=\left(0,\frac{\al}{2}\right], \quad I_c(\al):= \left(\frac{\al}{2}, \frac{3\al}{2}\right), \quad I_r(\al):= \left[\frac{3\al}{2}, 2\de\right).
	\ee
	Since
	$$
	|\al - \al'| \gtrsim \al, \quad  \al' \in I_l(\al); \qquad |\al' - \al| \gtrsim \al', \quad  \al'\in I_r(\al) ,
	$$   
one can estimate
	$$
	\aligned
	\frac{1}{|z(\al) - z(\al')|} = \frac{|\al - \al'|}{|z(\al) - z(\al')|} \frac{1}{|\al - \al'|} \, \lesssim \, \Fcal(z) \begin{cases}		
		\frac{1}{\al}, &\quad \al'\in I_l(\al),\\
		\frac{1}{\al'}, &\quad \al'\in I_r(\al) .
	\end{cases}
	\endaligned
	$$
Therefore, as $m(\al)\approx \al$ on $I_{2\de}$, we have
\begin{align*}
	\left|\int_{I_l(\al)} \, \frac{z_\al(\al')}{z(\al) - z(\al')} f(\al')d\al'\right| \, \lesssim \, \frac{1}{\al }\int_{0}^{\al}|f(\al')|  \, d\al',\\
	\left|\int_{I_r(\al)} \, \frac{z_\al(\al)}{z(\al) - z(\al')}  f(\al')d\al'\right| \, \lesssim \, \int_{\al}^{2\de}\frac{|f(\al')|}{\al'} \, d\al'
\end{align*}
	Both are bounded in $\Lcal_{2,\ga}(I_\de)$ by the Hardy inequalities~\eqref{eq:HardyA} and~\eqref{eq:HardyB}, respectively, in Lemma~\ref{lem:hardy}. Note that $\|z_\al\|_{L^\infty} $ is finite by Lemma \ref{lem:sobolev}, since by assumption $z_\al\in H^2_{\beta + 1}$ (cf.~Remark \ref{rem:ga}).
	
	On the other hand, when $\al'\in I_c(\al)$ we can write 
	\be\label{eq:basic}
	\aligned
	\frac{z_{\al}(\al')}{z(\al) - z(\al')} &=  \left(\frac{z_{\al}(\al')}{z(\al) - z(\al')} - \frac{1}{\al - \al'}\right) + \frac{1}{\al - \al'} \\
	&= -\frac{\pa_{\al'} F(z)(\al, \al')}{F(z)(\al, \al')} + \frac{1}{\al - \al'},
	\endaligned
	\ee
	where we have defined
	\be\label{def:F}
	F(z)(\al, \al') := \frac{z(\al) - z(\al')}{\al - \al'}.
	\ee
	It is not difficult to see that
	$$
	\pa_{\al'}  F(z)(\al,\al') = -\frac{1}{(\al-\al')^2}\int_{\al'}^\al z_{\al\al}(\al'')\frac{(\al - \al'')}{2}d\al''= O(\alpha^{-1 + \lambda_\beta}), \quad \al'\in I_c(\al),
	$$
	where we have used that $\al\approx\al'$ on $I_c(\al)$ and that $z_{\al\al} = O(\alpha^{-1 + \lambda_\beta})$ by Lemma \ref{lem:sobolev}. Since $1/F(z)$ is bounded by the arc-chord condition, the corresponding integral over $I_c(\al)$ is bounded in $\Lcal_{2,\ga}((0,\de))$ as required. To spell out the details, the error term in~\eqref{eq:basic} can be estimated by a suitable Hardy inequality cf.~\eqref{eq:HardyA}, while leading term $1/(\al-\al')$ reduces to the Hilbert transform, which is bounded on Lebesgue spaces  because the weight with power~$\gamma$ satisfies the Muckenhaupt $A_2$-condition.
	
	Finally, assume $\al' \in I\setminus I_{2\de}$. Then $|\al - \al'| > \de$, and the kernel of the Cauchy integral is bounded in this region by the arc-chord condition, so the estimate is straightforward.\end{proof}

The Cauchy integral is naturally bounded on $\Lcal_{2,\ga}(m)$, with $\ga$ in the Muckenhaupt range $(-\frac12,\frac12)$. However, to handle derivatives in the energy estimates, we will need to consider the action of~$\Ccal(z)$ on functions $f\in\Lcal_{2, \ga + j}(m)$, with $j\geq 1$. Note these functions are are generally not locally integrable at the singular point. 
Conversely, if $f\in\Lcal_{2, \ga + j}(m)$ with $j < 0$, the Cauchy integral applied to $f$ generally results in a function $\Ccal f\in \Lcal_{2,\ga}(m)$, not in the strictly smaller subspace $\Lcal_{2, \ga + j}(m)\subset \Lcal_{2,\ga}(m)$. 

To keep track of these subtleties, which have a major impact in the energy estimates, for $j\in\Zbb$ we define following ``corrected'' Cauchy integral operators:
\be\label{def:BR+k}
\aligned
\Ccal_{(j)}f (\alpha):&=\frac{1}{(z(\al) - z_*)^j}\frac{1}{\pi i}\, \PV \int_I f(\al')(z(\al') - z_*)^j  \, \frac{z_\al(\al')}{z(\al') - z(\al)} d\al'\\
& = \frac{1}{(z - z_*)^j}\Ccal [(z(\cdot) - z_*)^jf](\alpha).
%+ \left(\cot\left(\frac{z - z_*}{2}\right)\right)^{j - 1} \int_I \frac{f(\al')}{\left(\cot\left(\frac{z(\al') - z_*}{2}\right)\right)^{j}}ds_{\al'}
\endaligned
\ee
Obviously $\Ccal_{(0)} \equiv \Ccal$. With these corrections, an immediate consequence of Lemma~\ref{lem:BRbasic} is the following:

\begin{corollary}\label{lem:BRcorrections}
	Under the assumptions of Lemma \ref{lem:BRbasic}, the operator
	$$
	\Ccal_{(j)}: \Lcal_{2,\ga + j}(m) \rightarrow \Lcal_{2,\ga + j}(m),
	$$
	is bounded, for any $j\in \Zbb$.
\end{corollary}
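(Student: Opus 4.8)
The plan is to derive this corollary directly from Lemma~\ref{lem:BRbasic} by conjugating the Cauchy integral with the multiplication operator $M_j\colon\phi\mapsto (z(\cdot)-z_*)^j\phi$, so that $\Ccal_{(j)}=M_{-j}\,\Ccal\,M_j$ by the very definition~\eqref{def:BR+k}. First I would observe that multiplication by $(z(\al)-z_*)^j$ maps $\Lcal_{2,\ga+j}(m)$ isometrically-up-to-constants onto $\Lcal_{2,\ga}(m)$: indeed, since the curve satisfies the arc-chord condition~\eqref{arc-chord}, one has $|z(\al)-z_*|\approx|e^{i\al}-e^{i\al_*}|\approx m(\al)$ in a neighborhood of the corner, and $|z(\al)-z_*|$ is bounded above and bounded away from zero away from the corner; hence $\|M_j\phi\|_{2,\ga}\approx\|m^j\phi\|_{2,\ga}=\|\phi\|_{2,\ga+j}$, and similarly $M_{-j}$ maps $\Lcal_{2,\ga}(m)$ back to $\Lcal_{2,\ga+j}(m)$ with a comparable norm. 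The constants implicit in these comparisons depend only on $\Fcal(z)$ and on $\|z\|_{H^3_{\beta+1}}$ (which controls $\|z_\al\|_{L^\infty}$ via Lemma~\ref{lem:sobolev}), exactly the quantities allowed in the statement.

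Granting this, the chain of bounds is immediate: given $f\in\Lcal_{2,\ga+j}(m)$, the function $g:=(z(\cdot)-z_*)^j f$ lies in $\Lcal_{2,\ga}(m)$ with $\|g\|_{2,\ga}\lesssim\|f\|_{2,\ga+j}$; by Lemma~\ref{lem:BRbasic}, $\Ccal(z)g\in\Lcal_{2,\ga}(m)$ with $\|\Ccal(z)g\|_{2,\ga}\lesssim\|g\|_{2,\ga}$; and finally $\Ccal_{(j)}f=(z(\cdot)-z_*)^{-j}\,\Ccal(z)g$ lies in $\Lcal_{2,\ga+j}(m)$ with norm $\lesssim\|\Ccal(z)g\|_{2,\ga}\lesssim\|f\|_{2,\ga+j}$. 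Composing the three estimates yields $\|\Ccal_{(j)}f\|_{2,\ga+j}\lesssim\|f\|_{2,\ga+j}$, which is the claim. Note that Lemma~\ref{lem:BRbasic} is applied with the \emph{same} exponent $\ga$ throughout — the shift by $j$ is entirely absorbed into the conjugating multipliers — so the hypothesis $\ga\in(-\tfrac12,\tfrac12)$ is exactly what is needed, and $\beta$ enters only through the regularity of the curve.

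The only genuine point requiring a word of care — and thus the main (mild) obstacle — is the behavior of the principal value. One must check that the principal-value integral defining $\Ccal_{(j)}f$ makes sense and that the algebraic identity $\Ccal_{(j)}f=(z-z_*)^{-j}\,\Ccal[(z(\cdot)-z_*)^j f]$ holds pointwise a.e. For $\al$ bounded away from the corner this is clear since $(z(\al')-z_*)^j$ is a bounded smooth factor there; near the corner, when $j<0$ the integrand $(z(\al')-z_*)^j f(\al')$ is not locally integrable at $z_*$, but the combination appearing in $\Ccal_{(j)}$ is precisely the one controlled by the proof of Lemma~\ref{lem:BRbasic}: splitting $I=I_l(\al)\cup I_c(\al)\cup I_r(\al)\cup(I\setminus I_{2\de})$ as in~\eqref{eq:partition}, on $I_c(\al)$ one has $z(\al')-z_*\approx z(\al)-z_*$ so the prefactor $(z(\al)-z_*)^{-j}(z(\al')-z_*)^j$ is bounded and the principal value reduces, up to harmless Hardy-type errors, to the Hilbert transform as before; on $I_l,I_r,$ and $I\setminus I_{2\de}$ the kernel is already absolutely integrable after the Hardy reduction. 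Hence the conjugation identity is legitimate and the conclusion follows.
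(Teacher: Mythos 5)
Your argument is exactly the one the paper has in mind when it calls the corollary an ``immediate consequence'' of Lemma~\ref{lem:BRbasic}: conjugation by multiplication with $(z-z_*)^{j}$, which is an isomorphism $\Lcal_{2,\ga+j}(m)\to\Lcal_{2,\ga}(m)$ because $|z(\al)-z_*|\approx m(\al)$ by the arc-chord condition. The proof is correct (your worry in the last paragraph is harmless but slightly overstated: for $f\in\Lcal_{2,\ga+j}(m)$ the product $(z-z_*)^{j}f$ always lies in $\Lcal_{2,\ga}(m)\subset L^{p}$ for some $p>1$, so the principal value is exactly the one already handled in Lemma~\ref{lem:BRbasic}).
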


We are now interested in relating the Cauchy integral to this corrected operator, in order to apply the above corollary. With $f\in \Lcal_{2,\ga - k}(m)$ and $k\geq1$, it is straightforward to write 
\be\label{def:BR-k}
\Ccal f =  \sum_{i = 0}^{k-1} c_i(f)(z_* - z(\al))^{i} + \Ccal_{(-k)}f, 
\ee
where we have defined the constants
\be\label{def:bi}
c_j(f):= \frac{1}{\pi i}\int_I f(\al')\, \frac{z_\al(\al')}{(z(\al') - z_*)^{j+1}} \,d\al'.
\ee
We thus conclude that the Cauchy integral operator actually maps $\Lcal_{2,\ga - k}(m)$ to the sum of a function in $\Lcal_{2,\ga - k}(m)$ and a complex polynomial of order at most $k - 1$.   

%%%%%% Derivatives %%%%%%

Our next goal if to fix the function~$f$ and consider the derivatives of $ \Ccal f(\al)$ with respect to~$\al$. A first observation is that the continuity of $f$ at the corner tip has a major effect on the differentiability of $\Ccal f$. In fact, if $f\in H^1_{\ga}(m)\cap \Ccont^1(\overline{I})$ with $|\ga| < \frac{1}{2}$, we have 
	\be\label{eq:BR-ipp}
	D_\al (\Ccal f) = \Ccal D_\al f + \frac{f(\pi) - f(-\pi)}{z-z_*},
	\ee
	where we recall that $D_\al f:=z_\al^{-1}\pa_\al f$.
Therefore, we cannot expect $\Ccal$ to be bounded as an operator $H^1_\ga(m)\rightarrow H^1_\ga(m)$. This is not surprising; as we expect that $\Ccal f = f$ when $f$ is the boundary value of a holomorphic function and the imaginary part of the complex logarithm $\log (z - z_*)$ has a jump discontinuity at $z_*$. Note, however that $\Ccal: \Lcal^1_{2,\ga}(m)\rightarrow H^1_\ga(m)$ is easily shown to be bounded.

If $\frac{1}{2} < \ga$, then $D_\al f$ is not locally integrable near the singular point. Using ideas similar to~\eqref{def:BR+k}, one can nevertheless define ``corrected'' operators that will often appear when we integrate by parts to establish the energy estimates:

\begin{lemma}\label{lem:BRders}
	Let $\ga \in \left(-\frac{1}{2}, \frac{1}{2}\right)$, $\beta\in \left(-\frac{1}{2}, \frac{1}{2}\right]$ and assume that the curve $z\in H^{k + 1}_{\beta + k - 1}(m)$, with $k\geq 1$, satisfies the arc-chord condition \eqref{arc-chord}. Then, 	$$
	\Ccal  : \Lcal^{k + 1}_{2,\ga + (k + 1)}(m) \rightarrow \Lcal^{k + 1}_{2,\ga + (k + 1)}(m)
	$$
	is bounded and we have
	$$
	D_\al^j (\Ccal f) = \Ccal_{(j)} D_\al^j f, \quad 1 \leq j \leq k + 1. 
	$$ 
\end{lemma}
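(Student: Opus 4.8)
The plan is to establish the commutation identity $D_\al^j(\Ccal f) = \Ccal_{(j)} D_\al^j f$ for $1\le j\le k+1$ by induction on $j$, using the base case essentially as a differentiated version of~\eqref{eq:BR-ipp}, and then to deduce the boundedness of $\Ccal$ on $\Lcal^{k+1}_{2,\ga+(k+1)}(m)$ from this identity together with Corollary~\ref{lem:BRcorrections}. First I would observe that if $f\in\Lcal^{k+1}_{2,\ga+(k+1)}(m)$ then, by the definition~\eqref{Lkgamma} of these spaces, $\pa_s^j f\in\Lcal_{2,\ga+(k+1)+j-(k+1)}(m)=\Lcal_{2,\ga+j}(m)$, so each $D_\al^j f$ lies precisely in the space $\Lcal_{2,\ga+j}(m)$ on which $\Ccal_{(j)}$ is bounded by Corollary~\ref{lem:BRcorrections}. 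Moreover, since $\ga+(k+1)-(k+1)=\ga>-\tfrac12$, Remark~\ref{rem:ga} (applied after noting $\Lcal^{k+1}_{2,\ga+(k+1)}\subset\Lcal^1_{2,\ga+1}$ after differentiating $k$ times, i.e. $\pa_s^k f\in\Lcal^1_{2,\ga+1}(m)$) guarantees that $\pa_s^k f$ is continuous up to the corner and in fact vanishes there when $\lambda_\ga>0$; more importantly, all lower derivatives $\pa_s^i f$ with $i<k$ are continuous across $z_*$ by iterating Lemma~\ref{lem:sobolev}, so the ``jump terms'' that obstruct differentiation of the Cauchy integral will only be generated by the top-order derivative and are already controlled.

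The inductive step is the technical heart. Assuming $D_\al^{j-1}(\Ccal f) = \Ccal_{(j-1)} D_\al^{j-1} f$, I would differentiate once more with $D_\al$. Writing $g := D_\al^{j-1} f \in \Lcal^{1}_{2,\ga+j}(m)$ (so that $D_\al g = D_\al^j f\in\Lcal_{2,\ga+j}(m)$ and $g$ may have a jump at $z_*$), one computes $D_\al \Ccal_{(j-1)} g$ by unfolding the definition~\eqref{def:BR+k}: $\Ccal_{(j-1)}g = (z-z_*)^{-(j-1)}\Ccal[(z(\cdot)-z_*)^{j-1}g]$, and the function $h:=(z(\cdot)-z_*)^{j-1}g$ is now in $\Lcal^1_{2,\ga+1}(m)$ — i.e. continuous up to the corner — so the integration-by-parts identity~\eqref{eq:BR-ipp} applies to $\Ccal h$, producing $\Ccal D_\al h$ plus a jump term $\dfrac{h(\pi)-h(-\pi)}{z-z_*}$. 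The key point is that the product rule for $D_\al$ acting on $(z-z_*)^{-(j-1)}\Ccal h$, combined with the Leibniz expansion of $D_\al h = (j-1)(z(\cdot)-z_*)^{j-2}z_s e^{-i\theta}\cdots g + (z(\cdot)-z_*)^{j-1}D_\al g$ inside the integral, rearranges exactly into $\Ccal_{(j)}D_\al^j f$; the extra boundary/algebraic terms coming from differentiating the prefactor $(z-z_*)^{-(j-1)}$ must be seen to cancel the jump term $\dfrac{h(\pi)-h(-\pi)}{z-z_*}$ against the contributions of the $(z(\cdot)-z_*)^{j-2}$ piece, using $h(\pm\pi) = 0$ when $j\geq 2$ because $(z(\pm\pi)-z_*)=0$. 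This is precisely why the correction index bumps from $j-1$ to $j$ and why no leftover singular term survives. For $j=1$ this is just~\eqref{eq:BR-ipp} together with the observation that the jump $f(\pi)-f(-\pi)$ vanishes when $\lambda_\ga>0$, or is absorbed into $\Ccal_{(1)}$ otherwise — here one should be slightly careful and check that~\eqref{eq:BR-ipp} is being used in the regime $|\ga|<\tfrac12$ as stated, with $f$ replaced by the appropriate power-weighted object.

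Once the commutation identity is in hand, boundedness is immediate: for $0\le j\le k+1$ we have $\pa_s^j(\Ccal f)$ controlled by $\Ccal_{(j)}D_\al^j f$ (modulo lower-order terms involving derivatives of $z_s=e^{i\theta}$ and of $(z-z_*)^{\pm j}$, which are handled by the hypothesis $z\in H^{k+1}_{\beta+k-1}(m)$ exactly as in the proof of Lemma~\ref{lem:BRbasic} and Corollary~\ref{lem:BRcorrections}), and Corollary~\ref{lem:BRcorrections} gives $\|\Ccal_{(j)}D_\al^j f\|_{2,\ga+j}\lesssim\|D_\al^j f\|_{2,\ga+j}\lesssim\|f\|_{\Lcal^{k+1}_{2,\ga+(k+1)}}$; summing over $j$ and translating back into the $\Lcal^{k+1}_{2,\ga+(k+1)}(m)$ norm via~\eqref{Lkgamma} yields the claim. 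The main obstacle I anticipate is the bookkeeping in the inductive step — correctly tracking the algebraic prefactors $(z-z_*)^{\pm i}$ and their $D_\al$-derivatives, verifying that the boundary/jump terms telescope so that only the single ``corrected'' operator $\Ccal_{(j)}$ remains, and confirming that the regularity budget $z\in H^{k+1}_{\beta+k-1}(m)$ is exactly enough to differentiate the kernel $j$ times (this last point mirrors the $z_{\al\al}=O(\al^{-1+\lambda_\beta})$ estimate used in Lemma~\ref{lem:BRbasic}, now iterated to order $k+1$). A secondary subtlety is ensuring the fractional/half-integer nature of the index $k+1$ plays no role here — since $k\in\Zbb$ with $k\geq1$ the spaces are genuinely integer-order, so no commutator estimates from Subsection~\ref{SS.La} are needed.
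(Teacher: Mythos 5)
Your overall architecture (induction on $j$, integration by parts in $\al'$, then boundedness from Corollary~\ref{lem:BRcorrections} applied to $\Ccal_{(j)}D_\al^j f\in\Lcal_{2,\ga+j}(m)$) matches the paper's, and your final reduction of the norm estimate to Corollary~\ref{lem:BRcorrections} is fine. But there is a genuine gap at the single place where all the work is concentrated: the one-derivative identity. You propose to get it from~\eqref{eq:BR-ipp}, which produces $\Ccal D_\al h$ plus a jump term. That formula is stated (and only valid) for $D_\al h\in\Lcal_{2,\ga}(m)$ with $|\ga|<\tfrac12$, i.e.\ when $D_\al h$ is locally integrable. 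In your setting $D_\al h\in\Lcal_{2,\ga+1}(m)$ with $\ga+1>\tfrac12$, and a generic element of that space is \emph{not} locally integrable near the corner, so $\Ccal(D_\al h)$ is simply not defined; this is exactly the situation flagged in the paragraph preceding the lemma. Saying the jump ``vanishes when $\lambda_\ga>0$ or is absorbed into $\Ccal_{(1)}$'' does not repair this: $\lambda_\ga>0$ holds for every admissible $\ga$ and yet the jump need not vanish (take $f=\chi^+$, which lies in $\Lcal^{k+1}_{2,\ga+(k+1)}(m)$ and jumps at the corner), and the passage from the ill-defined $\Ccal D_\al f$ to the well-defined $\Ccal_{(1)}D_\al f$ is precisely the nontrivial content you are assuming. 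Relatedly, $h\in\Lcal^1_{2,\ga+1}(m)$ does \emph{not} mean $h$ is continuous up to the corner; the vanishing $h(\pm\pi)=0$ you need actually comes from the pointwise bound $g=O(m^{-(j-1)+\lambda_\ga})$ of Lemma~\ref{lem:sobolev}, not from membership in that space.

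The missing idea is the kernel identity~\eqref{eq:correction-ipp}: before integrating by parts one rewrites $z_\al^{-1}\pa_\al$ of the Cauchy kernel as $-\pa_{\al'}$ of the \emph{subtracted} antiderivative $\bigl(z(\al')-z(\al)\bigr)^{-1}-\bigl(z_*-z(\al)\bigr)^{-1}$. This single subtraction does three things at once: it makes the antiderivative vanish at $\al'=\pm\pi$, so no boundary/jump terms ever appear even though $f$ may be discontinuous; it produces the factor $\bigl(z(\al')-z_*\bigr)$ in the new kernel, which compensates the non-integrability of $D_{\al'}f$ so that the resulting integral converges; and it yields \emph{exactly} $\Ccal_{(1)}D_\al f$, with no leftover to ``absorb.'' Once $D_\al(\Ccal h)=\Ccal_{(1)}D_\al h$ is established this way for $h\in\Lcal^1_{2,\ga+1}(m)$, your inductive bookkeeping does close up: writing $w=z-z_*$, the term $-(j-1)w^{-j}\Ccal[(w')^{j-1}g]$ from differentiating the prefactor cancels against the $(j-1)(w')^{j-1}g$ piece of $w'D_{\al'}h$ inside $w^{-j}\Ccal[w'D_{\al'}h]$, leaving $\Ccal_{(j)}D_\al^j f$. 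So the induction is sound, but its base rests on the corrected integration by parts, not on~\eqref{eq:BR-ipp}.
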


\begin{proof}
	Since $\pa_\al^j f\in\Lcal_{2,\ga + j}(m)$ for $ 0 \leq j\leq k + 1 $, and $z \in H^{k + 1}_{\beta + k - 1}(m)$, we have	
	$$
	 D_\al^j f\in\Lcal_{2,\ga + j}(m), \quad 0 \leq j\leq k + 1.
	$$
Writing
	\be\label{eq:correction-ipp}
	\frac{1}{z_\al}\pa_\al  \left(\frac{z_{\al'}}{z(\al') - z(\al)}\right) = -\,\pa_{\al'} \left(\frac{1}{z(\al') - z(\al)} - \frac{1}{z_* - z(\al)}\right), 
	\ee
	we see using integration by parts and Corollary~\ref{lem:BRcorrections} that
	$$
	D_\al (\Ccal f)  = \Ccal_{(1)} D_\al f \in \Lcal_{2,\ga + 1}(m)\,,
	$$
	where we recall that $\Ccal_{(1)}$ was defined in \eqref{def:BR+k}. For higher order derivatives, an analogous argument shows	
	$$
	D_\al^j (\Ccal  f)  = \Ccal_{(j)} D^j_\al f, \qquad 1 \leq j \leq k + 1,
	$$
	so $\Ccal f \in \Lcal^j_{2,\ga + j}(m)$ as claimed.
\end{proof}

To finish this section, in the following lemma we aim to make precise the idea that the Hilbert transform~$H$ effectively captures the singular behavior of the Cauchy singular integral. This is standard, on unweighted Sobolev spaces, when the curve is smooth.

\begin{lemma}\label{lem:BRcancel}
	Under the assumptions of Lemma \ref{lem:BRders}, the operator 
	\be\label{eq:corner-der-on-ker}
	(i\Ccal + H): \, \Lcal_{2,  \ga}(m)\rightarrow H^{k - 1}_{\ga + k - 1}(m)
	\ee
	is bounded. 
	%We gain another derivative, if we control additional half-derivative of $f$. More precisely, we have
	%\be\label{eq:corner-der-on-kerI}
	%(i\Ccal   - H): \, \Lcal^{1/2}_{2,  \ga + \frac{1}{2}}(m)\rightarrow H^{k}_{\ga + k}(m).
	%\ee
	%In particular, the conclusions of Lemmas \ref{lem:BRcancel} and \ref{lem:struct1} remain true.
\end{lemma}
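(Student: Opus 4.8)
The plan is to write down the kernel of $i\Ccal(z)+H$ explicitly, notice that its singularity along the (torus) diagonal cancels, and then estimate the resulting locally integrable kernel by the partition-and-Hardy scheme used in the proof of Lemma~\ref{lem:BRbasic}, the bound away from the corner being the classical one for smooth curves.

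First I would observe that, by the definitions of $\Ccal(z)$ in~\eqref{E.defCcalV*} and of $H$ in~\eqref{eq:hilbert},
\[
(i\Ccal(z)+H)f(\al)=\frac1\pi\,\PV\!\int_I\Bigl[\frac{z_\al(\al')}{z(\al')-z(\al)}+\tfrac12\cot\tfrac{\al-\al'}{2}\Bigr]f(\al')\,d\al'.
\]
By~\eqref{eq:basic} (equivalently, Taylor's formula with integral remainder) one has $\frac{z_\al(\al')}{z(\al')-z(\al)}=\frac{1}{\al'-\al}+\frac{\pa_{\al'}F(z)(\al,\al')}{F(z)(\al,\al')}$, with $F(z)$ the divided difference~\eqref{def:F}, while $\tfrac12\cot\tfrac{\al-\al'}{2}=-\tfrac{1}{\al'-\al}+\tfrac12\,h(\al-\al')$ with $h(x):=\cot\tfrac x2-\tfrac2x$ real-analytic on $(-2\pi,2\pi)$. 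The $\tfrac1{\al'-\al}$-terms cancel, the principal value disappears, and
\[
(i\Ccal(z)+H)f(\al)=\frac1\pi\int_I\Bigl[\frac{\pa_{\al'}F(z)(\al,\al')}{F(z)(\al,\al')}+\tfrac12\,h(\al-\al')\Bigr]f(\al')\,d\al'=:Tf(\al).
\]
(For the unit circle the bracket equals the constant $\tfrac i2$, which is the classical statement.) Note that near the corner the singularities of the two original kernels occur at $\al-\al'\in\{0,\pm2\pi\}$, all lying on the torus diagonal through $z_*$, and it is exactly there that the cancellation takes place.

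Away from a fixed neighbourhood of $z_*$ the curve lies in the ordinary Sobolev space $H^{k+1}$, $1/F(z)$ is bounded by the arc-chord condition~\eqref{arc-chord}, and $h(\al-\al')$ is bounded; hence $T:\Lcal_{2,\ga}(m)\to H^{k-1}$ on that region by the classical theory, which also settles the part where $\al$ or $\al'$ stays away from the corner. The remaining task is to estimate $Tf$ near the corner. Since $\ga>-\tfrac12$, Hardy's inequality identifies $H^{k-1}_{\ga+k-1}(m)$ with $\Lcal^{k-1}_{2,\ga+k-1}(m)$; and since $|z_\al|$ is bounded above and below near the corner (by~\eqref{arc-chord} and Lemma~\ref{lem:sobolev}), with the coefficients relating $\pa_s$ and $\pa_\al$ in the relevant weighted spaces, it suffices to bound $\pa_\al^j(Tf)$ in $\Lcal_{2,\ga+j}(m)$ for $0\le j\le k-1$. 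Differentiating under the integral and using the Leibniz rule together with the formula for $\pa_{\al'}F(z)$ from the proof of Lemma~\ref{lem:BRbasic}, $\pa_\al^j\bigl(\frac{\pa_{\al'}F(z)}{F(z)}\bigr)$ is a finite sum of products of (bounded) derivatives of $1/F(z)$ with chord-averages of $z^{(2+a)}$, $0\le a\le j\le k-1$; for $a\le k-2$ one has the pointwise bound $z^{(2+a)}=O(m^{\lambda_\beta-1-a})$ from Lemma~\ref{lem:sobolev}, and for the top term $a=k-1$ one keeps $z^{(k+1)}\in\Lcal_{2,\beta+k-1}(m)$ inside the integral and uses Cauchy--Schwarz on the (bounded) chord to pass from its $L^2$-bound to an $L^1$-bound in $\al'$. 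Feeding these bounds into the partition $I=I_l(\al)\cup I_c(\al)\cup I_r(\al)$ of~\eqref{eq:Is}, with $\al\approx\al'$ on $I_c(\al)$, the estimate $\|m^{\ga+j}\pa_\al^j Tf\|_{L^2}\lesssim\|f\|_{2,\ga}$ reduces, exactly as in the proof of Lemma~\ref{lem:BRbasic}, to the one-dimensional Hardy inequalities~\eqref{eq:HardyA}--\eqref{eq:HardyB} of Lemma~\ref{lem:hardy}; the smooth term $\tfrac12 h(\al-\al')$ contributes only a convolution with a bounded kernel and is harmless.

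I expect the main obstacle, beyond the kernel cancellation, to be the weight accounting: one must check that the single unit of weight gained per derivative in the target $\Lcal^{k-1}_{2,\ga+k-1}(m)$ precisely offsets, through the Hardy inequalities (and using $\beta\le\tfrac12$), the $m^{\lambda_\beta-1-a}$ blow-up of the high derivatives of $z$ at the corner; the top-order term $a=k-1$, where only an $L^2$-type bound on $z^{(k+1)}$ is available, additionally requires the Cauchy--Schwarz step mentioned above. Everything else should be a routine adaptation of the argument already carried out for Lemma~\ref{lem:BRbasic}.
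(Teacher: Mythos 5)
Your argument is correct and is essentially the paper's own proof: the paper likewise cancels the diagonal singularity to reduce $i\Ccal+H$ to the operator with kernel $\pa_{\al'}F(z)/F(z)$ plus a smoothing remainder, handles the region away from the corner classically, and estimates the corner region by a variant of the proof of Lemma~\ref{lem:derFphi}, which is exactly your partition-and-Hardy scheme with pointwise bounds on the intermediate derivatives of $z$ and an $L^2$ treatment of the top one. The only point worth flagging is that for the top-order term on the central interval $I_c(\al)$, where only the $\Lcal_{2,\beta+k-1}$-bound on $z^{(k+1)}$ is available, your Cauchy--Schwarz step yields a kernel of size $|\al-\al'|^{-1/2}$ whose weighted $L^2$-boundedness comes from the Riesz-potential estimate (Theorem~\ref{thm:rieszintegral}) rather than from Hardy's inequalities alone.
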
 

\begin{proof} 
	The result is well known but for the weights, so it suffices to prove the estimate for the difference
	$$
	\Rcal f(\al) := \frac{1}{\pi}\, \PV \int_{I} \bigg(\frac{z_{\al}(\al')}{z(\al') - z(\al)} - \frac{1}{\al' - \al}\bigg)f(\al')d\al' \in H^{k}_{\ga + k}(m)
	$$
	near the corner tip. The reasoning is very similar to that in the proof of Lemma \ref{lem:BRbasic}, so for convenience we shift the interval and consider $I:=(0,2\pi)$ with $z_*=z(0)=z(2\pi)$. 
	
	Using the notation of the proof of Lemma~\ref{lem:BRbasic} without further mention, let $\al\in I_\de$ be fixed for some small $\de>0$. When $\al'\in I\setminus I_{2\de}$, the kernel of $\Rcal f$ is essentially non-singular, so we we can take as many $\pa_\al$-derivatives as the regularity of the parametrization allows. Therefore, one can use the arc-chord condition and the corresponding lower bounds on $|\al - \al'|$ to obtain the required estimates, as in the proof of Lemma~\ref{lem:BRbasic}. 
	
	When $\al' \in I_{2\de}$, we can write
	\be\label{eq:hilbert-to-graph}
	\pa_\al\left(\frac{z_{\al}(\al')}{z(\al') - z(\al)} - \frac{1}{\al' - \al}\right) =  \frac{\pa_{\al'} F(z)(\al, \al')}{F(z)(\al, \al')},
	\ee
	where $F$ was defined in \eqref{eq:basic}-\eqref{def:F}. Given $f\in\Lcal_{2,\ga}(I_{2\de})$, we need to show that
	$$
	\al \mapsto \int_{0}^{2\de}  \frac{\pa_{\al'} F(z)(\al, \al')}{F(z)(\al, \al')}\, f(\al')d\al'  \in H^{k-1}_{\ga + (k - 1)}(I_\de).
	$$ 
	Thanks to the arc-chord condition and the assumption that we control $k + 1$ weighted derivatives of $z$, a minor variation of the proof of Lemma~\ref{lem:derFphi} shows that the claim holds for the $(k-1)$-th derivative of this integral. The lemma then follows. 
	\end{proof}

%% Section 2: A-priori estimate 

\section{The a priori energy estimate}
\label{s.apriori}

Our goal in this section is to prove the a priori energy estimate that lies at the heart of our local wellposedness result, Theorem~\ref{T.main}.

	Let us start by describing the setting in which this estimate holds. Throughout this section, both the parametrization of the interface and the boundary velocity are sufficiently smooth functions on $I\times [0, T]$, bounded in certain weighted Sobolev spaces which we now specify.

	We consider the interval $I:=(-\pi,\pi)$, fix some $\beta\in\left(-\frac{1}{2},\frac{1}{2}\right]$ and some integer $k\geq 2$. To describe the interface, we assume  
	\be\label{regularity-z-corner}	
	\theta(\cdot, t)\in H^{k+1}_{\beta + k}(I, m), \qquad \log|z_\al(\cdot, t)|\in H^{k + \frac{1}{2}}_{\beta + (k-\frac{1}{2})}(I, m).
	\ee
	The interface curve is then written as
	\be\label{eq:z}
	z(\al, t) = z_*(t) + \int_{-\pi}^\al |z_\al(\al', t)|e^{i\theta(\al', t)}d\al',
	\ee
	where the corner point $z_*(t):= z(\pi, t)=z(-\pi,t) $ is the point of discontinuity of the tangent vector. 
	
	To keep track of the angle of the corner, we denote the lateral limits of the tangent angle at the corner by
		\be\label{eq:angle}
	 \nu_+(t):=\lim_{\al \rightarrow -\pi^+}\theta(\al, t) , \qquad \nu_-(t):= \lim_{\al \rightarrow \pi^-}\theta(\al, t) ,
	\ee
	where the branch cut is chosen along the negative real axis. In particular, we have $\nu_\pm \in [-\pi, \pi]$ and the (interior) angle of the corner defined by arcs of the curve~$z(\al,t)$, which we denote by $2\nu(t)$ for convenience, is	
	$$
	2\nu(t) = \pi + \nu_+(t) - \nu_-(t).
	$$
	Throughout, we assume that
	$$
	2\nu(t) \in \left(0, \frac{\pi}{2}\right).
	$$
	%Further down we justify that a fluid particle initially on the tip of the angled crest remains there as long as our solution exists. 
	
	Let us take a smooth cutoff $\chi$, which we can assume to be time-independent, and which is $1$ near the left endpoint and  $0$ near the right endpoint. Setting $\chi_+ := \chi$ and $\chi_- := 1 - \chi$, we can conveniently write 
	\be\label{eq:theta}
	\theta(\al, t) = \Theta(\al, t) +  \nu_+(t)\chi_+(\al) + \nu_-(t)\chi_-(\al).
	\ee
	where 
	\be\label{eq:Theta}
	\Theta\in \Lcal^{k + 1}_{2,\beta + k}(m)\,.
	\ee	
 	Recall that we have defined the $H^{k + 1}_{\beta + k}(m)$-norm of $\theta$ as (modulo cut-offs)
	\be\label{eq:norm-theta}
	\|\theta\|^2_{H^{k + 1}_{\beta + k}(m)} := |\nu_+|^2 + |\nu_-|^2 +  \|\Theta\|^2_{\Lcal^{k + 1}_{2, \beta + k}(m)}.
	\ee
		
	As we will see in a moment, the time derivatives of both $\theta$ and $\log |z_\al|$ are generally discontinuous at the corner tip, so in principle $\log |z_\al|$ will also have a jump discontinuity at $\al_*$. However, we will not explicitly need this fact to prove the energy estimates, so we will not discuss now the corresponding expansion for $\log|z_\al|$, which anyway follows along the same lines. %(We will use this expansion, though, when we regularize the initial data at the end of Section~\ref{s.regularization}.)
	
	As for the boundary velocity $V$, we assume its complex conjugate $V^*$ has the form  
	\be\label{eq:v-rep}
	V^*(\al, t) = b_0(t) + b_1(t)(z(\al, t) - z_*(t)) + U^*(\al, t), \quad U^*\in \Lcal_{2, \beta + k-1}^{k+1}(m),
	\ee
	where $U^*$ satisfies
	\be\label{eq:CauchyU}
	U^* = \Ccal(z) U^*.
	\ee	
	As a consequence of this, it follows from~\eqref{eq:Cauchy-const} that $V^* = \Ccal(z)V^*$ everywhere except at the corner tip. Thus $V^*$ is the boundary value of a holomorphic function, as required in Section~\ref{s.preliminaries}.  
	
	Note that the complex numbers $b_0,b_1$ are
	$$
	b_0(t) := V^*(\pm \pi, t), \qquad b_1(t) := D_s V^*(\pm \pi, t).
	$$
	Hence both $V^*$ and $D_s V^*$ are continuous when crossing the singular point, and the corresponding conjugate velocity in the water region satisfies $v^*, \frac{dv^*}{dz}\in\Ccal(\overline{\Om})$. However, its tangential derivative~$\pa_s V^*$ is discontinuous at the singular point. In fact, for any $\beta\in(-\frac12,\frac12]$,
	\begin{equation}\label{E.Rk31}
		D_s^i\left(\Ccal(z) V^*\right) = \Ccal(z)(D_s^i V^*),\qquad i=1,2.
	\end{equation}
	To see this, it suffices to take up to two $D_s$-derivatives of the basic relation $V^* = \Ccal(z)V^*$ and note that the boundary terms could appear after integration vanish by Remark~\ref{rem:ga}.
	
	Just as in the case of~$\theta$, for $k\geq 1$ and $V$ defined in terms of $b_0, b_1$ and $U$ via \eqref{eq:v-rep}, we have
	\be\label{eq:norm-V}
	\|V\|^2_{H^{k + 1}_{\beta + (k - 1)}} = |b_0|^2 + |b_1|^2 + \|U\|^2_{\Lcal^{k + 1}_{2,\beta + k -1}}.
	\ee
	%Analogously, the $H^{k + \frac32}_{\beta + k - \frac12}(m)$-norm of $V$ can be specified in terms of the $\Lcal^{k + \frac32}_{2, \beta + k - \frac12}(m)$-norm of~$U$. 

%%%%%%%%%%%%%%%%%%%%%%%%%%
%%%%%%%%%%%%%%%%%%%%%%%%%%

%%%%%%%%%%%%%%%%%%%%%%%%%%
%%%%%%%%%%%%%%%%%%%%%%%%%

\subsection{Estimates for the time derivatives in the Lagrangian parametrization}
\label{ss.PE}

Regarding the curve $z$ as a function of $z_*, \nu_\pm, \Theta, \log|z_\al|$ via~\eqref{eq:z}--\eqref{eq:theta} and the boundary trace of the velocity~$V$ as a function of $b_0, b_1, U$ via \eqref{eq:v-rep}, our goal now is to estimate the time derivatives for these quantities.
That is, under the assumption that~\eqref{eq:CauchyU} holds, we aim to compute the time derivatives of $z_*, \nu_\pm, b_0$, $b_1$ and to show that the assumptions~\eqref{eq:theta} and \eqref{eq:v-rep} are consistent. One should note that the latter is closely related to the behavior of $\si$ near the corner tip, and that the ODE system~\eqref{E.ODEintro} that we will use to prove the formation of singularities later on will make its appearance here for the first time, as we shall see in the key Lemmas~\ref{lem:param} and~\ref{lem:si}.

Throughout this section, we assume that
\be\label{eq:basic-energy}
\|\theta\|^2_{H^{k + 1}_{\beta + k}} +  \|\log |z_\al|\|^2_{H^{k +1/2}_{\beta + (k - 1/2)}} + \|V\|^2_{H^{k + 3/2}_{\beta + (k - 1/2)}} + \Fcal(z) < \infty.
\ee
It is worth mentioning, however, that when we define the  energy functional in Section \ref{ss.apriori-energy}, it will be convenient to replace the highest-order derivative of $V$ by a suitable derivative of a certain explicit function of $z$ and $V$, namely
\be\label{regularity-V}
\varphi_s := V_s \cdot z_s.
\ee
Note that here, unlike elsewhere in the paper, the subscript in $\varphi_s$ does not denote an arc-length derivative; it is simply a notationally convenient way to keep track of the fact that~$\varphi_s$ is on the same level of regularity as the partial derivative~$V_s$.

In Lemma \ref{lem:param} below, we shall show that the assumption~\eqref{eq:basic-energy} is actually equivalent to
\be\label{eq:basic-energy-phi}
\|\theta\|^2_{H^{k + 1}_{\beta + k}} +  \|\log |z_\al|\|^2_{H^{k + 1/2}_{\beta + (k - 1/2)}} + \|V\|^2_{H^{k}_{\beta + (k - 2)}} + \|\varphi_s\|^2_{H^{k + 1/2}_{\beta + (k - 1/2)}} + \Fcal(z) < \infty.
\ee
Recall that the norms of $V$ and $\theta$ have been given in \eqref{eq:norm-V} and \eqref{eq:norm-theta} respectively, and $\Fcal$ was defined in \eqref{arc-chord}. 

We next present several lemmas with estimates for the evolution of the various quantities under the water waves equations in Lagrangian parametrization, \eqref{eqs:evolGa}. In the proofs we will need a number of results from Section~\ref{s.inverse} and Appendix~\ref{A.A}. In all these lemmas, the various norms are controlled by at most the exponential of some polynomial of the energy~\eqref{eq:basic-energy-phi}; we will not repeat this fact in the statements.

\begin{lemma}\label{lem:param}
	Assume $(z, V)$ satisfy \eqref{eq:basic-energy-phi} for some $\beta\in\left(\frac12, \frac12\right]$ and $k\geq 2$. Then, the norm of the highest-order derivative of the boundary velocity, $\|\pa_s^{k + 1} V \|^2_{\Lcal^{1/2}_{\beta + k -1/2}(m)}$ can be controlled by~\eqref{eq:basic-energy-phi}. In particular, the energy functionals \eqref{eq:basic-energy} and \eqref{eq:basic-energy-phi} are equivalent. Moreover, 
	\begin{gather}
		(\log|z_\al|)_t = \varphi_s \in H^{k + 1/2}_{\beta + k - 1/2}(m),\qquad \Theta_t\in \Lcal^{k + 1/2}_{2, \beta + k - 1/2}(m),\notag\\[1mm]
		\frac{dz_*}{dt} = b_0^*, \qquad \frac{d\nu_+}{dt} = \Re\left(ib_1e^{2i\nu_+}\right), \qquad \frac{d\nu_-}{dt} = 			\Re\left(ib_1 e^{2i\nu_-}\right). \label{eq:nudt}
	\end{gather}
	Furthermore, when $\beta \in \left(\frac12, \frac12\right)$, we can write 
		\be\label{eq:th_ts}
		\theta_{ts} = H(\varphi_{ss})  + \Rcal(z,V), \quad  \Rcal\in \Lcal^{k}_{2, \beta + k}(m),
		\ee
	which, in case $\beta = \frac12$, should be modified to 
		\be\label{eq:th_ts12}
		\theta_{ts} = m^{-\lambda'}H(m^{\lambda'}\varphi_{ss})  + \Rcal(z,V), \quad  \Rcal\in \Lcal^{k}_{2,\beta + k}(m),
		\ee
	where $\lambda' \in (0,1)$ is arbitrary.
\end{lemma}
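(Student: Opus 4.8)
The strategy is to peel off the quantities in the claimed identities one at a time, starting from the evolution equations~\eqref{eqs:evolGa} and the structural assumptions~\eqref{eq:theta}, \eqref{eq:v-rep}. First I would differentiate the representation $z(\al,t) = z_*(t) + \int_{-\pi}^\al |z_\al|e^{i\theta}\,d\al'$ in time: since $z_t = V$, one gets $\frac{dz_*}{dt} = z_t(-\pi,t) = V(-\pi,t) = b_0(t)^*$ directly (the boundary value of $V$, whose conjugate is $V^*(-\pi,t)=b_0$). Next, differentiating $z_s = e^{i\theta}$ in time gives $\partial_t z_s = i\theta_t z_s$, and on the other hand $\partial_t z_s = \partial_t\bigl(z_\al/|z_\al|\bigr)$, which upon taking real and imaginary parts yields the two scalar identities $(\log|z_\al|)_t = z_{ts}\cdot z_s = \varphi_s$ and $\theta_t = z_{ts}\cdot z_s^\perp$ (a standard computation; here $\varphi_s := V_s\cdot z_s$ as defined in~\eqref{regularity-V}). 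Evaluating $\theta_t = z_{ts}\cdot z_s^\perp$ in the lateral limits at the corner, together with the decomposition $\theta = \Theta + \nu_+\chi_+ + \nu_-\chi_-$ with $\Theta$ continuous (indeed vanishing at $z_*$ by Remark~\ref{rem:ga} since $\Theta\in\Lcal^{k+1}_{2,\beta+k}(m)$), identifies $\frac{d\nu_\pm}{dt}$ with the lateral limits of $z_{ts}\cdot z_s^\perp$. One then has to evaluate $V_s\cdot z_s^\perp$ at the corner: writing $V_s = z_s D_s V^*{}^*$ (so $V_s = z_s\,\overline{D_sV^*}$ in complex notation) and using that $D_sV^*\to b_1$ while $z_s\to e^{i\nu_\pm}$ at the two sides, a short complex-arithmetic computation using~\eqref{eq:scalar} gives $z_{ts}\cdot z_s^\perp \to \Re(ib_1 e^{2i\nu_\pm})$, which is~\eqref{eq:nudt}.

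For the regularity claims, the point is that once the explicit discontinuous pieces $\nu_\pm\chi_\pm$ have been subtracted, what remains lies in the ``good'' scale $\Lcal$. From $(\log|z_\al|)_t = \varphi_s$ and the assumption $\varphi_s\in H^{k+1/2}_{\beta+k-1/2}(m)$ in~\eqref{eq:basic-energy-phi}, the first regularity assertion is immediate. For $\Theta_t$: since $\theta_t = \Theta_t + \dot\nu_+\chi_+ + \dot\nu_-\chi_-$ and $\dot\nu_\pm$ are just numbers (bounded by the energy), it suffices to show $\theta_t\in\Lcal^{k+1/2}_{2,\beta+k-1/2}(m)$ away from worrying about the constant part, i.e.\ to show $\theta_{ts}\in\Lcal^{k-1/2}_{2,\beta+k-1/2}(m)$ and that the appropriate lateral limits match. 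This is where~\eqref{eq:th_ts}--\eqref{eq:th_ts12} enter. The derivation of~\eqref{eq:th_ts} is the technical core: starting from $\theta_{ts} = \partial_s(z_{ts}\cdot z_s^\perp) = z_{tss}\cdot z_s^\perp + z_{ts}\cdot\partial_s z_s^\perp$, the second term is lower order (it involves $\theta_s z_{ts}\cdot z_s = \theta_s\varphi_s$, controllable), while for the first term one uses that $V^* = \Ccal(z)V^*$, hence by~\eqref{E.Rk31} also $D_sV^* = \Ccal(z)D_sV^*$, to convert the normal component of $V_{ss}$ at the boundary into a Hilbert-transform expression: by Lemma~\ref{lem:BRcancel}, $i\Ccal + H$ gains regularity, so $\Ccal\varphi_{ss}$ and $-iH\varphi_{ss}$ differ by something in $\Lcal^k_{2,\beta+k}(m)$, and the holomorphicity relation lets one replace the boundary second derivative of $V$ by $\varphi_{ss}$ up to such remainders. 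The case $\beta=\tfrac12$ is borderline for the Riesz/half-derivative bound, so one conjugates by $m^{\lambda'}$ to push $\gamma$ back into the Muckenhoupt range, which is exactly the modification~\eqref{eq:th_ts12}; independence of $\lambda'$ follows from the commutator estimates of Subsection~\ref{ss.commutator}.

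Finally, for the assertion that $\|\partial_s^{k+1}V\|^2_{\Lcal^{1/2}_{\beta+k-1/2}(m)}$ is controlled by~\eqref{eq:basic-energy-phi} — i.e.\ the equivalence of~\eqref{eq:basic-energy} and~\eqref{eq:basic-energy-phi} — the idea is to recover the top derivative of $V$ from $\varphi_s = V_s\cdot z_s$ plus the holomorphicity constraint. Knowing $\varphi_s$ gives the tangential part of $V_s$; the normal part of $V_s$ is determined through the Cauchy/Dirichlet-to-Neumann relation~\eqref{eq:DtN-V}, using the estimates for the Dirichlet-to-Neumann map on domains with corners from Section~\ref{s.inverse}, which convert control of $\varphi_s$ in $H^{k+1/2}_{\beta+k-1/2}(m)$ into control of $V$ in $H^{k+3/2}_{\beta+k-1/2}(m)$, granted the lower-order pieces $b_0,b_1,U$ and the geometry $z$ are controlled. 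The reverse inclusion is the trivial direction. \textbf{The main obstacle} I anticipate is the derivation of~\eqref{eq:th_ts}: carefully extracting the Hilbert-transform structure from $z_{tss}\cdot z_s^\perp$ while tracking which boundary terms survive the integrations by parts near the corner (they vanish precisely by the trace vanishing in Remark~\ref{rem:ga} and by~\eqref{E.Rk31}), and handling the endpoint regularity $\beta=\tfrac12$ via the $m^{\lambda'}$-conjugation without losing the independence of the construction from $\lambda'$. Everything else is bookkeeping with the weighted Sobolev calculus and the Cauchy-integral mapping properties already established in Section~\ref{ss.BR}.
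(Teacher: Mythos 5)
Your proposal follows essentially the same route as the paper for everything except the recovery of the top-order derivatives of $V$. The core steps coincide: the identity $(\log|z_\al|)_t - i\theta_t = z_s^2 D_s V^*$ (equivalently your real/imaginary-part decomposition $(\log|z_\al|)_t = V_s\cdot z_s$, $\theta_t = V_s\cdot z_s^\perp$), its one-sided limits at the corner yielding $\frac{d\nu_\pm}{dt}=\Re(ib_1e^{2i\nu_\pm})$, the further $D_s$-derivative $\varphi_{ss}-i\theta_{ts}=z_s^3D_s^2V^*+2i\theta_s(\varphi_s-i\theta_t)$ combined with $D_s^2V^*=\Ccal D_s^2V^*$, the smoothing of $i\Ccal\mp H$ and the commutator lemmas to extract $\theta_{ts}=H\varphi_{ss}+\Rcal$, and the $m^{\lambda'}$-conjugation at the endpoint $\beta=\tfrac12$ — all of this is exactly the paper's argument. (Minor slip: your intermediate formula $V_s = z_s\overline{D_sV^*}$ should read $V_s=\bar z_s\,\overline{D_sV^*}$, but your final limit $\Re(ib_1e^{2i\nu_\pm})$ is correct.)

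The one substantive deviation is your plan to control $\|\pa_s^{k+1}V\|_{\Lcal^{1/2}_{\beta+k-1/2}}$ via the Dirichlet-to-Neumann map applied to $V_1$. As phrased, this risks circularity: Proposition~\ref{prop:dirichletInt} bounds $\Gcal V_1$ at order $i-\tfrac12$ only in terms of $V_1$ at order $i+\tfrac12$, so to get $V_2$ at order $k+\tfrac32$ you would already need $V_1$ at order $k+\tfrac32$ — which is what is being proved; the hypothesis only gives $V$ at order $k$ and the \emph{combination} $\varphi_s=V_s\cdot z_s$ at order $k+\tfrac12$. The paper instead bootstraps directly from the identity~\eqref{eq:th_ts} it has just established: $\varphi_{ss}\in\Lcal^1_{2,\beta+1}(m)$ and $\theta_{ts}=H\varphi_{ss}+\Rcal$ give $\theta_t\in H^{k}_{\beta+k-1}(m)$ (and then the extra half-derivative, since $H$ and the commutators preserve it), whence $V_s=z_{ts}=z_s(\varphi_s+i\theta_t)$ is controlled at the claimed order. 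Your DtN idea is salvageable if applied to the holomorphic function $\tfrac{dv^*}{dz}$ rather than to $V_1$ — but the boundary conjugate-function relation for that object is precisely the Hilbert-transform identity~\eqref{eq:th_ts}, so the correct implementation collapses back to the paper's argument. You should replace the DtN detour by this bootstrap.
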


\begin{proof}
For simplicity let us take $k = 2$; the general case is completely analogous. By the definition of the parametrization-independent derivative $\pa_s$ and the tangent angle~$\theta$, we have $z_s = {z_\al}/{|z_\al|} = e^{i\theta}$, so
$$
 z_{st} = z_{ts} - \frac{|z_\al|_t}{|z_\al|} z_s.
$$
In Lagrangian parametrization, this implies
\be\label{eq:th_ts-aux}
 (\log|z_\al|)_t - i\theta_t = z_s (z_{ts})^* = z_s^2 D_s z_{t}^*=  z_s^2 D_s V^* \in H^1_{\beta}(m),
\ee
where we have used Lemma \ref{lem:sobolev}. Note that, by definition, 
\be\label{eq:eq-phis}
(\log|z_\al|)_t = \Re(z_s^2 D_sV^*) = V_s\cdot z_s = \varphi_s.   
\ee

Using \eqref{eq:v-rep}, we obtain
$$
 (\log|z_\al|)_t - i\theta_t = z_s^2 b_1 + z_s^2 D_s U^* = z_s^2 b_1 +  \Lcal^1_{2,\beta}(m).
$$ 
Here and in what follows, we simply write ``$+  \Lcal^1_{2,\beta}(m)$'', and similarly with other weighted spaces, when the identity holds up to the addition of a function in this space.
In the second equality, we have used that $U^*\in\Lcal^2_{2,\beta}(m)$ (which implies that $D_s U^*\in \Lcal^1_{2,\beta}(m)$) and $\theta_t\in H^1_\beta(m)$. By the properties of these spaces, both $\theta_t$ and $(\log|z_\al|)_t$ are thus bounded and well-behaved everywhere except possibly at the corner point. To compute the discontinuity, we simply approach the corner tip (or, equivalently, the endpoints of the interval $I=(-\pi,\pi)$) from the right and from the left, readily obtaining the equations for $\frac{d\nu_\pm}{dt}$ as stated in \eqref{eq:nudt}. Using \eqref{eq:theta}, one similarly gets the evolution equation for $\Theta_t$ and the estimate $\Theta_t\in\Lcal^1_{2,\beta}(m)$. Note that so far we have only used two derivatives of~$V$.

In order to prove \eqref{eq:th_ts}, we apply $D_s$ to both sides of \eqref{eq:th_ts-aux}, arriving at the equation
	\be\label{eq:complex-phi_s-th_t-der}
	\varphi_{ss} - i\theta_{ts} = z_s^3 D_s^2V^* + 2i\theta_s\left(\varphi_{s}-i\theta_t\right). 
	\ee
	Considering the real and imaginary part of this equation results in
	\be\label{eq:ztss}
	\theta_{ts} = \Re\left( i z_s^3D^2_sV^*\right) -2\theta_s\varphi_s,\qquad 	\varphi_{ss} = \Re\left(z_s^3D^2_sV^*\right) + 2\theta_s\theta_t.
	\ee
	
	Let us consider the case $\beta \in \left(\frac12, \frac12\right)$ first. Using~\eqref{E.Rk31}, we can rewrite the equation for~$\theta_{ts}$ in terms of the well-behaved operator $i\Ccal-H$ and commutators as
	$$
	\theta_{ts} = H\varphi_{ss} + \underbrace{\left[ \Re\left(z_s^3 (i\Ccal - H) D^2_sV^*\right) + \Re\left([z_s^3, H]D^2_sV^*\right) - 2\theta_s \varphi_s - 2H(\theta_s \theta_t)\right]}_{=:\Rcal(z,V)}.
	$$
	We thus conclude that all the terms belong to $\Lcal^1_{2,\beta + 1}(m)$: the first one by Lemma~\ref{lem:BRcancel}, the commutator term by Lemma~\ref{lem:derFphi}, and last two by Lemma~\ref{lem:trHilbert}. Here we have used that $D_s^2 V^*\in\Lcal_{2,\beta}(m)$ and $\theta_s \theta_t\in\Lcal^1_{2,\beta + 1}(m)$ under our regularity assumptions thanks to Lemma~\ref{lem:sobolev}. In particular,  $	\Rcal \in H^1_{\beta + 1}(m)$, as claimed.

	Next, we note that $\varphi_{ss}$ (or, equivalently, $\Re(z_s^3 D_s^2V^*$) is in $\Lcal^1_{2,\beta + 1}(m)$. By the preceding formulas for the derivatives of~$\theta_t$ and by Lemma~\ref{lem:trHilbert}, we therefore conclude that 
	$$
	\theta_{t} \in H^2_{\beta + 1}(m).
	$$
	In turn, this implies $z_t \in H^3_{\beta + 1}(m)$, which is equivalent to $V\in H^3_{\beta + 1}(m)$ as the equations are written in the Lagrangian parametrization. Thus, the norm $\|\pa_s^3 V\|^2_{\beta + 1}$ is controlled by~\eqref{eq:basic-energy-phi} and we have $\Theta_t \in \Lcal^2_{2,\beta + 1}(m)$.
	
	With the control on an additional derivative of $V$ that we have just established, we are ready to prove that we can take another derivative on the first summand in~$\Rcal$ (note that  all the other terms in~$\Rcal$ are now known to be in $\Lcal^2_{2,\beta + 2}(m)$). More precisely, it remains to show that  
	$$
	D^2_\al (i\Ccal - H)D_\al^2V^* \in \Lcal_{2, \beta + 2}(m),
	$$
	where we used that $D_s \equiv D_\al$. To simplify the notation, we temporarily assume $z_* = 0$ without loss of generality, so we can write $z$ instead of $z - z_*$. First let us compute 
	$$
	\aligned
	\pa_\al H(D^2_\al V^*) &= -\frac{z_\al}{z}HD^2_\al V^*  + \frac{1}{z}\pa_\al \left(zHD^2_\al V^*)\right)\\
	&=-\frac{z_\al}{z}HD^2_\al V^* + \frac{1}{z} H(\pa_\al (z D_\al^2 V^*)) + \frac{1}{z}\pa_\al [z, H]D^2_\al V^*\\
	&= \frac{1}{z}\left(z_\al H(z  D_\al^3 V^*) + [H, z_\al] (D_\al^2 V^* + zD_\al^3V^*) + \frac{1}{z}\pa_\al [z, H]D^2_\al V^*\right),
	\endaligned
	$$
	which implies 
	$$
	D_\al H(D^2_\al V^*) = \frac{1}{z}H(z  D_\al^3 V^*) + \Lcal^1_{2,\beta + 2}(m)
	$$
	by Lemmas \ref{lem:trHilbert} and \ref{lem:derFphi}. Using Equation~\eqref{eq:BR-ipp}, we now conclude that
	$$
	\aligned
	D_\al(i\Ccal - H) D^2_\al V^* &= \left(i\Ccal_{(1)}D_\al^3V^* - \frac{1}{z}H\left(zD_\al^3V^*\right)\right) + \Lcal^1_{2,\beta + 2}(m)\\
	&= \frac{1}{z}(i\Ccal - H) (zD_\al^3V^*)  + \Lcal^1_{2,\beta + 2}(m),
	\endaligned
	$$
	where we have used the definition of $\Ccal_{(1)}$, Equation~\eqref{def:BR+k}. Since $zD_\al^3V^*\in \Lcal_{2,\beta}(m)$ the remaining term belongs to $\Lcal^1_{2,\beta + 2}(m)$ by Lemmas \ref{lem:BRbasic} and \ref{lem:BRcancel}. 
	
	Note that in the argument we have not used any control over the highest-order half-derivative of $\varphi_s$. Therefore, in view of~\eqref{eq:th_ts}, the estimate for the additional highest-order half-derivative of~$V$ is straightforward.
	
	Now, consider the case $\beta = \frac12$. In this case, we cannot use Lemma~\ref{lem:BRcancel} (nor Lemmas~\ref{lem:trHilbert} and~\ref{lem:derFphi} for that matter) on $D_s^2 V^*$ directly, since the corresponding weight does not satisfy the Muckenhaupt $A_2$-condition. We shall therefore choose some $\lambda \in (0, 1)$ and we prove  \eqref{eq:th_ts12}, a modified version of \eqref{eq:th_ts}, as follows. 
	
	Let us write 
	$$
	\theta_{ts} = \Re\left( i z_s^3 z^{-\lambda'} (z^{\lambda'} D^2_sV^*)\right) -2\theta_s\varphi_s,
	$$
	where, for notational simplicity, we still assume $z_* = 0$ and write $z$ instead of $z - z_*$. The function $z^\lambda D_s^2 V^*$ is still the trace of a holomorphic function, but it now belongs to a weighted Sobolev space with a weight that does satisfy the Muckenhaupt $A_2$-condition. 
	
	Therefore, we can now proceed essentially as in the case $\beta \in\left(\frac12, \frac12\right)$. Indeed, we have
	$$
	\theta_{ts} = \Re\left(\frac{1}{z^{\lambda'} }\left(H(z_s^3 z^{\lambda'}  D_s^2 V^*) + z_s^3 (i\Ccal - H) (z^{\lambda'} D_s^2 V^*)  - [ H, z_s^3](z^{\lambda'} D_s^2 V^*) \right)\right) - 2\theta_s \varphi_s, 
	$$ 
	and we can further rewrite the first term as
	$$
	\Re\left(\frac{1}{z^{\lambda'}}H(z_s^3 z^{\lambda'} D_s^2 V^*)\right) = \frac{1}{m^{\lambda'}} H(m^{\lambda'} (\varphi_{ss} - 2 \theta_s \theta_t))  +  \frac{1}{m^{\lambda'}} \Re\left(\left[\frac{m^{\lambda'}}{z^{\lambda'}}, H\right]\left( z^3 z^{\lambda'} D_s^2 V^*\right)\right).
	$$
	Putting everything together, we arrive again at an equation of the form~\eqref{eq:th_ts12}, the only difference being that now one has
	\be\label{eq:R12}
	\aligned
	\Rcal(z, V) &:= \Re\left(\frac{z_s^3}{z^{\lambda'}} (i\Ccal - H) (z^{\lambda'} D_s^2 V^*) - \frac{1}{z^{\lambda'}}[ H, z_s^3](z^{\lambda'} D_s^2 V^*)\right) \\
	&- \Re\left(\frac{1}{m^{\lambda'}}\left[\frac{m^{\lambda'}}{z^{\lambda'}}, H\right]\left( z^3 z^{\lambda'} D_s^2 V^*\right)\right)- 2\theta_s \varphi_s - \frac{2}{m^{\lambda'}} H(m^{\lambda'} \theta_s \theta_t) 
	\endaligned
	\ee

	The higher-order regularity of $V$, as well as  the regularity of $\Rcal(z, V)$, now follows exactly as in the case $\beta \in\left(\frac12, \frac12\right)$. One simply has to recall that $z \approx m$ by the arc-chord condition and the regularity assumptions on the interface, and that we have the estimate $\pa_\al^j(m/z) = O(m^{-j})$ for $0\leq j \leq k$.
\end{proof}

%%%%  Rayleigh-Taylor condition
 
 \begin{lemma}\label{lem:si} Let $\left|(\beta - 3) + \frac{1}{2} \right| < \frac{\pi}{2\nu}$ and let $(z, V)$ satisfy \eqref{eq:basic-energy-phi} for some $\beta\in\left(\frac12, \frac12\right]$ and $k\geq 2$. Then, the normal derivative of the pressure $\si$, cf.~\eqref{eq:sigma}, is in $ H^{k+1}_{\beta + (k-1)}(m)$. Moreover, 
 \begin{gather}
  	\label{eq:si-rep}
 	\si = |b_1|^2\, |z - z_*|\tan 2\nu  +  \Lcal^{k+1}_{2,\beta + (k-1)}(m),\qquad U^*_t \in \Lcal^{k + \frac{1}{2}}_{2, \beta + (k-\frac{3}{2})}(m),\\
 	\label{eq:db}
	\frac{db_0}{dt} = ig, \qquad \frac{db_1}{dt} = \frac{1}{\cos 2\nu}|b_1|^2e^{-i(\nu_+ + \nu_-)}, 
 \end{gather}
 	and the assumption \eqref{eq:v-rep} for the boundary velocity is consistent.
\end{lemma}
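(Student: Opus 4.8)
The plan is to deduce everything from the representation \eqref{eq:sigma} of the normal derivative of the pressure, $\si=\Gcal\psi+gz_{1s}+V_s\cdot V^\perp$ with $\psi$ harmonic on $\Om$ and $\psi|_\Ga=-gz_2-\tfrac12|V|^2$, combined with the evolution law $V_t^*=-i\si\,z_s^*+ig$ (the complex conjugate of the second equation in \eqref{eqs:evolGa}) and the structural ansatz \eqref{eq:v-rep}. The first task is to prove $\si\in H^{k+1}_{\beta+(k-1)}(m)$. Inserting \eqref{eq:v-rep} into $\psi|_\Ga$, one sees that near the corner $\psi|_\Ga$ is the real part of a holomorphic polynomial in $z-z_*$ of degree $\le 1$ (the affine part of $-gz_2$ together with the term $-\Re\bigl(b_0^*b_1(z-z_*)\bigr)$ arising from $-\tfrac12|V|^2$), plus the genuinely non-harmonic leading term $-\tfrac12|b_1|^2|z-z_*|^2$, plus contributions vanishing faster at the corner (those involving $U^*$ and higher powers of $z-z_*$). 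The harmonic extension of the first part is explicit, and the sharp estimates for the Dirichlet-to-Neumann map on corner domains proved in Section~\ref{s.inverse} handle the rest; this is exactly where the spectral hypothesis $|(\beta-3)+\tfrac12|<\tfrac{\pi}{2\nu}$ is used, since it keeps the working weight/Sobolev index strictly below the first singular exponent $\tfrac{\pi}{2\nu}$ of the Laplacian on a wedge of opening $2\nu$, so that the leading singular mode of $\Gcal\psi$, of size $\sim|z-z_*|^{\pi/(2\nu)-1}$, is absorbed into $\Lcal^{k+1}_{2,\beta+(k-1)}(m)$. One should note that the three summands $\Gcal\psi$, $gz_{1s}$, $V_s\cdot V^\perp$ are not individually in $H^{k+1}_{\beta+(k-1)}(m)$ — their $\pa_s$-derivatives inherit the corner singularities of $z_s$, resp.\ $D_sU^*$ — and it is their sum $\si=-\pa_nP$, with $P$ the genuinely regular pressure solving $\Delta P=-|\nabla v|^2$, $P|_\Ga=0$, that has the stated regularity; the estimates of Section~\ref{s.inverse} are designed to capture this cancellation.

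Granting this, and using that $\beta\le\tfrac12$ forces the polynomial part of $H^{k+1}_{\beta+(k-1)}(m)$ to contain the degree-one terms, we may write near the corner $\si=a_0^{\pm}\chi^{\pm}+a_1^{\pm}s\,\chi^{\pm}+\widetilde\si$ with $a_0^{\pm},a_1^{\pm}\in\Rbb$, $\widetilde\si\in\Lcal^{k+1}_{2,\beta+(k-1)}(m)$, and $s$ the arclength from the corner on each side. I would read off $a_0^{\pm},a_1^{\pm}$ not from the harmonic extension but from the consistency of the evolution with \eqref{eq:v-rep}. Since $b_0(t)=V^*(\pm\pi,t)$ is a single value, the evolution law gives $\tfrac{db_0}{dt}=V_t^*(\pm\pi,t)=-i\,a_0^{\pm}e^{-i\nu_\pm}+ig$ from both sides; as $\nu_+-\nu_-\in(-\pi,-\tfrac\pi2)$ and the $a_0^{\pm}$ are real, this forces $a_0^{\pm}=0$ (the degenerate Rayleigh--Taylor condition $\si\to0$ at the corner) and $\tfrac{db_0}{dt}=ig$. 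Likewise $b_1(t)=D_sV^*(\pm\pi,t)$ is single-valued; equating, on each side of the corner, the coefficients of $s$ in $V_t^*-ig=-i\si\,z_s^*$ and in $V_t^*-ig=\tfrac{db_1}{dt}(z-z_*)+|b_1|^2(z-z_*)^*+(\text{terms vanishing faster than }s)$, and using $z-z_*=s\,e^{i\nu_+}$, resp.\ $-s\,e^{i\nu_-}$, to leading order, one obtains
\[
\tfrac{db_1}{dt}=(-i a_1^{+}-|b_1|^2)e^{-2i\nu_+}=(i a_1^{-}-|b_1|^2)e^{-2i\nu_-}.
\]
Eliminating $\tfrac{db_1}{dt}$ and using $2\nu=\pi+\nu_+-\nu_-$ (hence $e^{2i(\nu_+-\nu_-)}=e^{4i\nu}$), the real and imaginary parts of the resulting identity are both solved by $a_1^{+}=a_1^{-}=|b_1|^2\tan2\nu$; substituting back, $\tfrac{db_1}{dt}=-|b_1|^2(1+i\tan2\nu)e^{-2i\nu_+}=\tfrac{1}{\cos2\nu}|b_1|^2 e^{-i(\nu_++\nu_-)}$. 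Since $s=|z-z_*|+\Lcal^{k+1}_{2,\beta+(k-1)}(m)$ and the two leading coefficients coincide, this is precisely $\si=|b_1|^2|z-z_*|\tan2\nu+\Lcal^{k+1}_{2,\beta+(k-1)}(m)$, and together with the equations for $\tfrac{d\nu_\pm}{dt}$ from Lemma~\ref{lem:param} we recover the ODE system \eqref{E.ODEintro}.

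It remains to identify $U_t^*$. Writing $U_t^*=\pa_t\bigl(V^*-b_0-b_1(z-z_*)\bigr)=V_t^*-\tfrac{db_0}{dt}-\tfrac{db_1}{dt}(z-z_*)-b_1\bigl(z_t-\tfrac{dz_*}{dt}\bigr)$ and using $\tfrac{db_0}{dt}=ig$, $\tfrac{dz_*}{dt}=b_0^*$ (Lemma~\ref{lem:param}) and $z_t-b_0^*=V-b_0^*=\overline{b_1}(z-z_*)^*+\overline{U^*}$, we get $U_t^*=-i\si\,z_s^*-\tfrac{db_1}{dt}(z-z_*)-|b_1|^2(z-z_*)^*-b_1\overline{U^*}$. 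By construction the degree-$\le1$ parts of $-i\si\,z_s^*$ cancel, on each side of the corner, against $\tfrac{db_1}{dt}(z-z_*)+|b_1|^2(z-z_*)^*$, so $U_t^*$ equals $-i\widetilde\si\,z_s^*$ plus a sum of products of bounded functions built from $\theta$, $\log|z_\al|$ and their controlled derivatives with quantities vanishing faster than $s$ at the corner; invoking $\widetilde\si\in\Lcal^{k+1}_{2,\beta+(k-1)}(m)$, the regularity assertions of Lemma~\ref{lem:param} and Lemma~\ref{lem:sobolev}, and the Hardy and commutator estimates of Appendix~\ref{A.A}, each such term is seen to lie in $\Lcal^{k+\frac12}_{2,\beta+(k-\frac32)}(m)$. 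Since moreover $\Ccal(z)V^*=V^*$ is propagated by the flow (the conjugate velocity stays holomorphic) and $\Ccal(z)$ fixes $1$ and $z-z_*$ by \eqref{eq:Cauchy-const}, the relation $\Ccal(z)U^*=U^*$ is preserved, so the ansatz \eqref{eq:v-rep} is consistent, which completes the argument.

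The crux is the first step: proving $\si\in H^{k+1}_{\beta+(k-1)}(m)$ with exactly the remainder class $\Lcal^{k+1}_{2,\beta+(k-1)}(m)$. This rests entirely on the sharp weighted estimates for the Dirichlet-to-Neumann map (equivalently, for the pressure) on domains with an acute corner developed in Section~\ref{s.inverse}, and it is there that the hypothesis $|(\beta-3)+\tfrac12|<\tfrac{\pi}{2\nu}$ is indispensable, as the spectral gap keeping the first corner exponent $\tfrac{\pi}{2\nu}$ far enough away. Once that regularity is available, the remaining steps are essentially algebra together with the bookkeeping estimates already in place.
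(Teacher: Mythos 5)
Your derivation of the coefficient $a_1^{\pm}=|b_1|^2\tan 2\nu$ and of the ODE for $b_1$ from the single-valuedness of $b_1(t)=D_sV^*(\pm\pi,t)$ at the corner is a genuinely different route from the paper's, and the algebra checks out: the paper instead computes the harmonic extension of the quadratic boundary datum explicitly by separation of variables on the strip (Lemma~\ref{lem:special_sol}, which with $c_+=c_-=-\tfrac12|b_1|^2$ yields $|b_1|^2|z-z_*|\tan 2\nu$ directly) and only then reads off $\tfrac{db_1}{dt}$ by taking one-sided limits of $\pa_t(D_sV^*)$. Your approach trades the explicit two-point boundary value problem for the overdetermined matching condition $(-ia_1^{+}-|b_1|^2)e^{-2i\nu_+}=(ia_1^{-}-|b_1|^2)e^{-2i\nu_-}$, which indeed has the unique solution $a_1^{\pm}=|b_1|^2\tan2\nu$; this is legitimate in the a priori setting (holomorphy of $\tfrac{dv^*}{dz}$ forces the one-sided limits to agree at each time, so the identity $F^+(t)=F^-(t)$ may be differentiated) and is a nice structural explanation of why the coefficient is what it is.

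However, there are two genuine gaps in the regularity part, which is the input your coefficient argument needs. First, the expansion $\si=a_0^{\pm}\chi^{\pm}+a_1^{\pm}s\chi^{\pm}+\widetilde\si$ with $\widetilde\si$ in an $\Lcal$-space cannot be obtained from Proposition~\ref{prop:dirichletInt} alone: the boundary datum contains $-\tfrac12|b_1|^2|z-z_*|^2$, and $|z-z_*|^2\notin\Lcal^{k+3/2}_{2,\beta+k-3/2}(\Ga)$ precisely when $\beta\le\tfrac12$ (the $j=0$ integrability condition requires $\beta>\tfrac12$), so the quadratic term is outside the domain of the Dirichlet-to-Neumann estimate. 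One still needs the strip computation of Lemma~\ref{lem:special_sol} --- at minimum its structural content, namely that the extension is $e^{2\tilde x}p(\tilde y)$ with $p''+4p=0$ uniquely solvable for $2\nu\ne \ell\pi/2$, so that $\Gcal$ of the quadratic term is linear in $|z-z_*|$ modulo an $\Lcal$-remainder. (Once you invoke that lemma you get the coefficient for free, so your consistency argument becomes a cross-check rather than the primary derivation; but without it the expansion you match coefficients against is unjustified.) Second, the DtN map only delivers $\Gcal\psi$ with $k+\tfrac12$ derivatives of the remainder, i.e.\ $\Lcal^{k+1/2}_{2,\beta+k-3/2}(m)$, whereas the lemma asserts $\si\in H^{k+1}_{\beta+(k-1)}(m)$ with remainder in $\Lcal^{k+1}_{2,\beta+(k-1)}(m)$. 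The extra half-plus derivative at top order is not a consequence of Section~\ref{s.inverse}; it is obtained in the paper by writing $\pa_s^2\si=\Re\bigl((iI-H)\pa_s^2(\Psi^*z_s+V\pa_sV^*-igz_s)\bigr)$ and exploiting that $\Psi_1^*$ and $D_sU^*$ are traces of holomorphic functions, via Lemmas~\ref{lem:BRcancel}, \ref{lem:trHilbert} and~\ref{lem:derFphi} (with the $m^{\lambda'}$-modified operators when $\beta=\tfrac12$). Your proposal omits this step entirely, and the cancellation you point to between $\Gcal\psi$, $gz_{1s}$ and $V_s\cdot V^\perp$ (which is real, but only removes the $\Im(c_1z_s+b_0^*\pa_sU^*)$ terms) does not supply it.
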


\begin{proof}	  
 Take $k = 2$ for concreteness, as the general case is analogous. Under our regularity assumptions, we can rewrite \eqref{eq:psi} as
   \be\label{eq:psi-asymptotic}
   \psi =- \Re\left(c_0 + c_1(z - z_*) + b^*_0 U^*\right) -  \frac{1}{2}|b_1|^2 \, |z - z_*|^2  + h, \quad h\in\Lcal^{3+\frac{1}{2}}_{2,\beta+\frac{1}{2}}(m),
   \ee
   where we have used \eqref{eq:v-rep} and we have set
   $$
   c_0 := \frac{1}{2}|b_0|^2, \qquad c_1 := b^*_0 b_1 - ig.
   $$
   
   Under the condition $\left|(\beta - 3) + \frac{1}{2} \right| < \frac{\pi}{2\nu}$, we therefore have
   $$
   	\Gcal \psi = \Im \left(c_1 z_{s} + b_0^* \pa_s U^* \right) + |b_1|^2\, |z - z_*|\tan 2\nu  +  \Lcal^{2+\frac{1}{2}}_{2,\beta + \frac{1}{2}}(m),
   $$
   by Proposition \ref{prop:dirichletInt} and Lemma~\ref{lem:special_sol}. Note that, to use Lemma~\ref{lem:special_sol}, we have set $c_+ = c_- := -\frac{1}{2}|b_1|^2$; we have also used that $\pa_n U^* = i\pa_sU^*$ by~\eqref{eq:CauchyU}. Using \eqref{eq:v-rep} to rewrite the last term in \eqref{eq:sigma}, we thus conclude
   $$
   \aligned
   \si &= \Gcal\psi - \Im(c_1 z_s + b_0^* \pa_s U^*) + \Lcal^{2+\frac{1}{2}}_{2,\beta + \frac{1}{2}}(m) \\
   	   &= |b_1|^2\, |z - z_*|\tan 2\nu  +  \Lcal^{2+\frac{1}{2}}_{2,\beta + \frac{1}{2}}(m).
   \endaligned
   $$ 
   
We prove \eqref{eq:db} next. We have
$$
\aligned
\pa_t (D_s V^*) &= - \frac{z_{ts}}{z^2_s} \pa_s V^* + D_s V^*_t= -\frac{1}{z_s^2} |\pa_sV|^2 +  D_s V^*_t.
\endaligned
$$
Using the Euler equation~\eqref{eqs:evolGa} to express $V^*_t$ in terms of $\si$, the asymptotic formula \eqref{eq:v-rep} for $V$ together with the asymptotic formula~\eqref{eq:si-rep} for $\si$ that we have just derived yield
$$
\aligned
&b_{0t} = ig\\
&b_{1t} + \pa_t \left(D_s U^*\right) = \pa_t \left(D_s V^*\right) = -\frac{1}{z_s^2}|b_1|^2\left(1 + i\pa_s|z - z_*| \tan 2\nu\right) + \Lcal^{1+\frac{1}{2}}_{2,\beta + \frac{1}{2}}(m).
\endaligned
$$
If we then set $b_{1t}$ to be the zero-order term on the right-hand side, provided the limit exists, and include everything else in the time-derivative of $D_s U^*$, we conclude that the ansatz~\eqref{eq:v-rep} is consistent. 

Our next goal is to analyze the one-sided limits of these quantities as $z \rightarrow z_*$. To this end, as in Subsection~\ref{ss.cov}, consider a small neighborhood~$B$ of the corner point~$z_*$ and denote by $\Ga^\pm$ (with $\Ga \cap B = \Ga^+ \cup \Ga^-$ and $\Ga^+ \cap \Ga^- = \{z_*\}$) the arcs that form the (curvilinear) corner $\Ga\cap B$. Without loss of generality, we can assume that these arcs can be parametrized as graphs over the horizontal axis. If the corresponding parametrized curves are then denoted by $z^\pm\equiv z^\pm(x)$ and for convenience we reverse the orientation on the lower arc $z_-$, we can write
\[
z_\pm  = z_*+ (1 \pm i \tan \nu_\pm)x + O(|x|^{1 + \lambda_\beta}),
\]
with $\lambda_\beta$ defined by~\eqref{E.deflambda}. In the limit $x\rightarrow 0^\pm$, we obtain 
$$
\pa_s |z_\pm - z_*| = \pm \frac{1}{|z'_\pm|}\frac{d}{dx} |z_\pm - z_*| \rightarrow \pm 1,
$$ 
and therefore
$$
e^{- 2i\nu_\pm}(1 \pm i \tan 2\nu) = -\frac{1}{\cos 2\nu}e^{- 2i\nu_\pm \pm 2i\nu} = -\frac{1}{\cos 2\nu}e^{-i(\nu_+ + \nu_-)}
$$
using the definition $2\nu := \pi + \nu_+ - \nu_-$.

It remains to show that $\pa_s^3\si\in\Lcal_{2,\beta + 1}(m)$. Note that for this we only use $U\in \Lcal^{3}_{2,\beta + 1}(m)$, not the additional control on half a derivative. Just as in the proof of relations \eqref{eq:th_ts} and \eqref{eq:th_ts12} in Lemma \ref{lem:param}, the proof will rely on repeated applications of Lemma \ref{lem:BRcancel} together with  Lemmas \ref{lem:trHilbert} and \ref{lem:derFphi}. 

These results apply directly when the weight is Muckenhaupt, that is, in the case $\beta\in\left(\frac12, \frac12\right)$. However, in the case $\beta=\frac12$, the argument remains valid with the only proviso that one must replace the Hilbert transform $H$ and the Cauchy integral $\Ccal$ by the operators
\be\label{E.cacambio}
\frac{1}{m^{\lambda'}} H m^{\lambda'} \qquad\text{and} \qquad \frac{1}{(z - z_*)^{{\lambda'}}}\Ccal (z - z_*)^{\lambda'}
\ee 
respectively. Here ${\lambda'} \in (0,1)$ is any fixed parameter. With this replacement, and up to the appearance of additional harmless commutator terms (e.g., with $(m/z)^{\lambda'}$) that can then be estimated using Lemma \ref{lem:derFphi}, the proof in the case $\beta = \frac12$ goes just as  in the case $\beta<\frac12$. Therefore, we can focus on proving the aforementioned estimate in the case $\beta \in(- \frac12,\frac12)$.

First, note that, by the above, we already know that $\Psi^* = V_t^* - z_t D_s V^*\in H^2_{\beta + 1}(m)$, see also \eqref{eq:Psi}--\eqref{eq:psi}. In complex notation, we can write 
$$
\aligned
\pa_s^2 \si &=\pa_s^2 \left(\Psi \cdot z_s^\perp + V_s\cdot V^\perp + gz_{1s}\right) - H\pa_s^2 \left(\Psi \cdot z_s + V_s\cdot V + gz_{2s}\right) \\
&= \Re\left((iI - H)\pa_s^2\left(\Psi^*z_s + V \pa_sV^* - igz_s\right) \right),
\endaligned
$$
where $I$ is the identity operator and $H$ is the Hilbert transform. Furthermore, setting $\t V := V - b_0^*$ and $c_1 := b_0^*b_1 - i g$, we can write 
\begin{gather}
	V\pa_s V^* = z_s VD_sV^*  = z_s\t V D_sV^* + z_s b_0^* (b_1  + D_s U^*),\\ 
\label{eq:Phi}
\Psi_1 := \Psi^* + c_1 + b_0^*D_sU^* = b_{1t}(z - z_*) + \Lcal^2_{2,\beta}(m), \qquad D_s U^*\in \Lcal^2_{2,\beta + 1}(m).
\end{gather}

Since both $\Psi^*$ and $D_sU^*$ are boundary values of holomorphic functions by~\eqref{E.HPsi} and~\eqref{eq:CauchyU}, we can further write using the above formula for $\pa_s^2\si$
$$
\aligned
\pa_s^2\si 
&=  \Re\left((i I - H)\pa_s^2\left[z_s\left(\Psi_1 + \t V D_s U^* +  b_1 \t V\right)\right]\right)\\
&=\Re\left((iI - H)\left[z^3_s\left(D_s^2\Psi_1 + \t V D^3_s U^*\right) \right]  \right) +  \Lcal^1_{2,\beta + 1}(m).
\endaligned
$$
Here we have used the fact that  $\t V\in H^3_{\beta + 1}(m)$ together with Lemma~\ref{lem:trHilbert}. 

Also, as $D_s^2\Psi_1 = \Ccal D_s^2\Psi_1$, we now have  
$$
\aligned
iz^3_sD_s^2\Psi_1 - H\left(z^3_sD_s^2\Psi_1\right) &= z^3_s(i\Ccal - H)D_s^2\Psi_1 + [z^3_s, H]D_s^2\Psi_1\in \Lcal^1_{2,\beta + 1}(m)
\endaligned
$$
by Lemmas \ref{lem:BRcancel} and \ref{lem:derFphi}. Similarly, temporarily setting $z_* = 0$ for  ease of notation without any loss of generality, we can write 
$$
\aligned
iz^3_s \t V D^3_s U^* - H\left(z^3_s\t V D^3_s U^* \right) &= \frac{z^3_s\t V}{z} iz D^3_sU^* - H\left(\frac{z^3_s\t V}{z} z D_s^3 U^* \right) \\
&=\frac{z^3_s\t V}{z} (i\Ccal - H) (zD^3_sU^*) + \left[\frac{z^3_s\t V}{z}, H\right]\left( z D_s^3 U^* \right) \in \Lcal^1_{2,\beta + 1}(m)
\endaligned
$$
by Lemmas \ref{lem:BRcancel} and \ref{lem:derFphi}. Here we have used that $\t V = b^*_1 z^*  + \Lcal^3_{2,\beta + 1}(m)$, which implies that $j$-th space derivative of $\frac{z_s^3\t V}{z} = O(1)$  grows as $O(m^{-j})$ for $j\leq 2$.
\end{proof}

   \begin{lemma}\label{lem:si_t} Let $\left|(\beta - 3) + \frac{1}{2} \right| < \frac{\pi}{2\nu}$ and let $(z, V)$ satisfy \eqref{eq:basic-energy-phi} for some $\beta\in\left(\frac12, \frac12\right]$ and $k\geq 2$. Then, the time derivative of $\si$ is bounded as
 	$$
	\si_t = O(|z-z_*|).
	$$
   \end{lemma}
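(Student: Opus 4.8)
The goal is to show $\si_t = O(|z-z_*|)$, i.e.\ that the time derivative of the Rayleigh--Taylor coefficient inherits the same first-order vanishing at the corner as $\si$ itself (cf.~\eqref{eq:si-rep}). The natural strategy is to differentiate the representation formula~\eqref{eq:sigma} for $\si$ in time and to track the leading asymptotics near $z_*$ of each resulting term, using the asymptotic expansions already established in Lemmas~\ref{lem:param} and~\ref{lem:si}. Concretely, from $\si = \Gcal\psi + gz_{1s} + V_s\cdot V^\perp$ one gets, schematically,
\[
\si_t = (\Gcal\psi)_t + g\,\pa_t z_{1s} + \pa_t\!\left(V_s\cdot V^\perp\right).
\]
The terms $g\,\pa_t z_{1s}$ and $\pa_t(V_s\cdot V^\perp)$ are the easy ones: by Lemma~\ref{lem:param} we know $\theta_t, (\log|z_\al|)_t \in H^{k+1/2}_{\beta+k-1/2}(m)$, hence $z_{st}$ is $O(m^{\lambda_\beta})$ and in particular $o(1)$, which already gives $O(|z-z_*|)$ once we keep in mind (Remark~\ref{rem:ga}) that functions in the relevant weighted spaces vanish at the corner; similarly $V_s = z_s VD_sV^*$, $\pa_t V_s$, and the velocity expansion~\eqref{eq:v-rep} give that $\pa_t(V_s\cdot V^\perp)$ vanishes to first order because $V - b_0^* = O(m)$ and $D_sV^* = b_1 + O(m^{\lambda_\beta})$, so every product of two such factors carries at least one power of $m$ (up to harmless $m^{\lambda_\beta}$ gains).

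The substantive part is $(\Gcal\psi)_t$. Here I would commute the time derivative with the Dirichlet-to-Neumann operator using the ``shape derivative'' formula: differentiating $\Gcal(z)\psi = \pa_n w$, where $w$ is the harmonic extension of $\psi|_\Ga$, produces $\Gcal(z)\psi_t$ plus terms involving $z_t$ and tangential derivatives of $\psi$ and $w$ (of the standard form $\pa_n(z_t\cdot n)\,\pa_n w$ type corrections, together with transport-by-$z_t$ terms). Using $\psi|_\Ga = -gz_2 - \tfrac12|V|^2$ and the expansion~\eqref{eq:psi-asymptotic}, together with the explicit special harmonic function from Lemma~\ref{lem:special_sol} that produces the $|b_1|^2|z-z_*|\tan2\nu$ contribution, one computes the time derivative of the leading $-\tfrac12|b_1|^2|z-z_*|^2$ piece of $\psi$: its $\pa_n$-image behaves like $\pa_t(|b_1|^2\tan2\nu)\,|z-z_*|$, which is $O(|z-z_*|)$ provided $\pa_t(|b_1|^2\tan2\nu)$ is finite --- and this is exactly guaranteed by the ODEs~\eqref{eq:nudt} and~\eqref{eq:db} for $\nu_\pm$ and $b_1$, which express these derivatives purely in terms of $b_1,\nu_\pm$ (all controlled by the energy~\eqref{eq:basic-energy-phi}). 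The remaining, lower-order part of $\psi$ differentiates into functions in a weighted space $\Lcal^{k}_{2,\beta+(k-1)}(m)$-type space whose elements, by Lemma~\ref{lem:sobolev} and Remark~\ref{rem:ga}, are $O(m^{\lambda_\beta})$, hence $O(|z-z_*|^{\lambda_\beta})$; one has to check this is compatible with the claimed bound, which it is once one notes that the genuinely dangerous (non-vanishing or $o(1)$ but not $O(m)$) contributions all cancel through the ODE structure, leaving an honest factor of $|z-z_*|$.

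The main obstacle I anticipate is bookkeeping the shape-derivative correction terms for $(\Gcal\psi)_t$ on a domain with an acute corner: one cannot simply quote smooth-domain formulas, and must instead redo the computation within the weighted-Sobolev / corrected-Cauchy-integral framework of Section~\ref{ss.BR} and Section~\ref{s.inverse}, making sure each commutator (with $z_t$, with powers of $z-z_*$, with the cutoffs $\chi_\pm$) lands in a weighted space of high enough order that Lemma~\ref{lem:sobolev} yields at least the $O(|z-z_*|)$ decay. In particular one must verify that the contribution of the $-\tfrac12|z-z_*|^2$ term in $\psi$, which is the only one whose $\Gcal$-image is $O(|z-z_*|)$ rather than $O(|z-z_*|^{1+\lambda_\beta})$, has a time derivative whose coefficient is exactly $\pa_t(|b_1|^2\tan2\nu)$ with no extra singular factor --- this is where the decoupled ODE system does the essential work, since it prevents $\pa_t\nu$ or $\pa_t b_1$ from blowing up faster than the energy allows and ensures the coefficient stays finite (though possibly large, which is fine for the pointwise $O(|z-z_*|)$ statement, as the implicit constant is allowed to depend on the energy).
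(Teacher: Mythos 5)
Your proposal takes a genuinely different route from the paper, and unfortunately the route chosen leaves a real gap at exactly the hard point.

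\textbf{What the paper does.} The paper does not differentiate the representation $\si = \Gcal\psi + gz_{1s} + V_s\cdot V^\perp$ in time at all, and therefore never has to deal with the shape derivative of the Dirichlet-to-Neumann map. Instead it differentiates the Euler equation $V_t + ig = i\si z_s$ once more in time to obtain the identity
\[
V_{tt}\cdot z_s = -\si\theta_t, \qquad V_{tt}\cdot z_s^\perp = \si_t,
\]
and then expresses $V^*_{tt}$ algebraically through $\Psi^*$, $D_sV^*$ and their $D_s$-derivatives, showing that $V^*_{tt} = \Psi^*_3 + (2b_{1t}b_1^* + b_1 b^*_{1t})(z-z_*)^* + \Lcal^1_{2,\beta-1}(m)$ where $\Psi_3^*$ is the boundary trace of a holomorphic function. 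The claim then follows by the exact same mechanism as the estimate for $\si$ itself: Proposition~\ref{prop:dirichletInt} and Lemma~\ref{lem:special_sol} applied to the harmonic extension whose tangential derivative is $\Psi_3\cdot z_s$ show that $\Psi_3\cdot z_s^\perp = O(|z-z_*|)$. So the paper reduces the bound on $\si_t$ to the same Dirichlet-to-Neumann estimate that was already used for $\si$, avoiding any shape-differentiation.

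\textbf{Where your proposal has a gap.} You correctly identify that the substantive term is $(\Gcal\psi)_t$ and that it requires commuting $\pa_t$ with the domain-dependent operator $\Gcal(z(t))$. But the shape derivative of the Dirichlet-to-Neumann map on a domain whose boundary has an acute moving corner, expressed in the weighted-Sobolev framework, is precisely what the paper's argument is designed to avoid: you would need to show that (a) the transport/tilt terms are controlled, and crucially (b) the contribution coming from the motion of the corner point $z_*(t)$ does not introduce a singular term. You acknowledge this obstacle but resolve it only by asserting that ``the genuinely dangerous contributions all cancel through the ODE structure, leaving an honest factor of $|z-z_*|$'' --- this is exactly the step that needs to be proved, and it is not obvious from the ODEs alone. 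Moreover, in the endpoint case $\beta = \tfrac12$ (which the lemma allows), $\lambda_\beta = 0$, so the $O(m^{\lambda_\beta})$ bounds you invoke for the lower-order terms reduce to $O(1)$ and do not by themselves supply the needed $O(|z-z_*|)$ decay; the extra vanishing has to be extracted from the structure, not from the generic Sobolev embedding. The paper's use of $V_{tt}\cdot z_s^\perp = \si_t$ together with the holomorphicity of $\Psi_3^*$ is what converts these $O(1)$ pieces into genuine $O(|z-z_*|)$ contributions, and that mechanism is missing from your argument.
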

   
   \begin{proof} 
   Taking the time derivative of the second equation in \eqref{eqs:evolGa}, we can write
   	$$
   	V_{tt}\cdot z_s = -\si \theta_t, \qquad V_{tt}\cdot z_s^\perp = \si_t.   	
   	$$
   	Armed with these equations, the claim will essentially follow from Proposition~\ref{prop:dirichletInt} and Lemma~\ref{lem:special_sol}, once we have derived a formula  for $\si_t$ analogous to~\eqref{eq:sigma} (that is, once we have expressed $V_{tt}$ in terms of $z$, $V$ and $V_t$). Note that the results of Section~\ref{s.inverse} are stated for fractional Sobolev spaces, where one controls half a derivative more. We will not need the estimate for the additional half-derivative for the proof of this lemma.
   	   	
   	On the interface, recall that
   	$$
   	\Psi^* = V^*_t - V D_s V^*, \qquad  
   	\Psi^*_t = V^*_{tt} - V_tD_s V^* + \frac{1}{z_s^2}V|V_s|^2 - VD_sV^*_t,
   	$$
   	where, as discussed in the proof of the previous lemma, we already know that $\Psi^*$ and $D_sV^*$ are the boundary traces of holomorphic functions in $\Om$. On the other hand, from Equation \eqref{eq:Phi}, we know that $\Psi^* + c_1 \in \Lcal^1_{2,\beta}(m)$ with $c_1:= b_0^*b_1 - ig$. In particular, we have $D_s\Psi^* = \Ccal D_s\Psi^*$ by Equation~\eqref{eq:BR-ipp}. Therefore, by taking the time derivative of $\Psi^* = \Ccal\Psi^*$, we conclude that 
   	$$
   	\Psi^*_2 := \Psi^*_t - z_tD_s\Psi^*
   	$$
   	satisfies the same equation (i.e, $\Ccal\Psi_2^*=\Psi_2^*$). $\Psi_2^*$ is therefore the boundary trace of a holomorphic function. 
   	
   	Using now the evolution equation $z_t = V$, we can write
   	$$
   	\aligned
   	V^*_{tt} &= \Psi^*_t + V_tD_sV^* - \frac{1}{z_s^2}V|\pa_s V|^2 + VD_sV^*_t\\
  % 	&= \Psi_1 + VD_s \Psi + V_tD_sV^* - V \frac{1}{z_s^2}|\pa_s V|^2 + VD_sV^*_t\\
   	   	&= \Psi^*_2 + VD_s\Psi^* + (\Psi + V^*(D_sV^*)^*)D_s V^* -  \frac{1}{z_s^2}V|\pa_sV|^2 + V D_s\Psi^* + V D_s(VD_s V^*)\\
   	&= \Psi^*_2 + 2VD_s\Psi^* + \Psi D_s V^* + V^*|D_s V^*|^2 + V^2D_s^2 V^*.
   	\endaligned
   	$$
   	From \eqref{eq:v-rep} and \eqref{eq:Phi}, it then follows that
$$
\aligned
2VD_s\Psi^* &= 2b_0^* D_s\Psi^* + 2b^*_1(z - z_*)^*(b_{1t} - b_0^* D^2_s U^*)  + \Lcal^1_{2,\beta - 1}(m)\\ %-  2b_0^* U D_s^2U^* \\ 
\Psi D_s V^* &= -c_1^*b_1 -c_1^*D_sU^* - b_0b_1 (D_sU^*)^* - b_0 |D_sU^*|^2  + b_1b_{1t}^*(z-z_*)^* + \Lcal^1_{2,\beta - 1}(m)\\
V^*|D_s V^*|^2 &= b_0|D_s V^*|^2 + b_1|b_1|^2(z-z_*) + \Lcal^1_{2,\beta - 1}(m)\\
V^2D_s^2 V^* & = (b_0^*)^2 D_s^2U^* + 2 b^*_0b^*_1(z-z_*)^*D_s^2U^* + \Lcal^1_{2,\beta - 1}(m).%+ 2b_0^* U D_s^2U^* .
\endaligned
$$
Combining the first and the last equations, we obtain
$$
2VD_s\Psi^* + V^2D_s^2 V^* = 2b_0^* D_s\Psi^* + (b_0^*)^2 D^2_sU^* + 2b_{1t}b^*_1(z - z_*)^* + \Lcal^1_{2,\beta - 1}(m).
$$
The definition of $c_1$ and the second and third identities then yield
$$
\aligned
\Psi D_s V^* + V^*|D_s V^*|^2  =& -c_1^*(b_1 + D_sU^*) - b_0b_1 (D_sU^*)^* +  b_0(|b_1|^2 + 2\Re(b_1 (D_sU^*)^*)\\ 
&+ b_1|b_1|^2(z-z_*) + b_1b_{1t}^*(z-z_*)^*  + \Lcal^1_{2,\beta - 1}(m)  \\
%=&igb_1 -c_1^*D_sU^* -b_0\left(b_1 (D_sU^*)^* - 2\Re(b_1 (D_sU^*)^*)\right) + b_1|b_1|^2(z-z_*) + \Lcal^1_{2,\beta - 1}(m) \\
=&ig(b_1 + D_sU^*) + b_1|b_1|^2(z-z_*) + b_1b_{1t}^*(z-z_*)^* + \Lcal^1_{2,\beta - 1}(m). %-c_1^*D_sU^*  + b_0b_1^* D_sU^*
\endaligned 
$$

Putting everything together, we thus conclude
$$
V^*_{tt} = \Psi^*_3 + (2b_{1t}b^*_1 + b_1b_{1t}^*) (z - z_*)^* +  \Lcal^1_{2,\beta - 1}(m),
$$
where in $\Psi^*_3$ we have collected all the terms which can be written as the boundary trace of a holomorphic function. Note that
$$
\aligned
\Psi_3\cdot z_s &= -\si \theta_t - 3\Re(b_1^* b_{1t})(z-z_*)\cdot z_s +  \Lcal^1_{2,\beta - 1}(m) \\
&= \si \Im (e^{2i\nu_\pm} b_1) - 3\Re(b_1^* b_{1t})(z-z_*)\cdot z_s + \Lcal^1_{2,\beta - 1}(m),
\endaligned
$$ 
where we have used that $(z-z_*)\cdot z_s^\perp\in \Lcal^2_{2,\beta}(m)$. Therefore, by Proposition~\ref{prop:dirichletInt} and Lemma~\ref{lem:special_sol}, there exists a harmonic extension of $\psi_3 = \int \Psi_3\cdot z_s\, ds$ (recall that this means that $\psi_3$ as a primitive of $\Psi_3\cdot z_s$). The action of the Dirichlet-to-Neumann map on~$\psi_3$ is bounded as $\Gcal\psi_3 = \Psi_3\cdot z_s^\perp = O(|z - z_*|)$, as functions in $\Lcal^1_{2,\beta - 1}(\Ga)$ are bounded by $|z - z_*|^{1 + \lambda_\beta}$ by Lemma \ref{lem:sobolev}. 
\end{proof}

\subsection{The basic energy estimate}\label{ss.apriori-energy}

We are now ready to prove the fundamental energy estimate that we will use to establish Theorem~\ref{T.main}. The energy we shall define is modeled on the usual energy functionals for smooth water waves, but involves a careful interplay between weights to effectively capture the evolution of non-rigid corners.

It is convenient to distinguish between the lower-order contributions to the energy,
\be\label{def:energy-low}
E^\low_{k,\beta}(t)^2 := \|\theta\|^2_{H^k_{\beta + (k - 1)}} +  \|\log |z_\al|\|^2_{H^k_{\beta + (k - 1)}} + \|V\|^2_{H^k_{\beta + (k - 2)}} + \|\varphi_s\|^2_{H^k_{\beta + (k - 1)}} + \Fcal(z), 
\ee
and the higher-order part
\be\label{def:energy-high}
%\aligned
E^\high_{k,\beta}(t)^2 := \|\sqrt{\si}\pa_s^{k+1}\theta\|^2_{2, \beta + k - 1/2} +  \left\|\frac{1}{m^{\lambda}\sqrt{|z_\al|}}\Lambda^{1/2}(m^\lambda\pa_s^k\varphi_s)\right\|^2_{2, \beta + k - \frac{1}{2}}.
%\endaligned
\ee
The full energy functional is then defined as
$$
E_{k, \beta}(t)^2 = E^\low_{k,\beta}(t)^2 + E^\high_{k,\beta}(t)^2.
$$
Here $k\geq 2$ is an integer and, following the definition of our weighted Sobolev space of half-integer order in Section~\ref{s.preliminaries}, we fix some real $\lambda \equiv \lambda(k)$ in such a way that both $(\beta - \lambda) + (k - \frac{1}{2})$ and $(\beta - \lambda) + (k - 1)$ satisfy Muckenhaupt condition, that is
$$
(\beta - \lambda) + (k - 1) \in\left(-\frac{1}{2}, 0\right).
$$

Concerning the Rayleigh--Taylor condition, recall that $\si > 0$ away from the corner tip by the discussion in Section~\ref{s.preliminaries} and that Lemma~\ref{lem:si} implies 
\[
|z - z_*|^{-1}\si \approx 1
\]
on the whole interface provided that $b_1 \neq 0$ and that the opening angle $2\nu \in \left(0, \frac\pi2\right)$ (also recall that Lemma~\ref{lem:si} is only valid as long as $-\frac{\pi}{2\nu} + \frac52 < \beta \leq \frac12$). 

We are now ready to prove our basic estimate. Again, in the proof we will use a number of results from Section~\ref{s.inverse} and Appendix~\ref{A.A}. As a side remark, the exponential bound in \eqref{eq:energy} is not optimal, but an artifact of our definition of the energy as we chose to control $\log |z_\al|$ instead of $|z_\al|$. We could prove a polynomial bound by slightly modifying the energy. This is irrelevant for the purpose of proving our local wellposedness theorem, however.

\begin{lemma}\label{lem:apriori} Let $k\geq 2$ and let $(z,V)$ be a (sufficiently regular) solution of \eqref{eqs:evolGa}, where $V$ is given by \eqref{eq:v-rep}-\eqref{eq:CauchyU}. Suppose that the (degenerate)  Rayleigh--Taylor condition 
	$$
	\inf_{\al\in I}\frac{\si(\al, t)}{|z(\al, t) - z_*(t)|} > A
	$$ 	
holds.	Then we have the a priori energy estimate
	\be\label{eq:energy}
	\frac{d}{dt} E_{k, \beta}(t)^p \, \lesssim \, \frac{1}{A^p} \exp C E_{k, \beta}(t)^p
	\ee
	for some $p\in \Nbb$ and some constant $C>0$.
\end{lemma}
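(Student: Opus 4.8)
The plan is to carry out a standard energy-method argument, differentiating $E_{k,\beta}(t)^2$ in time and showing that each resulting term is bounded by a polynomial in the energy, with the only dangerous denominators being powers of the Rayleigh--Taylor lower bound $A$. First I would handle the lower-order part $E^\low_{k,\beta}(t)^2$: its time derivative is controlled directly from the evolution equations in Lemma~\ref{lem:param} and Lemma~\ref{lem:si}, which give $(\log|z_\al|)_t = \varphi_s$, $\Theta_t \in \Lcal^{k+1/2}_{2,\beta+k-1/2}(m)$, $\frac{d\nu_\pm}{dt} = \Re(ib_1e^{2i\nu_\pm})$, $\frac{db_0}{dt} = ig$, and the ODE for $b_1$; all of these are bounded by (the exponential of a polynomial in) the energy, since $U^*_t\in\Lcal^{k+1/2}_{2,\beta+k-3/2}(m)$ and $\varphi_s$ itself is part of the energy. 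The arc-chord quantity $\Fcal(z)$ is propagated in the usual way using $z_t=V$ and the fact that $V$ is Lipschitz away from the corner. This part is routine and produces only polynomial-in-$E$ contributions with no inverse power of $A$.

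Next I would treat the higher-order part. For the term $\|\sqrt{\si}\,\pa_s^{k+1}\theta\|^2_{2,\beta+k-1/2}$ I would differentiate under the integral, producing three types of contributions: (i) the term where $\pa_t$ hits the weight $\si$, which by Lemma~\ref{lem:si_t} satisfies $\si_t=O(|z-z_*|)\approx O(\si)$ so this is absorbed; (ii) the term where $\pa_t$ hits the weight $m^{2\beta+2k-1}$, which is harmless since $m$ is time-independent as a function of $\al$ and $m(z(\al,t))\approx m(\al)$; and (iii) the main term $\int \si\, m^{2\beta+2k-1}\,\pa_s^{k+1}\theta\;\pa_s^{k+1}\theta_t$. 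Here one uses the key structural identity $\theta_{ts}=H\varphi_{ss}+\Rcal$ (or its $\beta=\frac12$ variant with $m^{-\lambda'}Hm^{\lambda'}$) from Lemma~\ref{lem:param}, so that $\pa_s^{k+1}\theta_t = H\pa_s^k\varphi_{ss} + \pa_s^k\Rcal + [\text{lower order}]$, with $\pa_s^k\Rcal\in\Lcal_{2,\beta+k}(m)$. The point of pairing $\pa_s^{k+1}\theta$ against a half-derivative-level quantity $\varphi_s$ is that the highest-order term, after using that $\Lambda^{1/2}$ and $H$ are adjoint-related and commute suitably, cancels against the corresponding main term in the time derivative of $\|m^{-\lambda}|z_\al|^{-1/2}\Lambda^{1/2}(m^\lambda\pa_s^k\varphi_s)\|^2$. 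For that second energy term, $\varphi_{st}$ is obtained by differentiating $\varphi_s=V_s\cdot z_s=\Re(z_s^2D_sV^*)$ and invoking the Euler equation $V_t^* + ig = -i\si z_s^*$ (equivalently the second line of \eqref{eqs:evolGa}), which is precisely where $\si$ — and hence a $\sqrt{\si}$ weight, and an inverse power of $A$ when one later estimates $1/\sqrt{\si}\lesssim A^{-1/2}m^{-1/2}$ — enters. The cancellation between the $\theta$-energy and the $\varphi_s$-energy is the usual mechanism that makes the water-wave energy estimate close; all remaining (non-cancelling) terms are estimated using the Cauchy-integral bounds (Lemma~\ref{lem:BRbasic}, Corollary~\ref{lem:BRcorrections}, Lemma~\ref{lem:BRders}, Lemma~\ref{lem:BRcancel}), the commutator and Hardy estimates of Appendix~\ref{A.A}, and the Dirichlet-to-Neumann estimates of Section~\ref{s.inverse}, together with Lemma~\ref{lem:sobolev} and Remark~\ref{rem:ga} to control $L^\infty$ norms of $z_s,\theta_s,V,b_1$, etc., by the energy.

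The main obstacle — and the place where the inverse power $A^{-p}$ is generated — is the control of the terms in which, after the cancellation, one is left with an expression of the form $\int \si^{1/2} m^{\cdots}(\text{top order in }\theta)\cdot \si^{-1/2}m^{\cdots}(\text{commutator/remainder})$ or where one must estimate $\Gcal\psi$-type quantities divided by $\si$ to recover the pressure contribution. Using the hypothesis $\si(\al,t)\geq A\,|z(\al,t)-z_*(t)|\approx A\,m(\al)$, one replaces $\si^{-1/2}$ by $A^{-1/2}m^{-1/2}$, and the extra negative power of $m$ is absorbed into the weighted norm because the corresponding factor carries a compensating positive power of $m$ (this is exactly why the energy is defined with the weights $\beta+k-\frac12$ on the $\theta$ and $\varphi_s$ pieces, matching the $\Lcal$-scaling). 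Tracking the total number of times this substitution is used through all the error terms yields the exponent $p$ and the constant $C$; one then chooses $p$ to be the least common denominator of all the half-integer powers of $\si$ that appear, so that $E_{k,\beta}^p$ is differentiable and the estimate $\frac{d}{dt}E_{k,\beta}^p\lesssim A^{-p}\exp(CE_{k,\beta}^p)$ follows by collecting terms. The bookkeeping of weights in the $\beta=\frac12$ borderline case — where one must carry the auxiliary parameter $\lambda'$ and the extra commutators with $(m/z)^{\lambda'}$ as in \eqref{eq:R12} and \eqref{E.cacambio} — is the most delicate part of the routine computation, but it introduces no new ideas beyond those already used in Lemmas~\ref{lem:param}--\ref{lem:si_t}.
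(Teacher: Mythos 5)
Your proposal is correct and follows essentially the same route as the paper: the lower-order part of the energy is controlled by the evolution lemmas of Subsection~\ref{ss.PE}, and the higher-order part closes via the exact cancellation between the main term of $\frac{d}{dt}\|\sqrt{\si}\,\pa_s^{k+1}\theta\|^2$ (obtained from $\theta_{ts}=H\varphi_{ss}+\Rcal$, or its $m^{\lambda'}$-conjugated variant at $\beta=\tfrac12$) and the $-\si\pa_s^{k+1}\theta$ contribution to $\varphi_{st}$ in the $\Lambda^{1/2}$-term, with all remainders handled by the commutator, Hardy, Cauchy-integral and Dirichlet-to-Neumann estimates and the Rayleigh--Taylor hypothesis supplying the $A^{-p}$ factors wherever one divides by $\si$. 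This matches the paper's argument in both structure and the key identities used.
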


%
%
%%% Terms with $\theta$
%
%
\begin{proof}
	We prove the claim for $k=2$; the proof for higher~$k$ is analogous. The first observation is that we only need to consider time derivatives of the highest-order terms in $\theta$ and $\varphi_{s}$, as the estimates on remaining terms follow from the lemmas proven in Subsection~\ref{ss.PE}. 
	
	We claim that		
	$$
	\frac{1}{2}\frac{d}{dt}\left( \big\|\sqrt{\sigma}\pa_s^3\theta\big\|^2_{2, \beta + 3/2} + 	 \big\|\Lambda^{1/2}\big(m^{\lambda}\pa_s^2\varphi_{s}\big)\big\|^2_{2, \beta -\lambda + 3/2} \right) 
	$$
	is bounded by the right hand side of \eqref{eq:energy}. 
	
	To see this, we first show that
	\be\label{en1}
	\frac{1}{2}\frac{d}{dt} \big\|\sqrt{\sigma}\pa_s^3\theta\big\|^2_{2, \beta + 3/2} = \int m^{2\beta + 3 -\lambda} \si \pa_s^3\theta \, \Lambda (m^\lambda\pa_s^2\varphi_{s}) d\al +  \Rcal_1(t),
	\ee
	where the remainder $\Rcal_1(t)$ is bounded by the right hand side of~\eqref{eq:energy}. To prove this, we start with the identity
	$$
	\frac{1}{2}\frac{d}{dt} \big\|\sqrt{\sigma}\pa_s^3\theta\big\|^2_{2, \beta + 3/2} = \int m^{2(\beta + 3/2)} \si \pa_s^3\theta \,  (\pa_s^3\theta)_{t}  \, ds + \int m^{2(\beta + 3/2)} \Big(\frac{\si_t}{\si} + \varphi_s\Big) \, \si|\pa_s^3\theta|^2 ds,
	$$
	where $ds = |z_\al|d\al$ is the line element. The last two terms clearly satisfy the desired estimate because $\varphi_s = \frac{|z_\al|_t}{|z_\al|} = O(1)$ and $\frac{\si_t}{\si} = O(1)$ by our assumptions and Lemmas~\ref{lem:si} and~\ref{lem:si_t}. 
	
	To estimate $(\pa_s^3\theta)_{t}$, we successively interchange $\pa_t$ with $\pa_s = \frac{1}{|z_\al|}\pa_\al$ and use the fact that $\varphi_s = \frac{|z_\al|_t}{|z_\al|}$ to write  
	$$
	\aligned
	(\pa_s^3\theta)_{t} &= \pa^3_s\theta_{t} - \pa_s^2(\varphi_s \theta_{s}) - \pa_s(\varphi_s \pa_s^2\theta) - \varphi_s \pa_s^3\theta \\
	&=\pa_s (m^{ - \lambda}H(m^{\lambda}\pa^2_s\varphi_{s})) + \Lcal_{2, \beta + 2}(m).
	\endaligned
	$$
	Here, we repeatedly used Lemma \ref{lem:sobolev} and our regularity assumptions on $\theta$ and $\varphi_s$, together with Lemmas \ref{lem:param}, \ref{lem:trHilbert} and \ref{lem:derFphi} to show that we can write 
	\be\label{eq:th_tsslambda}
	\pa^2_s\theta_{t} = m^{-\lambda}H(m^{\lambda}\pa^2_s\varphi_{s}) + \Lcal^1_{2, \beta + 2}(m)
	\ee
	in the allowed range of $\beta$'s. In fact, note that we an write
	\be\label{eq:th_tslambda}
	\pa_s\theta_{t} = m^{-\lambda + 1}H(m^{\lambda - 1}\pa_s\varphi_{s}) + \Lcal^2_{2, \beta + 2}(m).
	\ee
	In the case $\beta = \frac12$, this is precisely the relation \eqref{eq:th_ts12} with $\lambda' := \lambda - 1\in(\frac12, 1)$, while in the case $\beta \in(- \frac12,\frac12)$ this is the relation \eqref{eq:th_ts} up to a commutator with $m^{\lambda - 1}$, which can be readily estimated using Lemma~\ref{lem:derFphi}. Recall that, by construction, the weight with power $\beta - \lambda + 1$ satisfies the Muckenhaupt $A_2$-condition.
	
	Taking a derivative of the remaining term in \eqref{eq:th_tslambda}, and using Lemma \ref{lem:trHilbert}, we find
	\be\label{eq:HpaScomm}
	\aligned
	\pa_s \left(\frac{1}{m^{\lambda - 1}}H(m^{\lambda - 1}\pa_s \varphi_s)\right) =& \frac{1}{m^\lambda}H(m^\lambda  \pa^2_s\varphi_s) +  \frac{1}{m^{\lambda} |z_\al|}[H, |z_\al|] (m\pa_s(m^{\lambda - 1}\pa_s\varphi_s)) \\ 
	&+  \frac{\lambda - 1}{m^\lambda }[H, m_s] (m^{\lambda - 1}\pa_s\varphi_s) + \Lcal^1_{2,\beta + 2}\\ 
	&= \frac{1}{m^\lambda}H(m^\lambda  \pa^2_s\varphi_s) + \Lcal^1_{2,\beta + 2}(m).
	\endaligned
	\ee
	Here we have used Lemma \ref{lem:derFphi} and the fact that $|z_\al| \in H^2_{\beta + 1}(m)$. In particular, we have proved the identity \eqref{eq:th_tsslambda}. Recalling the identity $H\pa_\al  = \Lambda$, \eqref{en1} follows.

	%%% Terms with $\varphi$ 

Now we shall show that
	\be\label{en2}
	\frac{d}{dt} \left\|\frac{1}{m^\lambda\sqrt{|z_\al|}}\Lambda^{1/2}\big(m^{\lambda}\pa_s^2\varphi_{s}\big)\right\|^2_{2, \beta + 3/2} = -\int m^{2\beta + 3 -\lambda} \si \pa_s^3\theta \, \Lambda (m^\lambda\pa_s^2\varphi_{s}) d\al  +  \Rcal_2(t),
	\ee
	where the remainder $\Rcal_2(t)$ is again bounded by the right hand side of~\eqref{eq:energy}. To keep the notation simple, we set
	\be\label{eq:beta'}
	\beta' := \beta - \lambda + 1
	\ee
	and note that $\lambda$ is chosen in such a way that both $\beta'$ and $\beta' + \frac{1}{2}$ satisfy the Muckenhaupt $A_2$-condition, i.e. $\beta' \in\left(-\frac{1}{2}, 0\right)$. 
	
	Taking the time derivative, we find
	$$
	\frac{1}{2}\frac{d}{dt} \left\|\frac{1}{\sqrt{|z_\al|}}\Lambda^{1/2}\big(m^{\lambda}\pa_s^2\varphi_{s}\big)\right\|^2_{2, \beta' + 1/2} = \int_{-\pi}^\pi m^{2\beta' + 1}  \Lambda^{1/2}\big( m^{\lambda}  (\pa_s^2\varphi_{s})_t\big) \Lambda^{1/2}\big(m^\lambda \pa_s^2\varphi_{s}\big)d\al.   
	$$
	Noting that 
	\be\label{eq:varphist}
	\varphi_{st} = -\varphi_s^2 + \theta_t^2 -\si \theta_s\in H^2_{\beta + 1}(m),
	\ee
	by Lemmas \ref{lem:param} and \ref{lem:si}, it is not hard to show that 
	\be\label{eq:varphissst}
	\aligned
	(\pa_s^2\varphi_{s})_t &=- 3(\pa_s\varphi_{s})^2 - 4\varphi_{s} \pa_s^2\varphi_{s} + \pa_s^2(- \si\theta_s + \theta_t^2) \\
	&= -4\varphi_{s} \pa_s^2\varphi_{s} - 2\theta_t m^{-\lambda}H(m^\lambda \pa^2_s\varphi_{s}) - \si \pa_s^3\theta + \Lcal^1_{2,\beta + 2}(m).
	\endaligned
	\ee
	Indeed, clearly $(\pa_s\varphi_{s})^2 \in \Lcal^1_{2,\beta + 2}(m)$ by Lemma \ref{lem:sobolev}, while \eqref{eq:th_tsslambda} and Lemma \ref{lem:param} ensure that
	$$
	\frac{1}{2}\pa_s^2(\theta_t^2) =  \theta_t \pa_s^2\theta_{t} + (\theta_{ts})^2 = - \theta_t m^{-\lambda}H(m^\lambda \pa^2_s\varphi_{s}) +\Lcal^1_{2,\beta + 2}(m).
	$$
	Finally, we know that $\si \in H^3_{\beta + 1}(m)$ by Lemma \ref{lem:si} and therefore 
	\be\label{aux:dersith}
	\pa_s^2(\si \theta_s)  - \si \pa_s^3\theta = 2 \si_s \theta_{ss} + \si_{ss}\theta_s \in \Lcal^1_{2,\beta + 2}(m).
	\ee
	Thus \eqref{eq:varphissst} follows. 
	
	Using the decomposition~\eqref{eq:varphissst}, we now claim 
	\be\label{eq:varphissst2}
	\Lambda^{1/2}(m^\lambda (\pa_s^2\varphi_{s})_t) = - \Lambda^{1/2}(m^\lambda\si \pa_s^3\theta )  + \Lcal_{2, \beta' + \frac{1}{2}}(m).  
	\ee
	To see this, note that when the half-derivative $\Lambda^{1/2}$ hits the terms in $m^\lambda(\pa_s^2\varphi_{s})_t$ that are in $\Lcal^1_{2,\beta' + 1}(m)$, one gets terms in $\Lcal_{2, \beta' + \frac{1}{2}}(m)$ by Lemma~\ref{lem:halfDer}. On the other hand, as $\varphi_s = O(1)$, $\pa_s\varphi_{s} = O(m^{-1  + \lambda_\beta})$ by Lemma \ref{lem:sobolev}, we have 
	$$
	\Lambda^{1/2}(m^\lambda \varphi_{s}\pa_s^2\varphi_{s}) = \varphi_{s}\Lambda^{1/2}(m^\lambda \pa_s^2\varphi_{s}) +  \left[ \Lambda^{1/2}, \varphi_s\right]\left(m^{\lambda}  \pa_s^2\varphi_{s}  \right) \in \Lcal_{2, \beta' + \frac12}(m),
	$$
	by Lemma \ref{lem:comm_gb}. The same estimates hold for $\theta_t$ (i.e. $\theta_t = O(1)$, $\pa_s\theta_{t} = O(m^{-1  + \lambda_\beta})$), and therefore
	$$
	\Lambda^{1/2}\left(\theta_t H(m^\lambda \pa^2_s\varphi_{s})\right) = \theta_t\Lambda^{1/2}H(m^{\lambda}\pa_s^2\varphi_{s}) + \left[\Lambda^{1/2}, \theta_t\right]H(m^\lambda \pa^2_s\varphi_{s}) \in \Lcal_{2, \beta' + \frac12}(m)
	$$
 	by Lemmas \ref{lem:comm_gb} and \ref{lem:trHilbert}. We have thus proven~\eqref{eq:varphissst2}.

	It remains to consider the term in \eqref{eq:varphissst2} that involves $\si$. We have
	$$
	-\int m^{2\beta' + 1}  \Lambda^{1/2}\left( m^{\lambda} \si \pa_s^3\theta  \right) \Lambda^{1/2}\left(m^\lambda \pa_s^2\varphi_{s}\right)d\al = \int (m^{\lambda} \si \pa_s^3\theta) \Lambda^{1/2}\big( m^{2\beta' + 1}  \Lambda^{1/2}\big(m^\lambda \pa_s^2\varphi_{s}\big)\big)d\al.
	$$
	Let us now write
	$$
	\Lambda^{1/2}\left( m^{2\beta' + 1}  \Lambda^{1/2}\left(m^\lambda \pa_s^2\varphi_{s}\right)\right) = -m^{2\beta' + 1}\Lambda \left(m^\lambda \pa_s^2\varphi_{s}\right) +\left[\Lambda^{1/2},  m^{2\beta' + 1}\right] \Lambda^{1/2}\left(m^\lambda \pa_s^2\varphi_{s}\right).
	$$
	Since by assumption $\Lambda^{1/2}(m^\lambda \pa_s^2\varphi_{s}) \in\Lcal_{2,\beta' + 1/2}(m)$, Lemma \ref{lem:comm_weight} implies
	$$
	m^{\lambda}\left[\Lambda^{1/2},  m^{2\beta' + 1}\right] \Lambda^{1/2}\left(m^\lambda \pa_s^2\varphi_{s}\right)\in \Lcal_{2, -(\beta+1)}(m).
	$$
	Therefore, using that $\si \pa_s^3\theta\in \Lcal_{2, \beta + 1}(m)$, we conclude   
	$$
	-\int  m^{2\beta' + 1}\Lambda^{1/2}\left( m^{\lambda} \si \pa_s^3\theta  \right) \Lambda^{1/2}\left(m^\lambda \pa_s^2\varphi_{s}\right)d\al = -\int  m^{2\beta -\lambda + 3}\si \pa_s^3\theta \, \Lambda(m^\lambda\pa_s^2\varphi_{s}) d\al + \, \text{`bounded'},
	$$
	where `bounded' stands for terms that are bounded by the right hand side of~\eqref{eq:energy}.
This proves~\eqref{en2}, so the lemma follows.  
\end{proof}

%% Section 3: Regularization 

\section{The local existence theorem}
\label{s.regularization}

We are now ready to state and prove a precise version of our local wellposedness result, Theorem~\ref{T.main}. 

\subsection{Statement of the result}

In the setting we consider, we will describe the fluid by three real functions, $(\Theta,\log|z_\al|,U_1)$, and five parameters (two real and three complex): $(b_0,b_1,z_*,\nu_+,\nu_-)$. 

We  denote by ${\mathcal{B}}^k_{\beta}(m)$, with $\beta \in\left(-\frac{1}{2}, \frac{1}{2}\right]$ and $k\geq 2$, the Banach space of all   
\begin{gather*}
(z_*, \nu_+, \nu_-, \Theta, \log|z_\al|) \in \Cbb \times \Rbb \times \Rbb \times \Lcal^{k+1}_{2, \beta + k}(I, m) \times H^{k+1/2}_{\beta + k - 1/2}(I, m),  \\
	(b_0, b_1, U_1) \in \Cbb\times\Cbb\times \Lcal^{k + 3/2}_{2,\beta + k -1 /2}(I, m),
\end{gather*}
equipped with the natural norm. As we shall see next, the former 5-tuple characterizes the interface, and the latter triple characterizes the velocity on the interface.
(One can also consider the case $k=1$ using the same ideas, with some minor modifications, but we will not pursue this idea here.) 

Let us start with the interface. Given $(z_*, \nu_+, \nu_-, \Theta, \log|z_\al|)$ as above, we define the tangent angle and interface curve as
\be\label{z}
z := z_* + \int_{-\pi}^\al e^{\log|z_\al| + i\theta}d\al', \qquad \theta := \left(\nu_+\chi_+ + \nu_- \chi_-\right) + \Theta\,,
\ee
where $\chi_\pm$ are fixed cutoffs as in the introduction to Section \ref{s.apriori}.  We assume that the opening angle $2\nu$ and $\beta$ satisfy the compatibility condition 
\be\label{eq:nu}
2\nu \in \left(0, \frac{\pi}{2}\right), \qquad -\frac{\pi}{2\nu} + 3 < \beta + \frac{1}{2}
\ee
and that the curve~$z$ satisfies the arc-chord condition
\be\label{eq:Fcal}
\Fcal(z) < A
\ee
for some possibly large constant $A>0$. Let us recall that the arc-chord term~$\Fcal(z)$ is in fact included in the energy functional.

On the other hand, given $(b_0, b_1, U_1)$ as above, we define the complex conjugate of the  boundary trace of the velocity as 
\be\label{V}
V^* := b_0 + b_1(z - z_*) + U^*, \qquad  U := U_1 + i \int \Gcal(z)U_1 ds, %\int_{\al_*}^\al D_{\al'}U^*(\al', t)z_{\al}(\al', t)d\al'.
\ee
where $\Gcal(z)$ is the Dirichlet-to-Neumann operator associated to $z$.

We assume that the Rayleigh--Taylor condition
\be\label{RT}
\inf_{\al\in I}\frac{\si}{|z - z_*|} > 1/A
\ee	
holds for some constant $A > 0$, where $\si$ (i.e., minus the normal derivative of the pressure) is given in terms of the above unknowns as
\be\label{eq:si-dvn}
\si(z, V) := \Gcal(z)\psi + gz_{1s} + V_s \cdot V^\perp, \qquad \psi := -gz_{2} - \frac{1}{2}|V|^2.
\ee
In view of the discussion in Section \ref{s.preliminaries} and of Lemma \ref{lem:si} (as elaborated in the proof of Equation~\eqref{E.RTfails} below), a Rayleigh--Taylor condition of the form \eqref{RT} holds provided that
		\be\label{RT-zero}
		|b_1(t)| > 1/A'>0
		\ee
(that is,  $\left|\frac{d\overline{v}}{dz}(z_*(t), t)\right| \neq 0$ is bounded away from zero) and the regularity of the interface and the arc-chord condition remain bounded. Therefore, our basic assumption concerning the Rayleigh--Taylor condition will be~\eqref{RT-zero}.

Conditions \eqref{eq:nu}, \eqref{eq:Fcal} and \eqref{RT} (which, as we have mentioned, can be replaced by~\eqref{RT-zero}) define an open set $\Ocal^k_\beta(m)\subseteq\Bcal^k_\beta(m)$. With some abuse of notation, in what follows we shall identify $(z,V)$ with the quantities $(\Theta,\log|z_\al|,U_1,b_0,b_1,z_*,\nu_+,\nu_-)$ we use to define them, and write $(z, V)\in\Ocal^k_\beta(m)$ whenever the $(z,V)$ defined by \eqref{z} and \eqref{V} are such that the conditions~\eqref{eq:nu}, \eqref{eq:Fcal} and \eqref{RT} are satisfied. The meaning of the assertion $(z,V)\in \Bcal^k_\beta(m)$ is defined analogously.

The local wellposedness result we establish, which was informally stated as Theorem~\ref{T.main} in the Introduction, is the following:

\begin{theorem}\label{thm:main} 
The water wave system,
\be\label{eqs:sys}
\aligned
z_t = V\qquad V_t + ig = iz_s \si 
\endaligned
\ee
is locally wellposed on $\Ocal^k_\beta(m)$. In particular, for each initial datum $(z^0, V^0)\in\Ocal^k_\beta(m)$, there exists some time $T>0$ such that~\eqref{eqs:sys} has a unique solution $(z, V)\in\Ccont([0, T], \Ocal^k_{\beta}(m))\cap \Ccont^1([0, T], \Bcal^{k-1}_{\beta}(m))$ with initial datum 
$$
z(\cdot, 0) = z^0, \quad V(\cdot, 0) = V^0.
$$
Furthermore, the evolution of the constants $(z_*,\nu_+,\nu_-,b_0,b_1)$ is given by the system of ODEs defined by~\eqref{eq:nudt} and~\eqref{eq:db}.
\end{theorem}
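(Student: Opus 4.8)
The plan is to follow the standard three-step scheme for quasilinear free-boundary problems — regularize, obtain $\varep$-uniform a priori bounds, pass to the limit — and then treat uniqueness and continuous dependence separately, the only non-classical ingredient being forced by the singular weights $m^\ga$ that pervade the spaces $\Lcal^{k+1}_{2,\beta+k}(m)$, $H^{k+1/2}_{\beta+k-1/2}(m)$, $\Lcal^{k+3/2}_{2,\beta+k-1/2}(m)$: ordinary mollification is not bounded on these spaces, so in place of convolution with a fixed bump I would use the variable-step convolution operators of Appendix~\ref{A.A}, whose step size is coupled to the distance to the corner point $z_*$. Accordingly I would introduce two nested regularizations: a first one that smooths the three genuine unknowns $(\Theta,\log|z_\al|,U_1)$ while keeping the parameters $(z_*,\nu_\pm,b_0,b_1)$ and the representation formulas \eqref{z}--\eqref{V} intact (so that the holomorphy constraint $U^*=\Ccal(z)U^*$ and the corner ansatz are preserved), and a second one that tames the lower-order transport terms so that the truncated system becomes a genuine ODE in the Banach space $\Bcal^k_\beta(m)$, to which a fixed-point argument applies on an interval $[0,T_\varep]$.

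Second, I would show that the a priori energy estimate of Lemma~\ref{lem:apriori} survives the regularization with constants independent of $\varep$. This amounts to checking (i) that the commutators of the variable-step convolution with $\Ccal$, $H$, $\Lambda^{1/2}$ and $\pa_s$ are bounded on the weighted spaces — precisely the commutator estimates gathered in Appendix~\ref{A.A} — and (ii) that the structural identities of Subsection~\ref{ss.PE}, in particular the ODEs \eqref{eq:nudt} and \eqref{eq:db} for $(z_*,\nu_\pm,b_0,b_1)$ and the asymptotics \eqref{eq:si-rep} for $\si$, degrade only by lower-order regularized errors, so that the symmetrizing cancellation between the $\si\,\pa_s^{k+1}\theta$ term in \eqref{en1} and the $\Lambda^{1/2}(m^\lambda\pa_s^k\varphi_s)$ term in \eqref{en2} is retained. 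Integrating $\frac{d}{dt}E_{k,\beta}^p\lesssim A^{-p}\exp(CE_{k,\beta}^p)$ then gives a uniform existence time $T>0$ and a uniform bound $\sup_{[0,T]}E_{k,\beta}(t)\le M$. One must also check that the constraints defining $\Ocal^k_\beta(m)$ persist: the arc-chord constant $\Fcal(z)$ is Lipschitz along the flow, and the degenerate Rayleigh--Taylor condition \eqref{RT} is controlled by the single scalar $|b_1|$, which stays bounded away from zero on a short time interval by the ODE \eqref{eq:db} (this is the content of \eqref{RT-zero}); hence the regularized solutions $(z^\varep,V^\varep)$ remain in a fixed bounded subset of $\Ocal^k_\beta(m)$ on $[0,T]$.

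Third, I would pass to the limit. The uniform bound together with \eqref{eqs:sys} controls $\pa_t(z^\varep,V^\varep)$ uniformly in $\Bcal^{k-1}_\beta(m)$, so a standard compactness argument (Aubin--Lions away from $z_*$ plus Arzel\`a--Ascoli, with the weight supplying equicontinuity near the corner) yields a subsequence converging in $\Ccont([0,T],\Bcal^{k-1}_\beta(m))$ to a limit $(z,V)$ that inherits the constraint $U^*=\Ccal(z)U^*$, the formulas \eqref{z}--\eqref{V} and the compatibility conditions \eqref{eq:nu}, \eqref{eq:Fcal}, \eqref{RT-zero}, hence lies in $\Ocal^k_\beta(m)$; passing to the limit in the equations shows $(z,V)$ solves \eqref{eqs:sys} and that $(z_*,\nu_\pm,b_0,b_1)$ solve the ODE system \eqref{eq:nudt}--\eqref{eq:db}. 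Weak lower semicontinuity gives membership in the space with the $L^\infty$-in-time topology; upgrading this to $\Ccont([0,T],\Ocal^k_\beta(m))$ requires continuity of the energy $E_{k,\beta}(t)$ in time, which I would obtain by a Bona--Smith-type argument — comparing the solution with its own regularization and using the uniform estimate to control the top-order discrepancy — and $\Ccont^1([0,T],\Bcal^{k-1}_\beta(m))$ then follows from \eqref{eqs:sys}. For uniqueness and continuous dependence I would run a difference estimate one derivative below the solution space: the differences of the parameters obey the linearization of \eqref{eq:nudt}--\eqref{eq:db} and are handled by Gr\"onwall, while the differences of $(\Theta,\log|z_\al|,\varphi_s)$ satisfy a linear system whose weighted energy is estimated exactly as in Lemma~\ref{lem:apriori} with the highest derivative removed, the same $V_t\approx i\si z_s$ structure providing the cancellation.

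I expect the main obstacle to be steps one and two: constructing the variable-step regularization so as to \emph{simultaneously} (i) keep the regularized unknowns in the correct weighted spaces with $\varep$-independent bounds, (ii) not destroy the holomorphy constraint $U^*=\Ccal(z)U^*$ nor the corner ansatz, and (iii) control the large number of weighted commutators that reappear when the delicate energy identity of Lemma~\ref{lem:apriori} is re-derived for the regularized system. The feature specific to corners that makes a single convolution insufficient is that the regularization scale must degenerate at $z_*$ at the rate dictated by $m$, which is exactly why two nested regularizations — one for the unknowns, one for the transport terms — are needed, and why the commutator lemmas of Appendix~\ref{A.A} must be proven in the variable-step setting rather than the classical one.
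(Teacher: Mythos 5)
Your overall scheme (two-level regularization built on variable-step convolutions, $\ep$-uniform energy estimates, limit passage, Bona--Smith and difference estimates) matches the architecture of Section~\ref{s.regularization}, but you have misidentified the first regularization, and that is where the essential mechanism lives. The paper's $\ep$-system does not mollify the unknowns: it adds an artificial viscosity $\ep\,\pa_s(m^2\Phi_s)$ to the tangential part of $V_t$ (Equation~\eqref{eqs:sys-e}), and this term is indispensable for two separate reasons. First, once the $\de$-convolution $A_\de$ is applied to the right-hand side to turn the system into a Banach-space ODE, the antisymmetric cancellation between \eqref{en1} and \eqref{en2} is \emph{not} simply ``retained up to lower-order errors,'' as you assert: the convolution produces genuinely top-order terms that can only be closed by trading them against the negative-definite contribution $-\ep\|\pa_s^{k+2}\Phi_s\|^2$ of the viscosity via $ab\le \tfrac{\ep}{N}a^2+\tfrac{N}{4\ep}b^2$ (see the proof of Lemma~\ref{lem:Tepsilon}). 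Second, the resulting $1/\ep$ term in~\eqref{eq:1/eps} is absorbed, when proving the $\ep$-uniform bound of Lemma~\ref{lem:existence}, by augmenting the energy to $E^\ep_{k,\beta}$ with the extra summand $\ep\,(\|\pa_s^{k+1}\Phi_s\|^2_{2,\beta+k}+\|\pa_s^{k+1}|z_\al|\|^2_{2,\beta+k})$ and by mollifying the \emph{initial data} at scale $\ep^{1/4}$ so that this augmented energy is bounded independently of $\ep$. Your proposal contains neither the dissipative term nor the $\ep$-weighted energy, so the derivative loss introduced by the convolution has nowhere to go and the uniform estimate does not close.

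A second concrete gap concerns the holomorphy constraint. You claim your regularization keeps $U^*=\Ccal(z)U^*$ intact, but the difficulty is its propagation under the mollified \emph{evolution}, not at $t=0$: once $A_\de$ acts on the right-hand side of the $V$-equation, $W_t-z_tD_sW$ is no longer the trace of a holomorphic function, and the paper must add the correction term $\Xi$ of~\eqref{Xi} (built from the Dirichlet-to-Neumann map) to restore it, then verify that $\Xi$ is bounded in $\Lcal^{k+1}_{2,\beta+k}(m)$ uniformly in $\de$ and vanishes when $\de=0$. You flag this as an obstacle but offer no resolution. Finally, the paper takes the limit $\ep\to0$ by showing $(z^\ep,V^\ep)$ is Cauchy in $\Bcal^k_\beta(m)$ via a difference estimate one derivative down, with the $\max\{\sqrt{\ep},\sqrt{\ep'}\}$ forcing controlled by the higher-order energy $\t E^\ep_{k,\beta}$; your Aubin--Lions route would only deliver the solution in a weaker topology, and the Bona--Smith upgrade you invoke itself requires the uniform higher-order control that your scheme has not established.
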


\begin{remark}\label{R.continuation}
For future reference, let us record that the continuation criterion that follows is that the solution exists as long as $(z,V)\in\Ocal ^k_{\beta}(m)$. Therefore, the solution exists at least until its $E_{k,\beta}$-energy (which we have shown to be equivalent to $\|(z,V)\|_{\Bcal^k_{\beta}(m)} + \|(z_t,V_t)\|_{\Bcal^{k-1}_{\beta}(m)} + \Fcal(z)$) becomes unbounded, the angle~$\nu$ stops satisfying the condition~\eqref{eq:nu}, or the arc-chord condition~\eqref{RT} fails (that is, the infimum in~$t$ of $\inf_\al|z(\al,t)-z_*(t)|\si(\al,t)$ is 0). 

%Note that the second condition of \eqref{eq:nu} can only break if the opening angle $2\nu$ approaches $\frac\pi2$ and we have chosen $\beta < \frac12$, for in that case the corresponding lower bound $-\frac{\pi}{2\nu} + 3$ approaches $1$. Of course, this is not an issue if we choose the maximal allowed $\beta = \frac12$.
\end{remark}

\begin{remark}
	As evolution of the angle of the corner is given by the ODE system~\eqref{eq:nudt}, the corners are not rigid. Also, note that the velocity field~$v(z,t)$ is of class $C^{1,\lambda}$ up to the boundary of the water region~$\Om(t)$ by Corollary~\ref{C.reg-v}, for some $\lambda>0$.
\end{remark}

Let us discuss the strategy of the proof of this result before getting bogged down in technicalities. The strategy to pass from the a priori energy estimates proven in Section~\ref{s.apriori} to a local wellposedness theorem is standard, see e.g.~\cite{MajBer}. There are many ways to regularize free boundary Euler equations; here, we adapt the approach in \cite{ambrose}, \cite{CCG} and \cite{CoEnGr21}. What makes this step nontrivial is the presence of singular weights, which we handle using functions that exactly vanish at the corner points. This property is certainly not preserved by standard mollifiers. We will strive to focus on the non-standard parts of the argument.

We regularize evolution equations in two steps. We first add a `viscosity'-term to the tangential direction of $\pa_t V$, which depends on a small parameter~$\ep$. Then, we introduce another regularization, depending now on another small parameter~$\delta$. The resulting regularized system, which we call the $(\ep, \de)$-system, is an ODE on an open set of a suitable Banach space. Therefore, we can use the abstract Picard theorem to find a sequence of solutions to the $(\ep, \de)$-system with a subsequence converging to a solution of the $\epsilon$-system as $\de\rightarrow 0^+ $.

\subsection{Regularization of the equations}
\label{ss.regul}

We shall next introduce the regularized system of equations that we will use to prove the local existence result.

We start by defining the Banach spaces ${\t\Bcal}^k_{\beta}(m)$ that we will use in this argument, where $\beta\in\left(-\frac{1}{2},\frac{1}{2}\right]$
 and $k\geq 2$. They are closely related to the above spaces $\Bcal^k_\beta(m)$ and consist of functions and parameters
\begin{gather*}
	(z_*, \nu_+, \nu_-, \Theta, \log|z_\al|) \in \Cbb \times \Rbb \times \Rbb \times \Lcal^{k+1}_{2, \beta + k}(I, m) \times H^{k+1}_{\beta + k}(I, m),  \\
(b_0, b_1, U_1) \in \Cbb\times\Cbb\times \Lcal^{k + 2}_{2,\beta + k}(I, m),
\end{gather*}
endowed with the natural norm.

We then define $z$ as a function of $(z_*, \nu_+, \nu_-, \Theta, \log|z_\al|)$ via \eqref{z} and $V$ as a function of $(b_0, b_1, U_1)$ via \eqref{V}. We denote by $\t\Ocal^k_\beta(m)\subseteq \t \Bcal^k_\beta(m)$ the open set  defined by the conditions \eqref{eq:nu}, \eqref{eq:Fcal} and \eqref{RT-zero} and, just as above, abuse the notation to write $(z, V)\in {\t\Ocal}^k_{\beta}(m)$.

The key difference between this space and $\Bcal^k_\beta(m)$ is that here  we control an extra half-derivative of $U_1$ and $\log|z_\al|$. By looking at the equations, it is clear why we take the additional half-derivative of~$U_1$, but one might believe the extra derivative of $\log|z_\al|$ is perhaps not required. The reason for which it is included (and necessary for our arguments to work) is the following. If we assume $U_1\in \Lcal^{k + 2}_{2,\beta + k}(m)$, we have $U_2\in \Lcal^{k + 2}_{2,\beta + k}(m)$ by Proposition \ref{prop:dirichletInt} (as long as the compatibility condition \eqref{eq:nu} for the opening angle and the parameter $\beta$ holds). Therefore,  $V \in \Lcal^{k + 2}_{2,\beta + k}(m)$, so $\varphi_s(z, V) = V_s \cdot z_s \in H^{k + 1}_{\beta + k}(m)$. As $\varphi_s = (\log|z_\al|)_t$, we really need to take an additional half-derivative of $\log|z_\al|$ as well.

We start by introducing the $\ep$-regularization of the system, for $\ep \geq 0$. Following \cite{CoEnGr21}, we keep the evolution equation for $z$ unchanged and we only modify the evolution equation for $V$, setting:
\be\label{eqs:sys-e}
\aligned
z_t &= V, \\
V_t + ig &= z_s\left(i \si +  \epsilon\, \pa_s\left(m^2\Phi_s\right)\right),
\endaligned
\ee
where we define
\[
\Phi_s := z_s\cdot U_s\in\Lcal^{k+1}_{2,\beta + k}(m).
\]
The reason for which we have used $\Phi_s$ in the regularization instead of  $\varphi_s = \Phi_s + \Re(b_1 z_s^2)$ is that $\pa_s (m^2\Phi_s) \in \Lcal^{k}_{2,\beta + k - 2}(m)$. Hence, this way we ensure are not introducing corrections asymptotic to $z - z_*$ near the corner point. Note that, keeping all other regularity assumptions fixed,  controlling $\Phi_s\in\Lcal^{k + 1}_{2,\beta + k}(m)$ is equivalent to controlling $\varphi_s\in H^{k + 1}_{\beta + k}(m)$.

As before, $V^*$ is by construction the boundary trace of a holomorphic function, which must therefore satisfy \eqref{eq:Cauchy-V}. Taking a time derivative of \eqref{eq:Cauchy-V}, it is not difficult to see that $V_t^* - z_t D_s V^*$ should also satisfy an equation of this form,
\be\label{E.Ccalmess} 
\Ccal(z) (V_t^* - z_t D_s V^*)=V_t^* - z_t D_s V^*\,.
\ee
Therefore, instead of taking the prescription~\eqref{eq:si-dvn}, define~$\si$ in this regularized setting by
\be\label{eq:si-eps}
\si(z, V, \epsilon) := \Gcal(z)(\psi + \psi^\ep) + gz_{1s} + V_s \cdot V^\perp ,
\ee
where $\psi$ and $\psi^\ep$ are
\be\label{eq:psi-eps}
\psi = - gz_{2} - \frac{1}{2}  |V|^2, \qquad \psi^\ep = \ep m^2 \Phi_{s}.
\ee
We will refer to \eqref{eqs:sys-e}, under these conditions, as the {\em $\epsilon$-system}\/. When $ \epsilon = 0$, we recover our original system, as discussed at the beginning of Section~\ref{s.regularization}. 

In the next lemma we show that the a priori energy estimate proven for the original system carries over, with  minor modifications, to the regularized setting:

\begin{lemma}\label{lem:apriori-e} Let $\ep>0$ be fixed and let $(z(\cdot, t),V(\cdot, t))\in\t \Ocal^k_\beta(m)$ be a sufficiently smooth solution of the $\epsilon$-system. Then the energy functional
\be\label{eq:energy-extended}
\t E_{k, \beta}(t)^2 := \|\log|z_\al|\|^2_{H^{k+1}_{\beta + k}} +  \|\theta\|^2_{H^{k+1}_{\beta + k}} + \|\Phi_s\|^2_{H^{k+1}_{\beta + k}} +  \|V_1\|^2_{H^{k}_{\beta + k - 2}}  < \infty,
\ee 
%where 
%$$
% \|V_1\|^2_{H^{2}_{\beta}(m)} := |b_0|^2 + |b_1|^2 + \|U_1\|^2_{\Lcal^{2}_{2,\beta}(m)}
%$$
which is equivalent to the $\t \Bcal^k_\beta(m) $-norm of $(z(\cdot, t),V(\cdot, t))$, satisfies the a priori inequality
\be\label{eqs:tenergy}
\frac{d\t E_{k, \beta}(t)^p}{dt} \, \lesssim \,  \frac{1}{\epsilon} \exp  (C \t E_{k, \beta}(t)^p)
\ee
for some $p\in \Nbb$.
\end{lemma}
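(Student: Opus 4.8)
The strategy is to mimic the proof of Lemma~\ref{lem:apriori} essentially verbatim, tracking the extra $\epsilon$-dependent terms that come from the viscosity term $\epsilon\,\pa_s(m^2\Phi_s)$ in~\eqref{eqs:sys-e}, and checking that the highest-order part of $\t E_{k,\beta}$ is well-defined and controlled. First I would note that the equivalence of $\t E_{k,\beta}(t)^2$ with $\|(z,V)\|_{\t\Bcal^k_\beta(m)}$ is essentially the same bookkeeping as in Lemma~\ref{lem:param}: given $(b_0,b_1,U_1)\in\Cbb\times\Cbb\times\Lcal^{k+2}_{2,\beta+k}(m)$ one has $U_2\in\Lcal^{k+2}_{2,\beta+k}(m)$ by Proposition~\ref{prop:dirichletInt} (using the compatibility condition~\eqref{eq:nu}), hence $\Phi_s=z_s\cdot U_s\in\Lcal^{k+1}_{2,\beta+k}(m)$, and controlling $\|\Phi_s\|_{H^{k+1}_{\beta+k}}$ is equivalent to controlling $\|\varphi_s\|_{H^{k+1}_{\beta+k}}$ since $\varphi_s=\Phi_s+\Re(b_1z_s^2)$; the point of using $\Phi_s$ rather than $\varphi_s$ in the regularization is precisely that $\pa_s(m^2\Phi_s)\in\Lcal^k_{2,\beta+k-2}(m)$ introduces no $z-z_*$ correction at the corner, as already remarked after~\eqref{eqs:sys-e}. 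Unlike in Lemma~\ref{lem:apriori}, here we control a full extra derivative (not a half), which actually simplifies some commutator arguments since all the relevant weighted spaces are now of integer order.

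Next I would carry out the differential inequality. As in Lemma~\ref{lem:apriori}, the lower-order terms in $\t E_{k,\beta}$ are controlled by the analogues of Lemmas~\ref{lem:param}, \ref{lem:si} and~\ref{lem:si_t}; the only genuinely new point is the effect of the viscosity term on the time derivatives of $\theta$ and $\Phi_s$ at top order. Concretely, from~\eqref{eqs:sys-e} the analogue of~\eqref{eq:th_ts-aux} becomes $(\log|z_\al|)_t-i\theta_t=z_s^2D_sV^*$ with $V^*$ no longer exactly the trace of a holomorphic function but $V_t$ satisfying~\eqref{E.Ccalmess}, so the structure $\theta_{ts}=H\varphi_{ss}+(\text{good})$ persists; the viscosity term contributes $\epsilon\,\pa_s(m^2\Phi_s)$, whose top-order part is $\epsilon\,\pa_s^{k+2}(m^2\Phi_s)$, a \emph{parabolic} gain of two derivatives that, after integration by parts against the energy density, produces a good sign. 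Thus differentiating $\|\sqrt\si\,\pa_s^{k+1}\theta\|^2$ and $\|\Phi_s\|^2_{H^{k+1}}$ (or, following the energy in Lemma~\ref{lem:apriori}, the weighted $\Lambda^{1/2}$-norm of $\pa_s^k\varphi_s$) and using~\eqref{eqs:sys-e} in place of~\eqref{eqs:evolGa}, the dangerous top-order transport and pressure terms cancel exactly as in~\eqref{en1}--\eqref{en2}, while the viscosity term yields a manifestly nonpositive contribution $-c\,\epsilon\,\|m\,\pa_s^{k+2}\Phi_s\|^2+(\text{lower order})$. The remaining error terms are estimated by the Cauchy-integral and commutator lemmas of Subsection~\ref{ss.BR} and Appendix~\ref{A.A}, exactly as before, producing a bound of the form $\exp(C\t E_{k,\beta}^p)$; the only place the factor $\frac1\epsilon$ appears is in absorbing lower-order pieces of the viscosity term into the negative parabolic term via Young's inequality (e.g.\ $|\int m^{2\gamma}\,\epsilon\,\pa_s^{k+2}(m^2\Phi_s)\cdot(\text{lower order})|\le \frac{c\epsilon}{2}\|m\,\pa_s^{k+2}\Phi_s\|^2+\frac{C}{\epsilon}(\text{controlled}))$, hence the $\frac1\epsilon$ on the right-hand side of~\eqref{eqs:tenergy}. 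The negative parabolic term is then simply discarded.

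I expect the main obstacle to be the careful treatment of the weights in the viscosity term near the corner. One must verify that $\pa_s^{j}(m^2\Phi_s)$ lands in the correct weighted space for $0\le j\le k+2$ (this is where the choice $m^2$ rather than $m$ matters, and where the identity $\pa_s(m^2\Phi_s)\in\Lcal^k_{2,\beta+k-2}(m)$ must be used to see that no spurious linear-in-$(z-z_*)$ term is created, so that the ansatz~\eqref{eq:v-rep}, and in particular the decoupled ODEs for $b_1,\nu_\pm$, are preserved under the $\epsilon$-flow); one must also check that the energy density $m^{2(\beta+k-\lambda)}$ against which one integrates by parts is compatible with the weight $m^2$ in the viscosity term so that the integrations by parts produce no uncontrolled boundary contributions at $\al=\pm\pi$ (this uses the vanishing at the corner encoded in $\Lcal$-spaces and Remark~\ref{rem:ga}, exactly as in Lemma~\ref{lem:param}). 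A secondary technical nuisance, handled exactly as in Lemmas~\ref{lem:param} and~\ref{lem:si}, is the borderline case $\beta=\tfrac12$, where $m^\beta$ fails the Muckenhaupt condition and one must conjugate $H$ and $\Ccal$ by $m^{\lambda'}$ and $(z-z_*)^{\lambda'}$ as in~\eqref{E.cacambio}, picking up harmless commutators estimated by Lemma~\ref{lem:derFphi}. Once these weight computations are in place, the estimate~\eqref{eqs:tenergy} follows by the same Gr\"onwall-type argument as~\eqref{eq:energy}.
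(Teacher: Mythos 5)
Your overall architecture is close to the paper's — the viscosity term produces a negative parabolic term after integration by parts, Young's inequality is responsible for the $1/\ep$, the borderline case $\beta=\frac12$ is handled by conjugating $H$ and $\Ccal$ as in~\eqref{E.cacambio}, and your argument for the equivalence of $\t E_{k,\beta}$ with the $\t\Bcal^k_\beta(m)$-norm is essentially the paper's. But there is a genuine error in the core mechanism. The cancellation of~\eqref{en1}--\eqref{en2} takes place between the $\sqrt\si$-weighted $(k+1)$-st derivative of $\theta$ and the half-derivative term $\Lambda^{1/2}(m^\lambda\pa_s^k\varphi_s)$; neither of these appears in the energy~\eqref{eq:energy-extended}, whose highest-order piece is $\|\pa_s^{k+1}\Phi_s\|_{2,\beta+k}$, sitting half a derivative \emph{above} the matched hyperbolic level. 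Differentiating that term in time produces $-\int m^{2(\beta+k)}\pa_s^{k+1}\Phi_s\,\pa_s^{k+1}(\si\theta_s)\,ds$, which contains $\si\,\pa_s^{k+2}\theta$ — one more derivative of $\theta$ than the energy controls — and there is no partner term in $\t E_{k,\beta}$ against which it could cancel. The paper does not use the hyperbolic cancellation here at all: the $\theta$-part of the energy is bounded \emph{directly}, because the extra derivative of $\Phi_s$ yields $\theta_t\in H^{k+1}_{\beta+k}(m)$ via $\theta_{ts}=H\varphi_{ss}+\Rcal$; and the dangerous pressure term above is handled by integrating by parts once and absorbing the resulting $\pa_s^{k+2}\Phi_s$ into the negative viscosity term $-\ep\int m^{2(\beta+k+1)}|\pa_s^{k+2}\Phi_s|^2$ via Young's inequality, which is exactly~\eqref{eq:1/eps}.

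This also means your accounting of the $1/\ep$ is wrong. Lower-order pieces of the viscosity term carry their own prefactor of $\ep$, so Young's inequality applied to them costs only $O(\ep)$, never $O(1/\ep)$. The $1/\ep$ in~\eqref{eqs:tenergy} arises exclusively from absorbing the $\ep$-\emph{independent} pressure contribution, i.e.\ from the bound $ab\le\ep a^2+\tfrac{1}{4\ep}b^2$ applied with $a=\pa_s^{k+2}\Phi_s$ and $b=\pa_s^{k+1-1}(\si\theta_s)$ (suitably weighted). If you run your proof as written — relying on the~\eqref{en1}--\eqref{en2} cancellation for the top-order pressure term and attributing $1/\ep$ to the viscosity remainders — the term $\int m^{2(\beta+k)}\pa_s^{k+1}\Phi_s\,\si\,\pa_s^{k+2}\theta\,ds$ is left uncontrolled. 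A secondary omission: you should verify (as the paper does via~\eqref{eq:psiep}--\eqref{eq:si-sp}) that $\psi^\ep=\ep m^2\Phi_s$ contributes only $\ep m^2H\pa_s\Phi_s+\Lcal^{k}_{2,\beta+k-2}(m)$ to $\si$ through the Dirichlet-to-Neumann map, so that the leading asymptotics $\si=|b_1|^2|z-z_*|\tan2\nu+\dots$ and the regularity of $\pa_s^{k-1}(\si\theta_s)$ needed for the $1/\ep$ term survive the regularization.
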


\begin{proof}
	As before, let us consider the case $k=2$, as the general case is similar. The results of Subsection~\ref{ss.PE}  carry over essentially word-for-word to the present situation, so we will mostly point out the  differences that arise. 
	
	First, as in Lemma \ref{lem:param}, when $\beta \in(- \frac12,\frac12)$ we can write 
$$
\theta_{ts} = H\varphi_{ss} + \Rcal(z, V),
$$
so \eqref{eq:th_ts} holds. In order to prove that $\Rcal(z, V)\in\Lcal^2_{2,\beta + 2}(m)$, we need to only use $V\in H^2_{\beta}(m)$ and Lemma~\ref{lem:BRcancel} (which, under current regularity assumptions, allows us to take two derivatives on the kernel of $(i\Ccal - H)D_s^2V^*$ directly, see the proof of Lemma ~\ref{lem:param}). Since we control an additional half-derivative of $\varphi_{s}$ (or, equivalently, of $\Phi_{s}$), we conclude $\theta_{t}\in H^{3}_{\beta + 2}(m)$. Since $V_s = z_s(\varphi_s + i \theta_t)$, we conclude that $V\in H^4_{\beta + 2}(m)$. In particular, the norms of the higher-order derivatives of~$V$ are controlled by some power of $\t E_{2,\beta}(t)$, and the energy functional $\t E_{2,\beta}$ is equivalent to the norm in the Banach space $\t\Bcal^k_\beta(m)$. 

In the case $\beta = \frac12$, we proceed analogously after replacing \eqref{eq:th_ts} by \eqref{eq:th_ts12} (with the corresponding term $\Rcal(z, V)$, as in \eqref{eq:R12}).   

On the other hand, by Proposition~\ref{prop:dirichletInt} and Lemma~\ref{lem:special_sol} we know that $\si \in H^{2}_{\beta}(m)$ is of the form~\eqref{eq:si-rep} on the entire allowed range of $\beta$'s. Here we are using that, by construction, $m^2\Phi_s\in\Lcal^3_{2,\beta}(m)$  (which is a subset of $ \Lcal^{3-\frac{1}{2}}_{2,\beta -\frac{1}{2}}(m)$ by Lemma~\ref{lem:halfDer} and the definition of the half-norm given in Section~\ref{ss.weightedSobolev}), so it does not contribute to the leading asymptotic behavior of~$\si$ (which is proportional to $|z-z_*|$) near the corner tip.

In fact, we can proceed exactly as in the proof of Lemma \ref{lem:si} to show that $\si = \Gcal(z)\psi^\ep + H^3_{\beta + 1}(m)$. Using the same strategy for $\Gcal(z) \psi^\ep$, when $\beta \in(- \frac12,\frac12)$ we infer that
\be\label{eq:psiep}
\aligned
\Gcal \psi^\ep &= \Gcal \psi^\ep - m^2 H\left(\frac{1}{m^2}\pa_s \psi^\ep \right) + \ep m^2 H\left(\frac{1}{m^2}\pa_s(m^2\Phi_s)\right), \quad \psi^\ep := \ep m^2\Phi_s \\
&= \ep m^2 H\pa_s\Phi_s + \Lcal^2_{2, \beta}(m)
\endaligned
\ee
Note that we have introduced suitable corrections to the Hilbert transform in order to ensure we have the desired behavior near the singular point. As $m^2 H\pa_s\Phi_s\in \Lcal^2_{2,\beta}(m)$, we conclude that
\be\label{eq:si-sp}
\si = |b_1|^2\, |z - z_*|\tan 2\nu + \epsilon m^2 H\pa_s\Phi_{s} + \Lcal^{2}_{2,\beta}(m).
\ee
As in the proofs of Lemmas \ref{lem:param} and \ref{lem:si}, when $\beta = \frac12$ we only need to replace the Hilbert transform by the operator $m^{-\lambda'} H m^{\lambda'}$ for some $\lambda'\in (0, 1)$ (see Equation~\eqref{E.cacambio}) to conclude 
\be\label{eq:si-sp12}
\si = |b_1|^2\, |z - z_*|\tan 2\nu + \epsilon m^{-\lambda' + 2} H(m^{\lambda'}\pa_s\Phi_{s}) + \Lcal^{2}_{2,\beta}(m).
\ee
Therefore $V_t\in H^2_\beta(m)$ and $U_t \in \Lcal^2_{2,\beta}(m)$, as before, for the entire range of weights $\beta\in(-\frac12,\frac12]$.

Finally,  consider the time derivative of $\varphi_s$:
\be\label{eq:varphist-e}
\varphi_{st} =- \varphi^2_s + \theta^2_t - \si\theta_s + \epsilon \pa_s^2(m^2 \Phi_{s}).
\ee
This implies that 
$$
\Phi_{st} =- \varphi^2_s + \theta_t (\theta_t - \Re(ib_1z_s^2)) - \si\theta_s + \epsilon \pa_s^2(m^2 \Phi_{s}) - \Re(b_{1t}z_s^2). 
$$
The only term which is nontrivial to analyze is the highest-order one, 
$$
\aligned
\frac{1}{2}\frac{d}{dt}\|\pa_s^3 \Phi_s\|^2_{2, \beta + 2} = -\int m^{2(\beta + 2)} &\pa_s^3 \Phi_s \pa^{3}_s\left(\si \theta_s\right) ds +  \epsilon\int m^{2(\beta + 2)} \pa_s^3 \Phi_s \pa^{5}_s(m^2 \Phi_{s}) ds \\
&+  \text{`bounded'}
\endaligned
$$  
where `bounded' stands for terms controlled by powers of $\t E_{2,\beta}(t)$.

To analyze the remaining terms, let us denote by  $I_1$ and $I_2$ the integrals on the right hand side of the last line. Integrating $I_2$ by parts yields
$$
I_2 = - \ep \int m^{2(\beta + 3)} |\pa_s^4 \Phi_s|^2ds + \ep \cdot \text{`bounded'}.
$$
For $I_1$, on the other hand, we have
\[%be\label{eq:I1viejo}
\aligned
I_1 &= \int \pa_s(m^{2(\beta + 2)} \pa_s^3 \Phi_s) \pa^{2}_s\left(\si\theta_s\right) ds \\
&=\int \left(m^{2(\beta + 2)}\pa_s^4 \Phi_s + 2(\beta + 2) m^{2\beta + 3}\pa_s m \,\pa_s^3 \Phi_s \right)\pa^{2}_s\left(\si\theta_s\right)ds.
\endaligned
\]%ee
The integral over the second summand is bounded in terms of $\t E_{2,\beta}(t)$, while for the first one we have the estimate 
\be\label{eq:1/eps}
\int m^{2(\beta + 2)} \pa_s^4 \Phi_s \pa^{2}_s\left(\si \theta_s \right) ds \, \leq \, 
\epsilon \int m^{2(\beta + 3)}| \pa_s^4 \Phi_s |^2 ds 
 + \frac{4}{\epsilon} \int m^{2(\beta + 1)} |\pa^{2}_s(\si \theta_s)|^2 ds + \text{`bounded'}.
\ee
Thus the most singular terms from $I_1$ and $I_2$ cancel out, and the lemma follows.
\end{proof}

\begin{remark}
One should take note of the term in~\eqref{eq:1/eps} proportional to $1/\ep$, which does not play a major role here but will need to be dealt with carefully in order to pass to the limit $\ep=0$.
\end{remark}

In the next step, we introduce a second regularization, which depends on an independent small parameter~$\de$. The basic idea is to apply a variable-step convolution operator $A^\de$ to the right-hand side of the evolution equation for~$V$. Note that an ordinary (fixed-step) convolution operator would cause major trouble due to the presence of weights and by the special role played by the corner point.

So we take $\phi$ as a standard mollifier (i.e., a smooth, non-negative, even function $\phi:\Rbb \rightarrow \Rbb$ with $\mathrm{supp}(\phi) \subset [-1,1]$ and such that $\int_\Rbb \phi = 1$). Let  $\eta\in\Ccont^\infty(\overline{I})$ be strictly positive on $I:=(-\pi,\pi)$ with first order zeros at~$\pm\pi$ (i.e., $\eta(\pm\pi) = 0$ but $\eta'(\pm\pi) \neq 0$). This condition ensures that $\eta \approx m$. For sufficiently small $\de >0$ and $x\in I$, we define 
\be\label{E.defphide}
\phi_{\de \eta(x)}(y) := \frac{1}{\de \eta(x)}\phi\left(\frac{y}{\de \eta(x)}\right)
\ee
and set, for $x\in I$,
\be\label{E.defBde}
B_{\de}f(x) := (\phi_{\de \eta(x)} \ast f) (x) := \int_I \phi_{\de \eta(x)}(x - y)f(y)dy.
\ee
The adjoint is 
\be\label{E.defB*de}
B_{\de }^*f(y) :=  \int_I \phi_{\de \eta(x)}(x - y) f(x)dx,\quad y\in I.
\ee
Finally, the convolution operator $A^\de$ is defined as the composition of $B_\de$ and $B_\de^*$:
\be\label{E.defAde}
A_\de f := B^{*}_\de  B_\de f.
\ee

As we had anticipated, these operators are useful to regularize functions that live in the weighted Lebesgue spaces $\Lcal^k_{2,\ga}(I)$, since the weight function $m$ is equivalent to the distance to the boundary $\pa  I:=\{-\pi,\pi\}$. Note that the interval of integration always has positive distance to $\pa I$, which is crucial as we are ultimately interested in functions having non-integrable singularities at the boundary of $I$. These operators have the smoothing effect in $I$, they respect growth rates at~$z_*$, and they approximate the original function in $\Lcal^k_{2,\ga}(I)$ as $\de\to0^+$.  Note that taking a derivative on $\phi$ results in a factor of size $O((\de \eta)^{-1})$, which is why we need to ensure $\eta \approx m$ to be able to handle the corresponding error terms. One should observe that these operators do not generally fix boundary values of the original function (they would if  $\eta$ vanished to higher order at $\pm\pi$). Technical results about these operators are presented in Subection~\ref{ss.convolution} in  Appendix~\ref{A.A}. 

A technical point is that here it is more convenient to consider $D_s U^*$ (or, more precisely, the real part) as a basic unknown instead of the real part of~$U_1$, and then define $U^*$ by integration. However, a necessary condition for a function~$W$ to coincide with $ D_s U^*$ is   
\be\label{eq:W-int}
\int W z_s \, ds = 0.
\ee 
In addition, the real and imaginary parts of $W$ must be conjugate functions because $\Ccal(D_sU^*)=D_sU^*$, and those of $W_t - z_t D_s W$ too. 

Let us now prescribe the new regularized set of evolution equations for 
\[
(z_*, \nu_+, \nu_-, \Theta, \log|z_\al|,b_0, b_1, W).
\]
First, the evolution of the curve is essentially unchanged:
\begin{gather}
\label{eqs:sys-z}
\frac{dz_*}{dt} = b_0^*, \qquad \frac{d\nu_+}{dt} = \Re\left(i b_1e^{2i\nu_+}\right), \qquad \frac{d\nu_-}{dt} = \Re\left(ib_1 e^{2i\nu_-}\right),\\
 (\log |z_\al| - i\theta)_t = z_s^2 (b_1 + W).\notag
\end{gather}

The curve is then defined as 
\[
z(\al, t) = z_*(t) + \int_{-\pi}^\al e^{i\theta(\al', t)} \, |z_\al(\al',t)|\, d{\al'},
\]
just as in~\eqref{z}. This defines a closed curve provided
\be\label{eq:z-int}
\int_\Ga e^{i\theta} \, ds = 0.
\ee
This integral is not necessarily zero, but obviously
$$
\frac{d}{dt}\int_\Ga e^{i\theta} \, ds = 0 \quad \Leftrightarrow \quad \int_\Ga z_s W ds = 0,
$$
in which case the integral is independent of~$t$. As we can always add a time-independent function to the definition of $z$ without changing the evolution equations, we can therefore assume that the integral is actually zero.

We take the evolution of the velocity as
\begin{gather}\label{eqs:sys-V}
\frac{db_0}{dt} = ig, \qquad \frac{db_1}{dt} = \frac{1}{\cos2\nu}|b_1|^2e^{-i(\nu_+ + \nu_-)}   ,\\[1mm]
\label{eqs:sys-ed}W_t = -\frac{1}{z_s^2}|b_1 + W|^2 - \left(b_{1t} + D_s\left(\frac{i\si_0}{z_s}\right)\right) + D_s A_\de\left(\frac{1}{z_s}\left(-i\t\si  + \epsilon \pa_s(m^2 \Phi_s)\right)  \right) - \frac{i\Xi}{z_s},
\end{gather}
where we recall that $\Phi_s  := \Re(z_s^2 W)$. Furthermore, we have set 
$$
\t \si := \si - \si_0, \qquad \si_0 := |b_1|^2 |z - z_*|\tan 2\nu,
$$ 
and $\Xi$ is another correction term that we will define (and motivate) below.

We now define $U^*$ and $V^*$ in terms of $W$ as 
$$
V^* := b_0 + b_1(z - z_*) + U^*, \qquad U^*(\al, t) := \int_{-\pi}^\pi  W(\al', t) \, z_\al(\al',t)\,d\al'.
$$
Note that $U^* (\pi,t)=U^*(-\pi,t)= 0$ provided \eqref{eq:W-int} is satisfied. Again, the integral~\eqref{eq:W-int} is not necessarily zero, but we do have
$$
\frac{d}{dt}\int_\Ga z_s W ds = 0
$$
as long as $\int \Xi \, ds = 0$. As $\Xi$ will essentially arise as the normal derivative of a harmonic function, this condition will always hold by construction. In particular, we have 
$$
 D_s U^* = W, \qquad D_sV^* = b_1 + W,
$$ 
as required. One should note that Equation \eqref{eqs:sys-ed} is essentially the evolution for $(D_s V^*)_t$, where~$V_t$ is given by the second equation in \eqref{eqs:sys-e} with a regularized right-hand side and a correction term~$\Xi$.
Now that we have specified $V^*$ in terms of our ``basic'' unknowns, we can define $\si \equiv \si(z, V, \epsilon)$ as in the $\ep$-system, that is, by Equations~\eqref{eq:si-eps}-\eqref{eq:psi-eps}.

An issue that we must still take care of is the following. As a consequence of the presence of the regularization operator $A_\de$,  $W_t - z_t D_s W$ is not the boundary trace of a holomorphic function anymore, in contrast with~\eqref{E.Ccalmess}. This is why we introduce the aforementioned correction term $ \Xi \equiv \Xi(z, V, \epsilon, \delta)$. 

Specifically, let us set 
$$
\aligned
u := \Re\left\{- VD_sV^* - \left(b_{1t}(z - z_*) + \frac{i\si_0}{z_s}\right) + A_\de\left(\frac{1}{z_s}\left(-i\t\si + \epsilon (m^2 \Phi_s)_s\right)  \right) \right\},
\endaligned
$$
so that
\[
\pa_s u = \Re\left\{-z_s\left(\frac{1}{z_s^2}|D_s V^*|^2  + V D_s^2 V^* \right) -  \left(b_{1t}z_s + \pa_s\left(\frac{i\si_0}{z_s}\right)\right) + \pa_s A_\de\left(\frac{1}{z_s}\left(-i\t\si + \epsilon \pa_s(m^2 \Phi_s)\right)  \right) \right\} ,
\]
and define
\be\label{Xi}
\Xi := \Gcal u + \Im\left\{-\pa_s\left(VD_sV^* \right) - \left(b_{1t}z_s + \pa_s\left(\frac{i\si_0}{z_s}\right)\right) + \pa_s A_\de\left(\frac{1}{z_s}(-i\t \si + \epsilon (m^2 \Phi_s)_s)  \right) \right\}.
\ee
One should note that $u = - \Re(b_0^*D_sV^*) + \Lcal^{k + 1}_{2,\beta + k - 1}(m)$, since $A_\de$ and its derivatives preserve the growth rates at the singular point, as we show in~Subsection~\ref{ss.convolution}. Therefore, Proposition~\ref{prop:dirichletInt} ensures that 
\[
\Gcal u = \Im (z_s b_0^* D_s^2U^*) + \Lcal^{k -\frac{1}{2}}_{2,\beta + k - \frac{3}{2}}(m),
\] 
so in particular $\Xi\in \Lcal^{k - 1}_{2,\beta + k - 2}(m)$. We can further show that $\Xi\in \Lcal^{k + 1}_{2,\beta + k}(m)$. For this, we proceed as in the proof of Lemma~\ref{lem:si}; however, as we now control an additional half-derivative of $|z_\al|$, we can put two additional derivatives on the kernel of $i\Ccal - H$, cf.~Lemma~\ref{lem:BRcancel}. 

Observe that, when $\de = 0$, by the choice of $\si$ and by the uniqueness of solutions to the Laplace equation, we have $\Xi = 0$. By construction, we also have $\Theta_t\in \Lcal^{k + 1}_{2,\beta + k}(m)$ and $W_t \in \Lcal^{k + 1}_{2,\beta + k}(m)$.

We will refer to \eqref{eqs:sys-z}--\eqref{eqs:sys-ed} as the {\em $(\epsilon, \delta)$-system}\/. 
It defines an ODE on the open set $\t \Ocal^k_\beta(m) \subseteq \t \Bcal^k_\beta(m)$. Furthermore, it is easy to see that the right-hand of \eqref{eqs:sys-z}--\eqref{eqs:sys-ed} (the ``vector field'' defining the ODE on $\t \Ocal^k_\beta(m)$) is a Lipschitz function on this set. Perhaps the only point that is not completely elementary is the fact that the Dirichlet-to-Neumann operator is a Lipschitz function of~$z$; however, this follows from the results in Section~\ref{s.inverse} essentially as in the case of smooth curves (see e.g.~\cite{LannesBook}).

Therefore, by the abstract Picard theorem, for every $\de >0$ and every initial datum $(z^{\epsilon, \delta}_0, V^{\epsilon, \delta}_0)\in\t \Ocal^k_\beta(m)$, there exists some time $T^{\epsilon, \de}>0$ and a unique solution 
\[
(z^{\epsilon, \delta}, V^{\epsilon, \delta})\in \Ccont^1([0, T^{\epsilon, \de}], \, \t \Ocal^k_{\beta}(m))\]
of the $(\ep,\de)$-system \eqref{eqs:sys-z}-\eqref{eqs:sys-ed} with initial condition
\[
(z^{\epsilon, \delta}(0), V^{\epsilon, \delta}(0))=(z_0^{\epsilon, \delta}, V^{\epsilon, \delta}_0).
\]
Moreover, for any fixed $\ep, \de >0$, the solution can be extended until it leaves the open set $\t\Ocal^k_{\beta}(m)$. 

The key property we aim to show is that the time of existence can be chosen uniformly in $\de$:

\begin{lemma}\label{lem:Tepsilon}
Fix some $\epsilon>0$ and some initial datum $(z^{\epsilon}_0, V^{\epsilon}_0)\in\t \Ocal^k_\beta(m)$. Then there exists some $\de$-independent time $T^\epsilon>0$ such that the solution $(z^{\epsilon, \de}, V^{\epsilon, \de})$ to the $(\ep,\de)$-system with this initial datum satisfies
\[
(z^{\epsilon, \de}, V^{\epsilon, \de}) \in \Ccont^1([0,T^\epsilon], \t \Ocal^k_{\beta}(m))
\]
for all small enough $\de> 0$.
\end{lemma}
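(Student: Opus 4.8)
The plan is to run a $\de$-uniform energy argument that parallels Lemma~\ref{lem:apriori-e} and then invoke the continuation criterion for the $(\epsilon,\de)$-ODE. The first observation is that the evolution of the five parameters $(z_*,\nu_+,\nu_-,b_0,b_1)$, given by~\eqref{eqs:sys-z} and~\eqref{eqs:sys-V}, does not involve $\de$ at all: it is precisely the closed ODE system~\eqref{eq:nudt}--\eqref{eq:db}. Hence both conditions in~\eqref{eq:nu} on the opening angle and the Rayleigh--Taylor-type condition~\eqref{RT-zero} depend only on the solution of this $\de$-independent ODE, so by continuous dependence they remain valid (with slightly worse constants) on a $\de$-independent time interval. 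This reduces matters to controlling the $\t\Bcal^k_\beta(m)$-norm of the functions $(\Theta,\log|z_\al|,U_1)$ together with the arc-chord quantity $\Fcal(z)$ (which enters through the $\de$-dependent curve $z$) on a $\de$-independent interval.

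For this I would show that the energy $\t E_{k,\beta}(t)$ of~\eqref{eq:energy-extended}, augmented by $\Fcal(z)$, satisfies the differential inequality~\eqref{eqs:tenergy} with all implied constants independent of $\de$. The estimates of Subsection~\ref{ss.PE} and all lower-order bounds carry over verbatim, because the only $\de$-dependence on the right-hand sides of~\eqref{eqs:sys-z}--\eqref{eqs:sys-ed} enters through the variable-step convolution operator $A_\de$ and the correction term $\Xi$; by the results of Subsection~\ref{ss.convolution} both preserve the growth rates at the corner and are bounded on each $\Lcal^k_{2,\ga}(I)$ uniformly in $\de$. In particular $\Xi\in\Lcal^{k+1}_{2,\beta+k}(m)$ with a $\de$-uniform bound, exactly as in the discussion preceding the statement, so $\Xi$ only contributes harmless lower-order terms.

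The crux is the top-order estimate for $\|\pa_s^{k+1}\Phi_s\|_{2,\beta+k}$. Differentiating $D_s A_\de(\cdot)$ to top order costs a factor $O((\de\eta)^{-1})$, which would be fatal for the $\de\to0^+$ limit; the remedy is the mechanism already used in~\eqref{eq:1/eps}. Writing $A_\de=B_\de^*B_\de$ and integrating by parts, the contribution of the viscous part $\epsilon\,\pa_s(m^2\Phi_s)$ sitting inside $A_\de$ produces, modulo commutators, a genuinely negative term of the form $-\epsilon\,\|B_\de(\text{top-order derivative of }\Phi_s)\|^2$ with a $\de$-uniform constant, and this term absorbs every dangerous $O((\de\eta)^{-1})$ commutator coming from the $\de$-dependent structure (the commutators of $B_\de,B_\de^*$ with the weights, with $z_s$ and with the cutoffs are all estimated $\de$-uniformly by the variable-step convolution lemmas of Appendix~\ref{A.A}). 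The remaining piece $-i\t\si$ inside $A_\de$ is, by the argument behind~\eqref{eq:si-sp}--\eqref{eq:si-sp12} for the $\epsilon$-system, one degree smoother than the leading asymptotics of $\si$, so it never forces a loss of derivatives; its contribution is bounded by a power of $\t E_{k,\beta}$ together with a term proportional to $1/\epsilon$, exactly as in Lemma~\ref{lem:apriori-e}. I expect this bookkeeping — verifying that every $O((\de\eta)^{-1})$ factor appears paired with the $\epsilon$-dissipation and that all residual commutators are $\de$-uniform — to be the main obstacle. Since $\epsilon$ is fixed, the outcome is a bound $\frac{d}{dt}\big(\t E_{k,\beta}^p+\Fcal(z)^p\big)\lesssim_\epsilon \exp\!\big(C(\t E_{k,\beta}^p+\Fcal(z)^p)\big)$, uniformly in $\de$.

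Finally, since the initial datum $(z_0^{\epsilon},V_0^{\epsilon})$ is $\de$-independent, so is $\t E_{k,\beta}(0)+\Fcal(z^{\epsilon,\de}(0))$, and a Gr\"onwall argument turns the preceding inequality into a $\de$-independent time $T^\epsilon>0$ during which $\t E_{k,\beta}(t)+\Fcal(z(t))$ stays below, say, twice its initial value. Combined with the $\de$-independent control of~\eqref{eq:nu} and~\eqref{RT-zero} coming from the parameter ODEs, this keeps $(z^{\epsilon,\de},V^{\epsilon,\de})$ inside the open set $\t\Ocal^k_\beta(m)$ on $[0,T^\epsilon]$. The continuation criterion for the $(\epsilon,\de)$-ODE (the solution persists as long as it remains in the open set) then yields a solution on all of $[0,T^\epsilon]$ for every sufficiently small $\de$, which is the claim.
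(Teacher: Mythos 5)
Your overall strategy coincides with the paper's: prove that the $\t E_{k,\beta}$-energy inequality~\eqref{eqs:tenergy} holds uniformly in $\de$, with the $\ep$-dissipation term producing a negative quantity $-\ep\,\|\pa_\al B_\de(m^2\pa_s^{k+1}\Phi_s)\|^2_{2,\beta+k}$ that absorbs the top-order contributions, and then conclude by Picard/Gr\"onwall. Your observation that the parameter ODEs for $(z_*,\nu_\pm,b_0,b_1)$ are $\de$-independent is correct and a clean way to dispose of conditions~\eqref{eq:nu} and~\eqref{RT-zero}.

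There is, however, a genuine gap in your treatment of the correction term $\Xi$. You assert that $\Xi\in\Lcal^{k+1}_{2,\beta+k}(m)$ \emph{with a $\de$-uniform bound}, ``exactly as in the discussion preceding the statement,'' and conclude that $\Xi$ only contributes harmless lower-order terms. That discussion establishes the bound only for \emph{fixed} $\de$: proving $\pa_s^{k+1}\Xi\in\Lcal_{2,\beta+k}(m)$ requires roughly $k+2$ derivatives of $A_\de\bigl(\tfrac1{z_s}(-i\t\si+\ep\pa_s(m^2\Phi_s))\bigr)$, hence (moving derivatives through $A_\de$ uniformly in $\de$, which is the only $\de$-uniform option) about $k+3$ derivatives of $\Phi_s$ — two more than the energy controls. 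Uniformly in $\de$ one only gets $\Xi\in\Lcal^{k-1}_{2,\beta+k-2}(m)$ (for $k=2$, $\Xi\in\Lcal^1_{2,\beta}(m)$, as the paper states explicitly), i.e., $\Xi$ is $k$ derivatives short of ``harmless.'' Consequently $\Xi$ injects genuinely top-order terms into $\pa_s^{k+1}\Phi_{st}$ — in the paper's notation, the pieces $z_{2s}A_\de(\t\si\theta_s)$, $z_{1s}mA_\de H(m^{-1}\t\si\theta_s)$ and $\t K_\de B_\de G$ in~\eqref{eq:PHIst} — and the bulk of the paper's proof consists of extracting this structure so that each such contribution is either absorbed into the negative dissipation term (via $ab\le\frac\ep N a^2+\frac N{4\ep}b^2$) or bounded with a $1/\ep$ loss. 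Your dissipation mechanism could in principle digest these terms, but as written your argument never applies it to them because you have declared $\Xi$ lower order. A secondary, more cosmetic point: the danger in differentiating $D_sA_\de$ is not the $O((\de\eta)^{-1})$ cost of hitting the mollifier (the commutator identities $(B_\de f)'=B_\de f'+K_\de f$ avoid that entirely, uniformly in $\de$), but the appearance of one derivative of $\Phi_s$ beyond what the energy controls; it is this excess derivative, not a $1/\de$ factor, that the $\ep$-dissipation must absorb.
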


\begin{proof} 
As is well known, it is enough to prove that the estimate~\eqref{eqs:tenergy} for the $\t E_{k,\beta}(t)$-energy of~$(z^{\epsilon, \de}, V^{\epsilon, \de})$ holds uniformly for all small enough~$\de> 0$. Recall that we have shown $\t E_{k,\beta}(t)$ is equivalent to the $\t \Bcal^k_\beta(m)$-norm of $(z^{\epsilon, \de}(\cdot, t), V^{\epsilon, \de}(\cdot, t))$. This is because, if \eqref{eqs:tenergy} holds uniformly for sufficiently small $\de>0$, Picard's theorem yields a lower bound for the maximal time of existence of~$(z^{\epsilon, \de}, V^{\epsilon, \de})$ that  depends on $\epsilon$ and $\t E_{k,\beta}(0)$ but not on~$\de$. (This bound will not allow us to take the limit $\ep\to0^+$, however, as it scales like $1/\ep$.

Let us now prove an energy estimate for~$(z^{\epsilon, \de}, V^{\epsilon, \de})$ that is uniform in the small parameter~$\de>0$. For the ease of notation, in the rest of the proof we drop the $(\epsilon, \de)$ superscript everywhere. 

As before, for concreteness, let us stick to the case $k=2$, as the general case is analogous. A glance at the $(\ep,\de)$-system reveals that it is enough to estimate the terms with the highest-order derivatives of $\Phi_{s}$, as all the other terms are obviously bounded uniformly in~$\de$. So we only need to focus on this term.

We have
$$
\aligned
\Phi_{st} &= 2\theta_t^2 - \theta_t\Re(i b_1 z_s^2 ) + \Re(z_s^2 W_t)\\
&= \Re\left(z_s \pa_s A_\de\left(\frac{1}{z_s}(-i\t\si + \ep\pa_s(m^2 \Phi_{s}))  \right) - iz_s \Xi\right) + H^{3}_{\beta + 2}(m),
\endaligned
$$
where we henceforth write ``$+\, H^{3}_{\beta + 2}(m)$'', and similar expressions, for terms bounded uniformly in~$\de$ in these spaces. However, the most we can say about~$\Xi$, under our regularity assumptions and uniformly in~$\de$, is $\Xi\in \Lcal^{1}_{2,\beta}(m)$.

Let us present the proof in the case the $\beta \in(- \frac12,\frac12)$. In the case $\beta = \frac12$, the proof  is exactly the same, after modifying the operators $H$ and $\Ccal$ using the  $\lambda'$-power of the weight  as in Equation~\eqref{E.cacambio}. For instance, this simply amounts to replacing \eqref{eq:si-sp} by \eqref{eq:si-sp12} when estimating $\t \si$. The details are just as before.

The starting point of our uniform estimate is to write
$$
\aligned
\pa_s \left(\frac{1}{z_s}(-i\t\si + \ep\pa_s(m^2 \Phi_{s}))\right) &= \frac{1}{z_s}\left(-\t\si \theta_s -i\ep \theta_s (m^2\Phi_s)_s -i\t\si_s + \ep(m^2\Phi_{s})_{ss}\right)\\
&=:\frac{1}{z_s}\left(-i\ep(m^2H\Phi_{ss})_s + \ep(m^2\Phi_{ss})_{s} - \t\si \theta_s + \ep g_1 + g_2\right)  \\
&=:\frac{1}{z_s}\left(\ep \pa_s(m^2(I - iH)\Phi_{ss})  - \t\si \theta_s + \ep g_1 + g_2\right).
\endaligned
$$
Here we have used \eqref{eq:si-eps}--\eqref{eq:psi-eps}, $I$ is the identity operator, and from now on we denote by $g_1\in\Lcal^2_{2,\beta + 1}(m)$ and $g_2 \in \Lcal^{3}_{2,\beta + 2}(m)$ complex-valued functions, which may vary from line to line, that are bounded in these spaces uniformly in~$\de$. In our argument, they play the role of constants.
At this point, it is important to note that $\t \si \theta_s$ has the same regularity properties as $g_1$, but it does not carry a factor of $\ep$. 

For ease of notation, we temporarily set
$$
G := m^2 (I - iH)\Phi_{ss},
$$
where $G\in \Lcal^2_{2,\beta}(m)$ by Lemma \ref{lem:trHilbert}. Thus, we can write 
$$
\aligned
\pa_\al A_\de\left(\frac{1}{z_s}(-i\t\si + \ep\pa_s(m^2 \Phi_{s}))\right) =\ep A_\de \left(\frac{|z_\al|}{z_s}\pa_s G\right) +  \frac{\ep}{z_s} \t K_\de B_\de G - \frac{|z_\al|}{z_s} A_\de(\t\si \theta_s)  + \ep B_{\de}^*g_1 + g_2.
\endaligned
$$
where we have written $(B_\de^* f)' =: B_\de^* f' + \t K_\de f$ as in the proof of Lemma~\ref{lem:B_delta} and we have used that we control an additional derivative whenever we have a commutator with any of the above convolution operators. This follows by applying Lemma~\ref{lem:BdeH} to the commutators with  $|z_\al|/z_s$.  The derivative of this function is in $\Lcal^1_{2,\beta + 1}(m)$, aso it is bounded pointwise by $O(m^{\lambda_\beta - 1})$ by~Lemma \ref{lem:sobolev}. 

Using this formula, let us now write
$$
\aligned
-z_s &\left(D_s(VD_sV^*) + b_{1t} + D_s\left(\frac{i\si_0}{z_s}\right)\right) + \pa_s A_\de\left(\frac{1}{z_s}(-i\t\si + \ep\pa_s(m^2 \Phi_{s}))\right) \\
&= -z_s V D_s^2 U^* + \frac{\ep}{|z_\al|} A_\de\left(\frac{|z_\al|}{z_s}\pa_s G\right) - \frac{1}{z_s } A_\de(\t\si \theta_s) +  \frac{\ep}{z_\al} \t K_\de B_\de G  + \frac{\ep}{|z_\al|} B_{\de}^* g_1 + g_2.
\endaligned
$$
We now proceed as in the proof of Lemma \ref{lem:si}. That is, we add and subtract the ``corrected'' Hilbert transform $m H\left(m^{-1}  \pa_s u\right)$ (note the factors of~$m$ here) from Equation~\eqref{Xi} (see also~\eqref{eq:psiep}), and then we repeatedly use Lemmas~\ref{lem:BRcancel}, \ref{lem:trHilbert}, \ref{lem:derFphi} and~\ref{lem:BdeH} to conclude, after a tedious but straightforward calculation, that
$$
\aligned
\Xi \, =\, &  \ep\,\frac{m}{|z_s|}\Im A^\de\left[(I + iH) \left(\frac{|z_\al|}{z_s}\frac{1}{m}\pa_s G\right) \right] + \ep  \Im\left[\frac{1}{z_\al}  (I + iH) \left(\t K_\de B_\de G\right)\right] + \\
& + z_{2s} A_\de(\t\si \theta_s) - z_{1s} m A_\de H\left(\frac{1}{m}\t\si \theta_s\right) + \frac{\ep}{|z_\al|}B_{\de}^* g_1 + H^3_{\beta + 2}(m)\\
=\, &\ep\Im\left[\frac{1}{z_\al}  (I + iH) \left(\t K_\de B_\de G\right)\right] + z_{2s}  A_\de(\t\si \theta_s) - z_{1s}m A_\de H\left(\frac{1}{m}\t\si \theta_s\right) + \frac{\ep}{|z_\al|}  B_{\de}^* g_1 + H^3_{\beta + 2}(m).
\endaligned
$$ 
To do this, the only nontrivial observations one must use in the argument are that one can put up to two derivatives on the kernel of the commutator of $H$ with functions in $H^3_{\beta + 1}(m)$, and  that $H$ and the multiplication by $m$ and by~$m^{-1}$ essentially commute with $A_\de$ by Lemma~\ref{lem:BdeH}. The other manipulations are by now standard.

Putting everything together, we obtain
\be\label{eq:PHIst}
\aligned
	\Phi_{st} =& \, \ep\Re\left(\frac{z_s}{|z_\al|} A_\de\left(\frac{|z_\al|}{z_s}\pa_s G\right)\right) +\left[-z_{1s}^2A_\de (\t\si \theta_s) + z_{2s} z_{1s}m A_\de H\left(\frac{1}{m}\t\si \theta_s\right)\right]\\
	&+\frac{\ep}{|z_\al|}\left[\Re(\t K_\de  B_\de G) + z_{2s}\Im\left(\frac{1}{z_s}  (I - iH) \left(\t K_\de B_\de G\right)\right)\right]  + \ep \Re (f  B_{\de}^*g_1)+ H^3_{\beta + 2}(m)
\endaligned
\ee
for some $f\in H^3_{\beta + 2}(m)$. The first term in this formula is the most singular one, as we only control its $H^1_\beta(m)$-norm uniformly in~$\de$. We thus require to gain uniform control over two additional derivatives. The remaining terms (except for the last one, which is smoother) are uniformly in $H^2_{\beta + 1}(m)$, so we  only need to control one additional derivative uniformly.  

We are now ready to estimate the time derivative of $\|\pa_s^3 \Phi_s\|^2_{2, \beta + 2}$:
$$
\aligned
\frac{1}{2}\frac{d}{dt}\|\pa_s^3 \Phi_s\|^2_{2, \beta + 2} &= \int m^{2(\beta + 2)} \pa_s^3 \Phi_s \, (\pa_s^3\Phi_{st} - 3\Phi_s \pa_s^2\Phi_{st}) \,ds + \text{ `bounded'}\\
&=: I_1 + I_2 + \text{`bounded'}
%&= \int m^{2(\beta + 2)} \pa_s^3 \Phi_s \pa_s^3 A^\de\left(-\si \theta_s + \epsilon m^2\pa^2_s\varphi_{s} \right) ds + \text{ `bounded'}
%&= \int_{0}^{2\pi} m^{2(\beta + 2)} \pa_s^3 \varphi_s \pa_s^3\left(-A^\de(\si \theta_s) + \epsilon\pa_s^2 A^\de(m^2\varphi_{s})\right) ds + \text{ `bounded terms'}. 
\endaligned
$$
Here and in what follows, `bounded' stands for functions which can be bounded in terms of $\t E_{2,\beta}(t)$ uniformly in $\de$.

Let us consider the integral $I_2$ first. By the above formulas, we have
$$
\aligned
\pa_s^2\Phi_{st} =& \, \ep\Re\left(\frac{z_s}{|z_\al|} A_\de\left(\frac{|z_\al|}{z_s}\pa_s G\right)\right) + \Lcal_{2,\beta + 1}(m) \\
&= \ep \, \pa_s\Re\left(\frac{z_s}{|z_\al|^2} A_\de\left(\frac{|z_\al|^2}{z_s} \pa^2_s G\right)\right) + \Lcal_{2,\beta + 1}(m)\\
&=\ep \, \pa_s\Re\left(  A_\de \pa^2_s G  +  \frac{z_s}{|z_\al|^2}\left[A_\de,\frac{|z_\al|^2}{z_s}\right]  \pa_s^2 G \right) + \Lcal_{2,\beta + 1}(m)
\endaligned
$$
where in the second line we have used that if a derivative hits any factor but $G$, one gets a term bounded in $\Lcal_{2,\beta + 1}(m)$ by the results in Subsection~\ref{ss.convolution}. Recalling that $\pa_s^2G\in\Lcal_{2,\beta}(m)$, and that the derivative of the commutator term belongs to $\Lcal_{2,\beta + 1}(m)$, and using the formula as above $(B_\de^* f)' = B_\de^* f' + \t K_\de f$, we then conclude
$$
\pa_s^2\Phi_{st} = \frac{\ep}{|z_\al|}\,B_\de^*\left(\pa_\al B_\de (\pa_s^3\Phi_s)\right)+ \Lcal_{2,\beta + 2}(m).
$$ 
Therefore, integrating by parts in~$I_2$, we obtain
$$
\aligned
\frac{1}{3\ep}I_2 &= \int  \pa_\al B_\de\left(m^{2(\beta + 2)} \Phi_s \pa_s^3 \Phi_s  \right) B_\de\left(m^2\pa_s^3\Phi_s\right) d\al + \text{ `bounded'}\\
&=\int \frac{1}{2} m^{2(\beta + 1)}\Phi_s \pa_\al |B_\de(m^2\pa_s^3\Phi_s)|^2 +  \pa_\al [B_\de, m^{2(\beta + 1)} \Phi_s](m^2\pa_s^3 \Phi_s) B_\de\left(m^2\pa_s^3\Phi_s\right) d\al +  \text{ `bounded'}.
\endaligned
$$
As the commutator term belongs to $\Lcal_{2,-\beta}(m)$ by Lemma \ref{lem:BdeH}, and integration by parts shows that the first integral is also bounded in terms of $\t E_{2,\beta}(t)$, we conclude that $I_2$ is uniformly bounded too.

The analysis of the integral $I_1$ is more involved. We split it as
$$
I_1 = \ep J_1 + J_2 + \ep J_3 + \ep J_4 + \text{`bounded'},
$$
where the summands in this decomposition are directly given by the summands in~\eqref{eq:PHIst}. In particular, $J_1$ corresponds to the most singular part of $I_1$, while $J_2, J_3$ and $J_4$ are basically all of the same order. 

We claim there exists some $n\in\Nbb$ such that
\be\label{eq:I1}
I_1 \,\leq\, \ep\left(-1 + \frac{n}{N}\right)\int m^{2(\beta + 2)}  |\pa_\al B_\de\left( m^2\pa_s^3 \Phi_s  \right)|^2 d\al + \frac{cN}{\epsilon}  \int  m^{2(\beta + 2)} |\pa_s^2(\t\si\theta_s)|^2 d\al + \text{`bounded'},
\ee
where $N>n$ is an integer which can be chosen arbitrarily large (the only price to pay is to make the bounded term larger).

To see this, we consider $J_2$ first. This is where the term proportional to $1/\ep$ (which we will need to take care of appropriately to pass to the limit $\ep=0$ later on) comes from. We have
$$
\aligned
J_2=& \int m^{2(\beta + 2)} \pa_s^3 \Phi_s  \left(- z_{1s}^2\pa^{3}_s A_\de(\t\si\theta_s) + z_{1s}z_{2s}m \pa_s^3 A_\de H\left(\frac{1}{m}\t\si\theta_s\right) \right)ds + \text{`bounded'}  \\
=& \int m^{2(\beta + 2)} \pa_s^3 \Phi_s  \left(-z_{1s}^2 B^*_\de \pa_\al B^\de \pa_s^2(\t\si\theta_s) + z_{1s}z_{2s} m B_\de^* \pa_\al B_\de\pa_s^2 H\left(\frac{1}{m}\t\si\theta_s\right) \right)d\al + \text{`bounded'}  \\
=& \, J_{2,1} + J_{2,2} + \text{`bounded'},
\endaligned
$$
where we have used that we can interchange $\pa_s$-derivatives with $B_\de$ and $B_\de^*$ because the error terms and the commutators provide control over an additional derivative by Lemma~\ref{lem:BdeH}). 

As the analysis of both terms is completely analogous, let us just consider $J_{2,2}$. As 
\[
\pa_s^2 H (\t\si\theta_s) = \frac{1}{m} H(m\pa_s^2(\t\si\theta_s)) + \Lcal^1_{2,\beta + 2}(m)
\]
by Lemmas \ref{lem:trHilbert} and \ref{lem:derFphi}, we can write
$$
\aligned
J_{2,2}=& -\int \pa_\al B_{\de}\left(m^{2(\beta + 2)}z_{1s}z_{2s}\pa_s^3 \Phi_s \right)B_\de H \left(m\pa_s^2\left(\frac{
1}{m}\t\si\theta_s\right)\right)d\al + \text{`bounded'}\\
=& \int m^{2(\beta + 2)}z_{1s}z_{2s}\pa_s B_{\de} \left(m^2\pa_s^3 \Phi_s \right)B_\de H (\pa_s^2(\t\si\theta_s)) ds + \text{`bounded'}.
\endaligned
$$ 
As in the proof of Lemma \ref{lem:apriori-e}, we now use the elementary inequality 
\be\label{E.abep}
ab \leq  \frac\ep N a^2 + \frac{N}{4\ep }b^2
\ee
to write
$$
|J_{2}| \,\leq \, \frac{2\ep}{N}\int  m^{2(\beta + 2)} \left|\pa_s B_{\de} \left(m^2\pa_s^3 \Phi_s \right)\right|^2 ds + \frac{cN}{2\epsilon}  \int  m^{2(\beta + 2)} |\pa_s^2(\t\si\theta_s)|^2 d\al + \text{`bounded'},
$$
where $N$ is a sufficiently large integer and $c$ is a numerical constant that depends on the operator norms of $H$ and $B_\de$. 

To estimate $J_3$ and $J_4$, we proceed similarly. However, as they carry a factor of~$\ep$, no terms of order $1/\ep$ will appear. More precisely, one can show 
$$
|J_3 + J_4| \, \leq \, \frac{1}{N} \int  m^{2(\beta + 2)} \left|\pa_\al B_{\de} \left(m^2\pa_s^3 \Phi_s \right)\right|^2 d\al + \text{`bounded'}.
$$ 
As the estimates for all the terms are very similar, we will only discuss how to control the term (let us call it $J_{3,1}$) involving the Hilbert transform  in $J_3$. Arguing as before, we have
$$
\aligned
J_{3,1}=& \Re\int m^{2(\beta + 2)} \pa_s^3 \Phi_s \, \pa_s^3\left(\frac{z_{2s}}{z_\al}H\t K_\de B_\de G\right)ds + \text{`bounded'}  \\
=& \,\Re\int H\left(\frac{z_{2s}}{z_\al} m^{2\beta + 3} \pa_s^3\Phi_s\right)  m\pa_\al \t K_\de B_\de \pa_s^2 G d\al  + \text{`bounded'}  \\
=& \,\Re\int \t K^*_\de \left(m H\left(\frac{z_{2s}}{z_\al} m^{2\beta + 3} \pa_s^3\Phi_s\right)\right)\pa_\al B_\de \pa_s^2 G d\al  + \text{`bounded'},
\endaligned
$$
where $\t K^*_\de$ is the adjoint of $\t K_\de $. Since $\pa_s^2 G =  (I - iH)(m^2\pa_s^3\Phi_s) + \Lcal^1_{2,\beta + 1}(m)$, we can write 
$$
\aligned
\pa_\al B_\de \pa_s^2 G &= \pa_\al(I - i H) B_\de (m^2\pa_s^3\Phi_s) + \Lcal_{2,\beta + 1}(m)\\
&= \frac{1}{m}(I - i H) \left(m \pa_\al B_\de (m^2\pa_s^3\Phi_s)\right)
\endaligned
$$
and the uniform estimate follows just as above.

Finally, we consider $J_1$, which is the most singular part of $\pa_s^3\Phi_{st}$. We claim there exists some $n_1$, independent of~$N$, such that
$$
J_1 \,\leq\, \left(-1 + \frac{n_1}{N}\right) \int m^{2(\beta + 1)}  |\pa_\al B_\de\left( m^2\pa_s^3 \Phi_s  \right)|^2 d\al +  \text{`bounded'}.
$$
Indeed, we have 
$$
\aligned
J_1 =&\,\int m^{2(\beta + 2)} \pa_s^3 \Phi_s \pa_s^3 \Re\left(\frac{z_s}{|z_\al|} A_\de\left(\frac{|z_\al|}{z_s}\pa_s G\right)\right) ds \\
=&\  \Re\int m^{2(\beta + 2)} \pa_s^3 \Phi_s \,\frac{z_s}{|z_\al|^4}  B_\de^*\left(\pa^2_\al B_\de\left(\frac{|z_\al|^2}{z_s}\pa_s^2G\right)\right) ds + \frac{1}{N} \|\pa_\al B_{\de} (m^2\pa_s^3 \Phi_s)\|^2_{2,\beta + 2} + \text{`bounded'}.%\int  m^{2(\beta + 2)} \left|\pa_\al B_{\de} \left(m^2\pa_s^3 \Phi_s \right)\right|^2 d\al + \text{'ok'}
\endaligned
$$
Here, the terms controlled by~$\|\pa_\al B_{\de} (m^2\pa_s^3 \Phi_s)\|^2_{2,\beta + 2}$ arise whenever a derivative hits a term other than $\pa_s G$ or from commutators with convolution operators, and the factor $1/N$ is introduced by means of the elementary inequality~\eqref{E.abep} as before (at the expense of making the bounded term larger with~$N$, but still independent of~$\de$). 

Denoting the remaining integral by $J_{1,1}$, we simply keep the negative term that arises and estimate the remaining terms as above (using, in particular, ~\eqref{E.abep}) to obtain
$$
\aligned
J_{1,1} = &\, -\Re\int  \pa_\al B_\de\left(m^{2(\beta + 2)} \pa_s^3 \Phi_s \frac{z_s}{|z_\al|^3} \right)\pa_\al B_\de\left(\frac{|z_\al|^2}{z_s}\pa_s^2 G\right) d\al  \\
=& \left(-1 + \frac{n_1}{N}\right) \int m^{2(\beta + 2)}  |\pa_\al B_\de\left( m^2\pa_s^3 \Phi_s  \right)|^2 d\al +  \text{`bounded'} 
\endaligned
$$
for some $n_1$. As $N$ can be chosen to be as large as necessary, we obtain the bound for $J_1$. The estimate~\eqref{eq:I1} follows, and thus the lemma.
\end{proof}

%{\color{violet} Maybe add some stuff here! e.g.the interpolation inequalities are to be replaced by their weighted counterparts, cf. \cite{CSLin} etc.)}.

Armed with the uniform bound proven in Lemma~\ref{lem:Tepsilon}, standard arguments provide the following local wellposedness result for the $\ep$-system:

\begin{corollary}\label{C.Tepsilon}
	For any fixed $\ep>0$, the $\ep$-system is locally wellposed on~$\t \Ocal^k_\beta(m)$. In particular, for any initial datum $(z^{\epsilon}_0, V^{\epsilon}_0)\in\t \Ocal^k_\beta(m)$, there exists some $T^\epsilon>0$ such that there is a unique solution $(z^{\epsilon}, V^{\epsilon})\in \Ccont^1([0,T^\epsilon], \t \Ocal^k_{\beta}(m))$ to the $\ep$-system with this initial datum.
\end{corollary}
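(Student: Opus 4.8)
The plan is to deduce Corollary~\ref{C.Tepsilon} from Lemma~\ref{lem:Tepsilon} by a standard compactness argument, passing to the limit $\de\to0^+$ in the $(\ep,\de)$-system with $\ep>0$ held fixed. First I would fix $\ep>0$ and an initial datum $(z^\epsilon_0,V^\epsilon_0)\in\t\Ocal^k_\beta(m)$ and invoke Lemma~\ref{lem:Tepsilon} to produce, for all small enough $\de>0$, solutions $(z^{\epsilon,\de},V^{\epsilon,\de})$ of the $(\ep,\de)$-system on a common interval $[0,T^\epsilon]$ whose $\t E_{k,\beta}$-energy, and hence whose $\t\Bcal^k_\beta(m)$-norm, is bounded on $[0,T^\epsilon]$ uniformly in $\de$. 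Reading off the equations \eqref{eqs:sys-z}--\eqref{eqs:sys-ed} and using that the convolution operators $A_\de,B_\de,B_\de^*,\t K_\de$ have $\de$-uniform operator norms (Subsection~\ref{ss.convolution}) together with the Section~\ref{s.inverse} bounds for $\Gcal(z)$, I would check that $(\pa_t z^{\epsilon,\de},\pa_t V^{\epsilon,\de})$ is bounded, uniformly in $\de$, in a space that embeds compactly into $\t\Bcal^{k-1}_\beta(m)$; thus the family is equibounded in $\Ccont([0,T^\epsilon],\t\Bcal^k_\beta(m))$ and equicontinuous in time with values in $\t\Bcal^{k-1}_\beta(m)$.

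Next I would extract, by Banach--Alaoglu together with an Aubin--Lions type argument, a sequence $\de_n\to0^+$ along which $(z^{\epsilon,\de_n},V^{\epsilon,\de_n})$ converges weakly-$*$ in $\Ccont([0,T^\epsilon],\t\Bcal^k_\beta(m))$ and strongly in $\Ccont([0,T^\epsilon],\t\Bcal^{k-1}_\beta(m))$ to a limit $(z^\epsilon,V^\epsilon)$, and then pass to the limit in every term of the $(\ep,\de_n)$-system. Here $A_{\de_n}\to\mathrm{Id}$ and $\t K_{\de_n}\to0$ by Subsection~\ref{ss.convolution}, the correction $\Xi(z^{\epsilon,\de_n},V^{\epsilon,\de_n},\ep,\de_n)\to0$ because $\Xi$ vanishes identically at $\de=0$, and $\Ccal(z)$, the corrected operators $\Ccal_{(j)}$ and $\Gcal(z)$ depend continuously on the curve by Lemma~\ref{lem:BRbasic}, Corollary~\ref{lem:BRcorrections} and Section~\ref{s.inverse}; the limit therefore solves the $\ep$-system \eqref{eqs:sys-e}--\eqref{eq:psi-eps} (in which, at $\de=0$, one has $\Xi=0$ and $A_\de=\mathrm{Id}$) with the prescribed datum. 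Since the conditions \eqref{eq:nu}, \eqref{eq:Fcal} and \eqref{RT-zero} defining the open set $\t\Ocal^k_\beta(m)$ hold at $t=0$ and are open, the solution stays in $\t\Ocal^k_\beta(m)$ on a possibly shorter interval, which I would still call $[0,T^\epsilon]$.

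To finish I would upgrade the weak time continuity to genuine continuity in $\t\Bcal^k_\beta(m)$ in the usual way (cf.~\cite{MajBer}, and the weighted version in~\cite{CoEnGr21}): the a priori inequality \eqref{eqs:tenergy}, now valid for $(z^\epsilon,V^\epsilon)$, gives $\limsup_{t\to t_0}\t E_{k,\beta}(t)\le\t E_{k,\beta}(t_0)$, while weak lower semicontinuity of the norm gives the reverse inequality, so $t\mapsto\t E_{k,\beta}(t)$ is continuous; combined with the already established weak continuity this yields $(z^\epsilon,V^\epsilon)\in\Ccont([0,T^\epsilon],\t\Ocal^k_\beta(m))$, and the $\ep$-system equations then give the stated $\Ccont^1$ regularity. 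Uniqueness, and continuous dependence on the datum, would follow from an energy estimate for the difference of two solutions run one order below the top order: repeating the manipulations of Lemmas~\ref{lem:apriori-e} and~\ref{lem:Tepsilon} on the difference produces a closed differential inequality to which Gr\"onwall applies.

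I expect the main obstacle to be the passage to the limit in the second step: one must ensure that $\Ccal(z)$, $\Ccal_{(j)}$ and $\Gcal(z)$ are continuous in the curve \emph{with respect to the weighted norms}, and that the compact embedding used for the time derivatives survives the singular weight $m$; at the borderline exponent $\beta=\tfrac12$ one also has to replace $H$ and $\Ccal$ throughout by their conjugates by a power $m^{\lambda'}$ as in \eqref{E.cacambio}, both in this step and in the uniqueness estimate, which only adds harmless commutator terms controlled by Lemma~\ref{lem:derFphi}. Given the machinery of Section~\ref{s.inverse} and the appendices, all of this is by now routine, and the bulk of the analytic work has already been carried out in Lemma~\ref{lem:Tepsilon}.
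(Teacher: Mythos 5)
Your proposal is correct and follows essentially the same route as the paper: the paper deduces the corollary from the uniform-in-$\de$ bound of Lemma~\ref{lem:Tepsilon} by "standard arguments" (a compactness passage $\de\to0^+$ in the $(\ep,\de)$-system, as announced in Subsection~\ref{ss.regul}), which is exactly the argument you spell out. The details you fill in — uniform time-derivative bounds, Aubin--Lions extraction, $A_\de\to\mathrm{Id}$ and $\Xi\to0$, upgrading weak to strong continuity, and uniqueness via a difference energy estimate — are the intended ones.
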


\subsection{Proof of Theorem~\ref{thm:main}}
\label{ss.existence}

Our goal in this subsection is to show how Corollary~\ref{C.Tepsilon} can be used to prove the local wellposedness result for  the water wave system presented in Theorem~\ref{thm:main}. The key step of the proof is to obtain a lower bound for the existence time $T_\ep$ which does not depend on~$\ep$.

For this, we start with the natural generalization $E^\epsilon_{k,\beta}(t)$ of the energy functional $E_{k,\beta}(t)$, which we will see is well suited to handle the  $\epsilon$-system:   
$$
E^\epsilon_{k,\beta}(t)^2 := E_{k,\beta}(t)^2+ \epsilon \, \left(\|\pa_s^{k+1}\Phi_s\|^2_{2,\beta + k} + \|\pa_s^{k+1}|z_\al|\|^2_{2,\beta + k}\right).
$$
Here $k\geq2$ and in the definition of $E_{k,\beta}(t)$ (Equation~\eqref{eq:energy})  we replace $\varphi_s$  by 
\[
\Phi_s := z_s \cdot U_s = \varphi_s - \Re(b_1 z_s^2).
\]
This is dictated by the particular choice of regularization in \eqref{eqs:sys-e}. In addition, we choose to control $|z_\al|$ directly instead of $\log|z_\al|$. One should note both prescriptions for the energy (i.e., with $\varphi_s$ or with $\Phi_s$) define equivalent norms.

The main result in this subsection is the following. It is standard that the local wellposedness result for water waves with corners, Theorem~\ref{thm:main} follows from this result and the energy estimates in Section~\ref{s.apriori}.
An important aspect of the proof is how we regularize the initial datum using the operators~$B_\de$ (roughly speaking, by means of variable-step convolutions over scales of size $\ep^{1/2}$) to make sure it is in the more regular space $\t \Ocal^k_\beta(m)$ used for the $\ep$-system.

\begin{lemma}\label{lem:existence} 
Let us fix some initial datum
\[
(z^0,V^0)\equiv (z^0_*, \nu^0_+, \nu^0_-, \Theta^0, \log|z^0_\al|,b^0_0, b^0_1, U_1^0)\in \Ocal^k_\beta(m).
\]
Let $T_\ep$ be the maximal time of existence of the solution~$(z^\epsilon, V^\epsilon)$ to the $\epsilon$-system \eqref{eqs:sys-e} with initial data
\begin{multline*}
	(z^{0,\ep},V^{0,\ep})\equiv(z^{0,\epsilon}_*, \nu^{0,\epsilon}_+, \nu^{0,\epsilon}_-, \Theta^{0,\epsilon}, \log|z^{0,\epsilon}_\al|,b^{0,\epsilon}_0, b^{0,\epsilon}_1, U_1^{0,\epsilon}):=\\
	(z^0_*, \nu^0_+, \nu^0_-, B_{\epsilon^{1/4}}(\Theta^0),B_{\epsilon^{1/4}}( \log|z^0_\al|),b^0_0, b^0_1, B_{\epsilon^{1/4}}(U_1^0)),
\end{multline*}
which is in $\t \Ocal^k_\beta(m)$ for small enough~$\ep$. 
Then there exists a time $T>0$, independent of $\epsilon$, such that $T_\ep>T$ for all small enough $\ep>0$.
\end{lemma}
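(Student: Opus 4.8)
The plan is to establish a differential inequality for the energy $E^\epsilon_{k,\beta}$ introduced right before the statement, with constants that do \emph{not} deteriorate as $\epsilon\to0^+$, and then to convert it into an $\epsilon$-independent lower bound on the existence time by a standard ODE comparison. Throughout I would work with $k=2$, the general case being entirely analogous, and I would use the continuation criterion for the $\epsilon$-system (the solution persists until it leaves $\t\Ocal^k_\beta(m)$, i.e.\ until $\t E_{k,\beta}$ blows up, the angle leaves the range \eqref{eq:nu}, or the arc-chord/nondegeneracy \eqref{RT-zero} fails).

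\textbf{Step 1 (control of the regularized datum).} First I would check that $(z^{0,\epsilon},V^{0,\epsilon})\in\t\Ocal^k_\beta(m)$ once $\epsilon$ is small and that $\sup_{0<\epsilon\ll1}E^\epsilon_{k,\beta}(0)<\infty$. Both facts follow from the mapping properties of the variable-step convolutions $B_\delta$ collected in Subsection~\ref{ss.convolution}: $B_{\epsilon^{1/4}}$ maps the spaces occurring in $\Bcal^k_\beta(m)$ boundedly into the more regular spaces occurring in $\t\Bcal^k_\beta(m)$, gaining half a derivative in the weighted scale at a cost $O(\epsilon^{-1/8})$, it respects the growth rate at the corner, and it leaves $z^0_*,\nu^0_\pm,b^0_0,b^0_1$ (hence \eqref{eq:nu}, the value of $b_1$, and, up to the usual time-independent additive constant, the closedness of the curve) untouched. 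Consequently the $E_{k,\beta}$-part of $E^\epsilon_{k,\beta}(0)$ is bounded by its value on the original datum, while $\epsilon\,\|\pa_s^{k+1}\Phi_s^{0,\epsilon}\|^2_{2,\beta+k}+\epsilon\,\|\pa_s^{k+1}|z^{0,\epsilon}_\al|\|^2_{2,\beta+k}\lesssim \epsilon\cdot\epsilon^{-1/4}=\epsilon^{3/4}\to0$. Since the arc-chord bound \eqref{eq:Fcal} and the nondegeneracy \eqref{RT-zero} are open conditions, they survive this small perturbation, so $(z^{0,\epsilon},V^{0,\epsilon})\in\t\Ocal^k_\beta(m)$ and $E^\epsilon_{k,\beta}(0)\le M$ with $M$ depending only on $(z^0,V^0)$.

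\textbf{Step 2 (the uniform a priori estimate).} The heart of the matter is to prove that along the $\epsilon$-system
\[
\frac{d}{dt}\,E^\epsilon_{k,\beta}(t)^p\;\lesssim\;\exp\!\big(C\,E^\epsilon_{k,\beta}(t)^p\big)
\]
for some $p\in\Nbb$ and some $C>0$ independent of $\epsilon$, as long as $(z,V)\in\t\Ocal^k_\beta(m)$. For the $E_{k,\beta}^2$ contribution I would repeat the proof of Lemma~\ref{lem:apriori} essentially verbatim, the only change being that $\si$ is now the $\epsilon$-modified quantity of Lemma~\ref{lem:apriori-e} (cf.\ \eqref{eq:si-sp}, \eqref{eq:si-sp12}); the extra piece $\epsilon m^2 H\pa_s\Phi_s$ of $\si$, as well as the direct regularization term $\epsilon z_s\pa_s(m^2\Phi_s)$ in $V_t$, only produce contributions proportional to $\epsilon\|\pa_s^{k+1}\Phi_s\|$, which are bounded by $E_{k,\beta}^2+\epsilon\|\pa_s^{k+1}\Phi_s\|^2\lesssim (E^\epsilon_{k,\beta})^2$ after Young, or by a parabolic term with favorable sign exactly as in Lemma~\ref{lem:apriori-e}. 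For the $\epsilon$-weighted top-order terms I would carry out the computation around \eqref{eq:1/eps} and the analysis of $I_1,I_2$ from Lemma~\ref{lem:apriori-e}, but now for $\tfrac{d}{dt}\big(\epsilon\,\|\pa_s^{k+1}\Phi_s\|^2_{2,\beta+k}\big)$ and, analogously, for the $|z_\al|$-term (whose evolution is driven by $(\log|z_\al|)_t=\varphi_s=\Phi_s+\Re(b_1z_s^2)$): the regularization supplies a dissipative term $\sim-\epsilon^2\|\pa_s^{k+2}\Phi_s\|^2_{2,\beta+k+1}$, against which the non-parabolic flux $\epsilon\int m^{2(\beta+k)}\pa_s^{k+2}\Phi_s\,\pa_s^{k}(\t\si\theta_s)$ — obtained after one integration by parts and after subtracting the explicit leading asymptotics $\si_0=|b_1|^2|z-z_*|\tan2\nu$ — is balanced via \eqref{E.abep}. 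Crucially, because the whole term already carries a factor $\epsilon$, Young's inequality now converts the $\epsilon^{-1}$ loss of \eqref{eq:1/eps} into an $O(1)$ term, namely a constant times $\|\pa_s^{k}(\t\si\theta_s)\|^2_{2,\beta+k-1}$ plus `bounded' quantities, all of which are controlled by $\exp(C\,E_{k,\beta}^p)\le\exp(C\,(E^\epsilon_{k,\beta})^p)$ using Lemmas~\ref{lem:si} and \ref{lem:si_t} and the control $E^\high\gtrsim\|\sqrt\si\,\pa_s^{k+1}\theta\|^2$. Here $\Xi\equiv0$ because $\delta=0$, so no extra correction terms enter, which is why the $\epsilon$-system estimate is cleaner than the $(\epsilon,\delta)$-one.

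\textbf{Step 3 (conclusion) and the main obstacle.} With Step~2 in hand, the differential inequality together with $E^\epsilon_{k,\beta}(0)\le M$ (Step~1) and comparison with the scalar ODE $y'=C_1 e^{C_2y}$ yields a time $T_1>0$, depending only on $M$, $C_1$, $C_2$, hence not on $\epsilon$, on which $E^\epsilon_{k,\beta}(t)\le 2M$. As $E^\epsilon_{k,\beta}$ is equivalent to the $\t\Bcal^k_\beta(m)$-norm of $(z^\epsilon,V^\epsilon)$ together with the arc-chord functional, this keeps $(z^\epsilon,V^\epsilon)$ in a fixed bounded subset of $\t\Bcal^k_\beta(m)$; the compatibility condition \eqref{eq:nu} and the nondegeneracy \eqref{RT-zero} then persist on a possibly shorter but still $\epsilon$-independent interval because $2\nu$ and $b_1$ solve the explicit ODEs \eqref{eqs:sys-z}--\eqref{eqs:sys-V} with $\epsilon$-independent right-hand sides, and \eqref{eq:Fcal} persists by the uniform energy bound and continuity in time. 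Hence $(z^\epsilon,V^\epsilon)$ does not leave $\t\Ocal^k_\beta(m)$ before some $\epsilon$-independent $T>0$, i.e.\ $T_\epsilon>T$ for all small $\epsilon$. The main obstacle is Step~2: one must verify that \emph{every} $\epsilon$-generated term — in the modified $\si$, in $V_t$ directly, and after all the integrations by parts needed to distribute derivatives symmetrically — either carries enough powers of $\epsilon$ to be absorbed into $\epsilon\times(\text{top-order energy})$, or is balanced against the $\epsilon^2$-dissipation by \eqref{E.abep} with no uncompensated $\epsilon^{-1}$ left over. The estimates themselves are exactly the bookkeeping already performed in Lemmas~\ref{lem:apriori-e} and~\ref{lem:Tepsilon}; the genuinely new point is merely that placing the factor $\epsilon$ on the top-order term of the energy turns the $\epsilon^{-1}$ loss of \eqref{eq:1/eps} into a bounded contribution, and the places where care is required are the subtraction of $\si_0$ before applying Young's inequality and the matching of weight powers $m^{2(\beta+\cdot)}$ against derivative counts near the corner tip.
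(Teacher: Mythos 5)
Your proposal is correct and follows essentially the same route as the paper: a uniform bound on $E^\ep_{k,\beta}(0)$ for the $B_{\ep^{1/4}}$-regularized datum, then a uniform-in-$\ep$ differential inequality obtained by combining the Lemma~\ref{lem:apriori} computation for the $E_{k,\beta}$ part (whose new $\ep$-terms are absorbed via~\eqref{E.abep} into the $\ep^2$-dissipation) with the Lemma~\ref{lem:apriori-e} computation for the $\ep$-weighted top-order terms (where the extra factor of $\ep$ in the energy neutralizes the $1/\ep$ loss of~\eqref{eq:1/eps}), and finally an ODE comparison. This matches the paper's proof, including the key cancellation of the $\pm\ep^2\|\pa_s^{k+2}\Phi_s\|^2$ terms and the observation that $\Xi\equiv0$ when $\de=0$.
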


\begin{proof}
As before, we will take $k=2$ for concreteness.
First, a short computation shows that the regularization $(z^{0,\ep},V^{0,\ep})$ of the initial datum over scales of order $\ep^{1/2}$ is such that the $E^\ep_{2,\beta}$-norm of~$(z^{0,\ep},V^{0,\ep})$ is bounded by the $E_{2,\beta}$-norm of~$(z^0,V^0)$ up to an $\ep$-independent multiplicative constant. Therefore, the bound will follow if one proves an energy inequality, similar to~\eqref{eq:energy}, for the energy~$E^\ep_{2,\beta}$ and the $\ep$-system.

Hence, our goal is to show how $\frac{d}{dt}E^\ep_{2,\beta}(t)^2$ can be estimated in terms of $E^\ep_{2,\beta}(t)$. Most of the arguments required to establish this estimate go just as before, so for the sake of brevity we will focus on the aspects of the proof that are genuinely new.

We start by noting that the time derivative of $\epsilon\|\pa^{3}_s \Phi_s\|^2_{2, \beta + 3}$, can be estimated just as in the proof of Lemma~\ref{lem:apriori-e}, as the additional $\ep$ we have included in the energy cancels out the factor $1/\ep$ in the right hand side of~\eqref{eq:1/eps}. In fact, a minor modification of the proof of Lemma \ref{lem:apriori-e} shows that
$$
\ep\frac{d}{dt}\|\pa_s^3 \Phi_s\|^2_{2, \beta + 2} \, \leq \, -\ep^2 \, \|\pa_s^4 \Phi_4\|^2_{2,\beta + 3} + \text{`bounded'}
$$
where `bounded' means controlled in terms of $E^\ep_{k, \beta}(t)$.

To estimate $\frac{d}{dt}E_{2,\beta}(t)^2$, we shall proceed basically as in Lemma \ref{lem:apriori} to show that 
$$
\frac{d}{dt}\left(\|\sqrt{\si}\,\pa_s^{3}\theta\|^2_{\beta + 3/2} + \|\Lambda^{1/2}(m^{\lambda}\pa^2_s\Phi_s)\|^2_{2, \beta' + 1/2}\right) \, \leq \, \ep^2 \, \|\pa_s^4 \Phi_4\|^2_{2,\beta + 3} + \text{`bounded'}
$$ 
We recall the notation $\beta' := \beta - \lambda + 1$ that we used, where $\lambda$ was chosen in such a way that $\beta'\in\left(-\frac12, 0\right)$. If one goes over the proof of Lemma~\ref{lem:apriori} in this setting, most of the estimates carry over word-by-word. However, some care is needed because we replaced $\varphi_s$ by $\Phi_s$ and because there are two additional $\ep$-dependent terms (and one of them actually yields the new term above). Let us discuss the details.

First, by definition, we have $\Phi_s = \varphi_s + \Lcal^3_{2,\beta + 2}(m)$, so we can write  
$$
\pa_s^2\theta_{t} = m^{-\lambda} H(m^{\lambda}\pa^2_s\Phi_{s}) + \Lcal^2_{2,\beta + 2}(m),
$$ 
for $\beta\in(-\frac12, \frac12]$ by~\eqref{eq:th_tsslambda} and the proofs of Lemmas \ref{lem:apriori} and \ref{lem:apriori-e} (using again~\eqref{E.cacambio}). 

Concerning the $\ep$-dependent terms, by~\eqref{eq:varphist-e} we have 
$$
\varphi_{st} =- \varphi^2_s + \theta^2_t - \si\theta_s + \ep\pa_s^2(m^2 \Phi_s),
$$
and 
$$
\pa_s^2(\si \theta_s) = \si \pa_s^3\theta + \ep m^{1-\lambda}\theta_s H(m^{1 + \lambda}\pa^3_s\Phi_{s}) + \Lcal_{2,\beta + 2}^1(m).
$$
Here we have used that $\si$, as defined in \eqref{eq:si-eps}, can be written as
$$
\pa_s^2 \si = \ep m^{1-\lambda} H(m^{1 + \lambda}\pa^3_s\Phi_{s}) + \Lcal^1_{2,\beta + 1}(m)
$$
for $\beta \in (-\frac12,\frac12]$. This essentially follows by taking derivatives of \eqref{eq:si-sp} when $\beta \in(- \frac12,\frac12)$ (or of \eqref{eq:si-sp12} with  $\lambda': = \lambda - 1$ when $\beta = \frac12$), and then repeatedly applying Lemmas \ref{lem:trHilbert} and \ref{lem:derFphi}. Recall that in the proof of Lemma \ref{lem:apriori} we used that $\pa_s^2(\si \theta_s) = \si \pa_s^3\theta + \Lcal_{2,\beta + 2}^1(m)$, which relied on the control of $\pa_s^3\si$. 

Some care is needed to estimate the aforementioned $\ep$-dependent terms, so we will discuss all the nontrivial parts of the argument. The integral corresponding to the additional term that comes from $\si\theta_s$ is
$$
\aligned
I_2 &= \epsilon\int m^{2\beta' + 1}\Lambda^{1/2}(m^{\lambda}\pa^2_s\Phi_s)\Lambda^{1/2}\left(m\theta_s H(m^{1 +\lambda}\pa_s^3\Phi_{s})\right)d\al\\
&= -\epsilon\int m \theta_s \Lambda^{1/2}\left(m^{2\beta'  + 1}\Lambda^{1/2}(m^{\lambda}\pa^2_s\Phi_s)\right)  H(m^{1+\lambda}\pa_s^3\Phi_{s})d\al \\
& = \epsilon\int m^{2\beta' + 2}  \,\theta_s \pa_\al H(m^{\lambda}\pa^2_s\Phi_s)  H(m^{1+\lambda}\pa_s^3\Phi_{s}) d\al + \text{`bounded'}.
\endaligned
$$ 
Here we have used the identity $\Lambda = \pa_\al H$ and Lemma \ref{lem:comm_weight}. From this expression, one easily sees that $I_2$ is controlled by $E^\ep_{2,\beta}(t)$ just as before. 

For the last term in $\pa_s^2\varphi_{st}$, we write
\be\label{est1}
\aligned
m^{\lambda}\pa_s^4(m^2 \Phi_{s}) &= m^\lambda \pa_s^2 \left(m^2\pa_s^2\Phi_s + 4 \pa_s m^2 \pa_s\Phi_s \right) + \Lcal^1_{2,\beta' +1}(m)\\
&=  m^{2+\lambda}\pa_s^4\Phi_s + g m^{1+\lambda}\pa^3_s\Phi_s +\Lcal^1_{2,\beta' +1}(m),
\endaligned
\ee
where $g \in H^2_{\beta + 1}(m)$ and $m^{1+\lambda} \pa_s^3\Phi_s$ and $m^{2+\lambda} \pa_s^4\Phi_s$ belong to $\Lcal_{2,\beta'}(m)$ by our regularity assumptions. 

Let us consider the most singular term first. Integrating by parts, we find
$$
\aligned
I_1 = & \,\epsilon\int m^{2\beta'  + 1}\Lambda^{1/2}(m^{\lambda}\pa^2_s\Phi_s)\Lambda^{1/2}(m^{2+\lambda}\pa_s^4 \Phi_4) d\al \\
=& \,\epsilon\int \left( m^{2\beta' + 1}\Lambda (m^{\lambda}\pa^2_s\Phi_s) - [\Lambda^{1/2}, m^{2\beta' + 1}] \Lambda^{1/2}(m^{\lambda}\pa^2_s\Phi_s)\right)(m^{2+\lambda}\pa_s^4 \Phi_s)\, d\al.
\endaligned
$$
Applying the pointwise inequality $ab \leq \frac{1}{\ep}a^2 + \ep b^2$, we conclude
$$
|I_1|  \, \leq \, \ep^2 \, \|\pa_s^4 \Phi_4\|^2_{2,\beta + 3} + \text{`bounded'}
$$
Finally, using Lemmas~\ref{lem:trHilbert}, \ref{lem:comm_gb} and \ref{lem:comm_weight}, we can similarly obtain a bound for 
 the integral involving the term $g m^{1+\lambda}\pa^3_s\Phi_s$ in \eqref{est1} which is quadratic in the norm of $\pa^3_s\Phi_s$, and therefore bounded by $E^\ep_{2,\beta}(t)$ modulo a multiplicative constant. The lemma is then proven.
\end{proof}

In order to finish the proof of Theorem \ref{thm:main}, it will be convenient to define another, higher-order, energy functional for the $\epsilon$-system, namely   
$$
\aligned
\t E^\epsilon_{k,\beta}(t)^2 := E^\ep_{k,\beta}(t)^2+ \ep E_{k + 1, \beta}^\high(t)^2\ + \epsilon^2 \, \left(\|\pa_s^{k+2}\Phi_s\|^2_{2,\beta + k + 1} + \|\pa_s^{k+2}|z_\al|\|^2_{2,\beta + k + 1}\right).
\endaligned
$$
Written out in terms of powers of $\ep$, this reads as
$$
\aligned
\t E^\ep_{k, \beta}(t)^2 =& E_{k,\beta}(t)^2 + \ep \left(E_{k + 1, \beta}^\high(t)^2 +\|\pa_s^{k+1}\Phi_s\|^2_{2,\beta + k} + \|\pa_s^{k+1}|z_\al|\|^2_{2,\beta + k}\right) \\
&+ \epsilon^2 \, \left(\|\pa_s^{k+2}\Phi_s\|^2_{2,\beta + k + 1} + \|\pa_s^{k+2}|z_\al|\|^2_{2,\beta + k + 1}\right).
\endaligned
$$

With the same argument we used to prove Lemmas \ref{lem:apriori}, \ref{lem:apriori-e} and \ref{lem:existence}, it is not difficult to verify that $\t E^\epsilon_{k,\beta}(t)$ also satisfies an a priori energy estimate that is uniform in $\ep$. This is because the term proportional to $\ep$ appears only linearly in the estimate for the time-derivative of $E^\high_{k + 1, \beta}(t)^2$, as the factor multiplying this term in the energy estimate only depends on the `lower-order' $\ep$-independent energy $E_{k, \beta}^2(t)$). To prove this, one only (but crucially) needs to observe that one can actually put two derivatives on $i\Ccal - H$, thanks to the additional regularity on the parametrization of the interface and Lemma~\ref{lem:BRcancel}. 

For concreteness, let us assume that $k = 2$, as the case of general~$k$ is analogous. Given an initial datum $(z^{0}, V^{0}) \in \Bcal^2_\beta(m)$, let $(z^{0, \ep}, V^{0, \ep})$ be the regularization of the initial datum as defined in Lemma \ref{lem:existence}. Note that the $\t E^\ep_{2,\beta}$-energy of $(z^{0,\ep},V^{0,\ep})$ is bounded by the $E_{2,\beta}$-energy of~$(z^0,V^0)$ up to an $\ep$-independent multiplicative constant. Therefore, the a priori estimate for $\t E^\ep_{2,\beta}$ ensures that there exists some $T>0$ and a a constant $M$ (depending on the $E_{2,\beta}$-norm of~$(z^0,V^0)$) such that 
$$
\t E^\epsilon_{2,\beta}(t) \,\leq\, M
$$
for all $t\in[0, T)$, uniformly in $\ep$. 

We claim that $(z^\ep, V^\ep)$ is a Cauchy sequence in $\Bcal^2_\beta(m)$. To show this,  consider the $\Bcal^2_\beta(m)$-norm of the difference $(z^\ep, V^\ep) - (z^{\ep'}, V^{\ep'})$. As usual, we may replace $\|\pa_s^3\theta^\ep - \pa_{s'}^3\theta^{\ep'}\|_{2,\beta+ 2}$ by
$$
\left\|\sqrt{\si^\ep} \pa_s^{3}\theta^\ep - \sqrt{\si^{\ep'}} \pa_{s'}^{3}\theta^{\ep'} \right\|_{2, \beta + 3/2}.
$$  
The resulting quantity, which we denote by $E^d_{\ep,\ep'}(t)$, is basically the  $E_{k, \beta}$-energy of the difference (cf.~\eqref{def:energy-low}--\eqref{def:energy-high}). The fact that this is equivalent to $\|(z^{\ep}, V^{\ep}) - (z^{ \ep'}, V^{\ep'})\|_{\Bcal^2_\beta(m)}$ is straightforward (just recall the definition of $\si$ and the fact that the Dirichlet-to-Neumann operator is Lipschitz). 

With the estimates that we now have at hand, it is not hard to see that the energy of the difference satisfies the a priori  estimate
\be\label{eq:cauchy-eps}
\frac{d}{dt} E^d_{\ep,\ep'}(t)^2 \, \leq \, C_1 E^d_{\ep,\ep'}(t)^2 + \max\{\sqrt{\ep}, \sqrt{\ep'}\}C_2 E^d_{\ep,\ep'}(t),
\ee
where $C_1$ and $C_2$ depend only on $M$. The proof goes just as that of Lemma \ref{lem:apriori}. The only interesting new terms are the additional $\ep$-dependent terms which appear in $\si$ and $\varphi_{st}$ (see the proof of Lemma~\ref{lem:existence}), but these only contribute  to the term proportional to  $\max\{\sqrt{\ep}, \sqrt{\ep'}\}$ and are thus controlled by $\sqrt{\ep}\t E^\ep_{2, \beta}$. 

Therefore, Gronwall's inequality ensures that
\be\label{eq:cauchy-eps}
E^d_{\ep,\ep'}(t) \, \lesssim \, E^d_{\ep,\ep'}(0) + \max\{\sqrt{\ep}, \sqrt{\ep'}\}.
\ee
uniformly for small (but uniform) $t$. Since by construction $(z^{0, \ep}, V^{0, \ep})$ is a Cauchy sequence and $E^d_{\ep, \ep'}(0)\approx \|(z^{0, \ep}, V^{0, \ep}) - (z^{0, \ep'}, V^{0, \ep'})\|_{\Bcal^2_\beta(m)}$, this completes the proof that $(z^\ep,V^\ep)$ is a Cauchy sequence. In turn, it is standard that this yields the existence of a solution to the original system (i.e., the water wave equations, corresponding to $\ep=0$). Furthermore, considering now the energy of the difference of two solutions, an energy estimate completely analogous to~\eqref{eq:cauchy-eps} shows the uniqueness and stability of solutions to the original system.

\subsection{The case of periodic domains}
\label{ss.unbounded}

Following Remark~\ref{R.periodicOmega}, in this subsection we will show how a minor modification of the strategy used to prove Theorem~\ref{thm:main} when the water region~$\Om$ is bounded can handle the case where $\Om$ is a periodic (unbounded) domain. That is, in this section we shall assume that~$\Om(t)$ is the planar region that lies below the interface curve~$z(\al,t)$, under the assumption that $\al\mapsto z(\al,t)-\al$ is $2\pi$-periodic, and of course the boundary velocity is assumed to be $2\pi$-periodic too. 

For consistency with previous sections, let us parametrize the curve so that the corner point is
   $$
   z_*(t) := z(-\pi, t).
   $$
   We find it convenient to denote by~$\Om_\pi$ one period (i.e., a fundamental cell) of the domain~$\Om$, whose upper boundary one can assume to be $\Gamma_\pi:=\{z(\al,t): -\pi\leq \al\leq\pi\}$.

As we shall see, all the essential ingredients of the proof of Theorem~\ref{thm:main} carry over directly to this case, and the modifications work just as in the usual case of smooth 2D gravity water waves (i.e., without any corners). 

We start by recalling the periodic Cauchy integral formula. As the domain~$\Om$ is unbounded, the Cauchy integral formula is not directly applicable. However, the function
\be\label{e.cot}
\frac{1}{2}\cot\frac{z}{2}  = \frac{1}{z} + h(z)
\ee 
is $2\pi$-periodic, with $h(z)$ holomorphic on $\Re z\in[-\pi,\pi]$, and moreover 
$
\cot\frac{z}{2} \rightarrow  \pm i
$ 
exponentially fast as $z \rightarrow \mp i\infty$. Hence, it is easy to see  that the boundary trace of a bounded holomorphic function on~$\Om$ satisfies~\eqref{E.DalV*} with the Cauchy singular integral operator~$\Ccal$ replaced by its periodic counterpart $\Ccal_{\mathrm{per}}$, defined as 
   \begin{equation}\label{E.defCcalV*-periodic}
   	\Ccal_{\mathrm{per}}(z)  f(\al, t) := \frac{1}{2\pi i} \, \PV  \int_{-\pi}^\pi f(\al', t) \left[\cot\left(\frac{z(\al, t)-z(\al', t)}{2}\right) + i \right] \, z_{\al}(\al',t)\, d\al'.
   \end{equation}
      
   With the periodic Cauchy integral in hand, the results of Section \ref{ss.BR} quickly generalize to the case of periodic domains. First, since \eqref{e.cot} holds, Lemmas \ref{lem:BRbasic} and \ref{lem:BRcancel} follow immediately. On the other hand, in order to define the periodic counterparts $\Ccal_{\mathrm{per}(j)}$  of the `corrected' Cauchy integrals, note that one can write the kernel as
   $$
   \cot\left(\frac{z - z'}{2}\right) + i = \frac{2ie^{-iz'}}{e^{-iz'} - e^{-iz}}.
   $$
  We can therefore define $\Ccal_{\mathrm{per}(j)}$ to be \eqref{def:BR+k} with $z - z_*$ replaced by $e^{-iz} - e^{-iz_*}$. Note that 
  $$
  |e^{-iz(\al, t)} - e^{-iz_*(t)}| \approx |z - z_*|
  $$
  in a neighborhood of the left corner tip, and analogously near the right corner tip at $z(\pi, t) = z_*(t) + 2\pi$ by periodicity, so straightforward analogs of Corollary \ref{lem:BRcorrections} and Lemma \ref{lem:BRders} hold.

   Another important ingredient is the periodic Dirichlet-to-Neumann operator, which we denote by $\Gcal_{\mathrm{per}}$. To analyze this linear map, observe that the function $ H(z) := e^{-iz} $ maps $\Omega_\pi$ conformally to $\tilde\Omega  \backslash (-\infty,0] $, where  $\tilde\Omega$ is a bounded domain with a corner. Furthermore, by standard results on removable singularities, any map which is continuous on~$\t\Om$ and conformal on $\tilde\Omega  \backslash (-\infty,0] $ can be extended as a conformal map on~$\t\Om$, and an analogous result holds for functions that are continuous on $\tilde\Omega  $ and harmonic on $\tilde\Omega  \backslash (-\infty,0] $. 
   
   This readily enables us to use our results about the Dirichlet-to-Neumann map on a domain with corners. Specifically, if we respectively denote by $\pa_n$ and $\pa_{\t n}$ the normal derivatives on $\Om$ and $\t \Om$, it is clear that they are related by
   \[
   \pa_n f = |H'|\, \big[\pa_{\t n} (f\circ H^{-1})\big]\circ H\,.
   \]
   The same relation must then hold if one replaces the normal derivatives $\pa_n$ and $\pa_{\t n}$ by the Dirichlet-to-Neumann maps of $\Om$ and $\t\Om$, $\Gcal_{\mathrm{per}}(z)$ and $\t\Gcal(H(z))$:
   \be\label{DtN-trafo}
   \Gcal_{\mathrm{per}}(z) f = |H'|\, \left[\t\Gcal(H(z)) (f\circ H^{-1})\right]\circ H\,.
   \ee
Therefore, we can use the results of Section \ref{s.inverse} on the Dirichlet-to-Neumann operator of a bounded domain with corners to handle the periodic case.
   
   We must still take care of one aspect that does not appear in the case of bounded domains: since $\Om$ is unbounded, one needs to prescribe the behavior at infinity the pressure and the velocity. Indeed, as mentioned in Remark~\ref{R.periodicOmega}, we must impose
\be\label{inf}
P(z) \rightarrow +\infty, \qquad v(z) \rightarrow 0
\ee
as $z \rightarrow -i\infty$. Here and in what follows, we omit the time dependence notationally whenever no confusion may arise.

To analyze the velocity, one can use the periodic Cauchy formula and the asymptotics of the kernel to see that the condition~\eqref{inf} for the velocity can be written in terms of the boundary velocity as
\be\label{eq:v-inf}
0=\lim_{z\rightarrow -i \infty} v^*(z) = \frac{1}{2\pi}\int_{\Ga_\pi} V^*(z)dz .
\ee  
However, the way we construct the boundary velocity~$V$ in this paper is by picking one component of the boundary velocity (or rather of its $D_s$-derivative), say $V_1$, as a basic unknown, and subsequently constructing the other component (namely, $V_2$) using the Dirichlet-to-Neumann map. Condition~\eqref{eq:v-inf} is then a constraint on $V_1$ which involves the a priori unknown quantity $\Gcal_{\mathrm{per}} V_1$. However, there is an easy way to incorporate this in the analysis: one can always  map $\Om$ conformally to the lower half-plane using the Riemann mapping theorem, and in this case the condition~\eqref{eq:v-inf} simply means that the basic unknown~$V_1$, composed with this conformal map, should have zero mean on~$\Gamma_\pi$. This is therefore a condition, depending only on~$V_1$ and on the interface curve~$z$, that one must impose on the admissible initial data to ensure that the velocity tends to zero at infinity. Since the time derivative of~\eqref{eq:v-inf} vanishes, this condition (and therefore the fact that $v(z)\to0$ as $z\to-i\infty$) is then preserved by the evolution.

Let us now discuss how one handles the condition~\eqref{inf} for the pressure. The evolution equations \eqref{eqs:evolGa} for $z$ and $V$ obviously remain the same. We also construct $\si = -\pa_n P$ just as before; that is, we look for a bounded function $\psi$, harmonic in $\Om$, such that 
$$
\psi|_{\Ga}  = - gz_2 - \frac{1}{2}|V|^2,
$$
and write $\si$ as
$$
\si= \Gcal_{\mathrm{per}}\psi + gz_{1s} + V_s\cdot V^\perp,
$$
cf.\ \eqref{eq:Psi}-\eqref{eq:sigma}. Now recall the well known formula 
$$
P(z) = - gz_2 - \frac{1}{2}|v(z)|^2 - \phi_t(z) , \quad z\in\Om ,
$$
where $\phi$ is the velocity potential (i.e. $v = \nabla \phi$). By construction, we know that $\phi_t = \psi$ is bounded, as is the velocity. Since $g>0$, we infer that the condition for the pressure in~\eqref{inf} must hold. A standard application of the Phragm\'en--Lindel\"of principle shows that $P > 0$, so that $\si(z, t) > 0$ at any regular point just as in the case of a bounded domain.

We can now move on to Section \ref{s.apriori}. We describe the interface $z$ in terms of the basic variables $z_*, \nu_\pm, \Theta$ and $\log|z_\al|$ exactly as in \eqref{regularity-z-corner}--\eqref{eq:norm-theta}. As for the boundary velocity, we modify the ansatz for the asymptotic expansion slightly to write 
\be\label{eq:V-rep-pi}
V^*(\al, t) = b_0(t) + b_1(t)(ie^{-iz(\al, t)} - ie^{-iz_*(t)}) + U^*(\al, t),
\ee
where the constraint for $U^*$ now involves the periodic Cauchy integral:
$$
U^* = \Ccal_{\mathrm{per}}(z) U^*.
$$	

In addition, as we have already mentioned, Equation \eqref{eq:v-inf} is a constraint on $\int U^*\, dz$ (which, as noted above, can be written using only~$U_1$ and~$z$, and now  $b_0, b_1$ as well). Otherwise, we simply follow \eqref{eq:v-rep}--\eqref{eq:norm-V}. We can then derive the ODE system for the coefficients $z_*, \nu_\pm, b_0$ and $b_1$ and prove an a priori energy estimate exactly as before. One only needs to replace every occurrence of $\Ccal$ and $(z - z_*)$ by $\Ccal_{\mathrm{per}}$ and $e^{-iz(\al, t)} - e^{-iz_*(t)}$, respectively, and note that results of Section~\ref{ss.BR}  hold in the periodic case as discussed above. 

The coefficients of the ODE system that one derives this way look very slightly different than in the periodic case  as a consequence of the fact that the above asymptotic expansions now involve the exponential map. Specifically, for $z_*$ and $\nu_\pm$ one finds
\be\label{E.odeper1}
\frac{dz_*}{dt} = b_0^*, \quad \frac{d\nu_+}{dt} = \Re\left(ib_1e^{- iz_* + 2i\nu_+}\right), \quad \frac{d\nu_-}{dt} = 			\Re\left(ib_1 e^{- iz_* + 2i\nu_-}\right).
\ee
To analyze $b_0$ and $b_1$, note that the expansion \eqref{eq:V-rep-pi} is particularly convenient because, going over to the bounded domain $\t \Om$ via the conformal map $H^{-1}$, we simply recover the asymptotic expansion \eqref{eq:v-rep} for the transformed boundary velocity $\t V := V \circ H^{-1}$. We can therefore directly use the results of Lemma \ref{lem:si} and the relation~\eqref{DtN-trafo} to find the leading order term in the expansion of $\si$. After a short calculation, we arrive at the equations
\be\label{E.odeper2}
\frac{db_0}{dt} = ig, \qquad \frac{d}{dt}(b_1e^{-iz_*})=  \frac{1}{\cos 2\nu}|b_1e^{-iz_{*}}|^2e^{-i(\nu_+ + \nu_-)}.
\ee

Note that the ODE system~\eqref{E.odeper1}-\eqref{E.odeper2} is just as in the case of bounded water domains (Equations~\eqref{eq:nudt} and~\eqref{eq:db}), with the caveat that the function $b_1$ must be replaced by $b_1e^{-iz_*}$ everywhere in the ODEs. This is merely because our choice of the parameter $b_1$ in the periodic case has a slightly different physical meaning as $b_1$ had in the case of bounded domains. Indeed, the coefficient~$b_1$ was introduced so that the gradient of the velocity at the corner tip is $b_1$ in the bounded case, but this quantity is given by $b_1 e^{-iz_*}$ in the periodic case. Therefore, this apparent difference simply reflects that, as one would expect, the leading order asymptotic behavior of the fluid near the corner does not depend on whether the fluid domain is bounded or periodic. 

The rest of proof of Theorem \ref{thm:main} carries over to the periodic way in a straightforward way. Besides, as we discussed right after~\eqref{eq:W-int} above, the constraint~\eqref{eq:v-inf} (which is preserved by the evolution) ensures that the solutions have the desired behavior at infinity. Incidentally, one should note that the proof also goes through if we relax \eqref{inf}-\eqref{eq:v-inf} to assume that the velocity is bounded at infinity but does not necessarily vanish in the limit $z \rightarrow -i\infty$; this corresponds to initial data that do not necessarily satisfy the additional constraint~\eqref{eq:v-inf}.

\section{Finite-time singularity formation}
\label{S.sing}

The fact that water waves with corners can develop singularities in finite time is a fairly straightforward consequence of the local wellposedness result established as Theorem~\ref{thm:main} and of the ODE system~\eqref{eqs:sys-z}--\eqref{eqs:sys-V}, which we  presented as~\eqref{E.ODEintro} in the Introduction. 

A precise statement of this fact, which accounts for Theorem~\ref{T.sing}, is as follows:

\begin{theorem}\label{T.sing2} 
	Set $\beta:=\frac12$. For any $k\geq2$, there exists an initial datum $(z^0, V^0)\in\Ocal^k_\beta(m)$ and some $T>0$ such that the unique solution $(z,V)$ to the initial value problem~\eqref{eqs:sys} with this initial datum is $\Ccont([0, T), \Ocal^k_{\beta}(m))\cap \Ccont^1([0, T), \Bcal^{k-1}_{\beta}(m))$, but
	\be\label{E.blowup}
	\lim_{t\to T^-}\left[ \|(z,V)\|_{\Bcal^k_{\beta}(m)} + \|(z_t,V_t)\|_{\Bcal^{k-1}_{\beta}(m)} + \Fcal(z)\right]=\infty\,.
	\ee
	This phenomenon is stable, as suitably small perturbations of this initial datum blow up as well.
\end{theorem}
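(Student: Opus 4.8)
The plan is to reduce the blowup statement to the analysis of the ODE system~\eqref{eqs:sys-z}--\eqref{eqs:sys-V} governing the time-dependent constants $(z_*,\nu_\pm,b_0,b_1)$, and then to invoke the continuation criterion of Remark~\ref{R.continuation}. First I would choose, using the analysis of the ODE system carried out in Appendix~\ref{A.ODE}, initial values $(\nu_+^0,\nu_-^0,b_1^0)$ for which the decoupled subsystem
\[
\frac{db_1}{dt} = \frac{|b_1|^2e^{-i(\nu_+ + \nu_-)}}{\cos 2\nu},\qquad
\frac{d\nu_+}{dt} = \Re\!\left(ib_1e^{2i\nu_+}\right),\qquad
\frac{d\nu_-}{dt} = \Re\!\left(ib_1 e^{2i\nu_-}\right)
\]
escapes to infinity in finite time $T_0>0$, with $|b_1(t)|\to\infty$ while the opening angle $2\nu(t)=\pi+\nu_+(t)-\nu_-(t)$ stays inside $(0,\tfrac\pi2)$ and in fact converges to some value $2\nu_\infty\in(0,\tfrac\pi2]$, and such that the compatibility condition $-\tfrac{\pi}{2\nu}+3<\beta+\tfrac12=1$ remains valid on $[0,T_0)$ — this last point forces $2\nu_\infty<\tfrac\pi2$ (equivalently $\nu_\infty<\tfrac\pi4$), which is exactly the regime where the ODE quadratic nonlinearity produces blowup of $b_1$ before the angle degenerates. (The statement says $\beta=\tfrac12$, so this is the borderline Muckenhoupt case and one uses the $m^{\lambda'}$-corrected operators of~\eqref{E.cacambio} throughout, but this is purely a technical decoration.)

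Next I would construct an actual Cauchy datum $(z^0,V^0)\in\Ocal^k_\beta(m)$ realizing these prescribed constants: take $\Theta^0$ and $\log|z^0_\al|$ so that $z^0$ is, say, a slightly rounded wedge of opening angle $2\nu^0$ satisfying the arc-chord condition~\eqref{eq:Fcal} and with $\Theta^0$ small in $\Lcal^{k+1}_{2,\beta+k}(m)$, set $b_0^0$ arbitrary, $b_1^0$ as above with $|b_1^0|$ large enough to be bounded away from zero (so the degenerate Rayleigh--Taylor condition~\eqref{RT-zero} holds at $t=0$), and $U_1^0$ small in its space; membership in $\Ocal^k_\beta(m)$ then just amounts to checking~\eqref{eq:nu}, \eqref{eq:Fcal}, \eqref{RT-zero}, which hold by construction for this simple geometry. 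Theorem~\ref{thm:main} gives a unique local solution $(z,V)\in\Ccont([0,T),\Ocal^k_\beta(m))\cap\Ccont^1([0,T),\Bcal^{k-1}_\beta(m))$, and — crucially — Theorem~\ref{thm:main} asserts that along this solution the constants $(z_*,\nu_\pm,b_0,b_1)$ evolve exactly by~\eqref{eq:nudt}--\eqref{eq:db}, i.e.\ by the ODE system we have just analyzed. Hence $|b_1(t)|\to\infty$ and $2\nu(t)\to 2\nu_\infty$ as $t\to T_0$, and the solution cannot be continued past $\min(T,T_0)$ in $\Ocal^k_\beta(m)$.

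It then remains to argue that the blowup of the solution is of the type claimed in~\eqref{E.blowup} rather than a failure of one of the other conditions in the continuation criterion. By the continuation criterion the solution persists as long as $(z,V)\in\Ocal^k_\beta(m)$, i.e.\ until (a)~the energy $E_{k,\beta}$ (equivalently $\|(z,V)\|_{\Bcal^k_\beta(m)}+\|(z_t,V_t)\|_{\Bcal^{k-1}_\beta(m)}+\Fcal(z)$) blows up, (b)~the angle condition~\eqref{eq:nu} fails, or (c)~$\inf_\al |z-z_*|\,\si\to0$, i.e.\ the degenerate Rayleigh--Taylor condition fails. Possibility~(b) is ruled out by our choice: $2\nu(t)$ converges to $2\nu_\infty\in(0,\tfrac\pi2)$ and stays in the admissible range. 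For~(c), note $|b_1(t)|\to\infty$ is incompatible with~(c) unless~(a) happens first: by Lemma~\ref{lem:si}, $\si\approx |b_1|^2|z-z_*|$ with constants controlled by the energy and the arc-chord bound, so as long as the energy and $\Fcal(z)$ are bounded, $\inf_\al |z-z_*|^{-1}\si\gtrsim |b_1|^2\to\infty$, which certainly does not go to zero. Therefore the only remaining possibility is~(a), and since $|b_1(t)|\le\|V\|_{\Bcal^k_\beta(m)}$ (it is a coefficient in the decomposition~\eqref{eq:v-rep}, cf.~\eqref{eq:norm-V}), the norm in~\eqref{E.blowup} must indeed diverge. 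Equivalently, $b_1(t)=\lim_{z\to z_*}\frac{dv^*}{dz}(t,z)$, so the gradient of the velocity becomes unbounded up to the boundary, which is impossible for a solution remaining in $\Ocal^k_\beta(m)$ with bounded energy. Finally, stability follows because the whole construction is open: if $(\tilde z^0,\tilde V^0)$ is close to $(z^0,V^0)$ in $\Bcal^k_\beta(m)$, then it is still in $\Ocal^k_\beta(m)$ (open set), its associated constants $(\tilde\nu_\pm^0,\tilde b_1^0)$ are close to $(\nu_\pm^0,b_1^0)$, and the finite-time escape of the ODE system~\eqref{eqs:sys-z}--\eqref{eqs:sys-V} is itself an open condition on the initial data of the ODE (the blowup time depends continuously on the data as long as we escape transversally, which one reads off from the explicit analysis in Appendix~\ref{A.ODE}); running the same argument gives blowup for the perturbed solution.

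The main obstacle I anticipate is not the PDE side — that is essentially bookkeeping, given Theorems~\ref{thm:main} and the lemmas of Section~\ref{s.apriori} — but rather the precise ODE analysis underpinning the first step: one must exhibit initial data for the three-dimensional system for $(\nu_+,\nu_-,b_1)$ for which $|b_1|\to\infty$ in finite time while $2\nu=\pi+\nu_+-\nu_-$ stays strictly between $0$ and $\tfrac\pi2$ (so that $\cos 2\nu$ stays bounded away from zero and the weight compatibility~\eqref{eq:nu} never degrades), together with enough continuity/robustness of this scenario to get the stability claim. This is exactly the content of Appendix~\ref{A.ODE}, so in the body of the proof I would simply cite the relevant proposition there; the delicate point is making sure the angle does not run out of the admissible window before $b_1$ blows up, which is why one works with $2\nu_\infty$ strictly less than $\tfrac\pi2$ and exploits that the forcing $db_1/dt\sim |b_1|^2/\cos 2\nu$ is genuinely quadratic and of a definite sign along the chosen trajectory.
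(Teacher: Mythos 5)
Your high-level strategy --- reduce the blowup statement to the ODE~\eqref{E.ODEintro} and the continuation criterion of Remark~\ref{R.continuation} --- is the same as the paper's, but there are two genuine errors in the execution. First, you claim that along the blowup trajectory the opening angle converges to some $2\nu_\infty$ strictly less than $\tfrac\pi2$, and on that basis you declare the choice $\beta=\tfrac12$ "a technical decoration." This is wrong: the ODE conserves $|b_1|^2/\tan 2\nu$ (Appendix~\ref{A.ODE}), so $|b_1|\to\infty$ forces $\tan 2\nu\to\infty$, i.e.\ $2\nu\to\tfrac\pi2$; the angle degenerates \emph{simultaneously} with $b_1$. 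This is precisely why $\beta=\tfrac12$ is essential: the compatibility condition~\eqref{eq:nu} reads $2\nu\in(0,\pi/(\tfrac52-\beta))$, and only at $\beta=\tfrac12$ does the admissible window extend up to (though not including) $\tfrac\pi2$, which is what lets $2\nu(t)$ stay admissible for all $t<T_0$. For any fixed $\beta<\tfrac12$ the angle would exit the admissible window strictly before $T_0$, and the continuation criterion would then no longer force norm blowup (cf.\ the remark following Theorem~\ref{T.sing2}). Your reasoning would incorrectly predict the theorem for every $\beta\in(-\tfrac12,\tfrac12]$.

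Second, your handling of possibility (c) --- failure of the degenerate Rayleigh--Taylor condition --- is too quick. You cite Lemma~\ref{lem:si} to conclude $\inf_\al|z-z_*|^{-1}\si\gtrsim|b_1|^2\to\infty$, but the lemma gives $\si=|b_1|^2|z-z_*|\tan 2\nu$ plus a remainder in $\Lcal^{k+1}_{2,\beta+k-1}(m)$, and at $\beta=\tfrac12$ that remainder decays near the corner only like $|z-z_*|$ (Lemma~\ref{lem:sobolev} with $\lambda_\beta=0$), the same rate as the leading term, so the expansion alone does not show the leading term dominates uniformly; and away from the corner it says nothing quantitative at all. The paper instead proves the uniform lower bound $\si\geq c|z-z_*|$ from the bounded $C^{1,\lambda}$-norm of the velocity in $\Om$ (Corollary~\ref{C.reg-v}) together with Hopf's lemma applied after conformally straightening the corner (the map $z\mapsto(z-z_*)^{\pi/2\nu}$ composed with a Riemann map to the disk), using that this gives a positive source $F=|\nabla v|^2\geq\tfrac12|b_1^0|$ in a uniform neighborhood of the corner for the Poisson problem $-\Delta P=F$. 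That step is absent from your proposal and cannot be replaced by the asymptotic formula for $\si$ alone.
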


\begin{proof}%[Proof of Theorem \ref{T.sing2}]
	
	In Appendix~\ref{A.ODE} we show that there is an open set~$\Ocal$ of initial conditions $(\nu_+^0,\nu_-^0,b_1^0)\in\Rbb^2\times\Cbb$ for which the ODE system~\eqref{E.ODEintro} blows up in finite time. More precisely, for all $(\nu_+^0,\nu_-^0,b_1^0)\in\Ocal$ one has:
	\begin{enumerate}
		\item[i)] The maximal forward existence time of the ODE, $T_0\equiv T_0(\nu_+^0,\nu_-^0,b_1^0)>0$, is finite.
		\item[ii)] The functions $2\nu(t):=\pi +\nu_+(t)-\nu_-(t)$ and $|b_1(t)|$ are positive and increasing for $t\in(0,T_0)$.
		\item[iii)] As $t\to T_0^-$, $2\nu(t)\to\frac\pi 2$ and $|b_1(t)|\to\infty$.
	\end{enumerate}
	
	With $\beta:=\frac12$ and any fixed $k\geq2$, let us take some initial data 
	$$
	(z_*^0, \nu_+^0, \nu_-^0, \Theta^0, \log|z^0_\al|,b_0^0,b^0_1,U^0_1)\in \Ocal^k_\beta(m)
	$$
	for which $(\nu_+^0,\nu_-^0,b_1^0)$ are in~$\Ocal$, where we recall that $\Ocal^k_\beta(m)$ is the open set of admissible initial conditions that appears in our local existence result (Theorem~\ref{thm:main}). It is obvious that this can be done, as one only needs to ensure that the functions $(\Theta^0, \log|z^0_\al|,U^0_1)$ are regular enough and that the corresponding interface curve does not have any self-intersections.
	
	Theorem~\ref{thm:main} ensures that this initial datum determines a unique solution 
	$$
	(z,V)\equiv (z_*, \nu_+, \nu_-, \Theta, \log|z_\al|,b_0,b_1,U_1)\in\Ccont([0, T), \Ocal^k_{\beta}(m))\cap \Ccont^1([0, T), \Bcal^{k-1}_{\beta}(m)),
	$$ 
	with $(\nu_+,\nu_-,b_1)$ given by the corresponding solution to the ODE~\eqref{E.ODEintro}. By Remark~\ref{R.continuation}, the solution (with $(\nu_+,\nu_-,b_1)$ determined by~\eqref{E.ODEintro}) can be continued as long as the corner angle~$2\nu$ satisfies condition~\eqref{eq:nu}, the degenerate Rayleigh--Taylor condition 
	\begin{equation}\label{E.RTfails}
		\inf_{t\in(0, T)}\inf_{\al\in I}\frac{\si(\al,t)}{|z(\al,t)-z_*(t)|}>0
	\end{equation}
	holds, and the
	$E_{k,\beta}$-energy (i.e., the right hand side of~\eqref{E.blowup}) remains bounded.

	Let us now assume that the right hand side of~\eqref{E.blowup} remains bounded for~$T:=T_0$, since otherwise the theorem automatically follows. To show that this leads to a contradiction, we will use that if the continuation criterion of Remark~\ref{R.continuation} is satisfied up to time $T_0$, we immediately infer that the blowup condition~\eqref{E.blowup} holds for $T:=T_0$ because $\|(z,V)\|_{\Bcal^k_{\beta}(m)} \geq |b_1|\to\infty$ as $t\to T_0^-$. 
	
	Since $\beta=\frac12$, verifying the first condition of the continuation criterion is immediate. With $(\nu_+^0,\nu_-^0,b_1^0)\in\Ocal$ and $T_0$ as above, condition~\eqref{eq:nu} (which in the case $\beta=\frac12$ simply reads as $2\nu\in(0,\frac\pi2)$) holds for all $t\in(0,T_0)$ because $2\nu$ is positive, increasing and tends to~$\frac\pi 2$ as $t\to T_0^-$.
	
	Therefore, to show that the solution to the water wave system blows up, it only remains to show that the degenerate Rayleigh--Taylor condition cannot fail either. To see this, suppose that there is some time $T<T_0$ such that
	\begin{equation}\label{E.RTfails2}
		\liminf_{t\to T}\inf_{\al\in I}\frac{\si(\al,t)}{|z(\al,t)-z_*(t)|}=0.
	\end{equation}
	By Equation~\eqref{V} and Corollary~\ref{C.reg-v}, the velocity in the water region~$\Om\equiv \Om(t)$ is
	\[
	v(z,t)^*= b_0(t) + b_1(t)[z-z_*(t)]+ u(z,t)^*.
	\]
	Since $|b_1(t)|\geq |b_1^0|$ because $|b_1(t)|$ is increasing, $\frac{du^*}{dz}(z_*(t),t)=0$ by the regularity of the function~$U_1$ in~\eqref{V} and $\|u(\cdot,t)\|_{C^{1,\lambda}(\Om(t))}\lesssim E_{k,\beta}(t)$ is bounded uniformly for $0<t<T$ by hypothesis, it follows that there exists some uniform radius~$\de>0$ such that the
	the nonnegative function 
	\[
	F:=| \nabla v|^2\geq0
	\]
	is lower bounded as
	\be\label{E.lowerbd}
	F(z,t)\geq \frac12|b_1^0|,\qquad z\in \Om(t)\cap B_\de(z_*(t)),
	\ee
	where $B_\de(z)$ denotes the ball centered at~$z$ of radius~$\de$.
	
	By~\eqref{E.subh}, in~$\Om$ the pressure satisfies
	\be\label{E.DeF}
	-\Delta P=F,\qquad P|_{\Ga}=0.
	\ee
	As the geometry of~$\Om$ is controlled up to time~$T$, we claim that the lower bound~\eqref{E.lowerbd}  implies
	\be\label{E.unifsi}
	\si =-\pa_nP\geq c|z-z_*|
	\ee
	for some uniform $c>0$ and all $t<T$, which contradicts the assumption~\eqref{E.RTfails2}. 
	
	In fact, it is not hard to see that the estimate~\eqref{E.unifsi} follows from a calculation on the strip $\Pi_\nu$ very similar to the one in the proof of Lemma~\ref{lem:special_sol}. As we do not need to be so careful here about the sharp dependence of the estimates on the regularity of~$z$ because we have more regularity than strictly necessary for this part of the argument, for the benefit of the reader we provide an elementary alternative proof of this fact too. For this, we henceforth omit the time variable for the ease of notation.
	
	Rotating the axes and applying a smooth diffeomorphism (equal to the identity in a neighborhood of the corner point) to~$\Om$ if necessary, e.g., as in the proof of~\eqref{eq:poincare-weighted}, we can assume that for $t$ close to~$T$ the region $\Om$ sits on the right of the point~$z_*$, that is, $z_{1*}=\inf_{\Ga} z_1$. The conformal map $z\mapsto (z-z_*)^{\frac\pi{2\nu}}$ then maps $\Om$ to a $C^{1,\lambda}$~domain, which can be in turn conformally mapped to the disk~$\Dbb$~\cite{Pomme}. Let us denote by $h:\Dbb\to\Om$ the resulting conformal map. We can assume that $h(-1)=z_*$, that is, that $-1\in\Dbb$ is mapped to the corner. One then readily finds the asymptotic expressions
	\[
	|h(\zeta)|\approx|\zeta+1|^{\frac{2\nu}\pi},\qquad |h'(\zeta)|\approx |\zeta +1|^{\frac{2\nu}\pi-1}
	\]
	for $\zeta$ in a uniformly small neighborhood of~$-1$. Moreover, there is some uniform $\de'>0$ such that
	\[
	h^{-1}(\Om\cap B_\de(z_*))\subset \Dbb\cap B_{\de'}(-1).
	\]
	Then~\eqref{E.unifsi} is simply the pullback to~$\Om$ of the uniform lower bound for $\pa_n(P\circ h)$ that one gets by transforming~\eqref{E.DeF} to the disk via~$h$ and using Hopf's lemma.
	
	Therefore, under the above hypotheses, we conclude that the continuation criterion is satisfied up to the time~$T_0$. As we discussed above, this is a contradiction, so we conclude that the solution to the water wave system blows up in time at most~$T_0$.	The stability of the blowup follows directly from the fact that the ODE blows up for an open set of initial values. 
\end{proof}

\begin{remark}
	The ODE system~\eqref{E.ODEintro} does not depend on~$\beta$, so both this part of the argument and the discussion of the Rayleigh--Taylor condition hold for all weights $\beta\in (-\frac12,\frac12]$. However, the fact that~\eqref{eq:nu} holds for all $t<T_0$ strongly relies on the choice $\beta=\frac12$: indeed, this condition means that $2\nu\in(0,\pi/(\frac52-\beta))$, so it will fail for any fixed $\beta<\frac12$ because $2\nu\to\frac\pi2$ as $t\to T_0$.		
	If this continuation criterion fails, one cannot conclude that the solution to the water wave system blows up, as it could conceivably happen that the solution $(z_*, \nu_+, \nu_-, \Theta, \log|z_\al|,b_0,b_1,U_1)$ is well defined but the functions $(\nu_+,\nu_-,b_1)$ no longer coincide with the solution to the ODE~\eqref{E.ODEintro}. In view of Subsection~\ref{ss.unbounded}, it is clear that the proof works verbatim in the periodic case.
\end{remark}

%\end{section}

   %% (corresponds to Section 4.2-4.4)

%% Section 4: Laplacian 

\section{The Dirichlet-to-Neumann map for domains with corners}\label{s.inverse}

Our goal now is to prove several estimates involving the Dirichlet-to-Neumann map that are used throughout the paper.

In this section, we henceforth assume that~$\Om$ is a domain of the kind considered in the proof of the local wellposedness result, Theorem~\ref{thm:main}. That is, the boundary curve~$\Ga$ is as in this theorem, in  the sense that its $E_{k, \beta}$-norm (or rather the part of the energy involving only the curve~$z$) is finite, with $k\geq2$. All the implicit constants in the results we shall establish in this section are bounded in terms of said $E_{k, \beta}$-energy. Also, whenever we write~$k$ or~$\beta$ in this section, we always refer to the parameters that appear in the energy.

Without any loss of generality, in this section we shall assume that $z_*=0$, that is, the corner point is located at the origin. As before, the angle of the (curvilinear) corner is~$2\nu\in(0,\frac\pi2)$. Note that the boundary, which satisfies the arc-chord condition, is Lipschitz everywhere, and of class $\Ccont^{1 + k, \lambda}$ away from the corner point. We will notationally omit any dependence of the curve on~$t$, as time does not play a role in this section.

For clarity, we will divide this section in three subsections: first we shall state the main results we prove and discuss their interrelations, then we will introduce a key change of variables that lies at the heart of the proofs, and finally we will discuss how the main results of this sections are actually established.

\subsection{Statement of the results}\label{ss.statements}

The central result of this section is the following proposition, which defines the Dirichlet-to-Neumann map~$\Gcal(z)$. In what follows, we shall refer to the function~$u$ given by this proposition as the harmonic extension of~$\psi$ to~$\Om$.

\begin{proposition}\label{prop:dirichletInt} 
Take $\ga \in  \Rbb$ be such that $\left|\ga + \frac{1}{2}\right| <\frac{ \pi}{2\nu}$. For each $\psi\in \Lcal^{i+\frac{1}{2}}_{2, \ga + i + \frac{1}{2}}(\Ga)$ with $1 \leq i \leq k + 1$, there exists a unique solution~$u$ of the boundary value problem
	\be\label{eq:systemOmDirichlet}
	\Delta u = 0, \quad u|_\Ga = \psi
	\ee
	such that 
	\begin{equation}\label{E.defDN}
			\Gcal(z)\psi := \pa_n u
	\end{equation}
	is in $\Lcal^{i - \frac{1}{2}}_{2, \ga  + i + \frac{1}{2}}(\Ga)$. Furthermore, the Dirichlet-to-Neumann operator given by~\eqref{E.defDN} defines a continuous map
	$$
	\Gcal(z): \Lcal^{i+\frac{1}{2}}_{2, \ga  + i  + \frac{1}{2}}(\Ga) \rightarrow \Lcal^{i - \frac{1}{2}}_{2, \ga  + i + \frac{1}{2}}(\Ga).
	$$
\end{proposition}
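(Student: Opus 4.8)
The strategy is to reduce the problem on the curvilinear-corner domain $\Om$ to a problem on a model domain with a straight corner (a sector, or after a further conformal change of variables, a half-plane or strip), where the relevant elliptic estimates can be obtained by explicit separation-of-variables / Mellin-transform arguments, and then to transfer everything back. The condition $\left|\ga + \frac12\right| < \frac{\pi}{2\nu}$ is precisely the condition that excludes the indicial exponents $\pm\frac{k\pi}{2\nu}$ ($k\in\Zbb$) of the Laplacian on the sector of opening~$2\nu$ from the critical weight line, so that the model operator is an isomorphism between the relevant weighted spaces. I expect the section to be organized exactly this way: Subsection~\ref{ss.cov} introduces the change of variables, and the proofs in Subsection~\ref{ss.statements} (or the following one) carry out the transfer.

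\textbf{First step: the change of variables.} I would introduce a bi-Lipschitz map $\Phi:\Om \to \Om_0$, equal to the identity away from a neighborhood of the corner, that straightens the two arcs $\Ga^\pm$ emanating from~$z_*=0$ into the two rays of a genuine sector $\{|\arg w - \arg w_0|<\nu\}$. Using the $E_{k,\beta}$-control on the curve (i.e.\ $\theta\in H^{k+1}_{\beta+k}(m)$, $\log|z_\al|\in H^{k+\frac12}_{\beta+k-\frac12}(m)$ and the arc-chord condition), this map can be chosen so that $\Phi - \mathrm{id}$ and its derivatives up to order $k$ lie in the weighted spaces that make the pulled-back Laplacian $\Delta_0$ a perturbation of the flat Laplacian on the sector by an operator with coefficients vanishing (in the appropriate weighted sense) at the corner. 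Composing with the conformal map $w\mapsto w^{\pi/(2\nu)}$ sends the sector to a half-plane (or, periodically, the exponential map sends a strip to a corner—this is the $\t\Om$ trick used later in Section~\ref{ss.unbounded}), turning the model problem into the ordinary half-plane Dirichlet problem, whose Dirichlet-to-Neumann operator is $\Lambda^{1/2} = (-\pa_x^2)^{1/2}$ up to lower-order terms—this is why the target space drops exactly half a derivative and why the fractional spaces of Subsection~\ref{ss.weightedSobolev} are the right ones.

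\textbf{Second step: solvability and estimates on the model.} On the model (half-plane after the conformal straightening), solve $\Delta u_0 = 0$, $u_0|_{\pa}=\psi_0$ by the Poisson integral, and verify that $\psi_0\in \Lcal^{i+1/2}_{2,\ga+i+1/2}$ implies $\pa_n u_0\in\Lcal^{i-1/2}_{2,\ga+i+1/2}$. The heart of this is the weighted boundedness of $\Lambda^{1/2}$, which is exactly Lemma~\ref{lem:halfDer} (the weighted Riesz-integral estimate, Theorem~\ref{thm:rieszintegral}), valid precisely because the conformal straightening has transferred the weight $m^\ga$ to a weight with exponent in the Muckenhaupt range — and it is here that the hypothesis $\left|\ga+\frac12\right|<\frac\pi{2\nu}$ enters, since the exponent $\frac\pi{2\nu}(\ga+\frac12)-\frac12$ of the new weight must avoid the forbidden values for the $A_2$ condition. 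Uniqueness on the model follows from the maximum principle together with the growth restriction encoded in the weighted space (a Phragmén–Lindelöf argument, just as in Section~\ref{ss.unbounded}). Higher regularity $1\le i\le k+1$ is obtained by commuting tangential derivatives through, using that $z$ is controlled in $H^{k+1}_{\beta+k}(m)$; each derivative costs one unit of weight, matching the space $\Lcal^{i-1/2}_{2,\ga+i+1/2}$ in the statement.

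\textbf{Third step: transfer back and close.} Pull $u_0$ back via $\Phi^{-1}$ to get a solution $u$ on $\Om$; the perturbation terms (difference between $\Delta$ and $\Delta_0$, and between the true boundary and the model boundary) are handled by a fixed-point / Neumann-series argument in the weighted spaces, using that these error operators have small norm near the corner because their coefficients vanish there in the weighted sense, while away from the corner the curve is smooth ($\Ccont^{k+1,\lambda}$) and classical elliptic regularity applies. This yields existence; uniqueness on $\Om$ follows from uniqueness on the model plus the transfer. The continuity statement for $\Gcal(z)$ is then just the composition of the model estimate with the bounds on $\Phi$ and $\Phi^{-1}$. I expect the \textbf{main obstacle} to be the bookkeeping in the perturbation argument: proving that the coefficients of $\Delta_0 - \Delta$ genuinely lie in the weighted spaces with enough vanishing at the corner to be treated perturbatively, with the sharp dependence on the $E_{k,\beta}$-norm of $z$ that the rest of the paper needs — in particular handling the borderline weight $\beta=\frac12$, where one must insert the $m^{\lambda'}$-modified operators of~\eqref{E.cacambio} to stay inside the Muckenhaupt range, exactly as in Lemmas~\ref{lem:param} and~\ref{lem:si}.
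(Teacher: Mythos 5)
Your plan follows the classical ``straighten the corner and compute'' route, which is genuinely different from what the paper does, and it contains a couple of concrete slips. First, the Dirichlet-to-Neumann map of the half-plane is the \emph{first-order} operator $(-\pa_x^2)^{1/2}=\Lambda$, not the half-derivative $\Lambda^{1/2}=(-\pa_x^2)^{1/4}$; consistently, the proposition loses one full derivative ($\Lcal^{i+1/2}\to\Lcal^{i-1/2}$), not half a derivative, so the mechanism you invoke (weighted boundedness of $\Lambda^{1/2}$ via Theorem~\ref{thm:rieszintegral}) is not the one that produces the mapping property. Second, the identification of $\left|\ga+\frac12\right|<\frac{\pi}{2\nu}$ with a Muckenhaupt condition for the conformally transformed weight cannot be right: the $A_2$ range has length one, whereas this interval has length $\pi/\nu>2$. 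The condition is the Kondratiev condition on indicial exponents, as you correctly say at the outset, but in the paper it enters quite differently, namely as the \emph{coercivity} threshold of a Lax--Milgram form: after the change of variables the problem lives on a strip of width $2\nu$, whose Poincar\'e constant is $\pi^2/(4\nu^2)$, and the twisted form satisfies $\t a(\t u,\t u)\geq(1-4\nu^2\ga^2/\pi^2)\|\na\t u\|_{L^2}^2$.

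More importantly, the paper deliberately does \emph{not} compose with the conformal map $w\mapsto w^{\pi/(2\nu)}$ (nor with any Riemann map): Section~\ref{ss.cov} stresses that the diffeomorphism $\dif^{-1}$ is non-conformal precisely because one must control all its derivatives by the weighted Sobolev norms of $z$ appearing in $E_{k,\beta}$, which is the known obstruction to the Kinsey--Wu-style conformal approach you propose. Instead, the paper (i) never constructs the harmonic extension via a boundary integral at all: it reduces the Dirichlet problem to a Poisson problem with \emph{zero} boundary data by the extension operator of Lemma~\ref{lem:extension} (so $u=\Psi-v$ with $f=\Delta\Psi$ as source), which is where the half-integer trace spaces are absorbed; (ii) maps $\Om$ near the corner to the half-strip $\Pi_\nu^-$ by log-polar coordinates combined with an angular renormalization, so that the Laplacian becomes $-\mathrm{div}((\Ibb+\Lbb)\na\cdot)$ with $\Lbb$ decaying as $\t x\to-\infty$; and (iii) solves by Lax--Milgram with exponential weights plus a cutoff argument absorbing $\eta\Lbb$ when its $C^1$-norm is small, together with a weighted Poincar\'e inequality on $\Om$ for the part of the source supported away from the corner. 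Your ``third step'' perturbation/Neumann-series idea is in the same spirit as (iii), but as written your proposal leaves the genuinely hard point unaddressed: how the norms of your straightening map (and in particular of the conformal factor) are controlled by $\|\theta\|_{H^{k+1}_{\beta+k}}$ and $\|\log|z_\al|\|_{H^{k+1/2}_{\beta+k-1/2}}$ with the sharp dependence the energy estimates require.
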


As we shall see, an easy consequence of this result is the following. We state it so that we can directly apply it to estimate the velocity of the fluid inside the water region under the assumptions of our local wellposedness result, Theorem~\ref{thm:main}. One should note the specific form of~$\psi$ we are analyzing in detail corresponds exactly to the formula~\eqref{eq:v-rep} for the boundary trace of the velocity, since we are taking $z_*=0$.

\begin{corollary}\label{C.reg-v}
	If $\beta \leq \frac12$ is such that $|(\beta - 2) +\frac12|<\frac\pi{2\nu}$ and $\psi\in \Lcal^{k+\frac{3}{2}}_{2,\beta+k-\frac{1}{2}}(\Ga)$, then the harmonic extension $u$ of $\psi$ is bounded as $\|u\|_{C^{1,\lambda}(\Om)}\lesssim \|\psi\|_{\Lcal^{k+3/2}_{2,\beta+k-1/2}(\Ga)}$ for some $\lambda>0$. Furthermore, if $\psi$ is a complex-valued function on the boundary of the form	\[
	\psi(z(\al)):= b_0+ b_1 z(\al) + \t \psi(z(\al))\,,\qquad b_0, b_1\in\Cbb, \qquad  \t\psi\in \Lcal^{k+\frac{3}{2}}_{2,\beta+k-\frac{1}{2}}(\Ga),
	\]
	its conformal extension is of the form
	\[
	u(z)= b_0+ b_1z+ \t u(z)
	\]
	with $\|\t u\|_{C^{1,\lambda}(\Om)}\lesssim \|\t \psi\|_{\Lcal^{k+3/2}_{2,\beta+k- 1/2}(\Ga)}$
\end{corollary}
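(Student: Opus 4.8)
\textbf{Proof plan for Corollary~\ref{C.reg-v}.}
The plan is to deduce the corollary from Proposition~\ref{prop:dirichletInt} together with the change of variables that straightens the corner. First I would observe that the hypothesis $\psi\in \Lcal^{k+\frac32}_{2,\beta+k-\frac12}(\Ga)$ matches the setting of Proposition~\ref{prop:dirichletInt} with $\ga:=\beta-2$ and $i:=k+1$, since then $\ga+i+\frac12=\beta+k-\frac12$ and the hypothesis $|(\beta-2)+\frac12|<\frac\pi{2\nu}$ is precisely $|\ga+\frac12|<\frac\pi{2\nu}$; moreover $\ga+\frac12=\beta-\frac32<0$, so that the Sobolev space we land in contains functions with extra decay at the corner. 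Thus the harmonic extension~$u$ exists, is unique, and $\Gcal(z)\psi=\pa_n u\in\Lcal^{k+\frac12}_{2,\beta+k-\frac12}(\Ga)$. The point is then to upgrade the boundary regularity of~$u$ to $C^{1,\lambda}(\overline\Om)$ for some $\lambda>0$.

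The key step is the conformal change of variables $w=(z-z_*)^{\pi/(2\nu)}$ (recall $z_*=0$ in this section), which maps a neighborhood of the corner in $\Om$ to a neighborhood of a boundary point of a domain~$\Om'$ whose boundary is of class $\Ccont^{1,\lambda}$ there; away from the corner $\Ga$ is already $\Ccont^{1+k,\lambda}$, so $\Om'$ is globally a $\Ccont^{1,\lambda}$ domain, which can in turn be mapped conformally onto the disk~$\Dbb$ by a map that is $C^{1,\lambda}$ up to the boundary by standard Kellogg--Warschawski-type results (see~\cite{Pomme}). Composing, we obtain a conformal map $h:\Dbb\to\Om$ with $h\in C^{1,\lambda}(\overline\Dbb\setminus\{-1\})$ and the explicit asymptotics $|h(\zeta)|\approx|\zeta+1|^{2\nu/\pi}$, $|h'(\zeta)|\approx|\zeta+1|^{2\nu/\pi-1}$ near the preimage of the corner. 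Pulling back, $u\circ h$ is harmonic in~$\Dbb$ and its boundary trace lies in a weighted Sobolev space on~$\partial\Dbb$ with a weight that, after the substitution, has an exponent in the Muckenhoupt range (this is exactly where $|\ga+\frac12|<\frac\pi{2\nu}$ is used); one then invokes the already-established mapping properties of the Dirichlet-to-Neumann map together with a Sobolev embedding on the disk (interior elliptic estimates plus the trace estimate $\Gcal\psi\in\Lcal^{k+\frac12}_{2,\beta+k-\frac12}$, $k\ge2$) to conclude $u\circ h\in C^{1,\lambda'}(\overline\Dbb)$, whence $u=（u\circ h)\circ h^{-1}\in C^{1,\lambda}(\overline\Om)$ for a possibly smaller $\lambda>0$. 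The quantitative bound $\|u\|_{C^{1,\lambda}(\Om)}\lesssim\|\psi\|_{\Lcal^{k+3/2}_{2,\beta+k-1/2}(\Ga)}$ follows by tracking the constants, all of which depend only on the $E_{k,\beta}$-energy of the curve by the convention of this section.

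For the second assertion, note that $z\mapsto b_0+b_1z$ is already harmonic in~$\Om$ (it is an affine, hence holomorphic, function of $z=z_1+iz_2$ in complex notation, and its real and imaginary parts are harmonic), and it is visibly in $C^\infty(\overline\Om)$. Therefore, writing $\psi=(b_0+b_1 z)|_\Ga+\t\psi$ and using uniqueness of the harmonic extension (linearity of the problem~\eqref{eq:systemOmDirichlet}), the harmonic extension of~$\psi$ is $u=b_0+b_1z+\t u$, where $\t u$ is the harmonic extension of $\t\psi\in\Lcal^{k+\frac32}_{2,\beta+k-\frac12}(\Ga)$; applying the first part of the corollary to~$\t\psi$ gives $\|\t u\|_{C^{1,\lambda}(\Om)}\lesssim\|\t\psi\|_{\Lcal^{k+3/2}_{2,\beta+k-1/2}(\Ga)}$.

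I expect the main obstacle to be the passage from the weighted-Sobolev control on $(u, \Gcal\psi)$ provided by Proposition~\ref{prop:dirichletInt} to genuine Hölder regularity up to the boundary: one must check carefully that after the conformal straightening the resulting boundary datum and Neumann datum lie in function spaces on~$\Dbb$ for which the classical elliptic boundary regularity theory yields a $C^{1,\lambda'}$ conclusion, and in particular that no regularity is lost at the corner preimage~$-1\in\partial\Dbb$ — this is exactly where the strict inequality $|(\beta-2)+\frac12|<\frac\pi{2\nu}$ (equivalently $\beta-\frac32\in(-\frac\pi{2\nu}-\frac12,\frac\pi{2\nu}-\frac12)$) is essential, since it guarantees the transformed weight stays strictly inside the Muckenhoupt $A_2$-range and hence that the singular behavior of the conformal factor $h'$ near~$-1$ is absorbed. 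The rest is bookkeeping of constants and routine elliptic estimates.
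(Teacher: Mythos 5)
Your decomposition in the second part (writing $\psi = b_0 + b_1 z + \t\psi$, noting the affine part is holomorphic and smooth, and reducing to the harmonic extension $\t u$ of $\t\psi$ by uniqueness) is exactly what the paper does, and your bookkeeping with Proposition~\ref{prop:dirichletInt} ($i=k+1$, $\ga=\beta-2$) is correct.

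For the passage from weighted Sobolev control to a $C^{1,\lambda}$ bound up to the boundary, however, you take a genuinely different route, and this is where I would push back. You propose to conformally straighten the corner via $z\mapsto (z-z_*)^{\pi/(2\nu)}$ and then compose with a conformal map to the disk controlled by Kellogg--Warschawski. The paper instead pulls $\t u$ back through the explicit \emph{non-conformal} diffeomorphism $\dif^{-1}:\Om\to\Scal$ defined in~\eqref{formulahtilde}, works in the strip, and reads off H\"older regularity from the exponentially-weighted Sobolev control of $\t u\circ\dif$ on $\Scal$ combined with the known asymptotics of the derivatives of $\dif^{-1}$. The reason the paper is careful to avoid the conformal map here is stated explicitly at the start of Subsection~\ref{ss.cov}: the conformal map's regularity is governed by pointwise ($C^{1,\lambda}$-type) norms of the boundary, whereas the section's standing convention requires every implicit constant to be controlled by the $E_{k,\beta}$-energy, i.e., by \emph{weighted Sobolev} norms of $z$. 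Your step ``the quantitative bound \dots\ follows by tracking the constants'' therefore glosses over exactly the issue that motivated the paper's construction of $\dif^{-1}$ in the first place; the conformal approach is used elsewhere in the paper (proof of Theorem~\ref{T.sing2}) but only with the explicit caveat that sharp dependence on the regularity of $z$ is not needed there. So your argument plausibly gives the qualitative conclusion, but does not, as written, establish the constant's dependence on $E_{k,\beta}$ that the statement requires by the section's convention. You also misattribute the role of the condition $|(\beta-2)+\frac12|<\frac\pi{2\nu}$: it is not a Muckenhoupt-range condition for the pulled-back weight on $\pa\Dbb$, but rather the coercitivity threshold for the variational problem on the strip (it is $|\ga|<\pi/(2\nu)$, matched against the Poincar\'e constant $\pi^2/(4\nu^2)$ of $\Pi_\nu$, in Lemma~\ref{lem:laplace-strip} and Proposition~\ref{prop:maz}), i.e., it governs unique solvability of the Dirichlet problem, not a singular-integral boundedness condition.
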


Although H\"older norms are convenient in a couple of applications of Corollary~\ref{C.reg-v}, we are mostly interested in mapping properties of the operators between weighted $L^2$-based Sobolev spaces of the kind we need to prove the energy estimates. In the domain~$\Om$, we shall denote the corresponding spaces by $\Lcal^l_{2,\ga}(\Om)$, where $\ga\in\Rbb$ and where~$l$ is a nonnegative integer. They consist of the functions on~$\Om$ for which the norm
\be\label{eq:sobolev-2d}
\|v \|^2_{\Lcal^l_{2,\ga}(\Om)} := \sum_{|\al| \leq l} \int_\Om (x^2+y^2)^{\ga + |\al| - l}|\pa_{x,y}^\al  v|^2\,dx \, dy 
\ee 
is finite, where the sum is over multiindices. It is known~\cite{Kufner} that the space of functions $\Ccal_c^\infty(\overline\Om\backslash\{0\})$ whose support has positive distance to the corner point is dense in $\Lcal^l_{2,\ga}(\Om)$. Furthermore, a boundary trace operator $\text{tr}:\Lcal^{l+1}_{2,\ga}(\Om) \rightarrow \Lcal^{l}_{2,\ga}(\Ga)$ and an extension operator can be defined as usual  (see e.g.~\cite{KMR}):

\begin{lemma}\label{lem:extension}
	Let $1\leq i\leq k + 1$. The trace is an onto, continuous map $\Lcal^{i + 1}_{2,\ga}(\Om)\rightarrow \Lcal^{i + \frac{1}{2}}_{2,\ga}(\Ga)$. Moreover, given $\psi\in \Lcal^{i + \frac{1}{2}}_{2, \ga}(\Ga)$, there exists $\Psi \in \Lcal^{i + 1}_{2, \ga}(\Om)$ such that $\Psi|_\Ga = \psi$ in the trace sense and $\|\Psi\|_{\Lcal^{i + 1}_{2, \ga}(\Om)} \lesssim \|\psi\|_{\Lcal^{i + \frac{1}{2}}_{2, \ga}(\Ga)}$.
\end{lemma}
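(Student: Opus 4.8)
The statement to prove is Lemma~\ref{lem:extension}, a trace and extension theorem for the weighted Sobolev spaces $\Lcal^l_{2,\ga}(\Om)$ on a domain with one corner.

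\textbf{Overall strategy.} The plan is to reduce the corner case to the classical (flat-boundary) trace theorem by exploiting two facts already available in the excerpt: (i) $\Ccal_c^\infty(\overline\Om\setminus\{0\})$ is dense in $\Lcal^l_{2,\ga}(\Om)$, so it suffices to prove the two inequalities (continuity of $\text{tr}$, and existence of a bounded right inverse) for functions supported away from the corner and then pass to the limit; (ii) away from the corner, $\Ga$ is of class $C^{1+k,\lambda}$ and $\Om$ is a bona fide smooth (Lipschitz, in fact better) domain, on which the standard trace theory $H^{i+1}(\Om)\to H^{i+1/2}(\Ga)$ with a bounded extension holds. The weight $(x^2+y^2)^{\ga+|\al|-l}$ is, on any annular region $\{r_1\le|z|\le r_2\}$ bounded away from $0$, comparable to a constant, so on such regions the weighted norms are equivalent to the usual ones with $\ga$-dependent constants. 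The work is therefore in organizing a dyadic (Whitney-type) decomposition near the corner so that the constants in these local equivalences combine into a single global weighted estimate with the correct power of $r$.

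\textbf{Key steps, in order.} First I would fix a smooth partition of unity subordinate to a covering of $\overline\Om$ by one ``outer'' piece supported away from~$0$ and a dyadic family of annuli $A_j:=\{2^{-j-1}\le |z|\le 2^{-j+1}\}$ near the corner; on the outer piece the claim is the classical theorem. Second, on each annulus I would rescale: the map $z\mapsto 2^j z$ sends $A_j$ to the fixed annulus $A_0$, and under this rescaling the weighted norm $\|v\|_{\Lcal^{i+1}_{2,\ga}(\Om\cap A_j)}$ becomes (up to the factor $2^{-j(\ga+1-(l)+\dots)}$ absorbing the weight and the Jacobian) comparable to the \emph{unweighted} $H^{i+1}$-norm of the rescaled function on a fixed reference domain $\Om\cap A_0$, which is smooth since $j$ is large (so this piece of $\Ga$ is $C^{1+k,\lambda}$ and, after rescaling, has bounded geometry uniformly in~$j$; this uses that the $E_{k,\beta}$-energy of the curve is finite, hence its $C^{1+k,\lambda}$-norm away from the corner is controlled). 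Third, apply the fixed-domain trace theorem on $\Om\cap A_0$ to get $\|\text{tr}(v)\|_{H^{i+1/2}(\Ga\cap A_0)}\lesssim \|v\|_{H^{i+1}(\Om\cap A_0)}$ with a constant independent of~$j$, then rescale back; summing over~$j$ (the dyadic weights telescope/sum because they appear as a geometric-type series once one checks the exponent bookkeeping) yields $\|\text{tr}(v)\|_{\Lcal^{i+1/2}_{2,\ga}(\Ga)}\lesssim \|v\|_{\Lcal^{i+1}_{2,\ga}(\Om)}$. For the extension, reverse the process: given $\psi\in\Lcal^{i+1/2}_{2,\ga}(\Ga)$, decompose $\psi=\sum_j \chi_j\psi$ dyadically, extend each piece on the rescaled reference annulus using the classical bounded extension operator, rescale back, and sum; the overlaps of consecutive annuli are finite (bounded multiplicity), so the sum converges in $\Lcal^{i+1}_{2,\ga}(\Om)$ with the asserted norm bound, and its trace is $\psi$ by construction. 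A mild point to handle is the matching of half-integer Sobolev spaces on $\Ga$ under rescaling and cutoff — one uses that $\Lambda^{1/2}$, equivalently the Gagliardo seminorm, interacts with dilations by the expected homogeneous factor, and that commutators with the smooth cutoffs $\chi_j$ are lower order and summable (here the commutator estimates of Appendix~\ref{A.A} are exactly what is needed).

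\textbf{Main obstacle.} The genuinely delicate part is the half-integer ($i+\tfrac12$) bookkeeping near the corner: one must verify that the nonlocal fractional norm on $\Ga$ does not ``see'' across the corner in a way that spoils the dyadic summation — i.e., that the long-range part of the Gagliardo double integral between well-separated annuli is controlled. This is handled exactly as in the proof of Lemma~\ref{lem:BRbasic}: the kernel decays like $|s-s'|^{-1-2\cdot\frac12}$ off-diagonal, so cross-annulus contributions form an absolutely convergent geometric series in the dyadic index, provided $\ga$ is in the Muckenhoupt-type range where the weights are summable; this is automatically the case within the constraint $|\ga+\tfrac12|<\tfrac\pi{2\nu}$ together with $2\nu<\tfrac\pi2$ (which forces $\tfrac\pi{2\nu}>1$), or can be arranged by the same $m^{\lambda'}$-conjugation trick used elsewhere in the paper. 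Everything else is a routine, if tedious, rescaling-and-summation argument modeled on standard references such as~\cite{KMR, Kufner}, which we would cite rather than reproduce in full.
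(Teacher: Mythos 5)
Your argument is correct, but it follows a genuinely different route from the paper's. You localize near the corner and run the classical Kondratiev-space argument: a dyadic Whitney decomposition into annuli $A_j$, rescaling each piece to a fixed reference annulus where the rescaled boundary arcs have uniformly bounded geometry (which, as you note, is exactly what the finiteness of the $E_{k,\beta}$-energy of the curve gives after the rescaling), applying the classical trace/extension theorem there with $j$-independent constants, and resumming; the only delicate point is the off-diagonal part of the Gagliardo seminorm between well-separated annuli, which you correctly identify and handle by a Schur-type summation (or the $m^{\lambda'}$-conjugation already used for the definition of the half-integer spaces). The paper instead reuses the diffeomorphism $\dif^{-1}:\Om\cap B\to\Pi_\nu^-$ constructed in Subsection~\ref{ss.cov}: by the equivalences~\eqref{eq:trafo}, the power weight $|z|^{\ga}$ becomes the smooth exponential weight $\t m^{\ga}=e^{\ga\t x}$ on the strip, and since all derivatives of $\log\t m$ are bounded, multiplication by $\t m^{\pm\ga}$ is an isomorphism between the weighted and unweighted Sobolev spaces (integer and half-integer alike). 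One then applies the classical extension theorem once, on the strip, to $\t g=\t m^{\ga}g$ and conjugates back. The two approaches are morally the same — your dyadic decomposition in $r$ is the decomposition of $\t x=\log r$ into unit intervals — but the paper's version buys a one-shot reduction with no cross-annulus Gagliardo estimates and no uniformity-in-$j$ verification, at the price of relying on the change of variables built earlier; yours is self-contained within $\Om$ but requires the more careful half-integer bookkeeping that you flag. Both are complete modulo standard references.
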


To construct the harmonic extension, we will use a function~$\Psi$ as in Lemma~\ref{lem:extension} and then solve a problem with sources but with zero boundary conditions. The result we will use is the following:

\begin{proposition}\label{prop:maz}
	Let $\ga \in\Rbb$ such that $|1 - \ga| < \frac{\pi}{2\nu}$. Given $f\in \Lcal^{i}_{2, \ga + i}(\Om)$ with $0 \leq i \leq k$, the Dirichlet problem
	\be\label{eq:systemOmPoisson}
	\Delta v = f, \qquad v|_{\Ga} = 0	
	\ee	
	has a unique solution $v\in \Lcal^{i + 2}_{2,\ga + i}(\Om)$, which is bounded as
	\be\label{eq:estVnormal}
	\|u\|_{\Lcal^{i+2}_{2, \ga + i}(\Om)} \,\lesssim \, \|f\|_{\Lcal^{i}_{2, \ga  + i}(\Om)}.
	\ee 
\end{proposition}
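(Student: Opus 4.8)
The plan is to transfer the Dirichlet problem~\eqref{eq:systemOmPoisson} on the corner domain~$\Om$ to a model problem on an infinite sector (or, equivalently, on a half-strip) where the mapping properties of the Laplacian between weighted Sobolev spaces are classical, and then patch the local analysis near the corner with the standard interior/smooth-boundary elliptic theory away from the corner. Concretely, I would first reduce to the case where $f$ is supported in a small neighborhood~$B$ of the corner point~$z_*=0$: writing $f = \chi f + (1-\chi)f$ for a cutoff~$\chi$ equal to~$1$ near~$0$, the piece $(1-\chi)f$ is supported away from the corner, where $\Ga$ is of class $C^{k+1,\lambda}$ and the weight $(x^2+y^2)^{\ga+i-l}$ is bounded above and below, so the corresponding solution is handled by the usual $L^2$ elliptic regularity up to the (smooth) boundary; the only subtlety there is that this solution must be shown to lie in the weighted space globally, which follows because it is harmonic near~$0$ and hence smooth there.

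The heart of the matter is thus the local problem near the corner. Here I would use the change of variables introduced in the (forthcoming) Subsection on the key change of variables — the conformal-type map that straightens the curvilinear corner $\Ga\cap B$ into a true sector of opening angle~$2\nu$, or composes with $z\mapsto (z-z_*)^{\pi/2\nu}$ and a $C^{1,\lambda}$ flattening — to reduce to the Laplacian (or a small perturbation thereof, with coefficients controlled by the $E_{k,\beta}$-norm of~$z$) on the model sector. On the model sector, passing to polar coordinates $(r,\omega)$ and then to the logarithmic variable $t=\log r$ turns the problem into an ODE in~$t$ with parameter in the angular Fourier variable; the invertibility of $\Delta$ between the weighted spaces $\Lcal^{i+2}_{2,\ga+i}$ and $\Lcal^i_{2,\ga+i}$, together with the estimate~\eqref{eq:estVnormal}, is then equivalent to the statement that the pencil associated with the operator has no eigenvalues on the relevant line — and those eigenvalues are exactly the numbers $\pm\frac{\pi}{2\nu}\cdot(\text{integer})$ for the Dirichlet sector. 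The hypothesis $|1-\ga|<\frac{\pi}{2\nu}$ is precisely the condition that the weight line avoids the nearest eigenvalue, so the model estimate holds; this is the Kondrat'ev–Maz'ya–Plamenevskii theory, for which I would cite \cite{KMR,MazNaPla,Kufner}. The gain of two derivatives and the sharp weight bookkeeping follow from the standard a priori estimates for the model operator, bootstrapped in~$i$ from $i=0$ up to $i=k$.

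Having the model estimate, I would close the argument by a perturbation/partition-of-unity scheme: cover~$\Om$ by the corner neighborhood (transferred to the model sector) and finitely many balls away from the corner; on each piece apply either the model estimate or the classical estimate; sum up using a weighted partition of unity subordinate to the cover, absorbing the lower-order commutator terms (which involve only $i+1$ derivatives of~$v$ and controlled derivatives of~$z$) into the left-hand side for a small enough corner neighborhood, and handling the resulting globally-lower-order term by a compactness/uniqueness argument. Uniqueness itself is immediate: a solution $v\in\Lcal^{i+2}_{2,\ga+i}(\Om)$ of $\Delta v=0$, $v|_\Ga=0$ with $|1-\ga|<\frac\pi{2\nu}$ cannot contain the singular harmonic $r^{-\pi/2\nu}$-type mode (it is excluded by the weight) nor any polynomial growth, so by the maximum principle (valid once one checks $v$ is bounded, which the weight condition and a Sobolev embedding on the sector give) $v\equiv 0$; alternatively, uniqueness follows from the injectivity half of the model pencil statement. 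The main obstacle I anticipate is the careful verification that the curvilinear, only-$C^{1,\lambda}$-away-from-smoothness corner can genuinely be reduced to the straight model sector with a perturbation small enough (in the right weighted norm, with constants depending only on the $E_{k,\beta}$-energy of~$z$) to be absorbed — i.e., controlling the variable-coefficient error terms produced by the change of variables uniformly in the regularity assumptions, which is exactly the point that the dedicated change-of-variables subsection is designed to address.
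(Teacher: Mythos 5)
Your overall architecture matches the paper's: split $f=\chi f+(1-\chi)f$, treat the piece away from the corner by classical elliptic theory, and reduce the corner piece to a model problem via the change of variables of Subsection~\ref{ss.cov}. Where you genuinely diverge is in the mechanism for inverting the model operator. You invoke the Kondrat'ev--Maz'ya--Plamenevskii pencil theory (Mellin transform in $t=\log r$, absence of pencil eigenvalues on the weight line), whereas the paper deliberately avoids this and instead gives a self-contained variational proof: after conjugating by the exponential weight $\t m^{\pm\ga}$, the bilinear form $\int_\Scal\na u\cdot\na v$ becomes coercive on unweighted $H^1_0$ precisely because the Poincar\'e constant of the strip of width $2\nu$ is $\pi^2/(4\nu^2)$, so $|\ga|<\pi/(2\nu)$ yields the factor $1-4\nu^2\ga^2/\pi^2>0$; Lax--Milgram then gives the weak solution and standard elliptic estimates the higher regularity. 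The paper's reason for this detour is exactly the point you flag as your ``main obstacle'': the constants must depend only on the weighted Sobolev norms of $z$ contained in the $E_{k,\beta}$-energy, not on pointwise or H\"older norms, and the reference \cite{MazNaPla} does not deliver this. Two further cautions. First, the paper's diffeomorphism is emphatically \emph{not} conformal (and not the map $z\mapsto(z-z_*)^{\pi/2\nu}$ you suggest as an alternative), precisely so that its derivatives are controlled by the energy; the conformal straightening would require pointwise control of the boundary that is not available. Second, your plan to absorb the variable-coefficient errors ``for a small enough corner neighborhood'' is delicate: the perturbation matrix $\Lbb$ in~\eqref{eq:Lbb} decays like $e^{\lambda_\beta\t x}$ down the strip only when $\lambda_\beta>0$, and in the endpoint case $\beta=\tfrac12$ (which is the one used for the blowup theorem) one has $\lambda_\beta=0$ and only a qualitative $O(|\t x|^{-1/2})$ decay, so smallness is not obtained by shrinking the neighborhood alone; the paper handles this by cutting off $\Lbb$ far down the strip (where it is small by decay, not by scaling) and then solving the remaining problem, whose source vanishes near the corner, via the weighted Poincar\'e inequality~\eqref{eq:poincare-weighted} on $\Om$ itself. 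With these points addressed, your route would work, but as written the decisive quantitative step is announced rather than carried out.
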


The key point of this result is that one must ensure that the dependence of the various constants on the regularity of the interface can be controlled using only the weighted Sobolev norms of~$z$ that we control with the energy~$E_{k,\beta}$, instead of using pointwise norms as usual. When no weights are involved,  so the interface does not have corners, one can use~\cite{ChSh17}. When sufficiently many derivatives of the curve~$z$ are controlled, the result is proven in~\cite{MazNaPla}. However, for the energy estimates, it is key to have optimal dependence on the regularity of~$z$, as in the lemma.

Note that Proposition \ref{prop:dirichletInt} follows easily from the results stated above, whose proofs will be presented later in this section:

\begin{proof}[Proof of Proposition \ref{prop:dirichletInt}.] 
	Given $\psi\in \Lcal^{i + \frac{1}{2}}_{2, \ga + i + \frac{1}{2}}(\Ga)$, we use  Lemma~\ref{lem:extension} to pick an extension $\Psi\in \Lcal^{i + 1}_{2,\ga + i + \frac{1}{2}}(\Om)$ of comparable norm, which satisfies	
	$$
	\pa_n \Psi \in \Lcal^{i+ \frac{1}{2}}_{2, \ga + i + \frac{1}{2}}(\Ga) , \qquad f := \Delta\Psi \in \Lcal^{i - 1}_{2, \ga + i + \frac{1}{2}}(\Om).
	$$
	By Proposition \ref{prop:maz}, for this choice of~$f$ there exists a solution $v\in \Lcal^{i + 1}_{2, \ga + i + \frac{1}{2}}(\Om)$ of the boundary value problem~\eqref{eq:systemOmPoisson}. Then $u:=\Psi-v$ satisfies~\eqref{eq:systemOmDirichlet}. Uniqueness follows from Proposition~\ref{prop:maz}.
\end{proof}

\subsection{The change of variables}
\label{ss.cov}

Our goal in this section is to construct a diffeomorphism mapping the bounded domain with a corner~$\Om$ into a smoother unbounded domain~$\Scal$. This diffeomorphism, or change of variables, will be essential in our analysis of the Laplace equation on~$\Om$. 

It is worth stressing that the diffeomorphism we construct is not conformal. The reason is that we need to ensure that all the relevant norms of the diffeomorphism are bounded in terms of the norms of~$z$ controlled by the $E_{k,\beta}$-energy.

For convenience, let us denote the diffeomorphism by
$$
\dif ^{-1}: \Om \rightarrow \Scal.
$$ 
The domain $\Scal\subset\Rbb^2$ that we will define does not have any corners (in fact, it is of class $\Ccont^{1 + k, \lambda}$ for some $\lambda>0$). The `right' side of this domain, $\Scal \cap \Rbb_+^2$, is bounded, where we use the notation
\[
\Rbb^2_\pm:=\{ (\t x,\t y)\in\Rbb^2: \pm \t x>0\}\,.
\]
The left side of~$\Scal$ is an infinite strip; more precisely, $\Scal \cap \Rbb_-^2 = \Pi_\nu^-$, where
\be\label{eq:Pi-}
\Pi_\nu := \left\{ (\t x , \t y)\in\Rbb^2 \, : \,\, \t x \in \Rbb, \,\,  |\t y| < 2\nu  \right\}, \qquad \Pi_\nu^- := \Pi_\nu \cap \Rbb_-^2
\ee
respectively denote the horizontal strip of width~$2\nu$ and its intersection with the negative half-plane.

The point here is to construct the diffeomorphism in a small neighborhood of the curvilinear corner, that is, to construct a diffeomorphism (with the right dependence on the norms of~$z$)
$$
\dif ^{-1}:\Om\cap B  \rightarrow \Pi_\nu^-
$$
where $B $ is a ball centered at the origin and of small radius~$\delta>0$. This is what we will do next. One can then extend this diffeomorphism as a map $\Om\to \Scal$ preserving this condition in a standard way (and one can even assume that $\dif^{-1}$ is the identity outside $B$, or that the whole domain~$\Scal$ is contained in~$\Pi_\nu$). A detailed discussion of this point is given e.g.~in~\cite{MunRam}.

Without loss of generality, by rotating the axes if necessary, for small~$\de$ we may assume that $\Om \cap B $ is contained in the right half-plane (i.e., $\Om \cap B \subset \Rbb^2_+$) and that $\nu_\pm = \pm\nu$. Let $\Ga^\pm$ denote the arcs forming the corner in the neighborhood of the tip, i.e. 
$$
\Ga\cap B  = \Ga^+\cup \Ga^-, \qquad \Ga^\pm := \{(x,\ka_\pm(x))\,:\, x\in[0,\de)\}.
$$
For small $\de>0$, we can assume that $\ka_-(x) < \ka_+(x)$ for $x\in I_\de := (0,\de)$ and that $\ka_\pm(0) = 0$. The assumptions on the regularity of the interface, inside~$B$, are simply that
$$
\ka_\pm(x) = \pm x\tan\nu + \Lcal^{k + 2}_{2,\beta + k}(I_\de)\,.
$$

It will be convenient to analyze $\Ga^\pm$ in polar coordinates, $(r,\vartheta)$. We then denote by $(r_\pm(x),\vartheta_\pm(x))$ the polar coordinates of the point $(x,\ka_\pm(x))$ on~$\Ga^\pm$. For $x\in I_\de$, we then have
$$
r_\pm(x) = \sqrt{x^2 + \ka_\pm(x)^2}, \qquad r'_\pm(x) = \frac{x}{r_\pm} \left(1 + \frac{\ka_\pm(x)}{x}\, \ka'_\pm(x)\right), \qquad r'_\pm(x) \approx \frac{x}{r_\pm}  
$$
on the respective arcs, so we can parametrize the arcs in terms of the radial distance along them, provided~$\de$ is small. With some abuse of notation, we will denote this parametrization of~$\Ga^\pm$ by $(r,\vartheta_\pm(r))$.

It is not difficult to see that
$$
\vartheta_\pm(r) = \pm\nu + \Lcal^{2 + k}_{2,\beta + k + 1}(I_{\de})\,,
$$
so, by Lemma~\ref{lem:sobolev}, for $i \leq  1 + k$ one has
\be\label{eq:vartheta_pm}
\left(r\frac{d}{dr}\right)^i (\vartheta_\pm(r) \mp \nu)\in \Lcal^1_{2,\beta}(I_\de), \quad \left(r\frac{d}{dr}\right)^i (\vartheta_\pm(r) \mp \nu) = O(r^{\lambda_\beta}).
\ee
We recall that $\lambda_\beta := \frac{1}{2} - \beta \in [0, 1)$ given $\beta \in \left(-\frac12, \frac12\right]$. 

We can now define the diffeomorphism in polar coordinates as 
\be\label{formulahtilde}
\t\dif^{-1}(r,\vartheta) := \left(\log r , \;  2\nu\left  (-\frac{1}{2} + \frac{\vartheta -\vartheta_-(r)}{\vartheta_+(r)-\vartheta_-(r)}\right  )\right ),
\ee
so that $\dif^{-1}: \Om \cap B  \rightarrow \Pi_\nu$ is simply the above expression in Cartesian coordinates:
\[
\dif^{-1}(x,y):=\t\dif^{-1}(r(x,y),\vartheta(x,y))\,,
\]
which we can now extend to a diffeomorphism $\Om\to\Scal$.

Under the change of variables $\dif $, the Laplace equations transforms as 
\be\label{eq:poissonH}
-\Delta u = f \quad \Leftrightarrow \quad - \mathrm{div}(\Abb \na (u \circ \dif )) = f\circ \dif  |\det D\dif |,
\ee
where the matrix-valued function~$\Abb$ is
$$
\Abb \circ \dif ^{-1} := \frac{1}{|\det D\dif ^{-1}|} \, \, D\dif ^{-1} (D\dif ^{-1})^\top .
$$
For future reference, let us also record the transformation rule for the normal derivative in terms of unit normal $\v n$ and tangent $\v t$ vectors:
\be\label{eq:normalH}
\pa_n u = g \quad \Leftrightarrow \quad  \Abb \na (u \circ \dif ) \cdot \v n =  g \circ \dif  |D\dif  \cdot \v t|.
\ee

To derive bounds on the unbounded part of~$\Scal$, we need to know the asymptotic behavior of $\Abb$ on $\Pi_\nu^- = \dif ^{-1}(\Om \cap B )$. We shall show by direct calculation that $\Abb=\Ibb + \Lbb$, where $\Ibb$ is the identity matrix and where the error term $\Lbb(\t x,\t y)$ decays exponentially as $\t x\rightarrow -\infty$. To this end, we compute
$$
D\t \dif^{-1}(r,\vartheta) = \left(
\begin{array}{cc}
	1/r & 0\\
	\pa_r h_2^{-1}(r,\vartheta) & \frac{2\nu}{\vartheta_+(r) - \vartheta_-(r)}
\end{array}
\right),
\quad
D\t\dif^{-1}\circ \dif (\t x, \t y) = \left(
\begin{array}{cc}
	e^{ -\t x} & 0\\
	X(\t x, \t y) & {1}/{\Theta(\t x)}
\end{array}
\right),
$$
where we have used that $r = e^{ \t x}$ and we have introduced the notation 
$$
\Theta_\pm(\t x) := \frac{1}{2\nu}\vartheta_\pm(e^{ \t x}), \qquad \Theta(\t x) := \Theta_+(\t x) - \Theta_-(\t x), \qquad X(\t x, \t y) := (\pa_r \t \dif^{-1}_2)\circ {\dif (\t x, \t y)}.
$$
Taking into account the factors coming from the change to polar coordinates, we then obtain
\be\label{eq:det}
|\det D\dif ^{-1}\circ \dif | =  e^{ -2\t x}/{\Theta(\t x)}.
\ee

Using these formulas, an elementary computation yields the formula
$$
\aligned
\Abb(\t x, \t y) &=\Theta(\t x)\left(
\begin{array}{cc}
	1 & X(\t x, \t y) e^{\t x}\\		      
	X(\t x, \t y) e^{\t x} &  X(\t x, \t y)^2 e^{2\t x} + \Theta(\t x)^{-2}
\end{array}
\right) ,
\endaligned
$$
which we can write as
\be\label{eq:Lbb}
\Abb =  \Ibb +  \Lbb, \qquad \Lbb  :=  \left(\begin{array}{cc}
	\Theta(\t x) - 1 & Y(\t x, \t y)\\		      
	Y(\t x, \t y) &  \frac{Y(\t x, \t y)^2}{\Theta(\t x)} + \left(\frac{1}{\Theta(\t x)} - 1\right)
\end{array}
\right)
\ee
with $Y(\t x, \t y) := \Theta(\t x) X(\t x, \t y) e^{ \t x}$. 

Equivalently, one can write this expression as
\be\label{eq:Y}
%\aligned
Y(\t x, \t y)
%&= -2\nu\Theta_-'(\t x) - \Theta'(\t x)(\t y+\nu)\\
%& 
= (\t y-\nu)\Theta_-'(\t x) - (\t y + \nu)\Theta_+'(\t x).
%\endaligned
\ee
By~\eqref{eq:vartheta_pm}, for $0 \leq i\leq k + 2$ we have 
\be\label{eq:regTheta}
\gathered
\frac{d^i}{d\t x^i}\left(\Theta_\pm(\t x) \mp 1/2 \right) \in \Lcal_{2,-\lambda_\beta}(\Rbb_-, \exp \t x)\endgathered
\ee
or, to put is differently, $\Theta_\pm(\t x) \mp 1/2\in H^{k + 2}_{-\lambda_\beta}(\Rbb_-, \exp \t x)$. Note that, when $\beta = \frac12$, we have $\lambda_\beta = 0$ and therefore $H^{k + 2}_{-\lambda_\beta}(\Rbb_-, \exp \t x) \equiv H^{k + 2}(\Rbb_-)$.
In particular, when $\lambda_\beta > 0$, the coefficients of $\Lbb$ decay as $e^{\lambda_\beta \t x}$ as $\t x\rightarrow -\infty$. On the other hand, when $\lambda_\beta = 0$, the decay is not necessarily exponential, however a contradiction argument shows that the coefficients must decay at least as fast as $1/|t|^{1/2}$.

%\iffalse
\subsection{Proof of the results}\label{ss.variational}

Here we will prove the results we stated in Subsection~\ref{ss.statements}.
The proofs will take place mostly on the domain~$\Scal$ introduced in the previous subsection, so we start by defining suitable weighted Sobolev spaces on~$\Scal$.

For $\ga \in\Rbb$ and a nonnegative integer~$i$, let us define
$$
F\in H^{i, \ga}(\Scal, \t m) \quad \Leftrightarrow \quad  \t m^{ \ga}F\in H^i(\Scal),
$$
where $H^i$ is the usual (unweighted) Sobolev space of order~$i$ and  $\t m$ is the exponential weight on~$\Rbb^2$  defined, with some abuse of notation, as 
$$
\t m(\t x, \t y)\equiv \t m(\t x) := e^{\t x}.
$$ 
We will drop the weight $\t m$ notationally, as we will only consider this kind of weights on~$\Scal$ and~$\pa\Scal$.

Note that, as we are considering only smooth exponential weights, the spaces $H^{i, \ga}(\Scal) $ and $H^i_\ga(\Scal)$ are equivalent, where
$$
F \in H^i_\ga(\Scal) \qquad \Leftrightarrow \qquad \sum_{|\al| \leq i}\int_\Scal \t m^{2\ga}|\pa_{\t x ,\t y}^\al  F(\t x, \t y)|^2 \, d\t x \, d\t y < \infty.
$$
In particular, it is not difficult to see that, with this weight,
\be\label{eq:trafo}
\aligned
f \in \Lcal^{i + 1}_{2,\ga + i + 1}(\Om) \qquad &\Leftrightarrow \qquad f\circ \dif  \in H^{i + 1, \ga + 1}(\Scal),\\
\psi\in \Lcal^{i + 1/2}_{2,\ga + i + 1/2}(\Ga) \qquad &\Leftrightarrow \qquad \psi \circ \dif \in H^{i + 1/2, \ga + 1/2}(\pa \Scal), \\
\psi\in \Lcal^{i}_{2,\ga + i}(\Ga) \qquad &\Leftrightarrow \qquad \psi \circ \dif \in H^{i, \ga + 1/2}(\pa \Scal).
\endaligned
\ee

We are now ready to present the proofs of the results in Subsection~\ref{ss.statements}:

\begin{proof}[Proof of Lemma \ref{lem:extension}] 
Let us start by showing how to construct a suitable extension~$\Psi$ is the function~$\psi$. Away from the corner point, the construction is standard (with optimal bounds in terms of the Sobolev regularity of~$z$ given in~\cite{ChSh17}), so we only need to consider functions supported near the origin, say inside $\frac12 B$ where $B$ is a small ball as in Subsection~\ref{ss.cov}.

As it is compactly supported, we can regard the transformed function on the boundary as being defined on the whole strip: 
$$
g = \psi \circ h_1 \in H^{i + \frac{1}{2},\ga + \frac{1}{2}}(\pa\Pi_\nu, \t m).
$$
We claim that there exists $G \in H^{i + 1,\ga +\frac{1}{2}}(\Pi_\nu)$ such that $G|_{\pa\Pi_\nu} = g$. Then $\Psi = (\chi G) \circ \dif ^{-1}$ is the required extension of $\psi$ in the neighborhood of the corner tip, where $\chi$ is a suitable cutoff such that $\chi|_{\pa\Pi_\nu}=1$ on the support of~$g$ and $\text{supp}(\chi)\subset \Pi_\nu^-$.

Now notice that 
$$
\t g :=  m^{\ga}g\in H^{i+ \frac{1}{2},0}(\pa \Pi_\nu)\equiv H^{i+ \frac{1}{2}}(\pa \Pi_\nu)
$$
is in an unweighted Sobolev space, so the usual extension theorem ensures the existence of a suitably bounded extension $\t G\in H^{i + 1}(\pa \Pi_\nu)$ of~$\t g$. The corresponding extension of~$g$ can then be taken as
	$$
	G := \t m^{ - \ga } \t G \in H^{i+ 1,\ga+ \frac{1}{2}}(\Pi_\nu)
	$$
	is the required extension of $g$. 
	
	That the boundary trace operator has the mapping properties specified in the statement can be proved using the same idea.\end{proof}

To prove Lemma \ref{prop:maz}, we first need to consider solutions of the Poisson equation on $\Scal$:

\begin{lemma}\label{lem:laplace-strip} Assume $\ga\in\Rbb$, with $|\ga| < \frac{\pi}{2\nu}$, and let $f\in H^{k - 1,\ga} (\Scal)$. Then there exists a unique solution $u \in H^{k + 1,\ga}(\Scal)$ of the boundary value problem
\be\label{eq:laplaceGpoisson}
-\Delta u = f,\qquad u|_{\pa\Scal} = 0,
\ee
that satisfies the estimate 
	$
	\|u \|_{H^{k + 1,\ga}(\Scal)} \, \lesssim \, \|f\|_{H^{k - 1,\ga}(\Scal)}.
	$
\end{lemma}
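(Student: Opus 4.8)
\textbf{Proof proposal for Lemma~\ref{lem:laplace-strip}.}

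The plan is to treat the problem as a small perturbation of the flat-strip Laplacian. Recall from Subsection~\ref{ss.cov} that on the unbounded left part $\Pi_\nu^-$ of $\Scal$ the transformed operator is $-\mathrm{div}(\Abb\nabla\cdot)$ with $\Abb=\Ibb+\Lbb$, where by~\eqref{eq:Lbb}--\eqref{eq:regTheta} the coefficients of $\Lbb$ lie in $H^{k+2}$ and decay (at least polynomially, exponentially when $\lambda_\beta>0$) as $\tilde x\to-\infty$; on the bounded right part $\Scal\cap\Rbb^2_+$ the boundary is of class $\Ccont^{1+k,\lambda}$ and there are no weights to worry about, so classical elliptic theory applies directly. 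First I would establish the result for the model problem $-\Delta u=f$, $u|_{\pa\Scal}=0$, on $\Scal$ in the exponentially weighted spaces $H^{i,\ga}(\Scal)$. The substitution $u=\tilde m^{-\ga}\tilde u=e^{-\ga\tilde x}\tilde u$ conjugates $-\Delta$ into the operator $-\Delta+2\ga\pa_{\tilde x}-\ga^2$ acting on unweighted $H^i(\Scal)$. On the strip $\Pi_\nu$ this operator is, after a Fourier transform in $\tilde x$, the family $\xi\mapsto -\pa_{\tilde y}^2+(\xi+i\ga)^2$ on $(-2\nu,2\nu)$ with Dirichlet conditions; its smallest Dirichlet eigenvalue is $(\pi/(4\nu))^2$, so the operator is invertible precisely when $\ga^2<(\pi/(2\nu))^2$, i.e.\ $|\ga|<\pi/(2\nu)$ — which is exactly our hypothesis. (The $\pi/4\nu$ versus $\pi/2\nu$ discrepancy is resolved by the half-width being $2\nu$; one must just track the factor carefully.) This gives existence, uniqueness and the a priori bound $\|\tilde u\|_{H^2}\lesssim\|f\|_{H^0}$ on the strip; gluing with interior and boundary elliptic regularity on the bounded part yields the model estimate $\|u\|_{H^{i+1,\ga}(\Scal)}\lesssim\|f\|_{H^{i-1,\ga}(\Scal)}$ for $1\le i\le k$ by elliptic bootstrapping, using that $\pa\Scal$ is $\Ccont^{1+k,\lambda}$ so one may differentiate up to $k+1$ times.

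Next I would upgrade to the actual operator $-\mathrm{div}(\Abb\nabla\cdot)$. Write $-\mathrm{div}(\Abb\nabla u)=-\Delta u-\mathrm{div}(\Lbb\nabla u)$ and set up a fixed-point / Neumann-series argument: given $f$, solve iteratively $-\Delta u_{n+1}=f+\mathrm{div}(\Lbb\nabla u_n)$ with the model solution operator. Since $\Lbb\to0$ at $-\infty$ and is controlled in $H^{k+2}$ in terms of the $E_{k,\beta}$-energy of the curve, the map $u\mapsto(-\Delta)^{-1}\mathrm{div}(\Lbb\nabla u)$ has small norm on $H^{k+1,\ga}(\Scal)$ after localizing to a sufficiently far left half-strip $\{\tilde x<-R\}$; on the complementary bounded region the coefficients of $\Lbb$ are merely bounded, not small, so there one uses a standard Fredholm/continuity-method argument (the operator is a compact perturbation of an invertible one, and uniqueness — which follows from the energy identity $\int\Abb\nabla u\cdot\nabla u=0$ together with uniform ellipticity of $\Abb$, itself a consequence of $\Lbb$ being small at infinity and the arc-chord condition away from it — upgrades Fredholm index zero to invertibility). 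Combining the two regions via a partition of unity and absorbing the commutator (cutoff) terms into the right-hand side closes the estimate.

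The step I expect to be the main obstacle is keeping the dependence of every constant \emph{optimal} in the regularity of $z$ — that is, controlled solely by $\|z\|$ in the weighted Sobolev norms appearing in $E_{k,\beta}$, rather than by pointwise ($\Ccont^{k}$-type) norms as in the classical references. Concretely, when one bootstraps to the top derivative $k+1$ of $u$, the terms where all derivatives land on the coefficients of $\Abb$ (equivalently on $\Theta_\pm$, hence on $\vartheta_\pm$, hence on $\kappa_\pm$, hence on $z$) must be estimated using only $H^{k+2}_{-\lambda_\beta}(\Rbb_-,\exp\tilde x)$-control of $\Theta_\pm\mp\tfrac12$ and the pointwise decay $O(r^{\lambda_\beta})$ from~\eqref{eq:vartheta_pm}, together with the product and commutator estimates of Appendix~\ref{A.A} and Lemma~\ref{lem:sobolev}; one never gets to use an $L^\infty$ bound on the top derivative of $z$. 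This is exactly the point flagged after the statement of Proposition~\ref{prop:maz}, and it is the reason one works on~$\Scal$ (where the weight is the benign exponential $e^{\tilde x}$ and the coefficient perturbation decays) rather than directly on~$\Om$. Everything else — the spectral computation on the flat strip, the elliptic regularity on the bounded $\Ccont^{1+k,\lambda}$ part, the Neumann-series/Fredholm gluing — is routine once this bookkeeping is set up.
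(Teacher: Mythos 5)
Your first paragraph is the part that actually addresses this lemma (which concerns only the flat Laplacian $-\Delta$ on $\Scal$); your second and third paragraphs — the Neumann series for $-\mathrm{div}(\Abb\nabla\cdot)$, the Fredholm gluing, and the bookkeeping of the dependence on $z$ — belong to the proof of Proposition~\ref{prop:maz}, which is a separate (later) step in the paper. For the lemma itself your route differs from the paper's: you conjugate by $e^{-\ga\tilde x}$ and diagonalize the resulting operator $-\Delta+2\ga\pa_{\tilde x}-\ga^2$ by Fourier transform in $\tilde x$ on the full strip, whereas the paper works directly with the bilinear form $a(u,v)=\int_\Scal\nabla u\cdot\nabla v$ on $H^{1,\ga}_0(\Scal)\times H^{1,-\ga}_0(\Scal)$, shows after the same conjugation that $\tilde a(\tilde u,\tilde u)\ge(1-4\nu^2\ga^2/\pi^2)\int_\Scal|\nabla\tilde u|^2$ using only the Poincar\'e inequality of the strip (applied to the zero-extension of $\tilde u$, since one may arrange $\Scal\subset\Pi_\nu$), and then invokes Lax--Milgram plus the unweighted elliptic estimates of~\cite{ChSh17}. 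Your spectral computation identifies the same sharp threshold $|\ga|<\pi/(2\nu)$ (once the width of the strip is taken to be $2\nu$, i.e.\ $|\tilde y|<\nu$, consistent with~\eqref{formulahtilde} and with the Poincar\'e constant $\pi^2/(4\nu^2)$ used in the paper; the $|\tilde y|<2\nu$ in~\eqref{eq:Pi-} is a slip), and your remark about tracking that factor is well taken.

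The one genuine gap is the passage from invertibility on the full strip $\Pi_\nu$ to existence on $\Scal$. The Fourier argument only produces a solution operator on $\Pi_\nu$; "gluing with interior and boundary elliptic regularity on the bounded part" yields a priori estimates but not, by itself, existence of a solution on $\Scal$ — for that you would need either a parametrix-plus-Fredholm argument (in which case uniqueness in $H^{1,\ga}$, i.e.\ precisely the weighted coercivity, must still be proved to upgrade index zero to invertibility), or the paper's device of running the variational argument globally on $\Scal$, using that $H^1_0(\Scal)$ embeds into $H^1_0(\Pi_\nu)$ by extension by zero. The paper also fine-tunes the weight so that $(\log\tilde m)'$ vanishes for $\tilde x\ge0$, which is what keeps the coercivity constant $1-4\nu^2\ga^2/\pi^2$ sharp on the bounded part of $\Scal$ and hence preserves the full range $|\ga|<\pi/(2\nu)$; without some such adjustment a naive global estimate would shrink the admissible interval of $\ga$, which matters when $2\nu$ is close to $\pi/2$. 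Once you either adopt the global coercivity argument or supply the uniqueness step for the Fredholm alternative, your proof closes.
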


\begin{proof} %Assume first $\ga = 0$. Then, our weighted Sobolev spaces 
We shall use a duality argument to prove the existence of weak solutions and prove higher regularity for the solutions using elliptic estimates.

Therefore, let us first show the existence of a weak solution $u\in H^{1,\ga}_0(\Scal)$ (defined as the closure of $\Ccont^\infty_c(\Scal)$ in $H^{1,\ga}(\Scal)$). For this, we consider the bilinear form $a: H^{1,\ga}_0(\Scal)\times H^{1,-\ga}_0(\Scal)\rightarrow \Rbb$ given by
	$$
	\aligned
	&a(u, v) := \int_\Scal \na u\cdot \na v.
	\endaligned
	$$
	
	For $(u,v)\in H^{1,\ga}_0(\Scal)\times H^{1,-\ga}_0(\Scal)$, we note we can effectively get rid of the exponential weights using that
	$$
	(\t u ,\t v) := (\t m^\ga u ,\t m^{-\ga}v)\in H^1_0(\Scal)\times H^1_0(\Scal).	
	$$
	Therefore, we define a quadratic form $\t a: H^{1}_0(\Scal)\times H^{1}_0(\Scal)\rightarrow \Rbb$ as
	\[
	\t a(\t u, \t v) := a(u,v)\,.
	\]
	To show this form is coercive, note that one can construct a weight depending only on~$\t x$, which we still denote by $\t m(\t x,\t y)\equiv \t m(\t x)$, which is equivalent to our exponential weight $e^{\t x}$ on~$\Scal$ and which satisfies
	$$
	|(\log \t m)'(\t x)| \leq 1, \quad \t x < 0; \qquad  \t m'(\t x) = 0, \quad \t x \geq 0.
	$$
	With this equivalent choice of weight, we compute
	$$
	\aligned
	\t a(\t u, \t u) &= \int_\Scal\na (\t m^{-\ga} \t u)\cdot \na (\t m^{\ga} \t u) \\
	&= \int_\Scal |\na \t u|^2 - \ga^2 \int_{\Pi_\nu^-} \frac{\t m_x^2}{\t m^2}\, |\t u |^2 \geq \int_{\Pi_\nu^-} (|\na \t u|^2 - \ga^2 |\t u |^2) + \int_{\Scal \cap \Rbb^2_+} |\na \t u|^2.
	\endaligned
	$$
	 
	As we can assume that $\Scal\subset\Pi_\nu$ and the Poincar\'e constant of the strip of width $2\nu$ is
	\[
	\int_{\Pi_\nu}|\nabla w|^2\geq \frac{\pi^2}{4\nu^2}\int_{\Pi_\nu} |w|^2\qquad \text{for all } w\in H^1_0(\Pi_\nu)\,,
	\]
	we then obtain
	$$
	\aligned
	\t a(\t u, \t u) &= \int_\Scal\na (\t m^{-\ga} \t u)\cdot \na (\t m^{\ga} \t u)  \geq \left( 1- \frac{4\nu^2\ga^2}{\pi^2}\right)\int_{\Scal} |\na \t u|^2  .
	\endaligned
	$$
	(Note that if we do not have  $\Scal\subset\Pi_\nu$, the fine tuning of the weight~$\t m$ is essential; otherwise the interval of allowed~$\ga$ would be strictly smaller, which  would be a problem if we want to consider angles $2\nu$ close to~$\pi/2$ in our local wellposedness theorem.)
	
	Now that we have coercitivity for the form~$\t a$, the existence of weak solutions for the equation
	\[
	\t a(\t u,w)=\int_{\Scal}  F w=\int_{\Scal}  f (\t m^\ga w) \qquad \text{for all }w\in H^1_0(\Scal)
	\] 
	follows from the Lax--Milgram theorem, with $F:= \t m^{\ga} f$. This easily implies that $u$ is a weak solution to $-\Delta  u = f$. Elliptic estimates for standard Sobolev spaces~\cite{ChSh17} then show the higher regularity of~$\t u$ in~$H^{k+1}(\Scal)$, which translates into higher regularity for~$u$ in~$H^{k+1,\gamma}(\Scal)$.
\end{proof}

\begin{proof}[Proof of Proposition \ref{prop:maz}] 
For concreteness, consider the case $i = 0$, as the general case is analogous. Then, given $f\in \Lcal_{2,\ga}(\Om)$, using the small ball~$B$ centered at the origin of Subsection~\ref{ss.cov}, we take a smooth, radially symmetric cutoff~$\chi$ identically equal to~$1$ on~$B $ and to~$0$ outside~$2B$. We decompose 
$$
f = \chi f + (1-\chi) f =: f_1 + f_2.
$$ 

We first look for the solution $u_1$ of a boundary problem of the form~\eqref{eq:systemOmPoisson} with source $f_1$:
\[
-\Delta u_1=f_1,\qquad u_1|_{\Ga}=0\,.
\]
Applying the change of variables $\dif : \Scal \rightarrow \Om$, this is equivalent to finding a solution $U_1$ of the boundary problem
\be\label{E.defU1Delta}
(-\Delta + L) U_1 =  F_1, \qquad F_1 := f_1\circ \dif | \det D\dif | \in \Lcal_{2,\ga - 1}(\Scal).%+ U (\Delta + L) \chi + 2\na \chi \cdot \na U =: F
\ee
Here $L$ is the (not necessarily elliptic) second order differential operator
\[
L U := -\mathrm{div}(\Lbb \na U),
\]
with $\Lbb$ given by~\eqref{eq:Lbb}. To pass to this equation, we have used  \eqref{eq:poissonH}, \eqref{eq:det} and \eqref{eq:trafo}. 

By the regularity properties \eqref{eq:regTheta}, it is easy to see that $L$ defines a continuous operator 
$$
L : H^{2,\ga-1} (\Scal) \rightarrow \Lcal_{2,\ga-1}(\Scal)
$$
with operator norm bounded in terms of the $E_{k,\beta}$-norm of~$z$. 

As the coefficients of~$\Lbb$ and their derivatives decay exponentially as $\t x \rightarrow -\infty$, let us take another smooth cutoff~$\eta$ such that $\eta(\t x, \t y) = 1$ for $\t x < \t x_0$ and $\eta(\t x, \t y) = 0$ for $\t x > \t x_0/2$ where $\t x_0 < 0$ is negative enough that the $C^1$-norm of $\eta \Lbb $ is smaller than a certain small constant~$\delta$. Then the coercitivity argument used to prove Lemma~\ref{lem:laplace-strip} still applies if one replaces $-\Delta$ by $-\Delta + \eta L$, yielding the solution $U_* \in H^{2,\ga - 1}_0(\Scal)$ to the equation
$$
(-\Delta + \eta L) U_* = F_1.
$$

To construct the function~$U_1$ as above, we write $U_1 =: U' + U_*$, where $U'$ must satisfy 
$$
(-\Delta + L) U' = -(1-\eta) U'
$$
with zero Dirichlet boundary condition.
Going back to $\Om$, this is equivalent to solving a problem of the kind~\eqref{eq:systemOmPoisson}, that is
\be\label{E.Deltau1}
-\Delta u'= f',\qquad u'|_{\Ga}=0
\ee
in~$\Om$, where the source term $f'$ is now identically~$0$ in a neighbourhood of the corner point, say in a small ball~$B'$ centered at~0. Furthermore, it also remains to find the solution $u_2$  of a problem of the form~\eqref{eq:systemOmPoisson} with a source term $f_2 $, which has the same structure. Therefore, we can use the same argument in both cases.

In order to solve~\eqref{E.Deltau1}, we just need to use that $\Om$ satisfies weighted Poincar\'e inequality  
\be\label{eq:poincare-weighted}
\int_{\Om} \frac{|w(x,y)|^2}{x^2+y^2} \,dx \, dy\lesssim \int_{\Om} |\na w(x,y)|^2 dx \,dy
\ee
for $w\in \Ccont^\infty_0(\Om)$ (and therefore for the closure of this space in~$\Lcal^1_{2,0}(\Om)$, cf.~\eqref{eq:sobolev-2d}). Before proving this, note that this inequality does not suffice to solve the problem for~$u_1$, as the weights in this inequality do not match what we need. However, the fact that $f'$ vanishes in a neighborhood~$B'$ of the origin, which can be controlled in terms of the $E_{k,\beta}$-norm of~$z$, makes the presence of those weights irrelevant.

To prove~\eqref{eq:poincare-weighted}, we can assume that $\Om$ is contained in some conic sector~$S_{\nu_0}$, described in polar coordinates as
\[
S_{\nu_0}:=\{r>0,\; |\vartheta|<2\nu_0\}.
\]
We can take~$\nu_0$ to be any number larger than~$\nu$ (in particular, smaller than $\frac\pi2$). The reason is that, although indeed~$\Om$ does not need to be contained in~$S_{\nu_0}$, one can take a small neighborhood of the origin, which one can assume to be~$B$, and a diffeomorphism $h':\Om\to \Om'$ with the desired bounds, which is the identity on $\Om\cap B$ (that is, it does not change the domain near the corner) but ensures that $\Om'\subset S_{\nu_0}$.

 Proving~\eqref{eq:poincare-weighted} is now trivial: any $w\in C^\infty_c(S_{\nu_0})$ satisfies this inequality with a constant that depends on~$\nu_0$. Indeed, with the change of variables $s:=\log r$, this is just the standard Poincare inequality on a strip. Denoting by $w(r,\vartheta)$ and $w(s,\vartheta)$ the expression of the function~$w$ in these coordinates with some abuse of notation,  we just note that this follows from the identities
 \begin{align*}
 \int_{-\nu_0}^{\nu_0}\int_0^\infty (w_r^2 + r^{-2}w_\vartheta^2)\, r\, dr\, d\vartheta & =  	 \int_{-\nu_0}^{\nu_0}\int_{-\infty}^\infty {w_s^2 + w_\vartheta^2}  \,ds\, d\vartheta ,\\
\int_{-\nu_0}^{\nu_0}\int_0^\infty r^{-2} w^2\,  r\, dr\, d\vartheta & =  	 \int_{-\nu_0}^{\nu_0}\int_{-\infty}^\infty {w^2} \, ds\, d\vartheta .
 \end{align*}
 The existence and regularity for~\eqref{E.Deltau1} is now standard and follows as in the proof of Lemma~\ref{lem:laplace-strip}. Note that the weak solution~$u_1$ is only guaranteed to exist $\Lcal^1_{2,0}(\Om)$, and similarly with regularity higher estimates, but this norm of~$u_1$ is equivalent to its $\Lcal^2_{2,1}(\Om)$-norm due to the fact that it vanishes on~$B'$.
 \end{proof}

\begin{proof}[Proof of Corollary~\ref{C.reg-v}]
Writing $u=: b_0+ b_1z+ \t u$, we infer that $\t u$ satisfies the boundary problem~\eqref{eq:systemOmDirichlet} with boundary datum~$\t\psi$. The existence of~$\t u$ then follows from Proposition~\ref{prop:dirichletInt}. The fact that is bounded in~$C^{1,\lambda}$ is elementary away from the corner; near the corner, it follows directly from the regularity of the function~$\t u \circ h^{-1}$ on~$\Scal$ and the asymptotic behavior of the derivatives of the transformation $\dif^{-1}:\Om\to\Scal$, cf.~\eqref{formulahtilde}.
\end{proof}

\subsection{The harmonic extension of certain second order polynomial}
\label{ss.special_sol}

To conclude, in this subsection we present a concrete calculation that we use to capture the leading order asymptotic behavior of~$\si$ near the cusp. As we have set $z_*=0$, the particular choice of the boundary datum corresponds to the leading terms in Equation~\eqref{eq:psi-asymptotic}. We shall state it for concreteness in the case $k=2$; the case of higher~$k$ is analogous.

%Assume for simplicity that $k = 2$ and let $\chi$ be a radially symmetric cut-off identically equal to $1$ on some $B $ and $0$ on $B_{2\de}(0)^c$ for some small $\de>0$. {lem:si_t}

	\begin{lemma}\label{lem:special_sol} Let $\left|\beta - \frac{5}{2}\right| < \frac{\pi}{2\nu}$ and let 
	$$
	\psi = \Re\left(c_0 + c_1 z\right) + c_\pm |z|^2 , \quad z\in\Ga
	$$
	for some $c_0, c_1\in \Cbb$ and $c_\pm\in \Rbb$. Then, the normal derivative of its harmonic extension $u$  to~$\Om$ is of the form
	$$
	\pa_n u =  -\Im \left(c_1 z_{s} \right) + \frac{2|z|}{\sin 4\nu}\left(c_\pm \cos 4\nu - c_\mp\right)  +  \Lcal^{2 + \frac{1}{2}}_{2,\beta+ \frac{1}{2}}(m).
	$$
	\end{lemma}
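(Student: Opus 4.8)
The plan is to compute the harmonic extension explicitly on the model strip and then transfer the estimate back to $\Om$ via the change of variables $\dif^{-1}:\Om\to\Scal$ of Subsection~\ref{ss.cov}, controlling the error terms with the weighted Sobolev machinery of Section~\ref{s.inverse}. First I would split the boundary datum into the linear part $\Re(c_0+c_1 z)$ and the quadratic part $c_\pm|z|^2$, treating them separately by linearity of the Laplace equation. The linear part is the trace of the holomorphic function $c_0+c_1 z$ itself, so its harmonic extension is exactly $\Re(c_0+c_1 z)$, whose normal derivative is $\Re(c_1 \v n)=-\Im(c_1 z_s)$ since $\v n = -iz_s$ (up to orientation); this accounts for the first term. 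The genuine work is in the quadratic part.

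For the quadratic part, the key observation is that, in a neighborhood of the corner, $|z|=r$ and the region looks like the sector $\{|\vartheta|<\nu\}$ up to the $O(r^{\lambda_\beta})$-corrections encoded in $\vartheta_\pm(r)\mp\nu$. I would therefore first solve the model problem on the exact sector $S_\nu=\{|\vartheta|<\nu\}$ with boundary datum $c_\pm r^2$ on the two sides $\vartheta=\pm\nu$: one looks for a harmonic function of the form $r^2(A\cos2\vartheta + B\sin2\vartheta)$ (the only $r^2$ harmonics), and matching $r^2(A\cos2\nu\pm B\sin2\nu)=c_\pm r^2$ gives $A=\frac{c_++c_-}{2\cos2\nu}$, $B=\frac{c_+-c_-}{2\sin2\nu}$ when $\cos2\nu,\sin2\nu\neq0$ (which holds since $2\nu\in(0,\pi/2)$). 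The normal derivative on $\vartheta=\pm\nu$ is $-\frac1r\pa_\vartheta\big|_{\vartheta=\pm\nu}$ of this function, namely $\frac{2r}{1}\big(A\sin2\nu\mp B\cos2\nu\big)$ with the appropriate sign; a short trigonometric simplification using $\sin4\nu=2\sin2\nu\cos2\nu$ turns $2r(A\sin2\nu\mp B\cos2\nu)$ into $\tfrac{2r}{\sin4\nu}(c_\pm\cos4\nu-c_\mp)$, which is exactly the claimed leading term (recall $r=|z|$). Then I would pass to the model strip $\Pi_\nu$ via $\dif^{-1}$, where this harmonic function becomes an explicit bounded function; transferring via the diffeomorphism and using~\eqref{eq:poissonH}–\eqref{eq:normalH} introduces source terms and a normal-derivative correction governed by $\Lbb$ and by $|D\dif\cdot\v t|$, all of which are controlled by the decay estimates~\eqref{eq:regTheta}.

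The remainder estimate is where the bulk of the argument lies. After subtracting the explicit model solution, the difference $w$ satisfies a Dirichlet problem $\Delta w = g$ in $\Om$ with $w|_\Ga$ equal to the discrepancy between $c_\pm|z|^2$ and $c_\pm r^2$ on the actual curve plus the error from replacing $\vartheta_\pm$ by $\pm\nu$; both the source $g$ and the boundary datum gain a power $r^{\lambda_\beta}$ over the model terms and live in the appropriate $\Lcal$-spaces. I would invoke Proposition~\ref{prop:dirichletInt} (with the compatibility $|\beta-\tfrac52|<\tfrac\pi{2\nu}$, i.e.\ $|(\beta-3)+\tfrac12|<\tfrac\pi{2\nu}$, which is precisely the hypothesis) together with Lemma~\ref{lem:extension} and Proposition~\ref{prop:maz} to conclude $\pa_n w\in\Lcal^{2+1/2}_{2,\beta+1/2}(m)$. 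The main obstacle I anticipate is the bookkeeping of the exact weights: one must check that the error terms produced both by the curvature of $\Ga$ (the $\Lcal^{k+2}_{2,\beta+k}$-part of $\ka_\pm$) and by the $r$-dependence of $\vartheta_\pm$ land in $\Lcal^{2+1/2}_{2,\beta+1/2}$ and not in a space that is one half-derivative too rough or whose weight exponent is off by one — this is the sort of computation where the half-integer fractional spaces of Subsection~\ref{ss.weightedSobolev} and the commutator estimates of Appendix~\ref{A.A} must be applied carefully, and where the restriction to $k=2$ in the statement keeps the indices manageable. A secondary subtlety is verifying that the $r^2$-harmonics are genuinely the only obstruction — i.e.\ that no $r^2\log r$ or lower-order resonant term appears — which follows from $2\cdot\tfrac{2\nu}{\pi}$ not being an even integer, guaranteed again by $2\nu\in(0,\tfrac\pi2)$.
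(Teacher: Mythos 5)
Your proposal is correct and follows essentially the same route as the paper: the paper carries out your sector computation on the strip $\Pi_\nu$ after the change of variables $\dif$, separating variables as $\zeta_0=e^{2\tilde x}p(\tilde y)$ with $p''+4p=0$, $p(\pm\nu)=c_\pm$ (the conformal image of your $r^2(A\cos2\vartheta+B\sin2\vartheta)$ ansatz, yielding the same leading term $\frac{2r}{\sin 4\nu}(c_\pm\cos4\nu-c_\mp)$), and then controls the remainder with the elliptic machinery of Proposition~\ref{prop:maz}. The only organizational difference is that the paper's non-conformal straightening map sends the boundary exactly to $\tilde y=\pm\nu$, so the error you put into the boundary discrepancy (from $\vartheta_\pm(r)\neq\pm\nu$) is instead absorbed into the interior source $\chi L\zeta_0$, with $L$ decaying by~\eqref{eq:regTheta}; both bookkeepings close under the hypothesis $\left|\beta-\frac{5}{2}\right|<\frac{\pi}{2\nu}$.
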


	\begin{proof}
	It is elementary that the contribution of the summand $\Re(c_0 + c_1 z)$ to $\pa_n u$ is $-\Im \left(c_1 z_{s}\right)$, so let us focus on the remaining term, which we write as  
	$$
	\psi_1 =: c_\pm \chi(r) r^2 + f.
	$$ 	
	Here $\chi$ is a radially symmetric cutoff that is equal to~$1$ on some small neighborhood of the origin. Note that $f$ is smooth and supported away from the origin. In particular, by Proposition \ref{prop:dirichletInt}, the contribution of~$f$ to $\pa_n u$ is in $\Lcal^{2 + \frac{1}{2}}_{2,\beta + \frac{1}{2}}(\Ga)$. 
	
	The nontrivial part is to estimate the remaining term, that is, the contribution of~$c_\pm \chi(r) r^2$. By means of the change of variables $\dif $ introduced in Subsection~\ref{ss.cov}, the problem reduces to estimate the solution~$U$ to the boundary value problem in~$\Scal$ given by
	$$
	(-\Delta + L)U = 0, \qquad U|_{\pa\Scal}= c_\pm \chi(e^{\t x})e^{2\t x}.
	$$
	
	We start by considering the solution~$\zeta_0$ to the analogous problem on the whole strip~$\Pi_\nu$, which one can equivalently write as
	$$
	\Delta \zeta_0 = 0, \qquad \zeta_0(\t x, \pm \nu) = c_\pm e^{2\t x}. 
	$$
	Setting $\zeta_0(\t x, \t y):= e^{2 \t x}p(\t y)$, we arrive at the ODE
	$$
	p'' + 4 p = 0, \qquad p(\pm \nu) = c_\pm,
	$$
	which is uniquely solvable because $2\nu \neq {l\pi}/{2}$ with $l\in \Zbb$ (in which case there are also non-trivial solutions to the homogeneous equation). Specifically, the solution is
	$$
	p(\t y) = a_+ e^{2i\t y} + a_- e^{-2i\t y}, \qquad a_\pm := \frac{c_\pm e^{2i\nu}- c_\mp e^{-2i\nu}}{e^{4i\nu} - e^{-4i\nu}}.
	$$
	
	We now write $U = \zeta_1 + \chi \zeta_0$. The function $\zeta_1$ then satisfies
	$$
	\aligned
	(-\Delta + L)\zeta_1 &= -\zeta_0\Delta \chi - \nabla\zeta_0 \cdot\nabla\chi + L(\chi \zeta_0), \qquad \zeta_1 |_{\pa\Scal} = 0\\
	&=:\chi L\zeta_0 + F ,
	\endaligned
	$$
	where $F$ is identically $0$ for all sufficiently negative $\t x$. It is not difficult to see that the r.h.s. belongs to $H^{2,-(\lambda_\beta + 2)}(\Scal)$, so following the proof of Proposition \ref{prop:maz} we obtain the existence of a unique solution $\zeta_1 \in H^{4,-(\lambda_\beta + 2)}(\Scal)$, where we recall  that $\lambda_\beta := \frac{1}{2} - \beta$. By composing with the change of variables and using the asymptotics for the derivatives of~$\dif$, Equation~\eqref{eq:trafo},  we obtain that  $ \zeta_1 \circ \dif ^{-1} \in \Lcal^{4}_{2, \beta + \frac{1}{2}}(\Om)$. It therefore contributes a term in $\Lcal^{2 + \frac{1}{2}}_{2,\beta + \frac{1}{2}}(\Ga)$ to the normal derivative of $u$. 
	
	It only remains to compute the contribution of $\zeta_0$ to the normal derivative. As	
	$$
	p'(\pm \nu) = \pm\frac{2}{\sin 4\nu}\left(c_\pm \cos 4\nu - c_\mp\right),
	$$ 
	the normal derivative on~$\pa\Pi_\nu$ is $\pa_n \zeta_0(\t x, \pm\nu) = \frac{2}{\sin 4\nu}\left(c_\pm \cos 4\nu - c_\mp\right)$. Back on the original domain~$\Om$, by~\eqref{eq:normalH} the normal derivative in the neighborhood of the corner tip reads 
	$$
	\pa_n (\zeta_0 \circ \dif ^{-1}) \circ \dif  = \frac{1}{|D \dif  \v e_1|}(\Abb \nabla \zeta_0) \cdot \v n, \ = \frac{1}{|D \dif  \v e_1|}\left[\pa_n \zeta_0 + (\Lbb \nabla\zeta_0) \cdot \v n\right].
	$$ 
	As $|D \dif  \v e_1| = e^{\t x}$, we get
	$$
	\pa_n (\zeta_0 \circ \dif ^{-1})= \frac{2r}{\sin 4\nu}\left(c_\pm \cos 4\nu - c_\mp\right) + \Lcal^{2 + \frac{1}{2}}_{2,\beta + \frac{1}{2}}(\Ga).
	$$ 
	The lemma then follows.
	\end{proof}

\section*{Acknowledgements}

The authors are deeply indebted to Erik Wahl\'en for pointing out how the ODE system~\eqref{E.ODEintro} blows up, to Siddhant Agrawal and Sijue Wu for very valuable discussions, and to an anonymous referee for very useful suggestions and corrections. This project has received funding from the European Research Council (ERC) under the European Union's Horizon 2020 research and innovation program through the grant agreements 788250 (D.C.) and 633152 (A.E.). This work is also supported by the grant PID2022-136795NB-I00 of the Spanish Science Agency (A.E.) and by the ICMAT-Severo Ochoa grant CEX2019-000904-S.

%% Appendix

\begin{appendices}

\section{Some auxiliary estimates}
\label{A.A}

In this Appendix we collect some auxiliary estimates that we will need throughout the paper. Unless  mentioned otherwise, the weight~$m$ is always as in the energy estimates (that is, smooth, positive on the bounded open interval under consideration ---typically $I:=(-\pi,\pi)$---, and vanishing at both endpoints with nonzero derivative).

\subsection{Hardy inequalities}

Let us start by recalling the general form of the $L^2$-based Hardy inequalities (see e.g. \cite{KufOpic}):

\begin{theorem}[Hardy's inequality]\label{thm:hardy}
	Let $-\infty \leq a < b \leq \infty$ and let $m_1, m_2$ be measurable, positive functions on $(a,b)$. Then, we have 
	\be\label{eq:hardyI}
	\int_a^b\Big(\int^x_a f(s)ds\Big)^2m_1(x)^2dx \,\leq \, C \int_a^b|f(x)|^2 m_2(x)^2dx
	\ee
	for some~$C$ and all~$f$ if and only if 
	$$
	\sup_{a<x<b}\Big(\int_x^b m_1(s)^2ds\Big)\Big(\int^x_a m_2(s)^{-2}ds\Big) \, < \, \infty.
	$$
	Similarly, the dual inequality 
	\be\label{eq:hardyII}
\int_a^b\Big(\int^b_x f(s)ds\Big)^2m_1(x)^2dx  \,\leq \, C \int_a^b|f(x)|^2 m_2(x)^2dx
	\ee
	holds for some~$C$ and  all~$f$ if and only if
	$$
	\sup_{a<x<b}\Big(\int_a^x m_1(s)^2ds\Big)\Big(\int^b_x m_2(s)^{-2}ds\Big) \, < \, \infty.
	$$	
\end{theorem}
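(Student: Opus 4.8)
The final statement is the classical weighted Hardy inequality, and my plan is to run the standard Muckenhoupt--Tomaselli--Talenti argument, which in fact pins down the optimal constant up to a universal factor. Throughout, write $U(x):=\int_a^x m_2(s)^{-2}\,ds$ and let $A:=\sup_{a<x<b}\big(\int_x^b m_1(s)^2\,ds\big)\,U(x)$ be the quantity appearing in the criterion for \eqref{eq:hardyI}. A short preliminary reduction disposes of the degenerate configurations: if $U\equiv+\infty$ on $(a,b)$ then $A<\infty$ forces $m_1=0$ a.e.\ and \eqref{eq:hardyI} is trivial, and otherwise one checks that $U$ is finite on a maximal subinterval $(a,c)$, with $U(a^+)=0$, and $m_1=0$ a.e.\ on $(c,b)$; hence one may assume from the outset that $U(x)<\infty$ for all $x$ and $U(a^+)=0$. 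One may also assume, by approximating and letting the left endpoint tend to $a$, that $f$ is supported away from $a$, so that $U$ stays bounded below on its support.

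For the necessity of the condition, I would fix $x_0\in(a,b)$ and test \eqref{eq:hardyI} with $f:=m_2^{-2}$ on $(a,x_0)$ and $f:=0$ elsewhere, which is admissible because $\int|f|^2m_2^2=U(x_0)<\infty$. For $x\ge x_0$ one has $\int_a^x f=U(x_0)$, so the left-hand side is at least $U(x_0)^2\int_{x_0}^b m_1^2$ while the right-hand side equals $C\,U(x_0)$; dividing gives $U(x_0)\int_{x_0}^b m_1^2\le C$, and taking the supremum over $x_0$ yields $A\le C$. (If $U(x_0)=\infty$ one first tests with $f$ supported on $(a',x_0)$ and lets $a'\to a^+$.)

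The substance of the theorem is the sufficiency direction, and the one clever step is the weight-splitting in the Cauchy--Schwarz inequality: choosing $\phi:=U^{1/2}$ as the auxiliary weight and using $m_2^{-2}U^{-1/2}=2(U^{1/2})'$, one gets, for every $x$,
\be\label{eq:hardy-plan-cs}
\Big(\int_a^x f\Big)^2\le\Big(\int_a^x |f|^2m_2^2\,U^{1/2}\Big)\Big(\int_a^x \frac{ds}{m_2^2\,U^{1/2}}\Big)=2\,U(x)^{1/2}\int_a^x |f|^2m_2^2\,U^{1/2}.
\ee
Multiplying \eqref{eq:hardy-plan-cs} by $m_1(x)^2$, integrating in $x\in(a,b)$ and swapping the order of integration with Tonelli's theorem reduces \eqref{eq:hardyI} (with constant $C=4A$) to the pointwise bound $U(s)^{1/2}\int_s^b m_1(x)^2\,U(x)^{1/2}\,dx\le 2A$. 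This in turn follows by integrating by parts in $x$ and applying the hypothesis $\int_x^b m_1^2\le A\,U(x)^{-1}$ to both resulting terms; a two-line computation then gives $\int_s^b m_1^2\,U^{1/2}\,dx\le 2A\,U(s)^{-1/2}$, which is exactly what is needed.

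Finally, the dual inequality \eqref{eq:hardyII} follows from \eqref{eq:hardyI} by the reflection $x\mapsto-x$, which maps $(a,b)$ onto $(-b,-a)$, exchanges $\int_a^x$ with $\int_x^b$, and carries the weights and the Muckenhoupt-type quantity into their mirror images; this works uniformly whether or not $a$ or $b$ is infinite. I expect the only genuinely delicate part of writing this up to be the bookkeeping in the degenerate cases --- non-integrable weights, $U$ infinite or vanishing on subintervals --- where one must keep track of the convention $0\cdot\infty=0$ in the definition of the supremum and of the absolute continuity of $U^{1/2}$ near $a$; everything else is a routine combination of Cauchy--Schwarz, Tonelli and integration by parts. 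Full details, including the sharp value of the constant, can be found in~\cite{KufOpic}.
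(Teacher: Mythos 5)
The paper does not actually prove Theorem~\ref{thm:hardy}: it is stated as a recalled classical result with the reference~\cite{KufOpic}, so there is no in-paper proof to compare against. Your reconstruction is correct and is precisely the canonical Muckenhoupt--Tomaselli argument found in that reference: the necessity via the test function $f=m_2^{-2}\mathbf 1_{(a,x_0)}$, the sufficiency via Cauchy--Schwarz with the auxiliary weight $U^{1/2}$, Tonelli, and the integration by parts yielding $\int_s^b m_1^2\,U^{1/2}\,dx\le 2A\,U(s)^{-1/2}$, which gives $A\le C\le 4A$. The handling of the degenerate weight configurations and the passage to the dual inequality by reflection are also as one would expect. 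No gaps.
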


In the proofs we will use Hardy's inequality with the following formulation, directly in terms of weighted Sobolev spaces:

\begin{lemma}\label{lem:hardy}
	Let $p>1$ and let $I_\de:= (0,\de)$ with $\de < \infty$.
	\begin{enumerate}
		\item\label{itm:hardy1} Let $\ga + 1/2 < 1$. Then,  
		$$
		f\mapsto \int_0^x f(t)dt \,:\, \Lcal_{2,\ga}(I_\de) \longrightarrow \Lcal_{2,\beta}(I_\de) 
		$$
		is continuous for all $\beta \geq \ga -1$. In particular, we have
		\be\label{eq:HardyA}
		f\mapsto \frac{1}{x}\int_0^x f(s)ds \,:\, \Lcal_{2,\ga}(I_\de) \longrightarrow \Lcal_{2,\ga}(I_\de). 
		\ee
		
		\item\label{itm:hardy2} Let $\ga + 1/2 >0$. Then, 
		$$
		f\mapsto \int_x^\de f(t)dt \,:\, \Lcal_{2,\beta}(I_\de) \longrightarrow \Lcal_{2,\ga}(I_\de)
		$$
		is continuous for all $\beta \leq \ga + 1$. In particular, we have
		\be\label{eq:HardyB}
		f\mapsto \int_0^x \frac{f(s)}{s}\, ds \,:\, \Lcal_{2,\ga}(I_\de) \longrightarrow \Lcal_{2,\ga}(I_\de). 
		\ee		

	\end{enumerate}
\end{lemma}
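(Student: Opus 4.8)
\textbf{Proof proposal for Lemma~\ref{lem:hardy}.} The plan is to deduce everything from the general Hardy inequality, Theorem~\ref{thm:hardy}, by checking the explicit supremum conditions with power weights $m_1(x)=x^{\beta}$ (or $x^{\ga}$) and $m_2(x)=x^{\ga}$ on the finite interval $I_\de=(0,\de)$. Since the weight appearing in $\Lcal_{2,\ga}(I_\de)$ is equivalent to $x^{\ga}$ near $0$ and bounded above and below elsewhere, it suffices to verify the conditions with these pure powers.

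For part~\eqref{itm:hardy1}, I would apply~\eqref{eq:hardyI} with $a=0$, $b=\de$, $m_1(x)=x^{\beta}$, $m_2(x)=x^{\ga}$. Here $\int_x^\de m_1(s)^2\,ds=\int_x^\de s^{2\beta}\,ds\lesssim 1 + x^{2\beta+1}$ (the $x^{2\beta+1}$ term only being relevant if $2\beta+1<0$), while $\int_0^x m_2(s)^{-2}\,ds=\int_0^x s^{-2\ga}\,ds$ is finite precisely because $-2\ga > -1$, i.e. $\ga+\tfrac12<1$, and equals a constant times $x^{1-2\ga}$. The product is thus $\lesssim x^{1-2\ga} + x^{2\beta+2-2\ga}$, which is bounded on $(0,\de)$ as soon as $1-2\ga\ge0$ and $2\beta+2-2\ga\ge0$; the former holds by hypothesis, and the latter is exactly $\beta\ge\ga-1$. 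This gives the first mapping statement; the special case~\eqref{eq:HardyA} is obtained by taking $\beta=\ga$ (allowed since $\ga\ge\ga-1$) and noting that the operator $f\mapsto\tfrac1x\int_0^x f$ differs from $f\mapsto\int_0^x f$ only by the factor $x^{-1}$, which shifts the target weight index by $1$; equivalently one applies the displayed inequality with target space $\Lcal_{2,\ga-1}$ and multiplies.

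For part~\eqref{itm:hardy2}, I would instead use the dual inequality~\eqref{eq:hardyII} with $m_1(x)=x^{\ga}$, $m_2(x)=x^{\beta}$. Now $\int_0^x m_1(s)^2\,ds=\int_0^x s^{2\ga}\,ds$ is finite because $2\ga>-1$, i.e. $\ga+\tfrac12>0$, and is comparable to $x^{2\ga+1}$; meanwhile $\int_x^\de m_2(s)^{-2}\,ds=\int_x^\de s^{-2\beta}\,ds\lesssim 1 + x^{1-2\beta}$. Their product is $\lesssim x^{2\ga+1} + x^{2\ga+2-2\beta}$, bounded on $(0,\de)$ provided $2\ga+1\ge0$ and $2\ga+2-2\beta\ge0$, the latter being exactly $\beta\le\ga+1$. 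This proves the second mapping statement, and~\eqref{eq:HardyB} follows by taking $\beta=\ga$ and inserting the factor $s^{-1}$ inside the integral, which again amounts to a unit shift of the weight index handled by the same inequality.

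I do not anticipate a genuine obstacle here: the whole content is a bookkeeping check of the two supremum conditions for power weights on a finite interval, and the endpoint cases (when an exponent is exactly $-1$, producing a logarithm) are excluded by the strict inequalities $\ga+\tfrac12<1$ and $\ga+\tfrac12>0$ in the hypotheses. The only mild care needed is to pass from the pure power weights to the actual smooth weight $m$ of the lemma, which is routine since $m(x)\approx x$ near the endpoint $0$ of $I_\de$ and $m$ is bounded away from $0$ and $\infty$ on compact subsets of $(0,\de]$, so the weighted $L^2$ norms with $m^\ga$ and with $x^\ga$ are equivalent on $I_\de$.
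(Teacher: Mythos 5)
The paper gives no proof of this lemma, leaving it as a direct consequence of Theorem~\ref{thm:hardy}; your approach---checking the two supremum conditions for power weights on $(0,\delta)$---is exactly the intended argument, and your computations for the two displayed mapping statements, as well as for \eqref{eq:HardyA}, are correct. There is one small slip at the end: to derive \eqref{eq:HardyB} from the second displayed mapping statement you must substitute $g:=f/s$, which lies in $\Lcal_{2,\ga+1}$ precisely when $f\in\Lcal_{2,\ga}$, so the relevant choice is the endpoint $\beta=\ga+1$ of the admissible range, not $\beta=\ga$ as you wrote. You should also note that the integral in \eqref{eq:HardyB} has to be read as $\int_x^\de f(s)/s\,ds$ (this is how the paper actually applies it, e.g.\ in Lemma~\ref{lem:BRbasic} and Lemma~\ref{lem:derFphi}); with the literal lower limit $0$ and $\ga>-\tfrac12$ the inner integral $\int_0^x |f(s)|/s\,ds$ need not even be finite for $f\in\Lcal_{2,\ga}$, by the same Hardy arithmetic you carried out in part~\eqref{itm:hardy1} applied to $g\in\Lcal_{2,\ga+1}$.
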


\subsection{Commutators involving derivatives and the Hilbert transform}\label{ss.commutator}

The following lemma shows how we handle commutators involving the weight, the derivative and the Hilbert transform:

\begin{lemma}\label{lem:trHilbert}
	Let $|\ga| < \frac{1}{2}$ and let $j\leq k$ be positive integers. The Hilbert transform,
	\[
	Hf(x) := \frac{1}{2\pi}\PV \int^\pi_{-\pi} \,\cot\left(\frac{x-y}{2}\right)\,f(y)\, dy,
	\]
	is a bounded operator $\Lcal^k_{2,\ga + j}(m)\to H^k_{\ga + j}(m)$. Furthermore:
		\begin{enumerate}
		\item[i)] As long as the all derivatives of~$f$ involved are integrable, the Hilbert transform and the corresponding derivatives commute. E.g. if $f\in \Lcal^1_{2,\ga}(m)$, then $Hf \in H^1_{\ga}(m)$ and 
		$$
		\pa_x H f = H\pa_x f.
		$$
		\item[ii)] If some derivative of $f$ is not integrable, we can ``correct'' the operator including factors of~$m$ at the expense of obtaining a well behaved commutator term. E.g., if $f\in \Lcal^1_{2,  \ga  + 1}(m)$, then $Hf\in \Lcal^1_{2,  \ga  + 1}(m)$ and 
		$$
		\pa_x Hf = \frac{1}{m} H (m\pa_x f) + K f,
		$$
		where $K: \Lcal_{2,\ga}(m)\rightarrow \Lcal^1_{2,  \ga + 1}(m)$ is bounded.
	\end{enumerate}	
\end{lemma}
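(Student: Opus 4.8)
\textbf{Proof proposal for Lemma~\ref{lem:trHilbert}.}

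The plan is to reduce everything to the classical mapping properties of the Hilbert transform on Muckenhoupt-weighted Lebesgue spaces, and then to control the commutators $[\pa_x, H]$ and $[m^{\ga}, H]$-type expressions that arise when one tries to move derivatives or weights past~$H$. The starting point is the well-known fact that, since $|\ga|<\frac12$, the weight $m^\ga$ satisfies the $A_2$-condition, so $H:\Lcal_{2,\ga}(m)\to\Lcal_{2,\ga}(m)$ is bounded (this was already used repeatedly in Section~\ref{ss.BR}); the same holds with $\ga$ replaced by $\ga+i$ for any integer~$i$ only after we correct by powers of~$m$, which is precisely the point of parts~i) and~ii). For the boundedness statement $H:\Lcal^k_{2,\ga+j}(m)\to H^k_{\ga+j}(m)$, I would argue by induction on the number of derivatives: writing $\pa_x^l Hf$ and commuting the derivatives inward one at a time, each commutation produces either $H\pa_x^l f$ (which lands in the right space directly, since $\pa_x^l f\in\Lcal_{2,\ga+j-k+l}(m)$ with the appropriate shifted weight still Muckenhoupt once we are in the range where the derivative is integrable) or an error term of lower order carrying an extra factor of~$m$, to be absorbed using Hardy's inequality (Lemma~\ref{lem:hardy}).

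For part~i), the key computational identity is that for smooth compactly-supported~$f$ one has $\pa_x Hf = H\pa_x f$ exactly, because differentiation under the principal-value integral is legitimate (the kernel $\cot\frac{x-y}{2}$ has an odd singularity and $\pa_x\cot\frac{x-y}{2}=-\pa_y\cot\frac{x-y}{2}$, so one integrates by parts). The only subtlety is the justification of this identity for $f\in\Lcal^1_{2,\ga}(m)$ rather than smooth~$f$: here one approximates by functions supported away from the endpoints $\pm\pi$ (which are dense in $\Lcal^1_{2,\ga}(m)$ since $m^\ga$ is Muckenhoupt and hence locally integrable), uses the exact identity there, and passes to the limit using the boundedness of $H$ on $\Lcal_{2,\ga}(m)$ for both $f$ and $\pa_x f$. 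The output $Hf\in H^1_\ga(m)$ follows since $\pa_x(Hf)=H(\pa_x f)\in\Lcal_{2,\ga}(m)$.

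For part~ii), the difficulty is that when $f\in\Lcal^1_{2,\ga+1}(m)$ the derivative $\pa_x f\in\Lcal_{2,\ga+1}(m)$ need not be integrable near $\pm\pi$, so $H\pa_x f$ is not even defined; this forces the correction $\frac1m H(m\pa_x f)$, where $m\pa_x f\in\Lcal_{2,\ga}(m)$ lives in the Muckenhoupt range. I would compute $\pa_x(Hf) - \frac1m H(m\pa_x f)$ directly: writing $mHf = H(mf) + [m,H]f$ and differentiating, one gets $\pa_x(Hf) = \frac1m\pa_x H(mf) - \frac{m'}{m}Hf + \frac1m\pa_x([m,H]f)$; then $\pa_x H(mf) = H(\pa_x(mf)) = H(m\pa_x f) + H(m' f)$ by part~i) applied to $mf\in\Lcal^1_{2,\ga}(m)$. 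Collecting terms, the correction operator is
\[
Kf := \frac1m H(m'f) - \frac{m'}{m}Hf + \frac1m\pa_x\big([m,H]f\big),
\]
and the claim $K:\Lcal_{2,\ga}(m)\to\Lcal^1_{2,\ga+1}(m)$ reduces to: (a) $m'=O(1)$ and $H$ bounded on $\Lcal_{2,\ga}(m)$, handling the first two terms with one derivative to spare; (b) the smoothing property of the commutator $[m,H]$, namely that $[m,H]:\Lcal_{2,\ga}(m)\to \Lcal^2_{2,\ga+2}(m)$ gains two derivatives (this is the standard commutator estimate, a special case of what is proven as Lemma~\ref{lem:derFphi} with the graph $z(\al)=\al$), so that $\pa_x[m,H]f\in\Lcal^1_{2,\ga+1}(m)$. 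The main obstacle is bookkeeping: making sure every term in the expansion of $Kf$ genuinely lands in $\Lcal^1_{2,\ga+1}(m)$ and not merely in $\Lcal_{2,\ga+1}(m)$, which is where the one-derivative gain from the commutator estimate and the boundedness of $H$ (rather than mere $L^2$-theory) are both essential; the higher-order versions needed elsewhere in the paper follow by iterating this computation.
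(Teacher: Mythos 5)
Your proposal follows essentially the same route as the paper: part~i) is the integration-by-parts identity for the PV integral (the paper carries out the truncated computation explicitly with the variable excision $|y-x|<\ep\, m(x)$ and uses $f|_{\pa I}=0$ to kill the boundary term, whereas you justify it by density, but these are equivalent), and part~ii) uses the identical decomposition $Hf=\tfrac1m H(mf)+\tfrac1m[m,H]f$, yielding exactly the same correction operator $K$, with the commutator terms controlled by Lemma~\ref{lem:derFphi} applied to $g=m$. The only caveat is a small bookkeeping slip at the end: $[m,H]:\Lcal_{2,\ga}(m)\to\Lcal^2_{2,\ga+2}(m)$ would give $\pa_x[m,H]f\in\Lcal^1_{2,\ga+2}(m)$ rather than $\Lcal^1_{2,\ga+1}(m)$ (one should use the sharper output $\Lcal^2_{2,\ga+1}(m)$ that Lemma~\ref{lem:derFphi} with $\lambda=1$ actually provides), but this is at the same level of precision as the paper's own one-line conclusion.
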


\begin{proof}
	Let us consider the case $k = 1$, as generalizing the argument to higher~$k$ is straightforward. Let us fix a point $x\in I:=(-\pi,\pi)$. It is worth keeping in mind that $m$~is essentially distance to the boundary of $I$, so it is easy to see that we can assume the principal value is defined via integration over $I\backslash I_\ep(x)$, with $I_\ep(x) := \{y: |y-x|<\ep m(x)\}$ and $\ep\to0^+$.
		
	One can readily see that
	$$
	\aligned
	\pa_x\int^\pi_{x + \ep m(x)} \cot\left(\frac{x-y}{2}\right)f(y)dy %&=\int^\pi_{x + \ep m(x)} f(y)\pa_x \cot\left(\frac{x-y}{2}\right)dy + f(x + \ep m(x))\cot\left(\frac{\epsilon x}{2}\right)\\
	&=-\int^\pi_{x + \ep m(x)} f(y)\pa_y\cot\left(\frac{x-y}{2}\right)dy + f(x + \ep m(x))\cot\left(\frac{\ep m(x)}{2}\right)\\
	&=-f(\pi)\cot\left(\frac{x-\pi}{2}\right) + \int^\pi_{x + \ep m(x)}  \pa_y f(y)\cot\left(\frac{x-y}{2}\right)dy,
	\endaligned
	$$
	and similarly for $[-\pi, x-\ep m(x))$.	
 	Since by assumption $f|_{\pa I} = 0$ because $f\in \Lcal^1_{2,\ga}$ with $|\ga|<\frac12$, we conclude by letting $\ep\rightarrow 0^+$ that
 	$$
 	\pa_x Hf = H\pa_x f \in \Lcal_{2,\ga}(m).
 	$$ 	
 	
Let us now suppose $f\in\Lcal^1_{2,\ga + 1}(m)$. Then $mf \in \Lcal^1_{2,\ga}(m)$ and we can write
 	$$
 	Hf = \frac{1}{m} H(mf) + \frac{1}{m}[m, H]f. 
 	$$
 	By the first part of the proof, we then have
 	$$
 	\aligned
 	\pa_x Hf &= \frac{1}{m}H\pa_x(mf) - \frac{m_x}{m} Hf + \frac{1}{m}\pa_x[m, H] f\\
 	&= \frac{1}{m} H (m\pa_x f) - \frac{1}{m}[m_x, H]f + \frac{1}{m}\pa_x[m, H] f,
 	\endaligned
 	$$
 	so we conclude $\pa_x Hf \in\Lcal_{2,\ga + 1}(m)$ using Lemma~\ref{lem:derFphi} below.
 \end{proof}

\begin{lemma}\label{lem:derFphi}
Let $\ga\in \left(-\frac{1}{2}, \frac{1}{2}\right)$, $\beta\in \left(-\frac{1}{2}, \frac{1}{2}\right]$ and let $k\geq 1$. We define $[g, H] f := g H f - H (g f)$.
 
Then, given $f\in \Lcal_{2, \ga}(m)$ and $g\in H^{k + 1}_{\beta + k}(m)$, we have
$$
\pa^i [g, H] f \in \Lcal_{2, \ga - \lambda_\beta +i}(m) , \qquad \lambda_\beta := \frac{1}{2} - \beta%\Lcal^{k}_{2,(\ga - \lambda_\beta) + k}(m)
$$
whenever $1 \leq i \leq k$ satisfies $\ga - \lambda_\beta + i > -\frac{1}{2}$. The estimate is also valid if we replace $\lambda_\beta$ by any $\lambda \geq 0$ provided that the derivatives of~$g$ satisfy the pointwise bound $g^{(j)} = O(m^{\lambda - j})$ for $1\leq j\leq k + 1$; or by any $\lambda \in (-1,0)$ such that $\ga - \lambda < \frac{1}{2}$. 
\end{lemma}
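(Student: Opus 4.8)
\textbf{Proof strategy for Lemma~\ref{lem:derFphi}.}

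The plan is to reduce everything to a pointwise kernel estimate for $\pa_x^i[g,H]f$ and then apply the Hardy inequalities of Lemma~\ref{lem:hardy} together with the boundedness of the Hilbert transform on Muckenhaupt-weighted Lebesgue spaces. The starting point is the elementary identity
\[
[g,H]f(x) = \frac{1}{2\pi}\,\PV\int_{-\pi}^\pi \cot\!\Big(\frac{x-y}{2}\Big)\big(g(x)-g(y)\big)f(y)\,dy,
\]
in which the factor $g(x)-g(y)$ cancels the singularity of the kernel, so that the commutator is an operator with a kernel that is merely weakly singular. First I would localize: away from the corner tip, $g$ lies in the ordinary Sobolev space $H^{k+1}$ and the estimate is the classical one for commutators with the Hilbert transform, so it suffices to work in a one-sided neighborhood of the endpoint, say on $I_\de=(0,\de)$ with the corner at $0$, exactly as in the proofs of Lemmas~\ref{lem:BRbasic} and~\ref{lem:BRcancel}. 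I would then split the domain of integration into the three regions $I_l(x)$, $I_c(x)$, $I_r(x)$ as in~\eqref{eq:Is}.

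Next I would differentiate $i$ times in $x$. Each derivative either hits the kernel $\cot((x-y)/2)$, producing a more singular kernel but one still controlled by $|x-y|^{-1-j}$ on $I_l\cup I_r$ and compensated by a difference quotient of $g$ on $I_c$, or it hits the factor $g(x)-g(y)$, producing derivatives $g^{(j)}(x)$ which by Lemma~\ref{lem:sobolev} satisfy $g^{(j)}=O(m^{\lambda_\beta-j})$ for $1\le j\le k+1$ under the hypothesis $g\in H^{k+1}_{\beta+k}(m)$ (this is precisely where the quantitative loss $\lambda_\beta=\tfrac12-\beta$ enters, and why the variant hypotheses in the statement — the pointwise bound $g^{(j)}=O(m^{\lambda-j})$, or $\lambda\in(-1,0)$ — plug in directly at this step). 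On $I_l(x)$ one bounds the contribution by $x^{-1-i+\lambda_\beta}\int_0^{x}|f|$, on $I_r(x)$ by $\int_x^{2\de}|f(y)|\,y^{-1-i+\lambda_\beta}\,dy$ after integrating by parts to move derivatives off the kernel, and both are then estimated in $\Lcal_{2,\ga-\lambda_\beta+i}(m)$ using~\eqref{eq:HardyA} and~\eqref{eq:HardyB} respectively — this is where the condition $\ga-\lambda_\beta+i>-\tfrac12$ is needed to keep the target weight inside the Muckenhaupt range so Hardy applies. On the central region $I_c(x)$, where $x\approx y$, one Taylor-expands $g(x)-g(y)$ and writes $\cot((x-y)/2)=\frac{2}{x-y}+O(|x-y|)$; the genuinely singular piece reduces to a Hilbert transform of $m^{\lambda_\beta}$-weighted functions, bounded because $\ga-\lambda_\beta+i$ satisfies the $A_2$-condition, and the remainder is handled by another Hardy inequality, just as in the estimate of~\eqref{eq:basic} in Lemma~\ref{lem:BRbasic}.

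The main obstacle, I expect, will be the careful bookkeeping in the central region $I_c(x)$ for $i\geq 2$: there one must expand $g$ to sufficiently high order so that after $i$ differentiations of the compensated kernel no uncontrolled singularity survives, and simultaneously track that each surviving term either carries enough powers of $m$ (via the $g^{(j)}=O(m^{\lambda_\beta-j})$ bounds, which degrade by one power of $m$ per derivative) or reduces to a bounded singular-integral operator on the correctly weighted space. The bound $\ga-\lambda_\beta+i>-\tfrac12$ is exactly the threshold below which the intermediate Hardy inequality or the weighted $L^2$-boundedness of $H$ fails, so the statement is sharp in $i$; the three alternative forms of the hypothesis on $\lambda$ correspond to the three ways of ensuring the relevant weights stay admissible, and each is verified by the same scheme with $\lambda_\beta$ replaced by the stated exponent. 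Everything else — the localization, the integration by parts to move derivatives onto $f$, the splitting into $I_l,I_c,I_r$ — is routine once the pointwise estimates on $g$ and its derivatives are in hand.
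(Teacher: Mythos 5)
Your proposal follows essentially the same route as the paper: reduce to the real-line kernel near the corner, exploit the compensation $g(x)-g(y)$, split into $I_l(x)$, $I_c(x)$, $I_r(x)$, use the pointwise bounds $g^{(j)}=O(m^{\lambda_\beta-j})$ from Lemma~\ref{lem:sobolev}, and close the left and right regions with the Hardy inequalities~\eqref{eq:HardyA} and~\eqref{eq:HardyB}. The one point where your description of the central region differs from what is actually needed: after differentiating the compensated kernel $F(g)(x,y)=\frac{g(x)-g(y)}{x-y}$ there is no residual Hilbert-transform piece on $I_c(x)$ (the compensation has already removed the principal-value singularity); instead one writes $\pa_xF(g)$ via the integral form of the Taylor remainder in $g''$, applies Cauchy--Schwarz against $\|g''\|_{2,\beta+1}$ to obtain a kernel of order $|x-y|^{-1/2}$, and concludes with the weighted Riesz-potential estimate of Theorem~\ref{thm:rieszintegral} rather than with $A_2$-boundedness of $H$. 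This is exactly the bookkeeping issue you flag as the main obstacle, and it resolves as you anticipate, so the gap is one of mechanism rather than of substance.
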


\begin{proof} 
	Again, take $k=1$ without any loss of generality and fix some point $x$. As the kernel of the periodic  Hilbert transform is
	\be\label{eq:cot}
	\frac{1}{2}\cot\left(\frac{x - y}{2}\right) =: \frac{1}{x - y} + h\left(\frac{x-y}{2}\right),
	\ee
	where $h$ is a smooth function and the weight is only singular at the endpoints, it is therefore enough to prove the claim in a small neighborhood of the endpoints and with the periodic kernel replaced by the real-line kernel, $1/(x-y)$.
	
	For notational simplicity, as in the proof of Lemma~\ref{lem:BRbasic}, let us consider the translated interval $I := (0, 2\pi)$ instead of $(-\pi, \pi)$. We take $x \in I_{\de}:=(0,\de)$, for some small $\de>0$. We show that, as a function of~$x$,
	$$
	\pa_x\int_I F(g)(x,y) f(y) dy \in \Lcal_{2, (\ga - \lambda_\beta) + 1}(I_\de),
	$$
	where we have set
	\be\label{eq:F}
	F(g)(x, y) := \frac{g(x)- g(y)}{x - y}
	\ee
	First note that, because we are only considering differences $g(x) - g(u)$, we may assume that actually $g\in \Lcal^2_{2,\beta + 1}(m)$ without loss of generality. Therefore, by Lemma \ref{lem:sobolev}, we have, $g(x) = O(m(x)^{\lambda_\beta})$ and $g'(x) = O( m(x)^{\lambda_\beta - 1})$, where recall that $m(x) \approx x$ near the origin.

	As in \eqref{eq:partition}-\eqref{eq:Is}, we split the interval as 
	\be\label{eq:interval}
	I = I_{2\de} \cup (I\setminus I_{2\de}) \qquad I_{2\de} =: I_l(x) \cup I_c(x) \cup I_r(x).
	\ee
	Consider $y\in I_{2\de}$ first. In this region we have $m(y) \approx y$. We can write
	\be\label{eq:Fj2B}
	\pa_x F(g)(x,y) =  \frac{ g'(x)}{x-y}\,  - \frac{g(x) - g(y)}{(x - y)^2},
	\ee
	which for $y\in I_l(x) :=\{0 < y \leq x/2\}$ implies  
	$$
	|\pa_x F(g)(x,y)| \,\lesssim\, \frac{1}{x}\,|g'(x)| + \frac{1}{x^2}\,|g(x)| + \frac{1}{x^2}\,|g(y)| \, \lesssim \, x^{\lambda_\beta - 2}.
	$$
	Therefore
	$$
	\int^\de_{0}m(x)^{2(\ga - \lambda_\beta + 1)}\Big(\int_{0}^{x/2} \pa_x F(g)(x,y) f(y)dy \Big)^2 dx \,\lesssim\, \int^\de_{0}m(x)^{2(\ga - 1)}\Big(\int_0^x |f(y)| dy \Big)^2 dx,
	%\,\lesssim\, & \int_{b(t)}^\de x^{2(\ga - 1)}\Big(\int_0^x |f(y)|dy \Big)^2 dx \, \lesssim \, \|f\|_{2,\ga}
	$$
	which is bounded by the $\Lcal_{2, \ga}$-norm of $f$ by Hardy's inequality \eqref{eq:HardyA}. 
	
	On the other hand, for $y\in I_r(x) :=\{2x < y \leq 2\de\}$, formula \eqref{eq:Fj2B}  implies
	$$
	|\pa_x F(g)(x,y)| \,\lesssim\, \frac{1}{y}\,|g'(x)| + \frac{1}{y^2}\,|g(x)| + \frac{1}{y^2}\,|g(y)| \,\lesssim \,  \frac{x^{\lambda_\beta - 1}}{y} +  y^{\lambda_\beta - 2}
	%|\pa_x F(g)(x,y)| \,\lesssim\, \frac{1}{y}\,\left(|g'(x)| + \int^{2\de}_x\frac{|g'(\tau)|}{\tau}d\tau\right) \,\lesssim\,  \frac{m(x)^{\lambda_\beta - 1}}{y}
	$$
	Using the bound $g = O(x^\lambda_\beta)$, 
	$$
	\aligned
	\int^\de_{0}m(x)^{2(\ga - \lambda_\beta + 1)}\left(\int_{2x}^{2\de} \pa_x F(g)(x,y) f(y)dy \right)^2 &dx \,\lesssim\, \int^\de_{0}m(x)^{2 \ga}\left(\int_{2x}^{2\de} \frac{|f(y)| }{y}dy \right)^2 dx \\
	&+ \int^\de_{0}m(x)^{2 (\ga - \lambda_\beta + 1)}\left(\int_{2x}^{2\de} |f(y)|y^{\lambda_\beta - 2}dy \right)^2 dx.
	\endaligned
	$$
	Both are bounded by the $\Lcal_{2, \ga}$-norm of $f$ by Hardy's inequality \eqref{eq:HardyB}. 
	
	When $y\in I_c(x) :=\{ x/2 < y \leq 2x\}$, we  write 
	\be\label{eq:Fj2A}
	\pa_x F(g)(x,y) = \frac{1}{(x - y)^{2}}\int_x^{y} g''(\tau)\, \frac{(y - \tau)}{2}d\tau.
	\ee
	If we only control the weighted $L^2$-norm of $g ''$, as in the case of non-negative $\lambda\geq 0$ other than $\lambda_\beta$ discussed in the statement, we can employ the inequality
	$$
	|\pa_x F(g)(x,y)| \,\lesssim\,  m(x)^{\lambda_\beta - 1} \, \frac{x^{-1/2}}{|x - y|^{1/2}} \, \|g '' \|^2_{2, \beta + 1},
	$$
	where we have used that $m(x)\approx m(y)$ uniformly for all $y\in I_c(x)$. The corresponding integral is easily seen to be bounded in $\Lcal_{2, \ga - \lambda_\beta + 1}(I_\de)$ by Theorem \ref{thm:rieszintegral}  below. 
	If, on the other hand, we have $g''(x) = O(x^{\lambda_\beta - 2})$, then
	$$
	|\pa_x F(g)(x,y)| \,\lesssim\,  m(x)^{\lambda_\beta - 2},
	$$
	in which case the claim follows arguing as on the interval $I_l(x)$.
	
	Finally, when $y\in I\setminus I_{2\de}$, we have $|x-y| > \de$, and so formula \eqref{eq:Fj2B} implies
	$$
	|\pa_x F(g)(x,y)| \,\lesssim\,  \frac{1}{\de^2}[m(x)^{\lambda_\beta - 1} + O(1)]. 
	$$
	The claim then follows since $m(x)^{2(\ga - \lambda_\beta + 1)}$ is integrable.
	
	To conclude, let us discuss the case $-1 < \lambda < 0$. A first observation is that the condition $\ga - \lambda < \frac{1}{2}$ when $\lambda < 0$ ensures that $g f \in \Lcal_{2, \ga - \lambda}(m)\subset L^p(I)$ for some $p>1$. As the function $g'$ is generally not integrable at the origin,  we  need to modify the part corresponding to the region $I_l(x)$. In this case, we can estimate \eqref{eq:Fj2B} for $y \in I_l(x)$ as  
	
	$$
	|\pa_x F(g)(x,y)| \, \lesssim \, m(x)^{\lambda - 1} + \frac{m(y)^\lambda}{x} .
	$$
	The claim now follows from Hardy's inequality \eqref{eq:HardyA} as before, provided that $\ga - \lambda + \frac{1}{2} < 1$.

	For higher $k$, we need to show $\pa^j [g, H] f\in\Lcal_{2,\ga - \lambda + k}(m)$ for all $j\leq k$. One can proceed just as above, using the higher-order version of \eqref{eq:Fj2A} when $u\in I_c(x)$ and considering direct derivatives of \eqref{eq:F} otherwise. Note that if $(\ga - \lambda) + j_0 > -1/2$ for some $j_0 < k$, then $\pa^j [g, H] f\in\Lcal_{2,\ga - \lambda + j}(m)$ for $j_0 \leq j\leq k$. We omit further details, which are largely straightforward.
\end{proof}

\subsection{Estimates involving the fractional Laplacian}
\label{SS.La}

Let us start by recalling a standard result on the boundedness of the Riesz potential on weighted Lebesgue spaces with power weights, which can be found e.g.~in~\cite{SW}:

\begin{theorem}\label{thm:rieszintegral} Let $\ga \in\left(0,\frac{1}{2}\right) $, so that both $\ga$ and $\ga - \frac{1}{2}$ give rise to Muckenhaupt weights. Then
$$
\int_{-\infty}^\infty \Big(\int_{-\infty}^\infty \frac{f(u)}{|x-u|^{1/2}}du\Big)^2 |x|^{2(\ga -\frac12)}dx \, \lesssim \, \int_{-\infty}^\infty |f(x)|^2 |x|^{2\ga} dx.
$$
\end{theorem}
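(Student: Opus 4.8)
\textbf{Proof proposal for Theorem~\ref{thm:rieszintegral}.} The plan is to reduce the stated inequality to the known weighted boundedness of the half-order Riesz potential $I_{1/2}$ on the real line, and to identify the precise $A_2$-type condition on the power weights that makes it work. Recall that $I_{1/2}f(x) = c\int_{\Rbb} |x-u|^{-1/2} f(u)\,du$ (up to a harmless constant $c$ that I would absorb), so the claim is exactly the statement that
\[
I_{1/2}: L^2(|x|^{2\ga}\,dx) \longrightarrow L^2(|x|^{2\ga - 1}\,dx)
\]
is bounded. First I would invoke the classical two-weight theorem for the Riesz potential $I_\alpha$ (Stein--Weiss, see~\cite{SW}): for $0<\alpha<n$ (here $n=1$, $\alpha=\tfrac12$) and power weights $|x|^{-a}$ on the target and $|x|^{b}$ on the source, with $1<p\le q<\infty$, the operator $I_\alpha: L^p(|x|^{b})\to L^q(|x|^{-a})$ is bounded provided the scaling (homogeneity) identity $\tfrac{1}{q} = \tfrac{1}{p} - \tfrac{\alpha}{n} + \tfrac{a+b}{n}$ (appropriately normalized) holds together with the side conditions $a< \tfrac{n}{q}$, $b<\tfrac{n}{p'}$, and $a+b\ge 0$. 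In the present situation $p=q=2$, $n=1$, $\alpha=\tfrac12$; matching exponents forces $b = 2\ga$ and $-a = 2\ga-1$, i.e.\ $a = 1-2\ga$, and the scaling relation $\tfrac12 = \tfrac12 - \tfrac12 + \tfrac{a+b}{2}$ reduces precisely to $a+b = 1$, which is automatically satisfied: $(1-2\ga) + 2\ga = 1$. So the only thing to check is the pair of strict inequalities.

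The key step is therefore the verification of the side conditions under the hypothesis $\ga\in(0,\tfrac12)$. We need $a < \tfrac{n}{q} = \tfrac12$, i.e.\ $1-2\ga < \tfrac12$, i.e.\ $\ga > \tfrac14$ --- wait, this is where care is needed, and I would instead run the argument through the $A_2$-weight formulation that the paper itself prefers, which avoids this mismatch. Writing $T f := |x|^{\ga - 1/2}\, I_{1/2}\big(|x|^{-\ga} f\big)$, the inequality to prove is the unweighted boundedness $\|Tf\|_{L^2}\lesssim \|f\|_{L^2}$. The kernel of $T$ is $K(x,u) = |x|^{\ga-1/2}|x-u|^{-1/2}|u|^{-\ga}$, which is homogeneous of degree $-1$ in $(x,u)$; hence $T$ is bounded on $L^2(\Rbb)$ by Schur's test with test function $|x|^{-1/2}$ as soon as the single convergent-integral condition
\[
\int_{\Rbb} K(x,u)\,\frac{|u|^{-1/2}}{|x|^{-1/2}}\,du = \int_{\Rbb} \frac{|u|^{-\ga-1/2}}{|x-u|^{1/2}|x|^{-\ga}}\,du < \infty
\]
holds uniformly (by homogeneity it suffices to take $x=1$). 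Splitting the integral near $u=0$, near $u=1$, and near $u=\infty$, convergence at $0$ requires $\ga + \tfrac12 < 1$, i.e.\ $\ga<\tfrac12$; convergence at $u=1$ requires $\tfrac12<1$, always true; convergence at $\infty$ requires $\ga+\tfrac12 + \tfrac12 > 1$, i.e.\ $\ga>0$. These are exactly the hypotheses $\ga\in(0,\tfrac12)$, and by symmetry of the Schur computation (the adjoint kernel requires the reflected conditions, which coincide) both Schur integrals are finite. This gives the bound with an explicit constant depending only on $\ga$.

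The main obstacle --- really the only subtle point --- is getting the bookkeeping of exponents exactly right so that the homogeneity degree of the kernel is $-1$ (this is forced by the scaling relation $a+b=1$ noted above, which in turn is forced by wanting $p=q=2$ for a half-order potential in dimension one) and then checking that the endpoint behaviors at $0$ and at $\infty$ match the two strict inequalities in $\ga\in(0,\tfrac12)$; the borderline cases $\ga=0$ and $\ga=\tfrac12$ genuinely fail, which is consistent with $\ga,\ga-\tfrac12$ failing the Muckenhaupt condition there. Everything else is a routine application of Schur's test (or, alternatively, a direct citation of~\cite{SW}), so I would present the Schur-test argument as the cleanest self-contained route and remark that it is the $1$-dimensional, half-order instance of the Stein--Weiss inequality. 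Since the statement is used only qualitatively (boundedness, not a sharp constant) throughout the paper, no further refinement is needed.
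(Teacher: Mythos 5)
Your Schur-test argument is correct and gives a clean, self-contained proof --- a genuinely different route from the paper, which simply invokes the Stein--Weiss two-weight inequality of~\cite{SW} without reproducing an argument. Your reduction to showing that $T f(x) := |x|^{\gamma - 1/2}\int_{\Rbb} |x-u|^{-1/2}|u|^{-\gamma}f(u)\,du$ is bounded on unweighted $L^2$, the observation that the kernel is homogeneous of degree~$-1$, and Schur's test with weight $|x|^{-1/2}$ reduce everything to the convergence of $\int_{\Rbb} |v|^{-\gamma-1/2}|1-v|^{-1/2}\,dv$ and of the (non-symmetric) adjoint integral $\int_{\Rbb} |w|^{\gamma-1}|w-1|^{-1/2}\,dw$, both finite exactly for $\gamma\in(0,\tfrac12)$; the sharp range of~$\gamma$ thus falls out transparently from elementary integral convergence, which is the main thing this route buys over a bare citation. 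The one slip is in your aborted Stein--Weiss detour: you identified $a=1-2\gamma$, $b=2\gamma$ as the powers of $|x|$ appearing \emph{inside} the squared integrals, but then tested them against the side conditions $a<n/q$, $b<n/p'$, which in the standard statement refer to the powers multiplying $I_{1/2}f$ and $f$ before taking $L^q$ and $L^p$ norms (hence half of yours). With the consistent identification $\beta=\tfrac12-\gamma$ and $\alpha=\gamma$, the side conditions $\alpha,\beta<\tfrac12$ together with $\alpha+\beta\ge0$ and the scaling identity give precisely $\gamma\in(0,\tfrac12)$, so Stein--Weiss does apply directly --- which is all the paper's citation relies on; both routes are valid.
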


Armed with this result, we are ready to present several lemmas involving fractional derivatives that we will use repeatedly:

\begin{lemma}\label{lem:halfDer}
Assume $\ga \in\left(0,\frac{1}{2}\right) $. Then, we have
\be\label{ineq:halfd2}
\|\Lambda^{1/2}f\|_{2,\ga - \frac{1}{2}}\,\lesssim \, \|f'\|_{2,\ga}.
\ee
\end{lemma}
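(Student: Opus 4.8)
The plan is to reduce the weighted half-derivative estimate to a Riesz potential bound via the integral representation of $\Lambda^{1/2}$. First I would write, using the explicit kernel formula $\Lambda^{1/2}f(x) = c_\Lambda \int \frac{f(x)-f(x')}{|x-x'|^{3/2}}\,dx'$, that the singular difference quotient can be integrated by parts against $f'$. Concretely, since $\frac{1}{|x-x'|^{3/2}} = \frac{2}{|x-x'|^{1/2}}\cdot\frac{1}{|x-x'|}\cdot\frac12$ and $\partial_{x'}|x-x'|^{-1/2} = \pm\frac12|x-x'|^{-3/2}$, one has the pointwise identity $\Lambda^{1/2}f(x) = c'_\Lambda \int \frac{f'(x')}{|x-x'|^{1/2}}\operatorname{sgn}(x'-x)\,dx'$, valid for $f$ with enough decay/regularity (and then extended by density). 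This converts the problem into bounding $\left\| \int \frac{f'(x')}{|x-x'|^{1/2}}\,dx'\right\|_{2,\gamma-1/2}$ by $\|f'\|_{2,\gamma}$, which is exactly Theorem~\ref{thm:rieszintegral} applied to $f'$ in place of $f$, since $\gamma\in(0,\frac12)$ means both $\gamma$ and $\gamma-\frac12$ give Muckenhaupt weights.

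The main steps in order would be: (i) justify the integration-by-parts identity for $\Lambda^{1/2}f$ in terms of $f'$, first for smooth compactly supported $f$ where the manipulation of the principal value is routine, taking care that the boundary terms at infinity vanish; (ii) observe that on the bounded interval $I$ the weight $m$ is comparable to the distance to $\partial I$, so near each endpoint $m(x)\approx |x-x_*|$ and one is in precisely the power-weight situation of Theorem~\ref{thm:rieszintegral}, while away from the endpoints the kernel $|x-x'|^{-1/2}$ composed with a cutoff is a bounded operator on $L^2$ by Schur's test; (iii) dominate $\big|\int \frac{f'(x')}{|x-x'|^{1/2}}\operatorname{sgn}(x'-x)\,dx'\big| \le \int \frac{|f'(x')|}{|x-x'|^{1/2}}\,dx'$ and apply Theorem~\ref{thm:rieszintegral} to $|f'|$; (iv) conclude by density of smooth functions in the relevant weighted Sobolev space. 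The periodicity of $\Lambda^{1/2}$ versus the real-line version is handled exactly as in Lemma~\ref{lem:derFphi}: the periodic kernel differs from $|x-x'|^{-3/2}$ by a smooth error near the diagonal, whose contribution is harmless.

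The step I expect to be the main (though still modest) obstacle is the careful bookkeeping in (i): the half-derivative kernel is not absolutely integrable, so one must be scrupulous about the order of the principal-value limit and the integration by parts, and verify that no boundary term survives — in particular one needs $f\in H^1_{\gamma+1/2}(m)$ or at least that $f$ vanishes appropriately at $\partial I$ for the argument to close, which is consistent with the hypothesis $f'\in\Lcal_{2,\gamma}(m)$ and $\gamma\in(0,\frac12)$ forcing $f$ to have a trace at the endpoints. Once the representation is in hand, the rest is a direct invocation of the Riesz potential bound and is genuinely routine. I would also note that this lemma is the prototype; the weighted half-derivative mapping property $\Lambda^{1/2}:H^1_{\gamma+1/2}(m)\to\Lcal_{2,\gamma}(m)$ asserted in Section~\ref{ss.weightedSobolev} follows by combining \eqref{ineq:halfd2} with the trivial bound $\|\Lambda^{1/2}f\|_{2,\gamma-1/2}\lesssim\|f\|_{2,\gamma-1/2}+\|f'\|_{2,\gamma}$ and the fact that the lower-order weighted norm of $f$ is controlled, via a Hardy inequality (Lemma~\ref{lem:hardy}), by $\|f'\|_{2,\gamma}$.
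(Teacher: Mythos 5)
Your proposal follows essentially the same route as the paper: integrate the kernel of $\Lambda^{1/2}$ by parts to land on the Riesz potential of $f'$, invoke Theorem~\ref{thm:rieszintegral} (valid precisely because $\ga$ and $\ga-\tfrac12$ are both in the Muckenhaupt range), and finish by density. The one point where your expectation differs from what actually happens is the fate of the boundary terms. On the bounded interval the integration by parts does \emph{not} produce vanishing boundary contributions at $\pm\pi$: one is left with the terms $(\pi\mp x)^{-1/2}\bigl(f(\pm\pi)-f(x)\bigr)=\mp(\pi\mp x)^{-1/2}\int_x^{\pm\pi}f'(y)\,dy$, which are nonzero in general and do not disappear under any trace condition on~$f$. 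They are, however, exactly of Hardy type and are bounded in $\Lcal_{2,\ga-1/2}$ by Lemma~\ref{lem:hardy}; this is how the paper closes the argument, and it requires no hypothesis on $f$ beyond $f'\in\Lcal_{2,\ga}(m)$. The other boundary term, at $y=x\pm\ep m(x)$ (note the principal value is taken with a cutoff scaling like $m(x)$), is shown to vanish as $\ep\to0$ by a short H\"older estimate, as you anticipated. With that correction your argument coincides with the paper's proof, and your closing remark on deducing the mapping property $\Lambda^{1/2}:H^1_{\ga+1/2}(m)\to\Lcal_{2,\ga}(m)$ is consistent with how the lemma is used.
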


\begin{proof}
It is enough to show that, as a function of~$x$,
$$
 \PV \int_I \frac{f(x) - f(y)}{|x - y|^{3/2}}\,dy = \lim_{\epsilon\rightarrow 0} \int_{|y-x|>\epsilon m(x)} \frac{f(x) - f(y)}{|x - y|^{3/2}}\,dy\in \Lcal_{2,\ga-\frac{1}{2}}(I). 
$$
So let $f\in H^1_{\ga}(m)\cap \Ccont^1(\overline{I})$. Integration by parts leads to
$$
\aligned
2\int^{\pi}_{x + \ep m(x)} \frac{f(x) - f(y)}{|x - y|^{3/2}}\,dy &= \int^{\pi}_{x + \ep m(x)} \big(f(y) - f(x)\big)\pa_y (y-x)^{-1/2}\,dy \\
& = \frac{f(\pi) - f(x)}{(\pi - x)^{1/2}} - \frac{f(x+\ep m(x)) - f(x)}{(\ep m(x))^{1/2} } -\int^{\pi}_{x + \ep m(x)}\frac{f'(y)}{|x - y|^{1/2}}\,dy.
\endaligned
$$
Note that Holder's inequality yields the bound
$$
\aligned
\left|\frac{f(x + \ep m(x)) - f(x)}{\sqrt{\ep m(x)}}\right| &\, \lesssim \, \frac{1}{\sqrt{\epsilon m(x)}}\int_x^{x+ \ep m(x)}|f'(s)|ds \\
&\, \lesssim \,  \, \left(\int_x^{x + \ep m(x)}m(s)^{2\ga}|f'(s)|ds\right)^{1/2} \left(\frac{1}{\ep m(x)}\int_x^{x + \ep m(x)}m(s)^{-2\ga}ds\right)^{1/2}\\
&\, \lesssim \,   m(x)^{-\ga}\|f'\|_{\Lcal_{2,\ga}((x, x + \ep m(x)))},
\endaligned
$$
which vanishes as $\ep\rightarrow 0$. 

Using the analogous formulas for the interval $[-\pi, x-\ep m(x))$ , we conclude
$$
\PV  \int_I \frac{f(x) - f(y)}{|x - y|^{3/2}}\,dy  = \frac{1}{(\pi - x)^{1/2}}\int^\pi_x f'(y)dy - \frac{1}{(\pi + x)^{1/2}}\int_{-\pi}^x f'(y)dy -\int_{-\pi}^{\pi}\frac{f'(y)}{|x-y|^{1/2}}\, dy.
$$
The first two integrals are bounded in $\Lcal_{2,\ga + \frac{1}{2}}(I)$ by Hardy's inequality, while the last one follows from Theorem~\ref{thm:rieszintegral}. The claim for general $f\in H^1_\ga(I)$, then follows by density, see e.g. \cite{Kufner}. 
\end{proof}

\begin{lemma}\label{lem:comm_gb}
Let $\ga \in \left(-\frac{1}{2},\frac{1}{2}\right)$ and $\lambda \leq 1$. If $f \in \Lcal_{2,\ga-1/2}(m)$, $ g = O(1)$, and  $g' = O(m^{-\lambda})$, then
$$
[\Lambda^{1/2}, g]f\in  \Lcal_{2, \ga}(m).
$$
\end{lemma}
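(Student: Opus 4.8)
The plan is to realize the commutator as a genuine integral operator whose kernel enjoys an extra cancellation, and then to localize near the corner point. Using the integral formula for the half-derivative,
\[
[\Lambda^{1/2},g]f(x) \;=\; c_\Lambda\int_I \frac{g(x)-g(y)}{|x-y|^{3/2}}\,f(y)\,dy ,
\]
and the factor $g(x)-g(y)$ kills the diagonal singularity, so no principal value is needed and the operator is better behaved than either $\Lambda^{1/2}(g\,\cdot)$ or $g\,\Lambda^{1/2}(\cdot)$ separately; in particular it acts on $\Lcal_{2,\ga-1/2}(m)$ even though $\ga-1/2<0$. As in Lemmas \ref{lem:BRbasic}, \ref{lem:derFphi} and \ref{lem:halfDer}, the periodic kernel differs from $|x-y|^{-3/2}$ by a smooth kernel (a bounded operator on every $\Lcal_{2,\ga}(m)$), and away from the corner $g$ is Lipschitz (since $m$ is bounded below there and $g'=O(m^{-\lambda})$), so there $[\Lambda^{1/2},g]$ is a standard Calder\'on-type commutator, bounded on $\Lcal_{2,\ga}(m)$ because $m^\ga$ is Muckenhoupt. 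Hence, after a partition of unity, it suffices to estimate $x\mapsto m(x)^\ga[\Lambda^{1/2},g]f(x)$ in $L^2(I_\de)$ with $I_\de=(0,\de)$, working with the real-line kernel and with $g$ supported near the corner (placed at $0$).

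For $x\in I_\de$ I would split the $y$-integral exactly as in Lemma \ref{lem:derFphi}: $I = I_{2\de}\cup(I\setminus I_{2\de})$ with $I_{2\de}=I_l(x)\cup I_c(x)\cup I_r(x)$, where $I_l(x)=(0,x/2]$, $I_c(x)=(x/2,2x)$, $I_r(x)=[2x,2\de)$. On $I_l(x)$ one has $|x-y|\approx x\approx m(x)$ and $|g(x)-g(y)|\lesssim x^{1-\lambda}$ — obtained by integrating $|g'|\lesssim m^{-\lambda}$ when $\lambda<1$, and from $g=O(1)$ when $\lambda=1$ — which bounds this piece by $x^{-1/2-\lambda}\int_0^x|f|$; multiplying by $m^\ga$ and using $\lambda\le1$ reduces it to $m^{\ga-1/2}\,\tfrac1x\int_0^x|f|$, which lies in $L^2$ by Hardy's inequality \eqref{eq:HardyA} (applicable since $\ga<1$). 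On $I_r(x)$ one has $|x-y|\approx y\approx m(y)$ and $|g(x)-g(y)|\lesssim\min(1,y^{1-\lambda})$, bounding this piece by $\int_{2x}^{2\de}y^{-1/2-\lambda}|f(y)|\,dy$ (or $\int y^{-3/2}|f|$ if $\lambda=1$); since $m^{\ga+\lambda}y^{-1/2-\lambda}\approx m^{\ga-1/2}$ the integrand lies in $\Lcal_{2,\ga+\lambda}(m)$, and as $\lambda\le1$ the dual Hardy inequality \eqref{eq:HardyB} (item (2) of Lemma \ref{lem:hardy}, valid since $\ga>-\tfrac12$) puts $x\mapsto\int_x^\de(\cdot)$ into $\Lcal_{2,\ga}(m)$. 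Finally, over $I\setminus I_{2\de}$ the kernel is bounded and $|g(x)-g(y)|\lesssim1$, while $f\in L^1(I)$ because $\ga<1$, so this contributes a bounded term.

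The main obstacle is the diagonal piece $I_c(x)$, where the kernel is genuinely singular and $g'$ is unbounded near the corner. There I would write $g(x)-g(y)=\int_y^x g'$ with $|g'(s)|\lesssim m(s)^{-\lambda}\approx x^{-\lambda}$ on $I_c(x)$, so that $\bigl|\tfrac{g(x)-g(y)}{|x-y|^{3/2}}\bigr|\lesssim x^{-\lambda}|x-y|^{-1/2}$ and this piece is $\lesssim x^{-\lambda}\int_{x/2}^{2x}\frac{|f(y)|}{|x-y|^{1/2}}\,dy$; after multiplying by $m(x)^\ga\approx x^\ga$ the task is to bound $\int_0^\de x^{2(\ga-\lambda)}\bigl(\int_{x/2}^{2x}\frac{|f(y)|}{|x-y|^{1/2}}\,dy\bigr)^2dx$. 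When $\ga-\lambda\in(-\tfrac12,0)$ this is precisely the weighted Riesz estimate of Theorem \ref{thm:rieszintegral}; to cover the whole admissible range of $\ga$ I would instead rescale $y=ux$, $u\in(1/2,2)$, turning the inner integral into $x^{1/2}\int_{1/2}^2\frac{|f(ux)|}{|1-u|^{1/2}}\,du$, apply Cauchy--Schwarz against the integrable weight $|1-u|^{-1/2}$, and then use Fubini and substitute $v=ux$ to arrive at $\lesssim\int_0^{2\de}m(v)^{2(\ga-\lambda+1/2)}|f(v)|^2\,dv$, which is controlled by $\|f\|_{\Lcal_{2,\ga-1/2}(m)}^2$ exactly because $\lambda\le1$. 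Collecting the four pieces near the left corner, arguing symmetrically near the right corner, and recalling the standard estimate away from the corners gives $[\Lambda^{1/2},g]f\in\Lcal_{2,\ga}(m)$; a density argument (approximating $f$ and $g$ by smooth functions, as in Lemma \ref{lem:halfDer}) makes the formal manipulations rigorous. The borderline value $\lambda=1$ is the only delicate point: there one must use the hypothesis $g=O(1)$ in place of integrating $g'$ for the non-diagonal pieces, which is exactly what the assumption supplies.
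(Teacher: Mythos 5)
Your proof is correct, and its skeleton coincides with the paper's: reduce to a neighbourhood of the corner, split the $y$-integral into $I_l(x)$, $I_c(x)$, $I_r(x)$ and the far region as in Lemma~\ref{lem:derFphi}, dispose of the off-diagonal pieces with the Hardy inequalities \eqref{eq:HardyA}--\eqref{eq:HardyB}, and isolate the diagonal piece where the kernel is genuinely singular. (On $I_l$ and $I_r$ the paper simply uses $g=O(1)$ to get the bounds $x^{-3/2}\int_0^x|f|$ and $\int_x^{2\de}y^{-3/2}|f|$; your sharper bounds via $|g(x)-g(y)|\lesssim x^{1-\lambda}$ are fine but not needed, and you correctly fall back on $g=O(1)$ at the borderline $\lambda=1$, where integrating $g'$ would produce a logarithm.)

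The one genuine difference is the treatment of $I_c(x)$. The paper bounds the kernel there by $y^{-1}|x-y|^{-1/2}$ (for $\ga\in(-\tfrac12,0)$) or redistributes the weight as $x^{\ga-1/2}y^{-1/2}$ (for $\ga\in(0,\tfrac12)$) and in both cases invokes the weighted Riesz-potential estimate, Theorem~\ref{thm:rieszintegral}, which forces the case split because that theorem only applies for Muckenhoupt exponents in $(0,\tfrac12)$. You instead rescale $y=ux$ with $u\in(\tfrac12,2)$, apply Cauchy--Schwarz against the integrable weight $|1-u|^{-1/2}$, and conclude by Fubini and the substitution $v=ux$; I checked the exponent bookkeeping and the final bound $\int v^{2(\ga-\lambda+1/2)}|f|^2\,dv\lesssim\|f\|^2_{2,\ga-1/2}$ does hold precisely because $\lambda\le1$. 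This buys you a single argument valid for all $\ga\in(-\tfrac12,\tfrac12)$ with no appeal to weighted Calder\'on--Zygmund theory; it exploits the fact that $I_c(x)$ is a region where $x\approx y\approx m$, which is exactly why the scaling argument closes. The paper's route is shorter on the page but leans on Theorem~\ref{thm:rieszintegral} and the two-case weight redistribution. Both are complete proofs.
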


\begin{proof} 
As before, we can consider the shifted interval $I := (0,2\pi)$. We take $x\in I_\de := (0,\de)$ for some small $\de>0$ and we divide $I$ in intervals depending on~$x$ as in \eqref{eq:interval}. It is enough to prove that
$$
 \PV \int_{I} \frac{g(x) - g(y)}{|x -y|^{3/2}} \, f(y) d y \in \Lcal_{2,  \ga}(I_\de).
$$

We clearly have
$$
\aligned
&\left|\int_{I_l(x)} \frac{g(x) - g(y)}{|x -y|^{3/2}} \, f(y) d y\right|   \, \lesssim \,  x^{- 3/2} \int_{0}^x |f(y)| dy, \\
&\left|\int_{I_r(x)} \frac{g(x) - g(y)}{|x -y|^{3/2}} f(y) d y\right| \, \lesssim \, \int_{x}^{2\de} \frac{|f(y)| }{y^{3/2}}\, dy,
\endaligned
$$
which are bounded in $\Lcal_{2,  \ga}(I_\de)$ by Hardy's inequality. 

When $y\in I_c(x)$ and  $\ga \in\left(-\frac{1}{2}, 0\right)$, we have
$$
\left|\, \PV\int_{I_c(x)} \frac{g(x) - g(y)}{|x -y|^{3/2}} \, f(y) d y\right| \, \lesssim \, \int_{I_c(x)} \frac{|f(y)|y^{-1}}{|x - y|^{1/2}} \, d y,
$$
where we have used $g' =O(m^{-\lambda} ) = O(m^{-1})$ since, by assumption $\lambda \leq 1$. The claim then follows from Theorem \ref{thm:rieszintegral}. 

In the case $\ga \in \left(0, \frac{1}{2}\right)$, we just need to replace $x^\ga y^{-1}$ by $x^{\ga - 1/2}y^{-1/2}$.
\end{proof}

\begin{lemma}\label{lem:comm_weight}
Let $\ga \in \left(-\frac{1}{2},\frac{1}{2}\right)$. If $f\in \Lcal_{2,\ga}(I)$, $ g = O(m^{2\ga})$ and $g'=O(m^{2\ga})$, then
$$
 [\Lambda^{1/2}, g]f\in  \Lcal_{2, -\ga + \frac{1}{2}}(I).
$$
\end{lemma}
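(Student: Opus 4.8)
The plan is to carry out an argument essentially identical to that of Lemma~\ref{lem:comm_gb}. Writing the commutator as an integral operator gives
$$
[\Lambda^{1/2},g]f(x)=c_\Lambda\int_I\frac{\big(g(x)-g(y)\big)f(y)}{|x-y|^{3/2}}\,dy ,
$$
and because $g$ is Lipschitz on every subinterval bounded away from the endpoints of $I$ (since there $g'=O(m^{2\ga})$ is bounded) the numerator is $O(|x-y|)$ near the diagonal, so no principal value is needed and the integral converges absolutely. In the interior of $I$ the weights are comparable to $1$ and $g,g'=O(1)$, so $[\Lambda^{1/2},g]$ is a standard order $-1/2$ commutator and is bounded on $L^2$; hence it suffices to control $[\Lambda^{1/2},g]f$ in a one-sided neighbourhood of each endpoint. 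Exactly as in the proof of Lemma~\ref{lem:BRbasic}, I would shift to $I:=(0,2\pi)$, fix $x\in I_\de:=(0,\de)$ with $\de$ small, and split $I=I_l(x)\cup I_c(x)\cup I_r(x)\cup(I\setminus I_{2\de})$ with $I_l(x)=(0,\tfrac x2]$, $I_c(x)=(\tfrac x2,\tfrac{3x}2)$, $I_r(x)=[\tfrac{3x}2,2\de)$; the one-sided estimate at the other endpoint is symmetric.

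The pointwise input is the difference estimate $|g(x)-g(y)|\le\int_{x\wedge y}^{x\vee y}|g'(\tau)|\,d\tau\lesssim\int_{x\wedge y}^{x\vee y}\tau^{2\ga}\,d\tau\lesssim(x\vee y)^{2\ga+1}$, which is valid precisely because $2\ga+1>0$ for $\ga>-\tfrac12$, together with the sharper bound $|g(x)-g(y)|\lesssim x^{2\ga}|x-y|$ on $I_c(x)$ (where $m(x)\approx m(y)\approx x$). Combining these with $|x-y|\approx x$ on $I_l(x)$ and $|x-y|\approx y$ on $I_r(x)$ yields $\big|\int_{I_l(x)}(\cdots)\,dy\big|\lesssim x^{2\ga-\frac12}\int_0^x|f|$, $\big|\int_{I_r(x)}(\cdots)\,dy\big|\lesssim\int_{2x}^{2\de}y^{2\ga-\frac12}|f(y)|\,dy$ and $\big|\int_{I_c(x)}(\cdots)\,dy\big|\lesssim x^{2\ga}\int_{I_c(x)}|x-y|^{-1/2}|f(y)|\,dy$. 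On $I\setminus I_{2\de}$ the kernel is bounded by $\de^{-3/2}$, and using $g=O(m^{2\ga})$ and $f\in\Lcal_{2,\ga}(I)\subset L^1(I)$ (Muckenhaupt) one bounds this piece by $C\big(|g(x)|+\|f\|_{\Lcal_{2,\ga}}\big)$ pointwise in $x\in I_\de$.

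It then remains to take the $\Lcal_{2,-\ga+1/2}(I_\de)$-norm of each contribution. For $I_l(x)$ the weighted square integral collapses (after writing $m(x)^{2(-\ga+1/2)}x^{4\ga-1}=x^{2\ga}$) to $\big\|x\mapsto\int_0^x|f|\big\|_{\Lcal_{2,\ga}(I_\de)}^2$, which is $\lesssim\|f\|_{\Lcal_{2,\ga}}^2$ by Lemma~\ref{lem:hardy}(\ref{itm:hardy1}), applicable since $\ga+\tfrac12<1$. For $I_r(x)$, setting $h(y):=y^{2\ga-\frac12}|f(y)|$ gives $\|h\|_{\Lcal_{2,1/2-\ga}}=\|f\|_{\Lcal_{2,\ga}}$, and Lemma~\ref{lem:hardy}(\ref{itm:hardy2}) with target weight $-\ga+\tfrac12$ applies because $\tfrac12-\ga\le(-\ga+\tfrac12)+1$ and $(-\ga+\tfrac12)+\tfrac12=1-\ga>0$. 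For $I_c(x)$, since $x\approx y$ there one absorbs $x^{\ga+1/2}\approx y^{\ga+1/2}$ into the integral and applies the $L^2$-boundedness of convolution with the integrable kernel $|t|^{-1/2}\mathbf 1_{(-2\de,2\de)}$ (equivalently Theorem~\ref{thm:rieszintegral}), bounding the result by $\int_0^{2\de}y^{2\ga+1}|f|^2\,dy\le 2\de\int_0^{2\de}y^{2\ga}|f|^2\,dy\lesssim\|f\|_{\Lcal_{2,\ga}}^2$, the extra power of $y$ being harmless since $y<1$. Finally the $I\setminus I_{2\de}$ contribution is immediate, since $m^{-2\ga+1}$ is integrable near the endpoint ($-2\ga+1>0$) and $\int_0^\de m(x)^{-2\ga+1}|g(x)|^2\,dx<\infty$ because $|g(x)|\lesssim x^{2\ga}$ and $2\ga+1>0$.

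The main point requiring care is the weight bookkeeping: one must check that the various Hardy and Riesz-potential hypotheses are met, and all of these conditions ($2\ga+1>0$, $1-2\ga>0$, $1-\ga>0$) follow from the standing assumption $\ga\in(-\tfrac12,\tfrac12)$. There is no deeper obstacle — the argument is a near-verbatim variant of Lemma~\ref{lem:comm_gb}. If one preferred to handle $I_c(x)$ through Theorem~\ref{thm:rieszintegral} rather than Young's inequality, one would distinguish $\ga\in(0,\tfrac12)$ from $\ga\in(-\tfrac12,0]$ and split the power $x^{2\ga+1}$ between the $x$- and $y$-variables so that the residual $x$-weight has exponent in $(-1,0)$, just as in Lemma~\ref{lem:comm_gb}; the Young's-inequality route used above avoids this case distinction.
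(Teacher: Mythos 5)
Your proof is correct and follows essentially the same route as the paper's: the identical decomposition $I=I_l(x)\cup I_c(x)\cup I_r(x)\cup(I\setminus I_{2\de})$, Hardy's inequalities (Lemma~\ref{lem:hardy}) on the outer pieces, and an integrable-kernel bound on the central piece. The only cosmetic differences are that the paper bounds $|g(x)-g(y)|$ by $|g(x)|+|g(y)|\lesssim x^{2\ga}+y^{2\ga}$ on $I_l$ and $I_r$ instead of integrating $g'$, and on $I_c$ it invokes Theorem~\ref{thm:rieszintegral} with a case split on the sign of $\ga$ where you use Young's inequality.
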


\begin{proof} As before, let $I := (0,2\pi)$, take $x\in I_\de := (0,\de)$ for some small $\de>0$, and divide $I$ as in \eqref{eq:interval}. Consider the case $y\in I_l(x)$ first. Then 
$$
x^{-\ga + \frac{1}{2}}\left|\int_{I_l(x)} \frac{g(x) - g(y)}{|x - y|^{3/2}} \, f(y) d y\right| \, \lesssim \,  x^{\ga - 1}\int_{0}^{x} |f(y)| d y + x^{-\ga-1}\int_{0}^{x} |y|^{2\ga}|f(y)| d y,
$$
where both terms are square integrable over $(0, \de)$ by Hardy's inequality. 

When $y\in I_r(x)$, we have
$$
\aligned
x^{-\ga + \frac{1}{2}}\left|\int_{I_r(x)} \frac{g(x) - g(y)}{|x -y|^{3/2}} f(y) d y\right| \, \lesssim \, x^{\ga}&\int_{x}^{\de} \frac{|f(y)|}{y}\, dy + x^{-\ga}\int_{x}^{\de} \frac{y^{2\ga}|f(y)|}{y}\, dy.
\endaligned
$$
Again, both are square integrable over $(0, \de)$ by Hardy's inequality.

Finally, when $y\in I_c(x)$. Suppose $\ga \in\left(-\frac{1}{2}, 0\right)$, so $\ga$ and $\ga + 1/2$ satisfy the Muckenhaupt $A_2$-condition. Then
$$
x^{- \ga + \frac{1}{2}}\left|\, p.v\int_{I_c(x)} \frac{g(x) - g(y)}{|x -y|^{3/2}} \, f(y) d y\right| \, \lesssim \, x^{\ga}\int_{I_c(x)} \frac{1}{|x - y|^{1/2}} \, |f(y)| y^{-1/2}d y,
$$
where we have used that $g' = O(x^{2\ga - 1})$. This is square integrable over $(0, \de)$ by Theorem~\ref{thm:rieszintegral}. The same is true if $\ga\in\left(0, \frac{1}{2}\right)$, and in this case we do not need the extra factor $y^{-1/2}$ in the integral. 

Finally, away from~$x$, i.e. if $y\in I\setminus I_{2\de}$, the result is obvious because $|x - y| > \de$.
\end{proof}

\subsection{Variable-step convolution operators}\label{ss.convolution}

Here we discuss some properties of the variable-step convolution operators that we use to regularize our equations in the proof of Theorem~\ref{thm:main}. Thus we shall consider here the functions and operators $\phi_{\de\eta(x)}(y)$, $B_\de$, $B_\de^*$ and $A_\de$ defined in~\eqref{E.defphide}--\eqref{E.defAde}.

So let $f\in L^1_{\text{loc}}(I)$. For $\de > 0$, it follows from the formula
$$
B_\de f(x) := (\phi_{\de \eta(x)} \ast f)(x) := \int  \frac{1}{\de \eta(x)}\phi\left(\frac{x-y}{\de \eta(x)}\right) f(y)dy
$$
that, given $x\in I$, the interval of integration $B_{\de\eta(x)}(x) = (x-\de \eta(x), x + \de \eta(x))$ is contained in $I$ and has positive distance to $\pa I$ for all sufficiently small $\de > 0$. In particular, the integral is well-defined and $m(x) \approx m(y)$ uniformly for $y\in B_{\de\eta(x)}(x)$. 

In the case of the adjoint,
$$
B_{\de}^*g(y) := \int \phi_{\de \eta(x)}(x - y)g(x)dx, 
$$
for fixed $y\in I$, the integral runs over $[x^-(y), x^+(y)]\subseteq I$, where $x^\pm(y)$ are the solutions of $x \mp \de \eta(x) = y$, respectively. These are well defined for all $\de<1 / \sup |\eta'| $). It is not difficult to see that  $x^\pm(y) \rightarrow \pa I$ as $y\rightarrow \pa I$ (and they have the same limit). 

\iffalse
Both $B_\de$ and $B_\de^*$ are $\Ccont^\infty(I)$, however their behavior as we approach $\pa I$ also depends on the behavior of $\eta$ and its derivatives on the boundary. In fact, everything we said so far remains true if we assume $\eta^{(j)}|_{\pa I} = 0$ for $0 \leq j\leq k$ for some fixed $k\geq 0$. So let $f\in\Ccont(\overline{I})$. By assumption, we have 
$$
B_\de (1) = \int_{B_1(0)} \phi(v)dv = 1
$$ 
and therefore, for fixed $\de$, we have
\be\label{eq:B-limit}
\lim_{x\rightarrow \pa I} f(x) = \lim_{x\rightarrow \pa I} B_\de f(x)
\ee
using that $\eta|_{\pa I} = 0$. On the other hand,
$$
B_\de^*(1)(y) = \int \phi_{\de \eta(x)}(x-y) dx = \int \phi(v)\pa_y x(y,v ) dv
$$ 
where, for fixed $y\in I$ and $v\in B_1(0)$, we have defined $x \equiv x(y, v)$ implicitly via
\be\label{eq:vc-conv}
\de v \eta(x) = x - y, \qquad \pa_y x = \frac{1}{\de \eta(x)}\pa_v x = \frac{1}{1 - \de v\eta'(x)}. 
\ee
If $\eta'|_{\pa I} = 0$, we have by the dominated convergence theorem  $B_\de^*(1)(y)  \to 1$ as $y\rightarrow \pa I$, so the operator $B_{\de}^*$ only preserves boundary values of the original function iff. $\eta'|_{\pa I} = 0$. More generally, if $B_\de$ is supposed to fix boundary values up to the $k$-th derivative, we need $\eta^{(j)}|_{\pa I} = 0$ for all $0 \leq j\leq k$, while in order to achieve the same for $B_\de^*$, the same must be true for $0 \leq j\leq k + 1$. However, if $f|_{\pa I} = 0$, we do have $B_\de(f)|_{\pa I} = 0$ respectively $B_\de^*(f)|_{\pa I} = 0$, and they actually both map $\Ccont^\infty_c(I)$ into itself. 
\fi

We shall next prove two technical results about these operators that play an important role in the proof of Theorem~\ref{thm:main}:

%However, we do have $B_\de^*(1)(u) \rightarrow 1$ as $\de\rightarrow 0$ pointwise for any $u\in I$, and therefore $\lim_{\de\rightarrow 0} B_{\de}^*g(u) = g(u)$. 

\begin{lemma}\label{lem:B_delta} 
For any $j\geq0$, all~$\ga\in\Rbb $ and all $\de>0$, the operator $B_\de : \Lcal^j_{2, \ga + j}(I) \to  \Lcal^j_{2, \ga + j}(I)$ is continuous, Furthermore, it preserves growth rates near~$\pa I$ in the sense that, for any $f\in \Lcal^j_{2, \ga + j}(I)$,
	$$
	B_\de f\in \Ccont^\infty(I) \cap \Lcal^{j'}_{2,\ga' + j'}(I)
	$$
	for all $\ga'\in\Rbb$ and all $j'\in\Nbb $, and it approximates~$f$ as
	$$
	\lim_{\de\rightarrow 0}\|B_{\de} f -  f\|_{\Lcal^j_{2, \ga + j}(I)} = 0.
	$$
	If $f\in \Lcal^1_{2,\ga + 1}(I)$, then
	$$
	(B_\de f)' = B_\de(f') + K_\de f,
	$$
	where $K_\de: \Lcal_{2,\ga}(I) \rightarrow \Lcal_{2,\ga + 1}(I)$ is bounded, and one has the quantitative estimate
	\be\label{eq: BdeII}
	\|B_{\de} f -  f\|_{\Lcal_{2, \ga}(I)} \ \lesssim \ \de^{1/2} \, \|f\|_{\Lcal^{1}_{2,\ga + 1}(I)}.
	\ee
	Analogous estimates hold for the case of higher~$j$. Furthermore, $B_\de^*$ and~$A_\de$ enjoy the same properties.
\end{lemma}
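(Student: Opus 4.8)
\textbf{Proof strategy for Lemma~\ref{lem:B_delta}.} The plan is to localize near the corner points, where the weight~$m$ is comparable to the distance to $\pa I$, and to exploit the fact that $\eta(x)\approx m(x)$, which keeps the convolution step $\de\eta(x)$ commensurate with the local value of~$m$. I would first record the elementary geometric facts: for small $\de$ and $x\in I$, the interval $B_{\de\eta(x)}(x)$ stays inside~$I$, has positive distance to $\pa I$, and $m(x)\approx m(y)$ uniformly for $y\in B_{\de\eta(x)}(x)$; the dual statement for $B_\de^*$ follows from the implicit function theorem applied to $x\mp\de\eta(x)=y$, noting the solutions $x^\pm(y)$ tend to $\pa I$ as $y\to\pa I$ with the same limit. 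These facts reduce everything to estimates on a model interval $I_\de=(0,\de)$ with $m(x)\approx x$, as in the proofs of Lemmas~\ref{lem:BRbasic} and~\ref{lem:derFphi}.

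I would then prove $L^2$-boundedness of $B_\de$ on $\Lcal_{2,\ga}(I)$ by a Schur-test argument: writing the kernel as $\frac{1}{\de\eta(x)}\phi\!\left(\frac{x-y}{\de\eta(x)}\right)$, one checks $\int |{\rm kernel}|\, \big(\tfrac{m(y)}{m(x)}\big)^{\ga}dy\lesssim 1$ in both variables using $m(x)\approx m(y)$ on the support and $\int\phi=1$; the constant is $\de$-independent. The smoothing claim $B_\de f\in\Ccont^\infty(I)\cap\Lcal^{j'}_{2,\ga'+j'}(I)$ follows because each derivative hitting $\phi_{\de\eta(x)}$ produces a bounded-away-from-zero factor $\eta(x)^{-1}\approx m(x)^{-1}$ times a function in $\Lcal_{2,\ga}$, combined with the fact that the integration variable stays at distance $\gtrsim\de m(x)$ from $\pa I$. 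Higher-order boundedness on $\Lcal^j_{2,\ga+j}$ is obtained by the differentiation identity: writing $(B_\de f)'=B_\de(f')+K_\de f$, where $K_\de f(x):=\int \pa_x\!\big[\phi_{\de\eta(x)}(x-y)\big]f(y)\,dy - \int\phi_{\de\eta(x)}(x-y)f'(y)\,dy$ after an integration by parts in~$y$; the surviving term is $\de\eta'(x)$ times an average of $\tfrac{x-y}{\de\eta(x)}\phi'$, which gains a full power of~$m$ (because $\eta'=O(1)$ and the length of the integration interval is $\de\eta(x)\approx\de m(x)$), giving $K_\de:\Lcal_{2,\ga}(I)\to\Lcal_{2,\ga+1}(I)$ bounded uniformly in~$\de$. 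Iterating this identity handles all~$j$.

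For the approximation statements, $\lim_{\de\to0}\|B_\de f-f\|_{\Lcal^j_{2,\ga+j}}=0$ follows from density of $\Ccal_c^\infty(\overline I\backslash\{0\})$ (and its translate at the other endpoint) in $\Lcal^j_{2,\ga+j}(I)$ together with the uniform bound just proven and the easy pointwise convergence $B_\de g\to g$ for such smooth~$g$; this is the standard $\varepsilon/3$ argument. The quantitative estimate~\eqref{eq: BdeII} is obtained by writing $B_\de f(x)-f(x)=\int\phi(v)\big[f(x-\de\eta(x)v)-f(x)\big]dv=\int\phi(v)\Big(-\int_0^1 \de\eta(x)v\, f'(x-t\de\eta(x)v)\,dt\Big)dv$, so that $|B_\de f(x)-f(x)|\lesssim \de m(x)\,\fint_{B_{\de\eta(x)}(x)}|f'|$; squaring, multiplying by $m^{2\ga}$, integrating, and using $m(x)\approx m(y)$ on the support to absorb the weight, we get $\|B_\de f-f\|_{\Lcal_{2,\ga}}^2\lesssim \de^2\,\|f'\|_{\Lcal_{2,\ga+1}}^2$, hence even slightly better than the stated $\de^{1/2}$; I would keep the $\de^{1/2}$ form as in the statement since that is what the applications use. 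Finally, the operator $A_\de=B_\de^*B_\de$ inherits all of these properties by composition, once the analogous facts for $B_\de^*$ are established; and $B_\de^*$ is treated by exactly the same scheme, the only difference being that the change of variables $y\mapsto x^\pm(y)$ introduces the Jacobian factor $(1-\de v\eta'(x))^{-1}=1+O(\de)$, which is harmless.

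\textbf{Main obstacle.} The delicate point is not any single estimate but making the constants genuinely uniform in~$\de$ while tracking the interplay between the variable step $\de\eta(x)$ and the weight~$m$: every derivative falling on the mollifier costs a factor $m(x)^{-1}$, and one must verify that this is always compensated either by a factor of~$m$ coming from the length $\de\eta(x)$ of the integration window (in the commutator/error terms) or by the gain in the number of weighted derivatives one is allowed to put on~$f$. The condition $\eta'(\pm\pi)\neq0$ (so $\eta\approx m$ rather than vanishing to higher order) is exactly what makes this bookkeeping close; I would be careful to state explicitly where $\eta\approx m$ and $\eta'=O(1)$ are used, and to note that $B_\de^*$, $A_\de$ do \emph{not} preserve boundary values (they would only if $\eta$ vanished to higher order), which is why only growth rates, not traces, are preserved.
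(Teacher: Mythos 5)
Your proposal follows essentially the same approach as the paper: Cauchy--Schwarz/Schur-type bounds exploiting $m(x)\approx m(y)$ on the support of the kernel, the explicit commutator identity $(B_\de f)'=B_\de(f')+K_\de f$ obtained by differentiating the rescaled integral (the paper changes variables to $v=(x-y)/(\de\eta(x))$ first and then integrates by parts in $v$; you differentiate the kernel and integrate by parts in $y$ — same result), and density for the approximation claim. One small slip: the prefactor in $K_\de f$ is $\eta'(x)/\eta(x)\approx m(x)^{-1}$, not ``$\de\eta'(x)$''; this $m^{-1}$ is exactly why $K_\de$ maps $\Lcal_{2,\ga}\to\Lcal_{2,\ga+1}$, and you do state the correct mapping even if the wording is imprecise. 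Your derivation of the quantitative estimate is correct and in fact gives rate $\de$ rather than the stated $\de^{1/2}$, a mild refinement which is worth noting even if the weaker bound suffices for the applications.
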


\begin{proof} 
	We consider the case $j=0$, that is, $f\in\Lcal_{2,\ga}(I)$. 	The estimates for higher $j$ follow from very similar arguments.
	
 To  show that $B_\de (f)\in\Lcal_{2, \ga}(m)$, we write 
	$$
	\aligned
	|B_\de f(x)|^2 &\, \lesssim \, \int_{B_{\de\eta(x)}(x)} \phi_{\de\eta(x)}(x-y) m(y)^{-2\ga} dy \int_{B_{\de\eta(x)}(x)} \phi_{\de\eta(x)}(x-y) |f(y)|^2 m(y)^{2\ga} dy\\
	&\,\lesssim \, m(x)^{-2\ga}\int_{B_\de(x)} \phi_{\de\eta(x)}(x-y) |f(y)|^2 m(y)^{2\ga} dy, 
	\endaligned
	$$
	where we have used that $m(x) \approx m(y)$ uniformly for $|y-x|\leq \de\eta(x)$. Multiplying both sides by $m(x)^{2\ga}$ and integrating, and then using that $ B^*_\de (1) \lesssim  1$, we conclude that $B_\de$ is bounded in~$\Lcal_{2, \ga}(m)$. Since $\Ccont_c(I)$ is dense in $\Lcal_{2, \ga}(I)$ (see e.g.~\cite{Kufner}), we have $B_\de(f) \rightarrow f$ in $\Lcal_{2, \ga}(I)$ as $\de\to 0$. Although we shall not need this fact, for $f\in\Ccont(\overline{I})$, we actually have $\sup |B_\de f - f| \rightarrow 0$ as $\de \rightarrow 0$. It is not difficult to see that $B_\de f$ are smooth in $I$ and that taking $j$ derivatives on $\phi$ leads to factors bounded as $O((\de m)^{-j})$. All these facts hold for $B_\de^*$ and $A_\de$ as well.
	
	Let us now consider the commutator with a derivative and $B_\de$ (or $B_\de^*$). With $f\in \Lcal^1_{2,\ga + 1}(I)$, we write
	$$
	\aligned
	(B_\de f)'(x) &= \pa_x \int  \phi\left(v\right) f\left(x+\de v\eta(x)\right)dv \\
	&=\int  \phi\left(v\right) f'\left(x+\de v\eta(x)\right)(1+\de v \eta'(x))dv \\  
	&= \int \phi\left(v\right) f'\left(x+\de v\eta(x)\right)dv - \frac{\eta'(x)}{\eta(x)}\int \pa_v (v\phi\left(v\right))  f\left(x+\de v\eta(x)\right) dv  \\ 
	&=: B_\de(f') +  K_\de f,
	\endaligned
	$$ 
	where $K_\de$ is bounded in $\Lcal_{2,\ga }(I)\to \Lcal_{2,\ga + 1}(I)$ by the same argument as above. Similarly, 
	$$
	\aligned
	B_\de^* (g)'(y) 
	&=  \int  \phi\left(v\right) g\left(x(y, v)\right)\pa_y x \, dv \\
	&= \int  \phi\left(v\right) \left(g'(x(y, v))(\pa_y x)^2 + g(x(y, v))\pa_y^2 x\right) dv \\
	&= \int  \phi\left(v\right) g'(x(y, v))\pa_y x dv + \int \left(\pa_v\left(\phi\left(v\right)\frac{\pa_y x}{\pa_v x}(\pa_y x - 1)\right) + \phi(v)\pa_y^2 x\right) g(x(y, v)) dv \\
	&=: B_\de^* (g') + \t K_\de(g),
	\endaligned
	$$
	where note that $\frac{\pa_y x}{\pa_v x}(\pa_y x - 1) = \frac{\eta'(x)}{\eta(x)}v\pa_y x$. The estimate for the commutator of $A_\de$ with the derivative follows directly from these results and the formula
	\be\label{E.commAdepa}
	[A_\de, \pa] = [B_\de^*, \pa]B_\de  + B_\de^*[B_\de, \pa].
	\ee
	The result then follows.
	\end{proof}

Finally, in this lemma we consider commutators between variable-step convolutions and the Hilbert transform:

\begin{lemma}\label{lem:BdeH} Let $\ga\in\left(-\frac{1}{2},\frac{1}{2}\right)$. Then $[ B_\de, H]f$ is a smooth function and the commutator $[ B_\de, H]$ is bounded on $\Lcal^k_{2,\ga + k}(I)$ for any $k\geq 0$. The same result holds for the commutators of $H$ with~$B_\de^*$ and~$A_\de$. Moreover, if~$g$ is a function bounded as $g' = O(m^{\lambda})$ for some $\lambda \in\Rbb$, then for $f\in\Lcal_{2,\ga}(m)$ and any $\ga\in\Rbb$, we have
$$
[g, \, B_\de] f \in  \Lcal^{1}_{2, \ga - \lambda}(m), \qquad [g, \, B_\de^*] f \in  \Lcal^{1}_{2, \ga - \lambda}(m).
$$	
\end{lemma}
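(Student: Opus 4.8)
\textbf{Proof proposal for Lemma~\ref{lem:BdeH}.}

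The plan is to treat the two assertions separately, since they are essentially independent. For the commutator $[B_\de,H]$ (and likewise $[B_\de^*,H]$ and $[A_\de,H]$), the key point is that the kernel of this commutator is genuinely smooth, with no singularity on the diagonal, so it is just an integral operator with a controlled kernel. Concretely, writing out
\[
[B_\de,H]f(x)=\int\!\!\int \phi_{\de\eta(x)}(x-z)\Big(\tfrac12\cot\tfrac{z-y}{2}-\tfrac12\cot\tfrac{x-y}{2}\Big)f(y)\,dz\,dy\,,
\]
one sees that the inner difference of cotangents, after using $|z-x|\le\de\eta(x)\approx\de m(x)$, is bounded (together with its $\al$-derivatives) by a kernel of the form $O\big(m(x)\,|x-y|^{-2}\big)$ away from the diagonal, and it is in fact bounded near the diagonal since the singularities of the two cotangent factors cancel at scale $\de m(x)$. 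Splitting the $y$-integral into the regions $I_l(x),I_c(x),I_r(x)$ exactly as in the proofs of Lemmas~\ref{lem:BRbasic} and~\ref{lem:derFphi}, one estimates the contributions of $I_l$ and $I_r$ by the Hardy inequalities~\eqref{eq:HardyA}--\eqref{eq:HardyB} (the extra factor $m(x)$ improves matters), while the $I_c$ contribution reduces to the Riesz integral bound of Theorem~\ref{thm:rieszintegral}, using that $\ga$ is a Muckenhaupt exponent. Smoothness of $[B_\de,H]f$ in $I$ follows because differentiating $\phi_{\de\eta(x)}$ only produces the harmless factors $O((\de m)^{-j})$ recorded in Lemma~\ref{lem:B_delta}, and because away from $\pa I$ the Hilbert-transform kernel is smooth. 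To pass to $\Lcal^k_{2,\ga+k}(m)$ one uses the identity $(B_\de f)'=B_\de(f')+K_\de f$ (and its analogue for $B_\de^*$) from Lemma~\ref{lem:B_delta} together with $\pa_x Hf=H\pa_x f$ (or the corrected version with a commutator term) from Lemma~\ref{lem:trHilbert}, and iterates: each derivative either lands on $f$, producing a term of the same type at one higher order, or on $\phi_{\de\eta(x)}$ through $K_\de$, which gains a full weight power. For $A_\de=B_\de^*B_\de$ one simply writes $[A_\de,H]=[B_\de^*,H]B_\de+B_\de^*[B_\de,H]$ and invokes the boundedness of each factor (Lemma~\ref{lem:B_delta} for $B_\de,B_\de^*$).

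For the second assertion, about $[g,B_\de]$ and $[g,B_\de^*]$ with $g'=O(m^\lambda)$, the point is a pointwise estimate on the integrand. For $x\in I$ and $|y-x|\le\de\eta(x)$ one has, by the fundamental theorem of calculus, $|g(x)-g(y)|\lesssim \de m(x)\cdot\sup_{[x,y]}|g'|\lesssim \de\,m(x)^{1+\lambda}$, using $m(x)\approx m(y)$ on this interval. Hence
\[
|[g,B_\de]f(x)|=\Big|\int\phi_{\de\eta(x)}(x-y)\big(g(x)-g(y)\big)f(y)\,dy\Big|\lesssim m(x)^{1+\lambda}\int\phi_{\de\eta(x)}(x-y)|f(y)|\,dy\,,
\]
and the last integral is controlled in $\Lcal_{2,\ga}(m)$ by the $\Lcal_{2,\ga}(m)$-boundedness of $B_\de$ from Lemma~\ref{lem:B_delta}; multiplying through by $m^{-\lambda}$ shows $[g,B_\de]f\in\Lcal_{2,\ga-\lambda}(m)$. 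The claim that it actually lies in $\Lcal^1_{2,\ga-\lambda}(m)$ follows by differentiating: $\pa_x[g,B_\de]f=g'(x)B_\de f-\pa_x B_\de(gf)+\ldots$; organizing the terms using $(B_\de f)'=B_\de(f')+K_\de f$ and the bound $g'=O(m^\lambda)$, every resulting piece is seen to be $O(m^{\lambda})$ times something controlled in $\Lcal_{2,\ga}(m)$, i.e. lies in $\Lcal_{2,\ga-\lambda+1}(m)=\Lcal_{2,(\ga-\lambda)+1}(m)$, as required for membership in $\Lcal^1_{2,\ga-\lambda}(m)$. The argument for $B_\de^*$ is identical, using the corresponding formulas for $x^\pm(y)$ and the bound $B_\de^*(1)\lesssim1$ from Lemma~\ref{lem:B_delta}.

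I expect the main obstacle to be the first assertion: making rigorous the claim that the commutator kernel $\tfrac12\cot\tfrac{z-y}{2}-\tfrac12\cot\tfrac{x-y}{2}$, averaged over $|z-x|\lesssim\de m(x)$, behaves like a non-singular kernel with a gain of one weight power, and then carrying the weighted estimates through all $k$ derivatives uniformly (though \emph{not} necessarily uniformly in $\de$, which is all that is needed). The bookkeeping of which derivative lands where, and checking that the $K_\de$-type error terms always gain exactly the weight power they should, is the delicate part; but it is entirely parallel to the commutator analysis already carried out in Lemmas~\ref{lem:trHilbert} and~\ref{lem:derFphi}, so no genuinely new idea is required. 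The second assertion is comparatively soft, relying only on the elementary mean-value estimate for $g(x)-g(y)$ over the short interval of integration.
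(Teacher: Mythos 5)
\textbf{Review of the proposal for Lemma~\ref{lem:BdeH}.}

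There is a genuine gap in the first part: the kernel formula you write down for $[B_\de,H]f$ is incorrect. Evaluating
\[
\int\!\!\int \phi_{\de\eta(x)}(x-z)\Big(\tfrac12\cot\tfrac{z-y}{2}-\tfrac12\cot\tfrac{x-y}{2}\Big)f(y)\,dz\,dy
\]
and using $\int\phi_{\de\eta(x)}(x-z)\,dz=1$, one gets $\pi\big(B_\de Hf(x)-Hf(x)\big)$, i.e.\ the operator $(B_\de-I)H$, \emph{not} $[B_\de,H]=B_\de H-HB_\de$. In $HB_\de f$, the mollifier inside the integral carries the step $\de\eta(y)$, not $\de\eta(x)$, so it cannot be pulled out of the $y$-integral as your formula does. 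The crucial step (which your write-up misses entirely) is to change variables in the $B_\de Hf$ integral by $y'=y+\de v\eta(y)$, the same transformation that already appears in the $HB_\de f$ integral, so that after reordering both terms involve $f(y'(y,v))$ with the \emph{same} argument. Only then do the two cotangent kernels pair up and their singularities on the diagonal cancel, producing the explicit smooth bracket
\[
\frac{1}{x'-y'}\frac{\pa y'}{\pa y}-\frac{1}{x-y}=\frac{\de v\,\frac{1}{x-y}\big[\eta'(y)-\frac{\eta(x)-\eta(y)}{x-y}\big]}{1+\de v\,\frac{\eta(x)-\eta(y)}{x-y}}\,,
\]
which is what makes $[B_\de,H]$ smoothing. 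A quick sanity check shows your formula is wrong: for constant step size $\eta\equiv\text{const}$, $B_\de$ is an honest convolution, hence commutes with the Hilbert transform (a Fourier multiplier), so $[B_\de,H]=0$; but your kernel expression gives $B_\de Hf-Hf$, which is certainly nonzero in that case.

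There is also a secondary issue in the second part. You compute the derivative via $\pa_x[g,B_\de]f=g'B_\de f+g\,\pa_x B_\de f-\pa_x B_\de(gf)$ and then propose to substitute $(B_\de f)'=B_\de(f')+K_\de f$. But $f$ is only in $\Lcal_{2,\ga}(m)$: $\pa_x B_\de f$ and $\pa_x B_\de(gf)$ are each only $O((\de m)^{-1})$ against the base norm of $f$, and $B_\de(f')$ is not even defined under the hypotheses. The cancellation between $g\,\pa_x B_\de f$ and $\pa_x B_\de(gf)$ is what you really need, but your decomposition obscures it and the conclusion does not follow from the pieces as organised. The paper's route is to differentiate the single integral formula $[g,B_\de]f(x)=\int\phi_{\de\eta(x)}(x-y)[g(x)-g(y)]f(y)\,dy$ directly, giving
\[
\pa_x[g,B_\de]f(x)=\int\pa_x\phi_{\de\eta(x)}(x-y)\,[g(x)-g(y)]\,f(y)\,dy+g'(x)\,B_\de f(x)\,,
\]
and then use the pointwise bounds $(x-y)\,\pa_x\phi_{\de\eta(x)}(x-y)=O(1)$ and $|g(x)-g(y)|/|x-y|=O(m^\lambda)$ to see that the integral kernel has the same shape and decay as $\phi_{\de\eta(x)}$ itself, up to the weight $m^\lambda$. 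This makes the cancellation manifest without ever differentiating $f$.

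In short: the second assertion is repairable but mishandled; the first assertion is built on an incorrect formula for the commutator, and the central idea behind the paper's proof---the symmetric change of variables $y'=y+\de v\eta(y)$---is absent.
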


\begin{proof}
	Suppose that $f\in\Lcal_{2,\ga}(I)$ is smooth. Then we can write
	$$
	\aligned
	B_\de (Hf)(x) &=  \int  \phi\left(v\right) Hf\left(x+\de v\eta(x)\right)dv \\
	&= \frac{1}{\pi}\int  \phi\left(v\right)\PV  \int \cot\left(\frac{x' - y'}{2}\right)f(y')dy' dv, \qquad x':= x+\de v\eta(x),\\
	H B_\de f(x) &= \frac{1}{\pi}\PV \int \cot\left(\frac{x - y}{2}\right)\int \phi(v) f(y + \de v\eta(y))dv \,dy \\
	&= \frac{1}{\pi}\int \phi(v) \, \PV \int \cot\left(\frac{x - y}{2}\right)f(y')dy \,dv, \qquad y':= y+\de v\eta(y).
	\endaligned
	$$
	Therefore,
	$$
	[B_\de,H]f(x) = \frac{1}{\pi}\int  \phi\left(v\right)\, \int \left[\cot\left(\frac{x'(x,v) - y'(y, v)}{2}\right)\frac{\pa y'}{\pa y} - \cot\left(\frac{x - y}{2}\right)\right]f(y'(y, v)) dy \,dv. 
	$$
	Note that the terms in brackets is actually smooth:
	$$
	\frac{1}{x' - y'}\frac{\pa y'}{\pa y} - \frac{1}{x - y} = \frac{\de v \frac{1}{(x - y)} \left[\eta'(y) -  \frac{\eta(x) - \eta(y)}{x-y} \right] }{1 + \de v \frac{\eta(x) - \eta(y)}{x-y} }.
	$$
	This is a smooth function of $x$, uniformly bounded in terms of~$\eta$ and its derivatives. By a density argument, the formula is true for general $f$ as well. 
	
The argument for the commutator with $B_\de^*$ is similar, using that
	$$
	\aligned
	B_\de^* Hf(y) 
	&= \int \frac{1}{\de\eta(y')}\phi\left(\frac{y' - y}{\de \eta(y')}\right)f(y')dy'\\
	&=  \int  \phi\left(v\right) Hf\left(y'(y, v)\right)\pa_y y' \, dv,  \qquad y' - \de v \eta(y') = y, \quad \pa_v y' = \de \eta(y') \pa_y y', \\%= \frac{\de \eta(x)}{1 - \de v \eta'(x)}  \\
	&= \frac{1}{\pi}\int  \phi\left(v\right)\PV  \int \cot\left(\frac{y'(y, v) - s'}{2}\right)\pa_y y'(y,v) f(s')ds' dv, \\
	H B_\de f(y) &= \frac{1}{\pi}\PV \int \cot\left(\frac{y - s}{2}\right)\int \phi(v) f(s'(s, v))\pa_s s' dv ds, \qquad s= s'-\de v\eta(s'), \\
	&= \frac{1}{\pi}\int \phi(v) \, \PV \int \cot\left(\frac{y - s}{2}\right)f(s')ds' dv,
	\endaligned
	$$
	and the claim for the commutator with~$A_\de$ follows from a formula analogous to~\eqref{E.commAdepa}. The details are straightforward.	
	
	For the commutator with $g$, we use the formula 
	$$
	\pa_x  [ g, \, B_\de](f)(x) = \int_{B_{\de\eta(x)}(x)} \pa_x\phi_{\de\eta(x)}(x- y)(g(x) - g(y)) f(y)dy + g'(x)B_\de(f)(x).
	$$
	As we are integrating over a set where $m(x) \approx m(y)$, the claim trivially follows from the fact that
	$$
	(x - y)\pa_x\phi_{\de\eta(x)}(x- y) = O(1), \qquad \frac{|g(x) - g(y)|}{|x - y|} = O(m(x)^{\lambda}).
	$$
	The lemma is then proven.
\end{proof}

\section{Blowup for the ODE system}
\label{A.ODE}

Our objective in this Appendix is to prove that the ODE~\eqref{E.ODEintro} blows up. The authors are indebted to Erik Wahl\'en for pointing this fact to them.

Let us introduce the variable $\beta:=\nu_++\nu_-$ (which will only appear in this Appendix, and which should not be mistaken for the constant~$\beta$ we use in the local wellposedness theorem). In terms of this variable, the system~\eqref{E.ODEintro} can be rewritten as
\begin{gather*}
	\nu'=R\cos(\beta+\psi)\sin2\nu\,,\qquad \beta' = 2R\sin(\beta+\psi)\cos 2\nu\,,\\
	R'= R^2\frac{\cos(\beta+\psi)}{\cos 2\nu}\,,\qquad \psi' =-R\frac{\sin(\beta+\psi)}{\cos2\nu}\,.
\end{gather*}
Here we are using the notation $b_1=:R e^{i\psi}$ for the modulus and argument of the complex number~$b_1$, and we recall that the corner angle is $2\nu := \pi+\nu_+-\nu_-$ throughout the paper. The initial data for the ODE at $t=0$ will be denoted by $(\nu^0,\beta^0,R^0,\psi^0)$.

%As $\nu$ plays the role of the opening angle of the corner, let us start with some small $\nu^0>0$. Since $\nu\equiv0$ if $\nu$ vanishes at some time by the first equation, it follows that $\nu>0$ for all time, as long as the solution does not blow up. 
First note that the evolution of the factor $\cos(\beta+\psi)$ appearing in the equation for~$R$ can be readily computed:
\[
\frac d{dt}\cos(\beta+\psi)= \frac R{\cos2\nu} (1-2\cos^22\nu ) \sin^2(\beta+\psi) \,.
\]
This quantity is positive as long as $2\nu\in (\frac\pi4, \frac\pi2)$. Since $\nu'>0$ as long as~$\nu$ is in this interval and $\cos(\beta+\psi)$ remains positive, a simple bootstrap shows that it suffices to pick $\cos(\beta^0+\psi^0)>0$ and $2\nu^0 > \frac\pi4$ to ensure that $\nu$ and $\cos(\beta+\psi)$ are increasing functions of~$t>0$. Therefore, for positive time,
\[
R'=R^2\frac{\cos(\beta+\psi)}{\cos 2\nu}\geq c_0 R^2\,,\qquad c_0:= {\cos(\beta^0+\psi^0)}>0\,.
\]
Whenever $R^0>0$, the solution then blows up as $R(t)\gtrsim (T-t)^{-1}$ for some finite time~$T>0$. Furthermore, one then has $2\nu \rightarrow \frac\pi2$ as $t\rightarrow T$ because $R^2/\tan2\nu$ is constant along the flow. To see this, just note that a straightforward computation yields 
$$
\frac d{dt}\left(\frac{R^2}{\tan 2\nu}\right) = 0.
$$

Note that, in the water wave system, this ODE blowup corresponds to the assertion that, if the the assumptions of our local existence theorem hold up to time~$T$, then as $t\to T$ the angle of the corner tends to~$\frac\pi2$ and the gradient of the velocity at the corner becomes unbounded.

\end{appendices}

%% Bibliography 


\begin{thebibliography}{99}\frenchspacing
\bibliographystyle{amsplain}
	
	\bibitem{Ag20}
	Siddhant Agrawal, {Rigidity of singularities of 2{D} gravity water waves},
	J. Differential Equations {268} (2020), no.~3, 1220--1249.
	%\MR{4029004}
	
	\bibitem{Ag21}
	Siddhant Agrawal, {Angled crested like water waves with surface tension:
		wellposedness of the problem}, Comm. Math. Phys. {383} (2021), no.~3,
	1409--1526. %\MR{4244258}
	
	\bibitem{SA23}
	Siddhant Agrawal, {Uniform in gravity estimates for 2D water waves}, 2301.04599.
	
	
	\bibitem{Ai19}
	Albert Ai, {Low regularity solutions for gravity water waves}, Water Waves
	{1} (2019), no.~1, 145--215. %\MR{4161284}
	
	\bibitem{Ai20}
	Albert Ai, {Low regularity solutions for gravity water waves {II}: {T}he
		2{D} case}, Ann. PDE {6} (2020), no.~1, Paper No. 4, 117. %\MR{4098033}
	
	
	\bibitem{AiIfTa19}
	Albert Ai, Mihaela Ifrim, and Daniel Tataru, {Two dimensional gravity
		waves at low regularity {I}: Energy estimates}, 1910.05323.

	
	\bibitem{AlBuZu14}
	T.~Alazard, N.~Burq, and C.~Zuily, {On the {C}auchy problem for gravity
		water waves}, Invent. Math. {198} (2014), no.~1, 71--163. %\MR{3260858}
	
	\bibitem{AlBuZu18}
	Thomas Alazard, Nicolas Burq, and Claude Zuily, {Strichartz estimates and
		the {C}auchy problem for the gravity water waves equations}, Mem. Amer. Math.
	Soc. {256} (2018), no.~1229, v+108. %\MR{3852259}
	
	
	\bibitem{Amick} C.J. Amick, L.E. Fraenkel, J.F. Toland, On the Stokes conjecture for the wave of extreme form, Acta Math. 148 (1982) 193--214.  
	

	%\bibitem{ambrosest} D.M. Ambrose. The Zero Surface Tension Limit of Two-Dimensional Interfacial Darcy Flow. J. Math. Fluid Mech. (2014) 16: 105.
	
	\bibitem{ambrose} D. M. Ambrose. Well-posedness of vortex sheets with surface tension {SIAM J. Math. Anal.} Vol. 35, No. 1, pp. 211-244. (2003)
	%	

%\bibitem{AAW}
%{Akers B. F., Ambrose D.M., Wright J.D.,} 
%{Gravity perturbed Crapper waves,}  
%\jour{Proc. R. Soc. A}, 470 (2013) 2161.


	
	%\bibitem{AM} D. M. Ambrose and N. Masmoudi. The zero surface tension limit of two-dimensional
	%water waves. {Comm. Pure Appl. Math.}, 58(10):1287--1315, 2005.
	
	
	%\bibitem{ASW} D. M. Ambrose, W. A. Strauss, J.D. Wright,
	%Global bifurcation theory for periodic traveling interfacial
	%gravity-capilary waves, arXiv:1411.5569.
	
	
	%\bibitem{BMO} G. Baker,D.  Meiron, S. Orszag, Generalized vortex methods for free-surface flow
	%problems, J. Fluid Mech. 123 (1982) 477--501.
	
	
	
	%\bibitem{Beale-Hou-Lowengrub1} J. T. Beale, T. Y. Hou, and J. Lowengrub. Convergence of a boundary integral method
	%for water waves. {SIAM J. Numer. Anal.}, 33(5):1797-1843, 1996.
	
	
	
	\bibitem{Beale-Hou-Lowengrub2} J. T. Beale, T. Y. Hou, and J. S. Lowengrub. Growth rates for the linearized motion
	of fluid interfaces away from equilibrium. {Comm. Pure Appl. Math.}, 46(9):1269-1301,
	1993.
	
	
	%\bibitem{BDT}
%{Buffoni B., Dancer E. N., Toland J.F. },
%{The regularity and local bifurcation of steady periodic water-waves,}
%\jour{Arch. Rational Mech. Anal.} 152 (2000) 207--240.
	
	
	
	\bibitem{CaCoFeGaGo13}
	Angel Castro, Diego C\'ordoba, Charles Fefferman, Francisco Gancedo, and Javier
	G\'omez-Serrano, {Finite time singularities for the free boundary
		incompressible {E}uler equations}, Ann. of Math. (2) {178} (2013),
	no.~3, 1061--1134. %\MR{3092476}
	
	
	\bibitem{ChSh17}
	C.H. Arthur Cheng and Steve Shkoller, {Solvability and regularity for an elliptic system prescribing the curl, divergence and partial trace of a vector field 
	on Sobolev-class domains}, J. of Math. Fluid Mech. {19} (2017), no.~3, 375--422. %\MR{2291920}
	
	
	\bibitem{ChLi00}
	Demetrios Christodoulou and Hans Lindblad, {On the motion of the free
		surface of a liquid}, Comm. Pure Appl. Math. {53} (2000), no.~12,
	1536--1602. %\MR{1780703}
	
	\bibitem{CCG} A. C\'ordoba,  D. C\'ordoba, F. Gancedo. Interface evolution: water waves in 2-D., Advances in Math. 223 (2010), 1, 120-173.
	
	
	\bibitem{CEG} Diego C\'{o}rdoba, Alberto Enciso, and Nastasia Grubic, {On the existence of stationary splash singularities for the Euler equations.} Advances in Math., 288:922-941 (2016).
	
	\bibitem{CEG2} Diego C\'{o}rdoba, Alberto Enciso, and Nastasia Grubic,
	{Self-intersecting interfaces for stationary solutions of the
		two-fluid Euler equations.} Annals of PDE, 7, 12 (2021).	
	
	
	\bibitem{CoEnGr21}
	Diego C\'{o}rdoba, Alberto Enciso, and Nastasia Grubic, {Local
		wellposedness for the free boundary incompressible euler equations with
		interfaces that exhibit cusps and corners of nonconstant angle}, Advances in Math., in press (2107.09751).
	
	\bibitem{Cou}
	D. Coutand, Finite-time singularity formation for incompressible Euler moving interfaces in the plane. Arch. Rational Mech. Anal. {232} (2019) 337--387. 
	
	\bibitem{CoSh07}
	Daniel Coutand and Steve Shkoller, {Well-posedness of the free-surface
		incompressible {E}uler equations with or without surface tension}, J. Amer.
	Math. Soc. {20} (2007), no.~3, 829--930. %\MR{2291920}
	
	\bibitem{23}
	D. Coutand, S. Shkoller. On the impossibility of finite-time splash singularities for vortex sheets. Arch. Ration. Mech. Anal. 221, 987-1033 (2016).


	
	\bibitem{Cr85}
	Walter Craig, {An existence theory for water waves and the {B}oussinesq
		and {K}orteweg-de {V}ries scaling limits}, Comm. Partial Differential
	Equations {10} (1985), no.~8, 787--1003. %\MR{795808}
%
%	\bibitem{hitch}
%	Di Nezza, E., Palatucci, G., Valdinocci, E. {Hitchhiker’s guide to the fractional Sobolev spaces}, Bull. Sci. math. 136 (2012) 521–573
	
	
	\bibitem{Po19}
	Thibault de~Poyferr\'{e}, {A {P}riori {E}stimates for {W}ater {W}aves with
		{E}merging {B}ottom}, Arch. Ration. Mech. Anal. {232} (2019), no.~2,
	763--812. %\MR{3925531}
	
	
	
	\bibitem{Eb87}
	David~G. Ebin, {The equations of motion of a perfect fluid with free
		boundary are not well posed}, Comm. Partial Differential Equations
	{12} (1987), no.~10, 1175--1201. %\MR{886344}
	
	\bibitem{26}
	 C. Fefferman, A. Ionescu and V. Lie. On the absence of splash singularities in the case of two-fluid interfaces. Duke Math. 163 (3): 417-462 (2016).
	
	\bibitem{HaIfTa17}
	Benjamin Harrop-Griffiths, Mihaela Ifrim, and Daniel Tataru, {Finite depth
		gravity water waves in holomorphic coordinates}, Ann. PDE {3} (2017),
	no.~1, Art. 4, 102. %\MR{3625189}
	
	\bibitem{HuIfTa16}
	John~K. Hunter, Mihaela Ifrim, and Daniel Tataru, {Two dimensional water
		waves in holomorphic coordinates}, Comm. Math. Phys. {346} (2016),
	no.~2, 483--552. %\MR{3535894}
	
	\bibitem{HPR} M. Hintermüller, K. Papafitsoros, C.N. Rautenberg, Variable Step Mollifiers and Applications. Integr. Equ. Oper. Theory 92, 53 (2020). 
	
	\bibitem{KiWu18}
	Rafe~H. Kinsey and Sijue Wu, {A priori estimates for two-dimensional water
		waves with angled crests}, Camb. J. Math. {6} (2018), no.~2, 93--181.
	%\MR{3811234}
	
\bibitem{Yao}	
	A. Kiselev, L. Ryzhik, Y. Yao, A. Zlatos, Finite time singularity formation for the modified SQG patch equation,  Ann. of Math. {184} (2016) 909--948.
	
	\bibitem{KMR}
	V. Kozlov, V.G. Maz'ya, J.Rossmann, \emph{Elliptic Boundary Value Problems in Domains With Point Singularities}, AMS, Providence, 1997.

	
	\bibitem{Kufner}
	{Kufner A.,}
	\emph{Weighted Sobolev spaces,}
	Wiley, New York, 1985.


	\bibitem{KufOpic}
	{Kufner, A., Opic, B.,} 
	\emph{Hardy-type inequalities}, Longman, New York, 1990.

	
	
	\bibitem{KuTu14}
	Igor Kukavica and Amjad Tuffaha, {A regularity result for the
		incompressible {E}uler equation with a free interface}, Appl. Math. Optim.
	{69} (2014), no.~3, 337--358. %\MR{3197302}
	
	\bibitem{La05}
	David Lannes, {Well-posedness of the water-waves equations}, J. Amer.
	Math. Soc. {18} (2005), no.~3, 605--654. %\MR{2138139}
	
	
	\bibitem{LannesBook}
	David Lannes, {\em The water waves problem}, AMS, Providence, 2013.
	
	\bibitem{Li05}
	Hans Lindblad, {Well-posedness for the motion of an incompressible liquid
		with free surface boundary}, Ann. of Math. (2) {162} (2005), no.~1,
	109--194. %\MR{2178961}


	\bibitem{MazNaPla} V.G. Maz'ya, S. Nazarov and B.A. Plamenevskij. \emph{Asymptotic theory of elliptic boundary value problems in singularly perturbed domains, Vol I}, 
	Springer, 2001.


	\bibitem{MajBer} A.J. Majda and L. Bertozzi. \emph{Vorticity and Incompressible Flow}, 
	Cambridge University Press, Cambridge, 2001.


	\bibitem{Mikhlin} S.G. Mikhlin, S. Prosssdorf. \emph{Singular integral operators}, 
Springer, Berlin, 1986.

	\bibitem{MunRam}
	{Munnier A., Ramdani K., } 
	{Asymptotic analysis of a Neumann problem in a domain with cusp. Application to the collision problem of rigid body in a perfect fluid}.
	{SIAM J. Math. Anal.}, 47 (6) (2015) 4360-4403.
	
	
	\bibitem{Na74}
	Nalimov, V.~I. {The {C}auchy-{P}oisson problem}, Dinamika Splovsn. Sredy
	(1974), no.~Vyp. 18 Dinamika v Zidkost. so Svobod. Granicami, 104--210, 254.
	%\MR{0609882}

%	\bibitem{PayneWeinberger}{Payne, L.E., Weinberger, H.F.} {An optimal Poincaré inequality for convex domains.} Arch. Rational Mech. Anal. 5, 286–292 (1960). 
	
	\bibitem{Pomme}
	{Pommerenke C.,} 
	\emph{Boundary behaviour of conformal maps,}  
	Springer, Berlin, 1992.


	%\bibitem{Ran}{Ransford, T.} \emph{Potential theory in the complex plane,}  Cambridge University Press, Cambridge, 1995.

	\bibitem{CSLin}
	{Chang Shou Lin,}
	{Interpolation inequalities with weights}, {Comm. Partial Differential Equations} 11:14, (1986) 1515--1538 



	\bibitem{SW}
	{Stein E., Weiss G. }
	{Fractional integrals on $n$-dimensional Euclidean space}, {J. Math. Mech.} 7, No. 4 (1958) 503--514. 
	
	
	\bibitem{Ta50}
	Geoffrey Taylor, {The instability of liquid surfaces when accelerated in a
		direction perpendicular to their planes. {I}}, Proc. Roy. Soc. London. Ser.
	A. {201} (1950), 192--196. %\MR{0036104}
	


	\bibitem{TaylorIII}
	{Taylor M.E.}
	\emph{Partial differential equations III}, Springer Verlag, New York, 2011. 
	
	
	\bibitem{Wu97}
	Sijue Wu, {Well-posedness in {S}obolev spaces of the full water wave
		problem in {$2$}-{D}}, Invent. Math. {130} (1997), no.~1, 39--72.
	%\MR{1471885}
	
	\bibitem{Wu99}
	Sijue Wu, {Well-posedness in {S}obolev spaces of the full water wave
		problem in 3-{D}}, J. Amer. Math. Soc. {12} (1999), no.~2, 445--495.
	%\MR{1641609}  
	
	
	\bibitem{Wu19}
	Sijue Wu, {Wellposedness of the 2{D} full water wave equation in a regime
		that allows for non-{$C^1$} interfaces}, Invent. Math. {217} (2019),
	no.~2, 241--375. %\MR{3987173}
	
	\bibitem{Yo82}
	Hideaki Yosihara, {Gravity waves on the free surface of an incompressible
		perfect fluid of finite depth}, Publ. Res. Inst. Math. Sci. {18}
	(1982), no.~1, 49--96. %\MR{660822}
	
	\bibitem{Yo83}
	Hideaki Yosihara, {Capillary-gravity waves for an incompressible ideal fluid}, J.
	Math. Kyoto Univ. {23} (1983), no.~4, 649--694. %\MR{728155}
	
	\bibitem{ZhZh08}
	Ping Zhang and Zhifei Zhang, {On the free boundary problem of
		three-dimensional incompressible {E}uler equations}, Comm. Pure Appl. Math.
	{61} (2008), no.~7, 877--940. %\MR{2410409}


	%\bibitem{AAW} B. F. Akers, D.M. Ambrose, J.D. Wright,
	%Gravity perturbed Crapper waves, 
	%Proc. R. Soc. A, 470 (2013) 2161.
	% % Springer Verlag, New York, 1988.
	
	
	
	%\bibitem{Abels} H. Abels. The initial-value problem for the Navier-Stokes equations with a free surface in L
	%q-Sobolev spaces. Adv. Differential Equations, 10(1):45–64, 2005.
	
	%\bibitem{Alazard-Burq-Zuily1} T. Alazard, N. Burq, and C. Zuily. On the water-wave equations with surface tension.{Duke Math. J.}, 158(3):413--499, 2011.
	
	%\bibitem{Alazard-Burq-Zuily2} T. Alazard, N. Burq, and C. Zuily. Strichartz estimates for water waves. Ann.
	%Sci.  Ec. Norm. Super. (4), 44(5):855-903, 2011.
	
	%\bibitem{AlazardDelort} T. Alazard and J.M. Delort. Global solutions and asymptotic behavior for two dimensional gravity water waves. Ann. Sci. Ec. Norm. Super. (4) 48, 2015, no. 5.
	
	%\bibitem{ABZ} T. Alazard, N. Burq, C. Zuily. On the Cauchy problem for gravity water waves, Invent. Math. 198 (1) (2014) 71--163.
	
	%\bibitem{ABZ2}  T. Alazard, N. Burq, C. Zuily. Strichartz estimates and the Cauchy problem for the gravity water waves equations, Mem. Am. Math. Soc. 256 (1229) (2018), v+108.
	
	%\bibitem{AlazardDelort2}  T. Alazard and J.M. Delort. Sobolev estimates for two dimensional gravity water waves. Asterisque N. 374, 2015.
	
	%\bibitem{Alazard-Metivier:paralinearization} T. Alazard and G. M\'etivier. Paralinearization of the Dirichlet to Neumann operator,
	%and regularity of three-dimensional water waves. {Comm. Partial Differential Equations},
	%34(10-12):1632-1704, 2009.
	
	%\bibitem{AlvarezSamaniego-Lannes:large-time-existence-water-waves} B. Alvarez-Samaniego and D. Lannes. Large time existence for 3D water-waves and
	%asymptotics. {Invent. Math.}, 171(3):485-541, 2008.
	
	
	%
	%
	
	
	
	
	
	%\bibitem{Chae}
	%{Chae D., Constantin P.}, Remarks on a Liouville-type theorem
	%for Beltrami flows, arXiv:1407.7303, 2014.
	%
	%\bibitem{Sverak}
	%{Choffrut A., Sverak V.}, Local structure of the set of
	%steady-state solutions to the 2D incompressible Euler equations,
	%Geom. Funct. Anal. 22 (2012) 136--201.
	%
	%\bibitem{CS}
	%{Choffrut A., Sz\'ekelyhidi L.}, Weak solutions to the
	%stationary incompressible Euler equations, SIAM J. Math. Anal. 46 (2014) 4060--4074
	
	
	%\bibitem{CEG}
	%{C\'ordoba D., Enciso A., Grubic N.},
	%{On the existence of stationary splash singularities for the Euler equations,}  
	%\jour{Adv. Math. 288 (2016)} 922--941.
	
	
	%\bibitem{CS1}
	% {Coutand D., Shkoller S.}, {On the impossibility of Finite-time
	%splash singularities for vortex sheets}, Arch. Rat. Mech. Anal. 221 (2016) 987--1033.
	%
	%\bibitem{CS2}
	% {Coutand D., Shkoller S.}, {On the finite-time splash and splat
	%singularities for the 3-D free-surface Euler equations}, \jour{Comm. Math.
	%Phys.} 325 (2014) 143--183.
	
	%\bibitem{crapper}
	%{Crapper G.D.},
	%{An exact solution for progressive capillary waves of arbitrary amplitude,}
	%\jour{J. Fluid Mech.} 2 (1957) 532--540.
	%
	%
	%\bibitem{boeck}
	%{de Boeck P.},
	%{Existence of of capillary-gravity waves that are perturbations of Crapper’s waves,}
	%arXiv:1404.6189v1, 2014.
	
	
	%\bibitem{EP12}
	%{Enciso A., Peralta-Salas D.}, Knots and links in steady solutions of
	%the Euler equation, {Ann. of Math.} 175 (2012) 345--367.
	%
	%
	%\bibitem{EP15}
	%{Enciso A., Peralta-Salas D.}, Existence of knotted vortex tubes in
	%steady Euler flows,  Acta Math. 214 (2015), no. 1, 61–134.  
	%
	%\bibitem{EP17}
	%{Enciso A., Peralta-Salas D., Torres de Lizaur F.}, Knotted
	%  structures in high-energy Beltrami fields on the torus and the
	%  sphere, Ann. Sci. \'Ec. Norm. Sup. 50 (2017) 995--1016.
	
	
	%\bibitem{FIL}
	%{Fefferman C., Ionescu A.D., Lie V.,} {On the
	%absence of splash singularities in the case of two-fluids interfaces,}
	%Duke J. Math. 165 (2016) 417--461.
	
	%\bibitem{Nadirashvili3}
	%{Hamel F., Nadirashvili N.}, A Liouville theorem for the Euler
	%equations in the plane, arXiv:1703.07293.
	
	%\bibitem{KufOpic}
	%{Kufner, A., Opic, B.,} 
	%\book{Hardy-type inequalities}  
	%Longman, 1990.
	
	%\bibitem{MazSo}
	%{Maz'ya, V.G, Soloviev A.A.} 
	%\book{Boundary integral equations on domains with peaks,}  
	%Birkhauser Verlag, 2010.
	
	
	%\bibitem{Nadirashvili}
	%{Nadirashvili N.}, Liouville theorem for Beltrami flow,
	%Geom. Funct. Anal. 24 (2014), no. 3, 916–921.
	%
	%\bibitem{Nadirashvili2}
	%{Nadirashvili N., Vladut S.}, Integral geometry of Euler
	%equations, Geom. Funct. Anal., in press.
	%
	%\bibitem{OS}
	%{Okamoto H., Shoji M.},
	%\book{The mathematical theory of permanent progressive water waves,}  
	%World Scientific Publishing, Singapore, 2001. %Advanced Series in Nonlinear Dynamics, 20, 2001
	
	%\bibitem{Ran}
	%{Ransford, T.} 
	%\book{Potential theory in the complex plane,}  
	%Cambridge University Press, 1995.
	%
	%\bibitem{Schott}
	%{Schott, T.,} 
	%{Pseudodifferential operators in function spaces with exponential weights,}  
	%\jour{Math. Nachr. 200 (1999)} 119-149.
	
	
	
	
	%\bibitem{ADCPJ2} A. Castro, D. C\'ordoba, C. Fefferman, F. Gancedo and J. G{\'o}mez-Serrano. Finite time singularities for the free boundary incompressible Euler equations. Annals of Math, 178, no. 3, 1061-1134 (2013).
	
	%\bibitem{ADCPJ3} A. Castro, D. C\'ordoba, C. Fefferman, F. Gancedo and J. G{\'o}mez-Serrano. Splash singularities for the free boundary Navier-Stokes equations.  Annals of PDE, 5, 12 (2019).
	
	%\bibitem{ADCPJ4} A. Castro, D. C\'ordoba, C. L. Fefferman, F. Gancedo, and J. G\'omez-Serrano.
	%Finite time singularities for water waves with surface tension.  J. Math. Phys. 53 (2012), no.11, 115622, 26pp. 
	
	
	
	%\bibitem{CL} A. Castro and D. Lannes. Well-posedness and shallow-water stability for a new Hamiltonian formulation of the water waves equations with vorticity. Indiana Univ. Math. J. 64 (2015), no. 4, 1169--1270.
	
	%\bibitem{Ch-Lindblad} D. Christodoulou and H. Lindblad. On the motion of the free surface of a liquid. {Comm.Pure Appl. Math.}, 53(12):1536-1602, 2000.
	
	
	
	


	
	
	
	
	
	%\bibitem{CS1} D. Coutand and S. Shkoller. On the finite-time splash and splat singularities for the 3-D free-surface Euler equations.
	%Comm. Math. Phys. 325(1):143-183 (2014).
	
	%\bibitem{Coutand-Shkoller:well-posedness-free-surface-incompressible} D. Coutand and S. Shkoller. Well-posedness of the free-surface incompressible Euler
	%equations with or without surface tension. {J. Amer. Math. Soc.}, 20(3):829-930, 2007.
	
	
	
	%\bibitem{CS3}
	%D. Coutand, S. Shkoller. On the splash singularity for the
	%free-surface of a Navier-Stokes fluid. Annales de l'Institut Henri
	%Poincar\'e C 36 (2019) 475--503.
	
	
	%\bibitem{CS4}
	%D. Coutand, S. Shkoller. On the impossibility of finite-time splash singularities for vortex sheets.
	%Arch. Ration. Mech. Anal. 221, 987--1033 (2016).
	
	%\bibitem{Craig} W. Craig. An existence theory for water waves and the Boussinesq and Korteweg-de
	%Vries scaling limits. {Comm. Partial Differential Equations}, 10(8):787-1003, 1985.
	
	
	%\bibitem{crapper} G. D. Crapper, An exact solution for progressive capillary waves of arbitrary amplitude. J. Fluid Mech. 2 (1957) 532--540.
	
	
	%\bibitem{boeck} P. de Boeck. Existence of of capillary-gravity waves that are perturbations of Crapper’s waves,
	%arXiv:1404.6189v1, 2014.
	%
	%\bibitem{DMS}E. Di Iorio, P. Marcati and S. Spirito. Splash singularity for a free-boundary incompressible viscoelastic fluid model. Adv. Math. 368 (2020), 107124, 64 pp.
	%
	%\bibitem{DMS2}E. Di Iorio, P. Marcati and S. Spirito. Splash singularities for a general Oldroyd model with finite Weissenberg number. Arch. Ration. Mech. Anal. 235 (2020), no. 3, 1589–1660.
	
	%\bibitem{DIPP} Y. Deng, A. D. Ionescu, B. Pausader  and F. Pusateri. Global solutions of  the gravity-capillary water-wave system in three dimensions.  Acta Math., 219 (2017), 213--402
		
	%\bibitem{FIL} C. Fefferman, A. Ionescu and V. Lie. On the absence of splash singularities in the case of two-fluid interfaces.  Duke Math. 163 (3): 417-462 (2016).
	
	
	
	
	%\bibitem{Germain2} P. Germain, N. Masmoudi and J. Shatah. Global solutions for capillary waves equation in dimension 3. Comm. Pure Appl. Math., 68 (2015), no. 4, 625-687.
	
	
	
	%\bibitem{IT} M. Ifrim and D. Tataru. Two dimensional water waves in holomorphic coordinates II: global solutions.
	%Bull. Soc. Math. France 144 (2016), no.2, 369-394.
	%
	%\bibitem{IT2} M. Ifrim and D. Tataru. The lifespan of small data solutions in two dimensional capillary water waves.
	%Preprint arXiv:1406.5471.
	%
	%\bibitem{IP} A. D. Ionescu and F. Pusateri. Global solutions for the gravity water waves system in 2D. Invent. Math.
	%199 (2015), no. 3, 653-804.
	
	%\bibitem{KW} R. H. Kinsey and S. Wu. A priori estimates for two-dimensional water waves with angled crests, Camb. J. Math. 6(2) (2018) 93–181.
	
	%\bibitem{Ko2}
	%  K. Kobayashi, On the global uniqueness of Stokes' wave of extreme form, IMA J. Appl. Math. 75 (2010) 647--675.



	
	%\bibitem{Lannes1} D. Lannes. Well-posedness of the water-waves equations. {J. Amer. Math. Soc.}, 18(3):605-
	%654, 2005.
	
	%\bibitem{Lannes2} D. Lannes. \emph{The Water Waves Problem.} American Mathematical Society, Providence, 2013.
	
	%\bibitem{Lannes3} D. Lannes. A stability criterion for two-fluid interfaces and applications. Arch. Ration. Mech. Anal. 208 (2013), no. 2, 481–567
	
	%\bibitem{Leray} J. Leray. Sur le mouvement d'un liquide visqueux emplissant l'espace. { Acta Math.}, 63(1):193-248, 1934.
	
	%\bibitem{Levi} T. Levi-Civita. Determinationrigoureuse des ondes permanentes d'ampleur
	%finie. Math. Ann., 93:264-314, 1925.
	
	%\bibitem{Lindblad} H. Lindblad. Well-posedness for the motion of an incompressible liquid with free surface
	%boundary. {Ann. of Math. (2)}, 162(1):109-194, 2005.
	
	%\bibitem{MazSo} V.G. Maz'ya and A. A. Soloviev. \emph{Boundary integral equations on domains with peaks}, 
	%Birkhauser Verlag, Berlin, 2010.

	
	
	%\bibitem{Nalimov} V. I. Nalimov. The Cauchy-Poisson problem. {Dinamika Splo$\breve{s}$n. Sredy}, (Vyp. 18 Dinamika Zidkost. so Svobod. Granicami):104-210, 1974.
	
	%\bibitem{Nekrasov} A. I. Nekrasov. Tocnaya teoriya voln ustanovivsegocya vida na poverhnosti
	%tyazeloy zidkosti. Izdat. Akad. Nauk SSSR, Moscow, 1951.
	
	%\bibitem{OS} H. Okamoto and M. Shoji. The mathematical theory of permanent progressive water waves,
	%World Scientific Publishing, Singapore, 2001. %Advanced Series in Nonlinear Dynamics, 20, 2001
	
	


	
	%\bibitem{Schott}
	%{Schott, T.,} 
	%{Pseudodifferential operators in function spaces with exponential weights,}  
	%\jour{Math. Nachr. 200 (1999)} 119-149.
	
	%\bibitem{SZ1} J. Shatah and C. Zeng. Geometry and a priori estimates for free boundary problems of the Euler equation. Comm. Pure Appl. Math. 61 (2008), no. 5, 698–744. 
	

	
	%\bibitem{Wu:well-posedness-water-waves-2d} S. Wu. Well-posedness in Sobolev spaces of the full water wave problem in 2-D. {Invent.Math.}, 130(1):39-72, 1997.
	
	%\bibitem{Wu:well-posedness-water-waves-3d} S. Wu. Well-posedness in Sobolev spaces of the full water wave problem in 3-D. {J. Amer.Math. Soc.}, 12(2):445-495, 1999.
	
	%\bibitem{Wu:almost-global-wellposedness-2d} S. Wu. Almost global wellposedness of the 2-D full water wave problem. {Invent. Math.},
	%177(1):45-135, 2009.
	%
	%\bibitem{Wu:global-wellposedness-3d}  S. Wu. Global wellposedness of the 3-D full water wave problem.
	%{Invent. Math.}, 184(1):125-220, 2011.
	
	%\bibitem{W} S. Wu. A blowup criteria and the existence of 2d gravity water waves with angled crests, arXiv 1502.05342.
	
	%\bibitem{W2} S. Wu. Wellposedness of the 2D full water wave equation in a regime that allows for non-$C^1$ interfaces, Invent. Math. 217(2) (2019) 241--375.

	%\bibitem{Wu12}	
	%S. Wu, {On a class of self-similar 2d surface water waves}, arXiv 1206.2208.

	%
	%\bibitem{Yosihara} H. Yosihara. Gravity waves on the free surface of an incompressible perfect fluid of finite depth. Publ. Res. Inst. Math. Sci., 18(1):49-96, 1982.
	
	%\bibitem{Zhang}  P. Zhang and Z. Zhang. On the free boundary problem of three-dimensional
	%incompressible Euler equations. Comm. Pure Appl. Math., 61(7):877-940, 2008.



\end{thebibliography}
\end{document}